\newcommand\restr[2]{{
		\left.\kern-\nulldelimiterspace 
		#1 
		\right|_{#2} 
}}
\def\Xint#1{\mathchoice
	{\XXint\displaystyle\textstyle{#1}}%
	{\XXint\textstyle\scriptstyle{#1}}%
	{\XXint\scriptstyle\scriptscriptstyle{#1}}%
	{\XXint\scriptscriptstyle\scriptscriptstyle{#1}}%
	\!\int}
\def\XXint#1#2#3{{\setbox0=\hbox{$#1{#2#3}{\int}$}
		\vcenter{\hbox{$#2#3$}}\kern-.5\wd0}}
\def\dashint{\Xint-}
\theoremstyle{thmstyleone}%
\newtheorem{teo}{Theorem}[section]
\newtheorem{lema}[teo]{Lemma}
\newtheorem{prop}[teo]{Proposition}
\newtheorem{coro}[teo]{Corollary}
\newtheorem{defi}[teo]{Definition}
\theoremstyle{thmstyletwo}%
\newtheorem{obs}{Remark}[section]%
\begin{document}

\title[Article Title]{Balanced metrics, Zoll deformations and isosystolic inequalities in $\mathbb{C}P^{\MakeLowercase{n}}$}


\author{\fnm{Luciano} \sur{L. Junior}}


\abstract{The $k$-systole of a Riemannian manifold is the infimum of the volume over all homologically non-trivial $k$-cycles. In this paper we discuss the behavior of the dimension two and co-dimension two systole of the complex projective space for distinguished classes of metrics, namely the homogeneous metrics and the balanced metrics. In particular, we argue that every homogeneous metric maximizes the systole in its volume-normalized conformal class, as well as that each Kähler metric locally minimizes the systole on the set of volume-normalized balanced metrics. The proof demands the implementation of integral geometric techniques, and a careful analysis of the second variation of the systole functional. As an application, we characterize the systolic behavior of almost-Hermitian 1-parameter Zoll-like deformations of the Fubini-Study metric.}

\keywords{53A10, 53C55, 53C30}



\maketitle

	\section{Introduction}

\indent The \textit{systole} of a closed Riemannian manifold is defined as the infimum of the length over all homotopically non-trivial loops. The interest in this geometric invariant started with C. Loewner, who proved that for every Riemannian metric on the two-dimensional torus, the systole is bounded by a universal constant times the square root of the area. This type of inequality is called \textit{isosystolic inequality}. Following his work, M. Pu provided an isosystolic inequality for the two-dimensional real projective space and characterized the equality case (\cite{Pu52}). The subject of \textit{systolic geometry} grew in interest with the stunning work of M. Gromov, who generalized Loewner's inequality for \textit{essential manifolds} (\cite{gromov_systole}). One of the reasons for such interest is the relation with different areas of mathematics, as, for instance, the link with \textit{isoperimetric inequalities}. A friendly introduction to the subject can be found in the following survey by L. Guth (\cite{guth2010metaphors}).      

Inspired by the works of C. Loewner and M. Pu, M. Berger proposed a definition of higher orders systoles (\cite{berger}). More concretely, if $(M^n,g)$ is a closed Riemannian manifold, we define the \textit{homological $k$-systole} with integer coefficients, or simply the \textit{$k$-systole}, as:  
$$\mathrm{Sys}_k(M,g)=\inf\{\mathrm{vol}_g(C)  :  \mbox{where $[C] \neq 0$ in ${H}_k(M,\mathbb{Z})$}\},$$
where the volume of a cycle is computed with respect to the $k$-dimensional Hausdorff measure induced by the Riemannian metric. From Cartan's Theorem the $1$-systoles are realized by geodesics. The $k$-systoles with $k > 1$, are realized by stable minimal submanifolds, possibly with singularities (\cite{federer69}). This creates a connection between systolic geometry and the theory of minimal submanifolds.   

Based on the aforementioned works, a natural question is the existence of isosystolic inequalities for the $k$-systole. However, perhaps because of the wilder nature of minimal submanifolds over geodesics, such inequalities are not expected. This phenomenon is known as \textit{systolic freedom} (\cite{Berger1992},\cite{Katz95}). Therefore, a more approachable problem is to study the points of local maximum and local minimum of the \textit{(volume) normalized systole}, 
$$\mathrm{Sys}_k^{\mathrm{nor}}(M,g)=\frac{{\mathrm{Sys}_k(M,g)}}{\mathrm{vol}(M,g)^{\frac{k}{n}}},$$
when restricted to distinguished subsets of Riemannian metrics. Note that the power in the volume is chosen in such way that the functional is invariant by scaling of the metric.

The first significant contribution in this regard comes from M. Berger (\cite{berger}), who demonstrated that in $\mathbb{C}P^n$, the Fubini-Study metric serves as the maximum for the normalized $2k$-systole within its conformal class, for all $1\leq k < n$. It is worth noting that in $\mathbb{C}P^n$, homology is only non-trivial for even dimensions. 

More recently, using the machinery of pseudo-holomorphic curves, M. Gromov and M. Berger also showed that in $\mathbb{C}P^2$, the Fubini-Study metric is a local maximum for the normalized $2$-systole.
{
	\renewcommand{\theteo}{A}
	\begin{teo}[Gromov-Berger, cf. \cite{gromov_pshol}, section $0.2.$B]\label{FSinCP2aremaximumforSys} 
		There exist an open neighborhood $g_{FS} \in \mathcal{U}$ of the Fubini-Study metric in the space $ (\mathrm{Riem}(\mathbb{C}P^2), C^{\infty})$ of Riemannian metrics, such that: 
		$$ \mathrm{Sys}^{\mathrm{nor}}_{2}(\mathbb{C}P^2,{g}) \leq \mathrm{Sys}^{\mathrm{nor}}_{2}(\mathbb{C}P^2,{g}_{FS}),$$
		for every metric $g \in \mathcal{U}$. Moreover, the equality holds if and only if there is an almost complex structure $J$ such that $(\mathbb{C}P^2,J,g)$ is almost Kähler.      
	\end{teo}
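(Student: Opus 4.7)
The plan is to combine Gromov's existence theorem for pseudo-holomorphic spheres with the sharp Wirtinger inequality on almost-Hermitian manifolds, together with an integral-geometric identity tying the areas of these spheres to the volume of $\mathbb{C}P^2$. The starting point is the observation that for the standard pair $(J_0, g_{FS})$, the $2$-systole is realized by complex projective lines, which are calibrated by the Kähler form $\omega_{FS}$; the goal is to show that this calibration persists under small perturbations of the metric.

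For $g$ in a small $C^{\infty}$-neighborhood of $g_{FS}$, I would first construct an almost complex structure $J_g$ compatible with $g$ via polar decomposition of the endomorphism $g^{-1}\omega_{FS}$, together with its fundamental $2$-form $\omega_g(X,Y) := g(J_g X, Y)$. The form $\omega_g$ is nondegenerate and $C^{\infty}$-close to $\omega_{FS}$, but generically not closed; its closedness is precisely the almost-Kähler condition. Since tameness is $C^0$-open, $\omega_{FS}$ tames $J_g$, and Gromov's theorem combined with automatic transversality in complex dimension $2$ yields, through each $p \in \mathbb{C}P^2$, a unique $J_g$-holomorphic sphere $\Sigma_p$ in the line class $[L] \in H_2(\mathbb{C}P^2,\mathbb{Z})$. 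By Wirtinger's inequality, $\mathrm{vol}_g(\Sigma_p) = \int_{\Sigma_p} \omega_g$ and every other $2$-cycle $C$ in $[L]$ satisfies $\mathrm{vol}_g(C) \geq \int_C \omega_g$; in particular $\mathrm{Sys}_2(g) \leq \mathrm{vol}_g(\Sigma_p)$ for every $p$.

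The heart of the argument is then an integral-geometric identity linking the total area of the family $\{\Sigma_p\}$ to the volume of $\mathbb{C}P^2$. Working on the incidence correspondence $I = \{(q, p) : q \in \Sigma_p\}$, whose two natural projections both have $2$-dimensional fibers, a fiber-integration applied to the wedge $\pi_1^{*}\omega_g \wedge \pi_2^{*}\mu$ (for an appropriate $4$-form $\mu$ on the parameter space dual to $\omega_g$) turns $\int \mathrm{vol}_g(\Sigma_p)\,d\mu$ into a multiple of $\int_{\mathbb{C}P^2} \omega_g^2 / 2 = \mathrm{vol}(g)$. Bounding $\mathrm{Sys}_2(g)$ by the minimum area, hence by the average area, produces the sharp inequality $\mathrm{Sys}_2^{\mathrm{nor}}(g) \leq \mathrm{Sys}_2^{\mathrm{nor}}(g_{FS}) = \sqrt{2}$. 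For the equality case one traces back: equality in the averaging step forces $\mathrm{vol}_g(\Sigma_p)$ to be independent of $p$, and equality in Wirtinger for every systolic minimizer then translates, after a short computation, into $d\omega_g = 0$, the almost-Kähler condition.

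The main technical obstacle is the integral-geometric step when $\omega_g$ is not closed. In the closed (almost-Kähler) case the numbers $\int_{\Sigma_p} \omega_g = [\omega_g] \cdot [L]$ are topological and constant in $p$, so the Crofton identity is essentially a Fubini computation. For a merely near-closed $\omega_g$ one has to track how the contribution $d\omega_g$ propagates through the incidence integral, verify that it produces only nonnegative corrections vanishing precisely when $d\omega_g = 0$, and then extract from equality in the resulting sharp inequality the rigidity needed to conclude that $(\mathbb{C}P^2, J_g, g)$ is almost Kähler. Quantifying this propagation under the $C^{\infty}$-closeness hypothesis on $g$ is where the bulk of the work lies.
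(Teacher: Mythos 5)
First, a point of comparison: the paper does not prove this statement at all --- it is Theorem \ref{FSinCP2aremaximumforSys}, quoted from Gromov (\cite{gromov_pshol}, \textsection 0.2.B) and used only as motivation --- so your sketch has to stand on its own. Its skeleton (existence of $J_g$-holomorphic spheres in the line class for a $g$-compatible $J_g$ tamed by $\omega_{FS}$, the comass/Wirtinger bound $\mathrm{vol}_g(C)\geq\int_C\omega_g$ with equality exactly on $J_g$-complex surfaces, and an averaging step to compare areas of the holomorphic spheres with $\sqrt{2\,\mathrm{vol}(g)}$) is indeed the standard outline attributed to Gromov, and the almost-K\"ahler case is correctly disposed of cohomologically since then $\int_{\Sigma}\omega_g$ is a topological number whose square equals $\int\omega_g^2=2\,\mathrm{vol}(g)$.

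However, there are genuine gaps. (1) The claim of a \emph{unique} $J_g$-line through each point $p$ is false: through a single point the $J_g$-lines form a $2$-parameter family (uniqueness holds through two distinct points, by positivity of intersections), and in fact no continuous choice of one line per point can exist even for $g_{FS}$, since a section of the space of complex tangent directions would give a complex line subbundle $L\subset T\mathbb{C}P^2$, and $c_1(L)=ah$ would force $a(3-a)=3$, which has no integer solution. So your incidence correspondence $I=\{(q,p):q\in\Sigma_p\}$ with parameter space $\mathbb{C}P^2$ and a "$4$-form $\mu$ dual to $\omega_g$" is not well posed; the correspondence must be built over the full $4$-dimensional moduli space of $J_g$-lines, and the measure on it has to be constructed from $g$. (2) Precisely at the step you flag as the heart of the argument --- the Crofton-type identity when $d\omega_g\neq0$ --- nothing is proved: you need a choice of $\mu$ for which $(\pi_1)_*\pi_2^*\mu$ is comparable to $\omega_g$ \emph{and} whose total mass is normalized so that the average area is bounded by $\bigl(\int_{\mathbb{C}P^2}\omega_g^2\bigr)^{1/2}$, and the asserted "nonnegative corrections vanishing exactly when $d\omega_g=0$" is given no sign argument; there is no obvious reason the error term coming from $d\omega_g$ has a favorable sign. (3) The equality analysis ("a short computation gives $d\omega_g=0$") is asserted rather than derived, and it must also account for the fact that the almost complex structure in the conclusion need not be your particular $J_g$. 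As it stands, the proposal reproduces the known reductions but leaves the quantitative core of Gromov--Berger unestablished.
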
                               
}
In contrast with the global result of M. Pu for $\mathbb{R}P^2$, this local statement is the best result we can expect in $\mathbb{C}P^2$. In fact, M. Katz and A. Suciu have proven that systolic freedom holds in this space (\cite{Katz}), excluding the possibility of a global version of this theorem.     

An interesting question is whether this theorem generalizes to the $(2n-2)$-systole in $\mathbb{C}P^n$, for $n>2$. However, given the significant differences in the character of pseudo-holomorphic curves and almost complex submanifolds of higher dimensions, this was not expected. In fact, in \cite{gromov_systole}, M. Gromov proved that this result is false for $n>2$, by exhibiting a family of almost Hermitian metrics approaching the Fubini-Study metric, each one with normalized systole larger than the Fubini-Study metric.   

Nevertheless, one question that remains and motivates part of our work is to characterize the behavior of the normalized $(2n-2)$-systole restricted to the set of Hermitian metrics of $\mathbb{C}P^n$, $n \geq 3$, that are compatible with the canonical complex structure. 

Our first observation is that, even when restricted to this smaller set, the Fubini-Study metric is not a local maximum for the normalized systole. In fact, we will prove that this metric represents a point of strict minimum for the normalized codimension two systole restricted to the class of homogeneous metrics in $\mathbb{C}P^{2n+1}$. Furthermore, we will also prove that systolic freedom holds within this class (see Theorem $\ref{systofhommetrics}$). 

The main observation leading to the aforementioned result is that every homogeneous metric is \textit{balanced}, meaning that the associated fundamental form is coclosed. Therefore, it is reasonable to ask if the Fubini-Study metric persist as a local minimum for the codimension two normalized systole in the larger set of balanced metrics, that are compatible with the canonical complex structure. This question leads us to our second result, namely: the Fubini-Study metric is a local minimum for the normalized $(2n-2)$-systole in $\mathbb{C}P^n$ when restricted to the infinite dimensional set of balanced metrics. Moreover, we characterize the equality case (see Theorem $\ref{sysofbal}$).

Note that in $\mathbb{C}P^2$, our result is consistent with the Gromov-Berger Theorem, as every balanced metric is Kähler when $n=2$. This leads us to conjecture that for the complex projective space, the normalized codimension two systole decreases under any non balanced perturbation of the Fubini-Study metric. For conformal directions this follows from the previously mentioned result of M. Berger, however the general case is still an open problem.         


Another aspect of the Gromov-Berger Theorem that we can draw inspiration from for generalizations is the rigidity statement. That is, the theorem guarantees the existence of an open neighborhood $\mathcal{U}$ of the Fubini-Study metric such that, if $g \in \mathcal{U}$ and 
$$\mathrm{Sys}_2^{\mathrm{nor}}(\mathbb{C}P^2,g) = \mathrm{Sys}_2^{\mathrm{nor}}(\mathbb{C}P^2,g_{FS}),$$
then there exists an almost complex structure $J$ such that $(\mathbb{C}P^2,J,g)$ is an almost Kähler manifold, i.e. its associated fundamental form is closed. By Taubes' uniqueness Theorem for symplectic structures on $\mathbb{C}P^2$ (\cite{Taubes95}), up to diffeomorphism and scaling we can assume that the fundamental form associated to the pair $(J,g)$ is the Fubini-Study form. In this case, the work of M. Gromov on pseudo-holomorphic curves (\cite{gromov_pshol}) implies that for every point and every tangent complex line there is a unique $J$-holomorphic $\mathbb{C}P^1$ that contains the point and is tangent to the given complex line (\cite{Sikorav04}, \cite{McKay_2006}). Moreover, each of these surfaces generates the homology of $\mathbb{C}P^2$ and realizes the $2$-systole.    

Note that we can interpret this property as a generalization of the classical Zoll condition (\cite{Zoll1903, Besse78}), following the approach of Ambrozio-Marques-Neves for the sphere (\cite{lucas_coda_andre}). In fact, recall that a Riemannian metric on a closed manifold is called Zoll if all its geodesics are closed and have the same length, which implies that through every point and every unit tangent vector there passes a unique simple, closed geodesic. A classical theorem of V. Guillemin (\cite{Guillemin}) guarantees an abundance of such Zoll metrics on $\mathbb{S}^2$.

While Guillemin's result concerns the existence of closed geodesics, Ambrozio-Marques-Neves extend this idea to the setting of minimal surfaces. Specifically, they prove that there is an abundance of Riemannian metrics on $\mathbb{S}^n$ which admit a \textit{generalized Zoll family}—a smooth family $\{\Sigma_\sigma\}_{\sigma \in \mathbb{RP}^n}$ of codimension one embedded minimal spheres such that for each $x \in S^n$ and each $(n-1)$-dimensional subspace $\pi \subset T_x S^n$, there exists a unique $\sigma \in \mathbb{RP}^n$ with $T_x \Sigma_\sigma = \pi$.
       
Moreover, they classify all metrics on the sphere that admit the canonical family of equators as a generalized Zoll family. This motivates us to propose the following definition.

{
	\renewcommand{\theteo}{B}
	\begin{defi}\label{zollmetricsdef}
		The set $\mathcal{Z}$ is defined as the class of almost Hermitian structures $(J, g)$ in $\mathbb{C}P^n$ whose admit a family $\{\Sigma^{2n-2}_\sigma\}_{\sigma \in \mathbb{C}P^n}$ of $(2n-2)$-dimensional submanifolds satisfying the following properties:
		\begin{enumerate}[label=\alph*),ref=(\alph*)]
			\item For every $\sigma \in \mathbb{C}P^n$ the submanifold $\Sigma_\sigma$ is diffeomorphic to $\mathbb{C}P^{n-1}$, minimal and $J$-almost complex.
			\item For every $(p ,\Pi) \in \mathrm{Gr}_{n-1}^{J}(T\mathbb{C}P^n)$, in the Grassmannian of $J$-almost complex hyperplanes, there exists a unique $\sigma \in \mathbb{C}P^n$ for which $p \in \Sigma_\sigma$ and $T_p \Sigma_\sigma = \Pi$. Moreover, the map $\mathrm{Gr}_{n-1}^{J}(T\mathbb{C}P^n) \ni (p,\Pi) \mapsto \sigma \in \mathbb{C}P^n$ is a submersion.
			\item The map $\mathbb{C}P^n \ni \sigma \mapsto \Sigma_\sigma$ is smooth in the sense of the graphical convergence.   
		\end{enumerate} 
		Moreover, we define the subset $\mathcal{Z}'$ as the elements $(J, g) \in \mathcal{Z}$, for which $H_{2n-2}(\mathbb{C}P^n, \mathbb{Z})$ is generated by $\Sigma_{\sigma}$, for every $\sigma \in \mathbb{C}P^n$. The family $\{\Sigma^{2n-2}\}_{\sigma \in \mathbb{C}P^n}$ is called the associated Zoll family.
\end{defi}}
Recall that, for a given family $\{\Sigma_\sigma\}_{\sigma \in \mathbb{C}P^n}$ of closed smooth submanifolds of $(\mathbb{C}P^n,g)$, the map $\mathbb{C}P^n \ni \sigma \mapsto \Sigma_\sigma$ is said to be \textit{smooth in the sense of graphical convergence} if, for every $\sigma_0 \in \mathbb{C}P^n$, there is an open neighborhood $\mathcal{W}$ of $\sigma_0$ and a smooth map $\mathcal{W} \ni \sigma \to \xi_\sigma \in \Gamma(N \Sigma_{\sigma_0})$ such that $\Sigma_\sigma = \left\{\mathrm{exp}_p\left(\xi_\sigma(p)\right) : p \in \Sigma_{\sigma_0}\right\}$ for every $\sigma \in \mathcal{W}$.    

Therefore, by our previous discussion, we can thus say that, if the metric $g$ in a neighborhood of the Fubini-Study metric satisfies the equality in Gromov-Berger Theorem, there exists an almost complex structure $J$ such that $(J,g) \in \mathcal{Z}'$. We prove the converse statement is true (see Theorem \ref{classWeaklyZoll}). In other words, in a neighborhood of the Fubini-Study metric, we can characterize the set $\mathcal{Z}'$ as the points of maximum of the normalized systole. This result can be compared with the relation between Zoll metrics and \textit{systoles} (i.e. least length closed geodesics) in $\mathbb{S}^2$ proved in (\cite{Alberto17}). 

Motivated by the previous characterization of the set $\mathcal{Z}'$ and the results of V. Guillemin (\cite{Guillemin}) and Ambrozio-Marques-Neves (\cite{lucas_coda_andre}) on Zoll deformations of the round metric in the sphere, we study $1$-parameter deformations of the Fubini-Study metric in $\mathcal{Z}$. In particular, using the classical deformation theory developed by K. Kodaira (\cite{Kodaira05}), we are able to show that such deformations must be balanced with respected with the canonical complex structure. In particular, such type of deformation must not decrease the normalized codimension two systole.             

The investigation of the  $(2n-2)$-systole invariant on $\mathbb{C}P^n$ leads naturally to the topic of balanced metrics, which plays a central role in this work. The first systematic work in this topic is due M.L. Michelson in the seminal article \cite{Michelsohn_1982}. Since then, balanced metrics arose in a variety of other contexts. For instance, in the theory of Twistor geometry over $4$-dimensional self-dual manifolds  (\cite{Atiyah78},\cite{Jixiang15}), Twistor geometry over hyperkähler manifolds (\cite{Verbitsky09}), Twistor geometry over hypercomplex manifolds (\cite{Tomberg15}), and also in the theory of complex Monge-Ampère equations (\cite{Yau08}).        

 In Section $\ref{section:MainResults}$ we state and discuss our main results in details. In Section $\ref{section:WeaklyZollmetrics}$ we classify the almost Hermitian manifolds which admit a large family of almost complex submanifolds that are also minimal submanifolds. In Section $\ref{section:homogeneousmetrics}$ we study the systole functional for the homogeneous metrics of the complex projective space. In Section $\ref{section:BalancedMetrics}$ we study the normalized systole restrict to the space of balanced metrics in $\mathbb{C}P^n$, for $n\geq 3$. In section $\ref{section:zolldeformation}$ we combine the above results to study $1$-parameter family of deformations of the Fubini-Study metric that lies in $\mathcal{Z}$. The paper also contains two appendices, one that discuss the relation of \textit{integral geometric formulas} with systolic inequalities, and the other that summarizes some classical results in the theory of Hermitian geometry.

\section{Main Results}\label{section:MainResults}

\subsection{The Class $\mathcal{W}_k$}
The criteria of integrability of the almost complex hyperplanes by minimal submanifolds, in the definition of $\mathcal{Z}$ (see definition $\ref{zollmetricsdef}$), can be viewed as a variation of the \textit{axiom of holomorphic planes} presented in the following paper by K. Yano and I. Mogi (\cite{YanoMogi55}), where they study integrability of complex planes by totally geodesic submanifolds, instead of minimal ones. This is a generalization of the classical \textit{axiom of $r$-planes}, defined and studied by E. Cartan. Moreover, the minimal counterpart of the axiom of $r$-planes was characterized by T. Hangan (\cite{Hangan96}, \cite{hangan1997beltrami}).

Therefore, we can draw inspiration in these works to propose the following generalization of the axiom of holomorphic planes.
{
	\renewcommand{\theteo}{C}
	\begin{defi}
		Let $(M^{2n}, J, g)$ be a $2n$-dimensional almost Hermitian manifold, with $n\geq2$. For each fixed integer $1\leq k \leq n-1$, we define the set $\mathcal{W}_k$ as the class of almost Hermitian structures $(J, g)$ satisfying the following property: for every $(p, \Pi) \in \mathrm{Gr}_k^{J}(TM)$, there exists a minimal and almost complex submanifold $\Sigma^{2k}_{p,\Pi}$ of $M$ such that $p \in \Sigma_{p,\Pi}$ and $T_p(\Sigma_{p,\Pi}) = \Pi$. 
	\end{defi}
}
The Section $\ref{section:WeaklyZollmetrics}$ will be devoted to the proof of the following classification theorem for  almost Hermitian structures that lies in $\mathcal{W}_k$. For the reader less familiarized with the theory of almost complex geometry, we refer Section $\ref{section:WeaklyZollmetricsSubsection:Prelinaries}$ for definitions.
{
	\renewcommand{\theteo}{D}
	\begin{teo}\label{classWeaklyZoll}
		Let $(M^{2n},J,g)$ be a $2n$-dimension almost Hermitian manifold, with $n \geq 2$. Then, we have:
		\begin{enumerate}[label=\alph*),ref=(\alph*)]
			\item\label{classWeaklyZoll1} The pair $(J,g)$ lies in $\mathcal{W}_1$ if and only if $(M,J,g)$ is quasi Kähler. 
			\item\label{classWeaklyZoll2} For each fixed $1<k<n-1$, the pair $(J,g)$ lies in $\mathcal{W}_k$ if and only if $(M,J,g)$ is Kähler.
			\item\label{classWeaklyZoll3} For $n\geq 3$, the pair $(J,g)$ lies in $\mathcal{W}_{n-1}$ if and only if $J$ is integrable and $(M,J,g)$ is balanced.  
		\end{enumerate}
\end{teo} }

The key computations for the proof of this theorem were inspired by the work of A. Gray (\cite{gray}), which contains a comprehensive study of the theory of almost complex geometry. In particular, it contains an important characterization of almost complex submanifolds that are also minimal.   

The technicality provided by the theory of almost complex structures can overshadow the simplicity of this statement. Therefore, we state the following corollary, with focus in the integrable case. Incidentally, it clarifies the relation of Theorem $\ref{classWeaklyZoll}$ and the theory of calibrations (\cite{HarveyLawson82}).  
{
	\renewcommand{\theteo}{E}
	\begin{coro}[Integrable Case]\label{classWeaklyZollInt} 
		Let $(M^{2n},J,g)$ be a $2n$-dimension Hermitian manifold, with $n \geq 2$, and let $\omega \in \Omega^{2}(M)$ be the associated fundamental form.
		\begin{enumerate}[label=\alph*),ref=(\alph*)]
			\item For each fixed $1\leq k < n-1$, we have that $(J,g) \in \mathcal{W}_k$ if and only if $d \omega = 0$.
			\item For $n\geq3$, $(J,g) \in \mathcal{W}_{n-1}$ if and only if $d\omega^{n-1}=0$.  
		\end{enumerate}
\end{coro}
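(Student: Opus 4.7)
The plan is to obtain Corollary \ref{classWeaklyZollInt} as a short specialization of Theorem \ref{classWeaklyZoll} to the case where $J$ is integrable, using only classical facts from almost Hermitian geometry.

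Most of the corollary is bookkeeping. For $1 < k < n-1$, Theorem \ref{classWeaklyZoll}(b) identifies $\mathcal{W}_k$ with the Kähler class, which by definition is $d\omega = 0$. For $k = n-1$ with $n \geq 3$, Theorem \ref{classWeaklyZoll}(c) identifies $\mathcal{W}_{n-1}$ with the class of integrable and Balanced structures, i.e.\ with $d\omega^{n-1} = 0$. For the converse implications, I would rely on the fact that in any Hermitian manifold satisfying $d\omega = 0$ (resp.\ $d\omega^{n-1}=0$), the Newlander--Nirenberg theorem furnishes local complex submanifolds through any point tangent to any prescribed complex $k$-plane (resp.\ complex hyperplane), and these submanifolds are automatically minimal by Wirtinger's calibration argument applied to $\omega^k/k!$ (resp.\ to $\omega^{n-1}/(n-1)!$), giving membership in $\mathcal{W}_k$.

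The only place where a small genuine argument is needed is the case $k=1$ in part (a). Here Theorem \ref{classWeaklyZoll}(a) identifies $\mathcal{W}_1$ with the Quasi-Kähler class, so I must upgrade ``integrable and Quasi-Kähler'' to Kähler. This is classical: in the Gray--Hervella decomposition of $\nabla J$, the Quasi-Kähler component and the Hermitian (i.e.\ $N_J = 0$) component live in complementary $U(n)$-invariant submodules whose intersection is the Kähler class, so the two conditions together force $\nabla J = 0$. Equivalently, one may polarize the Quasi-Kähler identity $(\nabla_X J)Y + (\nabla_{JX}J)(JY) = 0$ and combine it with $N_J = 0$ to directly conclude $\nabla J = 0$. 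This is the only step where any real computation is needed, and it is short and standard; beyond a mild bookkeeping concern to avoid clashing the paper's $\mathcal{W}_k$ notation with the homonymous Gray--Hervella classes, no further obstacle is anticipated.
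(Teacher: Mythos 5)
Your proposal is correct and follows essentially the same route as the paper: the corollary is obtained by specializing Theorem \ref{classWeaklyZoll} to the integrable case, translating via the classical equivalences (Hermitian plus $d\omega=0$ is K\"ahler, Hermitian plus $d\omega^{n-1}=0$ is Balanced), with the converse directions handled by complex charts and the calibration argument, exactly as the paper outlines. The one genuine step you isolate for $k=1$ --- upgrading Quasi-K\"ahler plus integrable to K\"ahler --- is already available in the paper as Corollary \ref{QK+I=K} (proved via Gray's characterization $H=0$ of the Hermitian condition), so your Gray--Hervella or polarization argument is just an equivalent form of that same fact.
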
    }
One implication of the proof can be outlined as follows. Provided that $J$ is integrable, every element of $\mathrm{Gr}_{k}^{J}(TM)$ can be integrated by a germ of a complex submanifold. Therefore, if $\omega^k$ is a calibration, each of these germs must be a minimal submanifold, implying that $(J,g) \in \mathcal{W}_k$. Hence, the main content of the theorem is to prove that if we have enough minimal complex $2k$-submanifolds, then $\omega^k$ necessarily defines a calibration. 

A classical theorem in complex geometry due to Hirzebruch (\cite{Hirzebruch1957complex}), Kodaira and Yau (\cite{yau1977calabis}) states the uniqueness of the Kähler structure in $\mathbb{C}P^n$, up to biholomorphism. Combining this result with our classification we obtain the following.   
{
	\renewcommand{\theteo}{F}
	\begin{coro}
		Let $(M^{2n},J,g)$ be a $2n$-dimension almost Hermitian manifold, with $n \geq 2$. If $M$ is homeomorphic to $\mathbb{C}P^n$ and the pair $(J,g)$ lies in $\mathcal{W}_k$ for some $1<k<n-1$, then $(M,J,g)$ is a Kähler manifold biholomorphic to $\mathbb{C}P^n$.
\end{coro}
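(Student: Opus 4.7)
The plan is to derive this corollary as an essentially immediate consequence of Theorem \ref{classWeaklyZoll}(b) combined with the uniqueness result for the Kähler structure on $\mathbb{C}P^n$ attributed to Hirzebruch--Kodaira and Yau. There is no substantive new computation required; the task is to assemble the pieces correctly and verify that the hypotheses of the external theorem are met.

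First, I would invoke Theorem \ref{classWeaklyZoll}(b) directly: since $1<k<n-1$ and $(J,g)\in\mathcal{W}_k$, the almost Hermitian structure $(M,J,g)$ is Kähler. In particular $J$ is an integrable complex structure and the associated fundamental form $\omega$ is closed. So $(M,J)$ is a compact complex manifold carrying a Kähler metric, and by hypothesis its underlying topological manifold is homeomorphic to $\mathbb{C}P^n$.

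Second, I would apply the Hirzebruch--Kodaira--Yau theorem, which asserts that any compact Kähler manifold homeomorphic to $\mathbb{C}P^n$ is biholomorphic to $\mathbb{C}P^n$ with its standard complex structure. (The Hirzebruch--Kodaira paper \cite{Hirzebruch1957complex} treats the case of odd $n$, or arbitrary $n$ under a sign constraint on the first Chern class; Yau's solution to the Calabi conjecture \cite{yau1977calabis} removes the residual hypothesis by ruling out the exceptional sign via the existence of a Kähler--Einstein metric and the Miyaoka--Yau inequality.) This yields the conclusion that $(M,J)$ is biholomorphic to $\mathbb{C}P^n$, and since we have already shown $g$ is Kähler with respect to $J$, the triple $(M,J,g)$ is a Kähler manifold biholomorphic to $\mathbb{C}P^n$, as desired.

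There is essentially no obstacle in this argument beyond quoting the two results correctly; the only point requiring a brief sanity check is that the hypothesis in the external theorem is the homeomorphism type of $\mathbb{C}P^n$ (equivalently, for these dimensions, homotopy equivalence to $\mathbb{C}P^n$), which matches the topological assumption made here. Thus the corollary should occupy only a few lines in the final write-up.
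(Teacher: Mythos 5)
Your argument is exactly the paper's: Theorem \ref{classWeaklyZoll} (case $1<k<n-1$) gives the Kähler property, and the Hirzebruch--Kodaira--Yau uniqueness theorem for the Kähler structure on $\mathbb{C}P^n$ (\cite{Hirzebruch1957complex}, \cite{yau1977calabis}) then yields the biholomorphism. Your parenthetical remark correctly tracking which cases are due to Hirzebruch--Kodaira and which require Yau is a welcome precision, but the route is the same.
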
  }

This corollary confirms our proposal that the relevant scenarios to study a Zoll-like integrability property in $\mathbb{C}P^n$ are the cases of pseudo-holomorphic curves and complex hypersurfaces, because the middle case presents a rigid structure. A counterpart of this observation for the axiom of (minimal) $r$-planes was proved by T. Hangan in \cite{hangan1997beltrami}. 

Corollary~\ref{classWeaklyZollInt} immediately implies that every almost complex Hermitian structure in $\mathcal{Z}$ must be integrable and balanced. Furthermore, we conjecture that this result can be refined in the sense that the only complex structure $J$ admitting a compatible Hermitian metric $g$ with $(J,g) \in \mathcal{Z}'$ is the canonical complex structure of $\mathbb{C}P^n$. This conjecture is motivated by the existence of a large number of complex hypersurfaces representing the generator class of $\mathbb{C}P^n$. In Section~\ref{section:zolldeformation}, we prove that this conjecture holds for $1$-parameter deformations of the Fubini-Study metric.

While our classification of generalized Zoll structures is algebraic, M. Mazzucchelli and S. Suhr (\cite{Mazzucchelli_Suhr}) characterized classical Zoll metrics on $\mathbb{S}^2$ through the coincidence of Lusternik-Schnirelmann min-max values. Subsequently, Ambrozio-Marques-Neves (\cite{ambrozio_marques_neves_24}) generalized this result by showing that metrics admitting generalized Zoll families on $\mathbb{S}^3$ are characterized by their spherical widths. It would be interesting to investigate whether a similar characterization via variational values exists for generalized Zoll structures in the complex projective space.          

\subsection{Systole of Homogeneous Metrics}
In \cite{berger}, M. Berger computed the $2k$-systole of $\mathbb{C}P^n$ endowed with the Fubini-Study metric, for $1 \leq k \leq n-1$. Moreover, using the integral geometric argument developed by M. Pu (see appendix $\ref{Appendix:IntegralGeometricFormulasandSystolic Inequalities}$), Berger also proved that the Fubini-Study metric is a maximum of the normalized $2k$-systole within its conformal class. In section $\ref{section:homogeneousmetrics}$, we will generalize Berger's results to the family of homogeneous metrics of the complex projective space, in the context of dimension two and codimension two systoles.        

Homogeneous metrics on $\mathbb{C}P^n$ have been classified by W. Ziller (\cite{Ziller1982}, section $3$). Besides the Fubini-Study metric and its rescalings, other homogeneous metrics exist only when $n$ is odd. These metrics behave similarly to the Berger metrics on the sphere, and they can be easily described by means of the \textit{Penrose fibration}, which is a fibration of $\mathbb{C}P^{2n+1}$ over $\mathbb{H}P^n$ with fibers $\mathbb{C}P^1$. 

In fact, if we denote the Penrose fibration by $\pi: \mathbb{C}P^{2n+1} \to \mathbb{H}P^n$, the family of homogeneous metrics can be constructed as follows. Consider the decomposition $T \mathbb{C}P^{2n+1} = \Lambda^0 \oplus \Lambda^1$, with $\Lambda^0=\mathrm{ker}d\pi$ and $\Lambda^1=(\mathrm{ker}d\pi)^{\perp}$, where the orthogonal complement is taken with respect to the Fubini-Study metric. Then, consider the family of metrics $g_{t} = t \restr{g_{FS}}{\Lambda^0} + \restr{g_{FS}}{\Lambda^1}$ for $t \in \mathbb{R}_{>0}$. As proved by Ziller, up to scaling and isometries, they are all the homogeneous metrics in $\mathbb{C}P^{2n+1}$. Since the normalized systole is invariant by scaling there is no loss of generality to restrict the study of homogeneous metrics to $\{g_{t}\}_{t \in \mathbb{R}_{>0}}$. Geometrically, the parameter $t\in \mathbb{R}_{>0}$ in the family $\{g_{t}\}_{t\in \mathbb{R}_{>0}}$ controls the volume of the fiber $\mathbb{C}P^1$ in $\mathbb{C}P^{2n+1}$.

This family displays a number of interesting properties. However, the most relevant for our work is the fact that each of these metrics is balanced. Hence, for every metric, the calibration argument implies that the complex submanifolds are area minimizing within their homology class. This fact, combined with the simplicity of the homology of the complex projective space, allows us to explicitly find the submanifold that realizes the dimension and codimension two systole for those metrics.    
{\renewcommand{\theteo}{G}
	\begin{prop}\label{sysofhomometrics}
		Suppose that $(\mathbb{C}P^m, J_{\mathrm{can}},g)$, $m\geq 2$, is balanced. Then, its codimension two systole satisfies
		\begin{equation}\label{eq:sysofhomometrics}
			\mathrm{Sys}_{2m-2}(\mathbb{C}P^{m},g) = \mathrm{area}_g(\mathbb{C}P_{\sigma}^{m-1}),
		\end{equation}
		where $\mathbb{C}P^{m-1}_{\sigma} \doteq \{[p] \in \mathbb{C}P^{m}: p \in \mathbb{S}^{2m+1} \text{ and } p \perp \sigma\},$ for each complex line $\sigma \in \mathbb{C}P^{m}$.
	\end{prop}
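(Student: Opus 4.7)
The plan is to establish the identity via a Wirtinger-type calibration argument, using the Balanced hypothesis to produce a closed $(2m-2)$-form whose comass is bounded by one and which computes the area of the linear hyperplanes $\mathbb{C}P^{m-1}_\sigma$ exactly.

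First, I would set $\eta \doteq \omega^{m-1}/(m-1)!$, where $\omega$ is the fundamental form associated to $(J_{\mathrm{can}},g)$. The classical Wirtinger inequality asserts that for any orthonormal basis $v_1,\dots,v_{2m-2}$ of a $(2m-2)$-plane $\Pi \subset T_pM$, one has $\eta(v_1,\dots,v_{2m-2}) \leq 1$, with equality precisely when $\Pi$ is $J_{\mathrm{can}}$-invariant. In particular $\eta$ has comass at most one. Since $g$ is Balanced, by definition $d\omega^{m-1}=0$, so $\eta$ is closed; combined with the comass bound, $\eta$ is a calibration in the sense of Harvey--Lawson.

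The upper bound $\mathrm{Sys}_{2m-2}(\mathbb{C}P^m,g) \leq \mathrm{area}_g(\mathbb{C}P^{m-1}_\sigma)$ is immediate because $\mathbb{C}P^{m-1}_\sigma$ is a smooth closed submanifold generating $H_{2m-2}(\mathbb{C}P^m,\mathbb{Z}) \cong \mathbb{Z}$. For the lower bound, I would take any homologically non-trivial $(2m-2)$-cycle $C$, writing $[C] = k[\mathbb{C}P^{m-1}_\sigma]$ with $k \in \mathbb{Z}\setminus\{0\}$. Stokes' theorem together with $d\eta = 0$ yields $\int_C \eta = k \int_{\mathbb{C}P^{m-1}_\sigma} \eta$. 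The tangent planes to $\mathbb{C}P^{m-1}_\sigma$ are $J_{\mathrm{can}}$-complex, so Wirtinger's equality case gives $\int_{\mathbb{C}P^{m-1}_\sigma} \eta = \mathrm{area}_g(\mathbb{C}P^{m-1}_\sigma)$. On the other hand, the comass bound applied pointwise to the tangent planes of $C$ (working, if necessary, with rectifiable cycles via the standard extension of the calibration inequality due to Federer) yields $\mathrm{vol}_g(C) \geq \int_C \eta$. Assembling these facts,
\begin{equation*}
\mathrm{vol}_g(C) \;\geq\; \int_C \eta \;=\; k \cdot \mathrm{area}_g(\mathbb{C}P^{m-1}_\sigma) \;\geq\; \mathrm{area}_g(\mathbb{C}P^{m-1}_\sigma),
\end{equation*}
so the infimum over all non-trivial cycles is bounded below by $\mathrm{area}_g(\mathbb{C}P^{m-1}_\sigma)$, completing the identity.

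The main conceptual obstacle is the need for $d\eta = 0$: without the Balanced hypothesis the form $\omega^{m-1}/(m-1)!$ would only be closed in the Kähler case, and Stokes' theorem would fail. The role of the Balanced condition is precisely to make $\eta$ into a genuine calibration. A minor technical point is extending the pointwise Wirtinger bound from smooth submanifolds to the rectifiable setting that realizes the infimum in the definition of the systole; this is standard, and relies on the fact that the approximate tangent planes of a rectifiable cycle exist almost everywhere and the comass inequality is pointwise.
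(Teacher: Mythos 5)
Your argument is essentially the paper's own: the Balanced condition makes $\omega^{m-1}/(m-1)!$ a closed calibration, and the systole is computed by combining Stokes' theorem with the Wirtinger inequality exactly as in the paper's proof of the analogous statement for the homogeneous metrics (Proposition \ref{4sysofgt}). The only nitpick is that in your final chain of inequalities you should use $\mathrm{vol}_g(C) \geq \left|\int_C \eta\right| = |k|\,\mathrm{area}_g(\mathbb{C}P^{m-1}_{\sigma})$, since the integer $k$ may be negative.
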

}
The above proposition settles the computation of the systole for the homogeneous metrics in the codimension two case. The dimension two case reduces to a comparison of the area of the fiber of the Penrose fibration against a linear $\mathbb{C}P^1$ that is traversal to the fibers. Combining these observations, we obtain the following.
{\renewcommand{\theteo}{H}
	\begin{teo}\label{systofhommetrics}
		The dimension and codimension two normalized systole functional for the family of homogeneous metrics $\{g_t\}_{t\in \mathbb{R}_{>0}}$ in $\mathbb{C}P^{2n+1}$, $n\geq1$, satisfies the following:
		\begin{enumerate}[label=\alph*),ref=(\alph*)]
			\item $\mathrm{Sys}^{\mathrm{nor}}_2(\mathbb{C}P^{2n+1},g_t)=  
			\begin{cases}
				\left(\frac{1}{(2n+1)!} \right)^{\frac{1}{2n+1}}t^{\frac{2n}{2n+1}}&,\mbox{ for } t \leq 1, \\
				\left(\frac{1}{(2n+1)!} \right)^{\frac{1}{2n+1}}{t}^{\frac{2n}{2n+1}-1}&,\mbox{ for } t\geq 1. \\
			\end{cases} $
			
			\item $\mathrm{Sys}^{\mathrm{nor}}_{4n}(\mathbb{C}P^{2n+1},{g}_t) = \left(\frac{1}{(2n+1)!}\right)^{\frac{1}{2n+1}} \left(2n+\frac{1}{t}\right)t^{\frac{1}{2n+1}}.$
		\end{enumerate}
	\end{teo}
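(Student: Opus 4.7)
The plan hinges on the decomposition $T\mathbb{C}P^{2n+1}=\Lambda^{0}\oplus\Lambda^{1}$ adapted to the Penrose fibration $\pi\colon\mathbb{C}P^{2n+1}\to\mathbb{H}P^{n}$. Setting $\omega^{0}(X,Y):=\omega_{FS}(X^{\Lambda^{0}},Y^{\Lambda^{0}})$ and $\omega^{1}:=\omega_{FS}-\omega^{0}$, the canonical complex structure $J$ preserves both summands, so the fundamental form of $g_{t}$ is $\omega_{t}=t\omega^{0}+\omega^{1}$. Since $\Lambda^{0}$ has complex rank one, $(\omega^{0})^{2}=0$, and a binomial expansion gives $\omega_{t}^{2n+1}=(2n+1)\,t\,\omega^{0}\wedge(\omega^{1})^{2n}$; consequently $\mathrm{vol}(\mathbb{C}P^{2n+1},g_{t})=t\cdot\mathrm{vol}(\mathbb{C}P^{2n+1},g_{FS})$.

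For part (b), Proposition \ref{sysofhomometrics} (applicable since each $g_{t}$ is Balanced with respect to $J$) identifies $\mathrm{Sys}_{4n}(\mathbb{C}P^{2n+1},g_{t})$ with the $g_{t}$-area of a linear hyperplane $\mathbb{C}P^{2n}_{\sigma}$. Wirtinger on this complex submanifold, together with $(\omega^{0})^{2}=0$, gives
\[
\mathrm{area}_{g_{t}}(\mathbb{C}P^{2n}_{\sigma})=\int_{\mathbb{C}P^{2n}_{\sigma}}\frac{(\omega^{1})^{2n}}{(2n)!}+t\int_{\mathbb{C}P^{2n}_{\sigma}}\frac{\omega^{0}\wedge(\omega^{1})^{2n-1}}{(2n-1)!}.
\]
Comparing $\omega^{0}\wedge(\omega^{1})^{2n}/(2n)!=\mathrm{dvol}_{g_{FS}}$ with the Riemannian submersion identity $\mathrm{dvol}_{g_{FS}}=\omega^{0}\wedge\pi^{*}\mathrm{dvol}_{\mathbb{H}P^{n}}$ forces $(\omega^{1})^{2n}/(2n)!=\pi^{*}\mathrm{dvol}_{\mathbb{H}P^{n}}$. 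Since $\pi|_{\mathbb{C}P^{2n}_{\sigma}}$ is a degree-one map onto $\mathbb{H}P^{n}$ (a diffeomorphism off a real codimension-four locus, because a generic Penrose fiber meets $\mathbb{C}P^{2n}_{\sigma}$ in exactly one point), the first integral equals $\mathrm{vol}(\mathbb{H}P^{n})$. The coefficient of $t$ is then pinned down by evaluating at $t=1$ against the standard Fubini-Study area of $\mathbb{C}P^{2n}_{\sigma}$. Dividing by $\mathrm{vol}(g_{t})^{2n/(2n+1)}$ produces the formula in (b).

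For part (a), I first analyze linear $\mathbb{C}P^{1}$'s $P(V)$ for $V\subset\mathbb{C}^{2n+2}\simeq\mathbb{H}^{n+1}$ a complex $2$-plane. Using the right $j$-action, whose $\mathbb{C}$-orbits generate the Penrose fibers $P(\mathbb{C}v+\mathbb{C}(vj))$, a short Hermitian computation shows that the principal angle $\alpha(V)$ between $T_{p}P(V)$ and $\Lambda^{0}_{p}$ is constant on $P(V)$: for any unitary basis $\{v_{1},v_{2}\}$ of $V$, $\cos^{2}\alpha(V)=|\langle v_{1},v_{2}j\rangle|^{2}\in[0,1]$, attaining $1$ on fibers and $0$ on totally real planes with $V\perp Vj$. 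Therefore
\[
\mathrm{area}_{g_{t}}(P(V))=\bigl(1+(t-1)\cos^{2}\alpha(V)\bigr)\cdot\mathrm{area}_{g_{FS}}(\mathbb{C}P^{1}),
\]
minimized by the fiber when $t\leq 1$ and by a totally real $\mathbb{C}P^{1}$ when $t\geq 1$. To pass from linear $\mathbb{C}P^{1}$'s to arbitrary $2$-cycles $C$, the pointwise bound $\mathrm{d}A_{g_{t}}\geq\min(t,1)\,\mathrm{d}A_{g_{FS}}$ (a direct eigenvalue check of $g_{t}$ versus $g_{FS}$ on any real $2$-plane), combined with the Kähler Wirtinger inequality $\mathrm{area}_{g_{FS}}(C)\geq\mathrm{area}_{g_{FS}}(\mathbb{C}P^{1})$ for any $C$ representing the generator, yields $\mathrm{area}_{g_{t}}(C)\geq\min(t,1)\cdot\mathrm{area}_{g_{FS}}(\mathbb{C}P^{1})$. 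Dividing by $\mathrm{vol}(g_{t})^{1/(2n+1)}$ gives (a).

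The main technical points are the identification of $(\omega^{1})^{2n}/(2n)!$ with $\pi^{*}\mathrm{dvol}_{\mathbb{H}P^{n}}$ in part (b), which reduces to recognizing $\omega^{0}$ as the fiber-area form of the Penrose fibration, and the constancy of $\cos^{2}\alpha(V)$ along $P(V)$ in part (a), which ultimately follows from the anti-linearity identity $\langle v_{1},v_{2}j\rangle=-\overline{\langle v_{2},v_{1}j\rangle}$; the remaining steps are bookkeeping.
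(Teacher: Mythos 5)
Your proposal is correct and follows essentially the same route as the paper: the splitting $T\mathbb{C}P^{2n+1}=\Lambda^{0}\oplus\Lambda^{1}$ adapted to the Penrose fibration, the balancedness/calibration computation with $(\omega^{0})^{2}=0$ and $(\omega^{1})^{2n}=(2n)!\,\pi^{*}\mathrm{dvol}_{\mathbb{H}P^{n}}$ for the $4n$-systole, and the comparison $g_{t}\geq\min(t,1)\,g_{FS}$ combined with the Fubini--Study calibration bound on homologically non-trivial $2$-cycles for the $2$-systole. The differences are cosmetic --- you identify $\int_{\mathbb{C}P^{2n}_{\sigma}}\pi^{*}\mathrm{dvol}_{\mathbb{H}P^{n}}=\mathrm{vol}(\mathbb{H}P^{n})$ by a genericity/degree-one argument where the paper uses explicit affine charts, and your principal-angle classification of linear planes (where the correct identity is the bilinear one $\langle v_{1},jv_{2}\rangle=-\langle v_{2},jv_{1}\rangle$, without conjugation) merely refines the paper's two explicit candidates; to land the stated constants you still need $\mathrm{vol}(\mathbb{H}P^{n},g_{\mathbb{H}P^{n}})=1/(2n+1)!$, which follows from the coarea formula as in the paper.
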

}
The explicitness of the formulas presented in Theorem $\ref{systofhommetrics}$, enables us to derive two significant observations about the codimension two normalized systole of $\mathbb{C}P^{2n+1}$. The first is the minimality of the Fubini-Study metric over the set of homogeneous metrics. The second is the presence of the phenomena of systolic freedom within this set, both as $t$ goes to $0$ and $+\infty$. 

We remark,  that the systolic freedom in the class of Hermitian metrics was already observed by M. Berger (\cite{Berger1992}) and M. Gromov (\cite{gromov_systole}).   

The construction that leads to the Theorem $\ref{systofhommetrics}$ provides the minimal submanifolds that realize the systole for each case studied. This allows us to construct integral geometric formulas in the context of homogeneous metrics. Consequently, applying M. Pu and M. Berger's arguments we proved that each homogeneous metric maximizes the normalized systole within its conformal class. This generalizes Berger's result about the  of Fubini-Study metric.    
{\renewcommand{\theteo}{I}
	\begin{teo}\label{confclassofhommetr}
		Let $g$ be a homogeneous Riemannian metric in $\mathbb{C}P^{2n+1},$ for $n \geq 1$, and $\bar{g}$ a metric in the conformal class of $g$. For $k=1,2n$ we have 
		$$\mathrm{Sys}^{\mathrm{nor}}_{2k}(\mathbb{C}P^{2n+1},\bar{g}) \leq \mathrm{Sys}^{\mathrm{nor}}_{2k}(\mathbb{C}P^{2n+1},{g}).$$ 
		Moreover, a metric $\bar{g}$ attains the optimal bound if and only if is homothetic to $g$.
	\end{teo}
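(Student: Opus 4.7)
The plan is to adapt the Pu--Berger integral-geometric scheme (see Appendix \ref{Appendix:IntegralGeometricFormulasandSystolic Inequalities}) to both cases $k=1$ and $k=2n$. Without loss of generality take $g=g_t$ and write $\bar{g}=\phi^{2}g$ for a positive $\phi \in C^{\infty}(\mathbb{C}P^{2n+1})$, so that $\mathrm{vol}_{\bar{g}}(\Sigma)=\int_{\Sigma}\phi^{2k}\,d\mathrm{vol}_{g}$ for every $2k$-submanifold $\Sigma$, and $\mathrm{vol}(\bar g)=\int_{M}\phi^{2(2n+1)}\,d\mathrm{vol}_{g}$.

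The first step is to exhibit, in each case, a compact parameter space $P$, a smooth family $\{\Sigma_{\sigma}\}_{\sigma\in P}$ of $g$-minimal $2k$-submanifolds realizing $\mathrm{Sys}_{2k}(g)$, and a positive measure $\mu$ on $P$, all invariant under a subgroup $K\leq\mathrm{Isom}(g)$ which acts transitively on $M$, such that the Crofton-type identity
\begin{equation*}
\int_{P}\Bigl(\int_{\Sigma_{\sigma}} f\,d\mathrm{vol}_{g}\Bigr)\,d\mu(\sigma)\;=\;c\int_{M} f\,d\mathrm{vol}_{g}\qquad \forall f\in C^{\infty}(M)
\end{equation*}
holds with $c>0$, the constant being determined by $f\equiv 1$. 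Such an identity follows by uniqueness of $K$-invariant positive linear functionals on $C^{\infty}(M)$, once one checks that the natural projection from the incidence variety $\{(\sigma,p):p\in\Sigma_{\sigma}\}\subset P\times M$ onto $M$ is a $K$-equivariant submersion with equidimensional fibres. In the co-dimension two case ($k=2n$), Proposition \ref{sysofhomometrics} supplies the family $\Sigma_{\sigma}=\mathbb{C}P^{2n}_{\sigma}$ parametrized by $P=\mathbb{C}P^{2n+1}$, with the fibre over $p\in M$ equal to $\mathbb{C}P^{2n}_{p}$. In the dim-two case ($k=1$), the proof of Theorem \ref{systofhommetrics} separates regimes: for $t\leq 1$ one takes $P=\mathbb{H}P^{n}$ with $\Sigma_{q}=\pi^{-1}(q)$ the fibres of the Penrose fibration; for $t\geq 1$ one takes a suitable $Sp(n+1)$-homogeneous family of $g_{t}$-minimal $J$-complex projective lines whose tangent plane lies in the horizontal distribution $\Lambda^{1}$.

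The second step combines the Crofton identity with the pointwise bound $\mathrm{Sys}_{2k}(\bar g)\leq\mathrm{vol}_{\bar g}(\Sigma_{\sigma})=\int_{\Sigma_{\sigma}}\phi^{2k}\,d\mathrm{vol}_{g}$ and H\"older's inequality with conjugate exponents $(2n+1)/k$ and $(2n+1)/(2n+1-k)$ to obtain
\begin{equation*}
\mu(P)\cdot\mathrm{Sys}_{2k}(\bar g)\;\leq\;c\int_{M}\phi^{2k}\,d\mathrm{vol}_{g}\;\leq\;c\,\mathrm{vol}(\bar g)^{\frac{k}{2n+1}}\mathrm{vol}(g)^{\frac{2n+1-k}{2n+1}}.
\end{equation*}
Dividing by $\mathrm{vol}(\bar g)^{k/(2n+1)}$ yields an upper bound for $\mathrm{Sys}^{\mathrm{nor}}_{2k}(\bar g)$ that, upon specializing to $\bar g=g$ (where the first inequality is an equality because $\Sigma_{\sigma}$ realizes the systole and H\"older is trivially an equality since $\phi\equiv 1$), is identified with $\mathrm{Sys}^{\mathrm{nor}}_{2k}(g)$. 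The rigidity follows from the equality case of H\"older: $\phi^{2k}$ must be constant, hence $\phi$ is constant and $\bar g$ is homothetic to $g$.

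I expect the main obstacle to be the construction of the systole-realizing family in the dim-two case for $t\geq 1$: one must identify the correct $Sp(n+1)$-homogeneous family of minimal $\mathbb{C}P^{1}$'s of area $\pi$ (matching the value of $\mathrm{Sys}_{2}(g_{t})$ read off from Theorem \ref{systofhommetrics}(a)), verify their $g_{t}$-minimality (most likely by a Wirtinger-type calibration argument exploiting that the fundamental form $\omega_{t}$ restricts to $\omega_{FS}$ on horizontal complex 2-planes), and check that this family sweeps $\mathbb{C}P^{2n+1}$ with constant pointwise multiplicity so that the Crofton constant $c$ is well defined and independent of $p\in M$.
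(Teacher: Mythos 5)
Your proposal follows essentially the same route as the paper: exhibit, for each case $k=1$ (splitting into the regimes $t\leq 1$ via the Penrose fibers and $t\geq 1$ via the $\mathrm{Sp}(n+1)$-orbit of the transversal line $\mathbb{C}P^1_T$) and $k=2n$ (the family $\{\mathbb{C}P^{2n}_\sigma\}_{\sigma\in\mathbb{C}P^{2n+1}}$), a homogeneous family of systole-realizing submanifolds admitting an integral geometric formula, and then conclude by the H\"older argument of Theorem \ref{FIGimpSCC}, with rigidity from the equality case. The only cosmetic difference is that you obtain the Crofton identity by uniqueness of $\mathrm{Sp}(n+1)$-invariant positive functionals, whereas the paper computes it explicitly by applying the coarea formula twice in the double fibration with constant Jacobians; both rest on the same homogeneity and are interchangeable here.
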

}

An analogous result for homogeneous metrics on $\mathbb{R}P^3$ was proven in (\cite{lucas_rafael_sistole_projcspace}, Theorem $1.1$).

\subsection{Systole of Balanced Metrics}
As previously observed, our results on the systole of homogeneous metrics suggests a study of the normalized systole over the set of balanced metrics, with respect with the canonical complex structure on $\mathbb{C}P^n$, $n\geq 3$. 

The main idea to take from those computations is the Proposition $\ref{sysofhomometrics}$, which allows us to conclude that the normalized systole over $\mathscr{B}$, the space of balanced metrics compatible with the canonical complex structure, is a smooth functional for an appropriated choice of topology in the space of Riemannian metrics.         
Moreover, equation \eqref{eq:sysofhomometrics} implies that the normalized systole is constant over $\mathscr{K}$, the space of Kähler metrics in $\mathscr{B}$. Therefore, the Hessian of the normalized systole functional contains at least the Kähler directions in its kernel. Consequently, it is not a positive form. 

However, a careful analysis of the first and second variation of this functional allows us to conclude that every Kähler metric determines a critical point. Even more, the Hessian is semi-positive definite, while the kernel is exactly determined by the Kähler directions. 

Applying a Taylor expansion argument, we can translate this infinitesimal property to a local behavior, obtaining the following theorem.           
{
	\renewcommand{\theteo}{J}
	\begin{teo}{\label{sysresttobalmet}}
		Let $n \geq 3$. There exists an open set $ \mathscr{K} \subset \mathcal{U} \subset \mathscr{B}$, in the ${C}^{2}$-topology, such that for every metric $g\in \mathcal{U}$, 
		$$\mathrm{Sys}_{2n-2}^{\mathrm{nor}}(\mathbb{C}P^n,g) \geq \mathrm{Sys}_{2n-2}^{\mathrm{nor}}(\mathbb{C}P^n,g_{FS}).$$
		Moreover, $g \in \mathcal{U}$ satisfies the equality if and only if $g \in \mathscr{K}$.      
	\end{teo}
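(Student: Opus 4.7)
The strategy is to use Proposition \ref{sysofhomometrics} to recast the normalized systole as a smooth variational functional on $\mathscr{B}$, compute its first two variations at $g_{FS}$, and then convert the infinitesimal picture into a local inequality via a Taylor expansion. For any $g\in\mathscr{B}$ with fundamental form $\omega_g$, Proposition \ref{sysofhomometrics} identifies $\mathrm{Sys}_{2n-2}(\mathbb{C}P^n,g)$ with the $g$-area of the linear submanifold $\mathbb{C}P^{n-1}_\sigma$, which by the calibration $\omega_g^{n-1}/(n-1)!$ equals $\tfrac{1}{(n-1)!}\int_{\mathbb{C}P^{n-1}_\sigma}\omega_g^{n-1}$. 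The Balanced condition $d\omega_g^{n-1}=0$ makes this a cohomological quantity depending smoothly on $g$, so the normalized systole equals
$$\mathcal{F}(g) = \frac{(n!)^{(n-1)/n}}{(n-1)!}\,\frac{\int_{\mathbb{C}P^{n-1}}\omega_g^{n-1}}{\bigl(\int_{\mathbb{C}P^n}\omega_g^n\bigr)^{(n-1)/n}},$$
which is smooth in the $C^2$ topology on $\mathscr{B}$.

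A deformation $g_t=g_{FS}+th+O(t^2)$ in $\mathscr{B}$ corresponds to a real $(1,1)$-form $\beta(X,Y)=h(J_{\mathrm{can}}X,Y)$ satisfying the linearized Balanced constraint $d(\omega_{FS}^{n-2}\wedge\beta)=0$, while $T_{g_{FS}}\mathscr{K}$ is the subspace of closed $(1,1)$-forms. Since $\omega_{FS}^{n-2}\wedge\beta$ is closed, its class in $H^{2n-2}(\mathbb{C}P^n;\mathbb{R})=\mathbb{R}\cdot[\omega_{FS}^{n-1}]$ is $\mu[\omega_{FS}^{n-1}]$ for some $\mu\in\mathbb{R}$, and a short Stokes computation using $d\omega_{FS}=0$ gives $\int_{\mathbb{C}P^n}\omega_{FS}^{n-1}\wedge\beta=\mu\int\omega_{FS}^n$. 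Differentiating the numerator and denominator of $\mathcal{F}$ produces $d\log A|_{0}=(n-1)\mu$ and $d\log V|_{0}=n\mu$, whence $d\log\mathcal{F}^n|_{g_{FS}}(\beta)=n(n-1)\mu-(n-1)n\mu=0$ for every $\beta\in T_{g_{FS}}\mathscr{B}$. In particular, $g_{FS}$ is a critical point.

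For the Hessian I would expand $\omega_t=\omega_{FS}+t\beta+\tfrac{t^2}{2}\gamma+O(t^3)$ and impose the Balanced condition to second order, obtaining
$$d\bigl(\omega_{FS}^{n-2}\wedge\gamma\bigr) = -(n-2)\,d\bigl(\omega_{FS}^{n-3}\wedge\beta\wedge\beta\bigr).$$
Substituting into the second derivative of $\log\mathcal{F}^n=n\log A-(n-1)\log V$, the dependence on $\gamma$ is eliminated by combining Poincaré duality in $\mathbb{C}P^n$ with this second-order relation, leaving a quadratic form in $\beta$ alone. Decomposing $\beta=\beta_0+f\,\omega_{FS}$ into its primitive and non-primitive parts and applying the Hodge--Riemann bilinear relations on the Kähler manifold $(\mathbb{C}P^n,g_{FS})$, the resulting quadratic form is a manifestly non-negative multiple of the $L^2$-norm of the non-closed component of $\beta$, i.e. of $d\beta$. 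Consequently the Hessian of $\mathcal{F}$ at $g_{FS}$ is positive semidefinite, and its kernel coincides with the Kähler directions $T_{g_{FS}}\mathscr{K}=\{\beta:d\beta=0\}$.

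Finally, since $\mathscr{K}$ is cut out inside $\mathscr{B}$ by the linear condition $d\beta=0$, there is a direct linear splitting $T_{g_{FS}}\mathscr{B}=T_{g_{FS}}\mathscr{K}\oplus T_{g_{FS}}\mathscr{K}^{\perp}$ near $g_{FS}$; combined with the $C^2$-smoothness of $\mathcal{F}$, a Morse--Bott-type Taylor expansion with remainder yields the desired open neighborhood $\mathcal{U}\supset\mathscr{K}$ on which $\mathcal{F}(g)\geq\mathcal{F}(g_{FS})$, with equality exactly on $\mathscr{K}\cap\mathcal{U}$. The main obstacle is the second-variation step: cancelling the $\gamma$-contributions and identifying the Hessian kernel precisely with the closed $(1,1)$-forms demands a careful use of the second-order Balanced identity, the Hodge--Riemann bilinear relations, and honest bookkeeping of the $\omega_{FS}^{n-3}\wedge\beta\wedge\beta$ cross terms, any of which, if mishandled, would spoil the sharp characterization of the equality case.
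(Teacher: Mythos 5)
Your setup (the calibration formula for the systole of Balanced metrics, criticality of the Kähler locus, and a Hessian that is positive exactly transverse to the closed $(1,1)$-directions, computed via the Hodge--Riemann bilinear relations) matches the paper's scheme, and the elimination of the second-order term $\gamma$ at a critical point is legitimate. The genuine gap is in your last step. Your Hessian is coercive only with respect to the $L^2$-norm of the transverse component, while the functional is merely Fr\'echet-smooth in the $C^{2}$ (in the paper, $C^{1,\nu}$) Banach topology; a ``Morse--Bott-type Taylor expansion with remainder'' controls the remainder by the Banach norm, and since $\|\eta\|_{L^2}\leq C\|\eta\|_{C^{2}}$ but not conversely, the error term $o(\|\eta\|_{C^{2}}^{2})$ cannot be absorbed by the positive term $c\|\eta\|_{L^2}^{2}$. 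This loss-of-derivatives issue is exactly what the paper isolates and resolves: Lemma \ref{L2boundofHess} shows, by estimating term by term the pieces $P_\omega$, $R^1_\omega$, $R^2_\omega$ of the second variation, that the difference of Hessians at nearby Balanced forms, evaluated on a fixed direction $\eta$, is bounded by $\tfrac{n-1}{2}\|\eta\|^2_{L^2_{\omega_0}}$ --- i.e.\ the modulus of continuity of the Hessian is itself measured in $L^2$ --- and only then does the Taylor argument close. Without an estimate of this type your final inequality, and hence the sharp equality case, does not follow.

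Two smaller points. First, your splitting $T_{g_{FS}}\mathscr{B}=T_{g_{FS}}\mathscr{K}\oplus T_{g_{FS}}\mathscr{K}^{\perp}$ is asserted, but the complement you need is the coexact part of the Hodge decomposition, and it must be checked that this part again satisfies the linearized Balanced condition and is a primitive $(1,1)$-form (otherwise it is not tangent to $\mathscr{B}$ and the Hodge--Riemann positivity does not apply); this is the content of Lemma \ref{coexactpartoftoB} and Proposition \ref{KissplitinB}, which also supply the Banach-manifold structure (via Michelsohn's bijectivity of $\omega\mapsto\omega^{n-1}$) that your argument tacitly uses. Second, the theorem asks for an open set $\mathcal{U}$ containing all of $\mathscr{K}$, so the expansion must be carried out at every smooth Kähler metric (where $\mathcal{F}$ is constant and critical), not only at $g_{FS}$, and the resulting neighborhoods united; your computation is performed only at $g_{FS}$.
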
 
}

A question that remains open is to determine the local behavior of the co-dimension two normalized systole in the directions transversal to balanced metrics within the class of Hermitian metrics.     

\subsection{Deformations in $\mathcal{Z}$}  

A smooth family $t \mapsto (J_t,g_t)$ of almost Hermitian structures is said to be a \textit{$1$-parameter deformation of the Fubini-Study metric in $\mathcal{Z}$} if $(J_t,g_t) \in \mathcal{Z}$ and, for every time, there exists a family of Zoll families $\{\Sigma_{\sigma,t}\}_{\sigma \in \mathbb{C}P^n}$ such that the map $(\sigma,t) \mapsto \Sigma_{\sigma,t}$ is continuous in the sense of graphical convergence, and moreover $(J_0,g_0)$ and $\{\Sigma_{\sigma,0}\}_{\sigma \in \mathbb{C}P^n}$ are given by $(J_{\mathrm{can}},g_{FS})$ and $\{\mathbb{C}P^{n-1}_{\sigma}\}_{\sigma \in \mathbb{C}P^n}$. As earlier discussed, Ambrozio-Marques-Neves extensively studied this type of deformation in the context of families of co-dimension one spheres in spheres (\cite{lucas_coda_andre}). Inspired by their work, our our goal is to classify $1$-parameter deformation of the Fubini-Study metric in $\mathcal{Z}$ and study its systolic behavior.   

The first step to classify these deformations is to notice that the notion of $(J,g) \in \mathcal{Z}$ presented in the previous definition is a stronger version of the concept of belonging in $\mathcal{W}_{n-1}$. Therefore, by Theorem $\ref{classWeaklyZoll}$ we can assume that every $1$-parameter deformation of the Fubini-Study metric in $\mathcal{Z}$ consist of deformations by Hermitian structures. Hence, we can use the classical theory of deformations of complex manifolds develop by K. Kodaira (\cite{Kodaira05}) and A. Frölicher, A. Nijenhuis (\cite{Frolicher_Nijenhuis}) to obtain the following classification theorem, whose proof will be given in Section $\ref{section:zolldeformation}$.     
{
	\renewcommand{\theteo}{K}
	\begin{teo}
		Fix $n \geq 3$. Let $\mathbb{R} \ni t \mapsto (J_t, g_t) \in \mathcal{Z}$ be a smooth $1$-parameter deformation of the Fubini-Study metric in $\mathcal{Z}$. Then there exists $\varepsilon >0$ and a continuous map $(-\varepsilon, \varepsilon) \ni t \mapsto \theta(t) \in \mathrm{Diff}(\mathbb{C}P^n)$ such that, modulo isotopy, for every $t \in (-\varepsilon, \varepsilon)$ the following properties are satisfied:
		\begin{enumerate}[label=\alph*)]
			\item The almost complex structure $J_t$ is constant and equal to $J_{\mathrm{can}}$.
			\item The metric $g_t$ is balanced with respect to $J_{\mathrm{can}}$.
			\item The family $\{\Sigma_{\sigma,t}\}_{\sigma \in \mathbb{C}P^{n}}$ is given by $\left\{\mathbb{C}P^{n-1}_{\theta(t,\sigma)}\right\}_{\sigma\in \mathbb{C}P^{n}}.$ 
		\end{enumerate}
	\end{teo}
}
Combining this classification theorem with our previous analysis of co-dimension two normalized systole, we conclude that $1$-parameter deformation in $\mathcal{Z}$ of the Fubini-Study metric does not decrease the normalized systole. 
{
	\renewcommand{\theteo}{L}
	\begin{coro}
		Fix $n \geq 3$. Let $\mathbb{R} \ni t \mapsto (J_t, g_t) \in \mathcal{Z}$ be a smooth $1$-parameter deformation of the Fubini-Study metric in $\mathcal{Z}$. Then there exist an $\varepsilon>0$ such that, for every $t \in (-\varepsilon, \varepsilon)$, $$\mathrm{Sys}_{2n-2}^{\mathrm{nor}}(\mathbb{C}P^n,g_t) \geq \mathrm{Sys}_{2n-2}^{\mathrm{nor}}(\mathbb{C}P^n,g_{FS}).$$ 
	\end{coro}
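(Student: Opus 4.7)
The strategy is to reduce to the situation controlled by Theorem \ref{sysresttobalmet} via the classification in the previous theorem. Since the normalized systole is a diffeomorphism invariant, pulling back the data along any isotopy does not affect the inequality we need to establish. Thus the plan is: first, use Theorem K to replace $(J_t,g_t)$ by an isotopic family that is \emph{pointwise} compatible with $J_{\mathrm{can}}$ and consists of Balanced metrics; second, observe that this family lies in the open neighborhood $\mathcal{U} \subset \mathscr{B}$ provided by Theorem \ref{sysresttobalmet} for $|t|$ small; finally, invoke Theorem \ref{sysresttobalmet} to conclude.

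More concretely, let me apply Theorem K to obtain a continuous family of diffeomorphisms $\theta(t)$ such that, modulo isotopy, $J_t = J_{\mathrm{can}}$ and $g_t$ is Balanced with respect to $J_{\mathrm{can}}$. Because the normalized systole is invariant under pullback by diffeomorphisms, I can from now on assume without loss of generality that $g_t \in \mathscr{B}$ for every $t$ in a small interval around $0$. The smoothness of $t \mapsto (J_t,g_t)$ together with the regularity of the isotopy produced in the proof of Theorem K implies that $t \mapsto g_t$, viewed as a curve in $\mathscr{B}$, depends continuously on $t$ in the $C^{2}$-topology; since $g_0 = g_{FS} \in \mathscr{K}$ and $\mathscr{K} \subset \mathcal{U}$ is open in $\mathscr{B}$ by Theorem \ref{sysresttobalmet}, there exists $\varepsilon > 0$ such that $g_t \in \mathcal{U}$ for every $t \in (-\varepsilon,\varepsilon)$.

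With this in hand, Theorem \ref{sysresttobalmet} applied to $g_t$ gives
\[
\mathrm{Sys}_{2n-2}^{\mathrm{nor}}(\mathbb{C}P^n,g_t) \geq \mathrm{Sys}_{2n-2}^{\mathrm{nor}}(\mathbb{C}P^n,g_{FS})
\]
for every such $t$, which is exactly the inequality to be proved.

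The only delicate point is the $C^{2}$-continuity of the pulled-back family $t \mapsto \theta(t)^{*} g_t$: a priori Theorem K only provides that $\theta(t)$ is continuous in $t$, which is not enough to guarantee $C^{2}$-convergence of the metrics. The main obstacle is therefore to verify that the isotopies produced in the proof of Theorem K can be chosen with enough regularity in $t$ for the evaluation $t \mapsto g_t \in (\mathscr{B}, C^{2})$ to be continuous at $t = 0$. This is expected, as those isotopies arise from integrating smooth vector fields built from the family $\{\Sigma_{\sigma,t}\}$, which itself depends smoothly on $t$ by hypothesis; a careful bookkeeping during the proof of Theorem K suffices to upgrade the continuity of $\theta$ to the regularity required here.
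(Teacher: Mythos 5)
Your argument is exactly the paper's: apply the classification theorem to reduce, modulo diffeomorphism invariance of the normalized systole, to a family of $J_{\mathrm{can}}$-Balanced metrics, note that by continuity it stays in the neighborhood $\mathcal{U}$ of $\mathscr{K}$ for small $|t|$, and invoke Theorem \ref{sysresttobalmet}. Your final worry is misplaced only in its target: the diffeomorphisms that act on the metrics are the isotopy $\phi_t$ produced by Kodaira's deformation theorem, which is smooth in $t$ (so $\phi_t^{*}g_t$ is $C^{2}$-continuous and $\phi_0^{*}g_{FS}$ is again Kähler), while $\theta(t)$ merely reparametrizes the Zoll family and plays no role in the inequality.
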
      
}

\section{The Class \texorpdfstring{$\mathcal{W}_k$}{Wk}}\label{section:WeaklyZollmetrics}
\subsection{Preliminaries}\label{section:WeaklyZollmetricsSubsection:Prelinaries} 

This section will be dedicated to fixing notation and recalling definitions of complex and almost complex geometry. This exposition is based in \cite{gray}.

\begin{defi}
	Let \( M \) be a smooth manifold of dimension \( 2n \). An almost complex structure on \( M \) is an endomorphism \( J \in \mathrm{Hom}(TM) \) such that \( J^2=-\mathrm{Id} \). A manifold \( (M^{2n},J) \) equipped with an almost complex structure is called an \textit{almost complex manifold}. Moreover, the almost complex  structure \( J \) is said to be \textit{integrable} if the Nijenhuis tensor \(
	\mathcal{N}_J(X,Y) \doteq [X,Y] + J[JX,Y] + J[X,JY] - [JX,JY]\) vanishes identically, in which case \( (M,J) \) is called a complex manifold.
\end{defi}

The fundamental relationship between the integrability of an almost complex structure and the existence of a holomorphic atlas is given by the Newlander–Nirenberg Theorem.

\begin{teo}\label{NNteo}[Newlander–Nirenberg]
	Let \( (M^{2n},J) \) be an almost complex manifold. Then \( \mathcal{N}_J = 0 \) if and only if \( M \) admits a holomorphic atlas \( \{(U_\alpha, \phi_\alpha)\} \) compatible with \( J \), that is, the transition functions are holomorphic and for each chart $\phi_{\alpha}: U_{\alpha}\to \phi_{\alpha}(U_{\alpha}) \subset \mathbb{C}^n$, it holds that \( (\phi_\alpha^{-1})^* J = i \), where $i$ is the canonical complex structure of $\mathbb{C}^n$.
\end{teo}

In the context of almost complex geometry, interesting Riemannian metrics to be studied are those that are compatible with the almost complex structure, in the following sense.

\begin{defi}
	Let $(M^{2n},J)$ be an almost complex manifold. A Riemannian metric $g$ on $M$ is said to be compatible with the almost complex structure $J$ (or $J$-compatible) if $g(J \cdot, J \cdot) = g( \cdot,  \cdot)$, and in this case we will write $J \in \mathrm{Iso}(TM,g)$. An almost complex manifold $(M,J,g)$ equipped with a Riemannian metric $g$ that is $J$-compatible is called an \textit{almost Hermitian manifold}.
\end{defi}

Suppose that $(M^{2n},J,g)$ is an almost Hermitian manifold. Let us define the \textit{fundamental $2$-form} associated to $(M,J,g)$:
$$\omega(\cdot,\cdot) \doteq g(J\cdot, \cdot) \in \Omega^2(M).$$
The anti-symmetry of $\omega$ is guaranteed by the compatibility condition $J \in \mathrm{Iso}(TM,g)$. In the general case, an almost Hermitian manifold does not satisfy any further compatibility condition between these two structures. However, it is worth to highlight a few conditions that arise naturally. For that, we introduce the following tensors.     

Before proceeding with definitions, we establish notation: for any Riemannian manifold $(N, g)$, we denote by $\nabla$ the Levi-Cevita connection associated with the metric $g$.

\begin{defi}
	Let $(M^{2n},J,g)$ be an almost Hermitian manifold and $X,Y \in \mathfrak{X}(M)$. We define: 
	\begin{enumerate}[label=\alph*),ref=(\alph*)] 
		\item $\mathcal{Q}(X,Y)=(\nabla_XJ)Y+(\nabla_{JX}J)JY$.
		\item $\mathcal{H}(X,Y)=(\nabla_XJ)Y-(\nabla_{JX}J)JY$. 
		\item $\mathcal{S}(X,Y)=(\nabla_X J)Y-(\nabla_Y J)X.$    
	\end{enumerate}
\end{defi}

Now, we proceed with the definition of distinct classes of almost Hermitian manifolds, which are established through compatibility conditions determined by the previously introduced tensors.
\begin{defi}\label{defiofalmoshertistrc}
	Let $(M^{2n},J,g)$ be an almost Hermitian manifold. We say that $(M,J,g)$ is:
	\begin{enumerate}[label=\alph*),ref=(\alph*)]
		\item Hermitian, if $J$ is integrable. 
		\item Kähler, if it is Hermitian and $\nabla J=0$. 
		\item Almost Kähler, if $d \omega=0$. 
		\item Quasi Kähler, if $\mathcal{Q}=0$.
		\item Balanced, if $d \omega^{n-1} = 0$.      
	\end{enumerate} 
\end{defi}

Note the following inclusions between the previously defined classes of almost Hermitian manifolds: Kähler $\subset$ almost Kähler $\subset$ quasi Kähler $\subset$ balanced. Additionally, if $n=2$, the balanced condition implies the almost Kähler condition. However, if $n>2$, all the inclusions are strict (\cite{gray16classes}). 

Another important relation for us is that every quasi Kähler manifold that is also Hermitian is Kähler. The proof of this fact is based on the following proposition.

\begin{prop}(cf. Corollary $4.2$ in \cite{gray})
	Let $(M^{2n},J,g)$ be an almost Hermitian manifold. Then $(M,J,g)$ is Hermitian if and only if $\mathcal{H}=0$. 
\end{prop}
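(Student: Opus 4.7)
The plan is to reduce the statement to a clean algebraic identity expressing $\mathcal{N}_J$ in terms of $H$, and then, for the converse, to extract two symmetry properties of $H$ that together force it to vanish.

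First I would expand the Nijenhuis tensor by writing every bracket as $[U,V]=\nabla_U V - \nabla_V U$, exploiting the torsion-freeness of the Levi-Civita connection. Then I would repeatedly apply the two identities $J^{2}=-\mathrm{Id}$ and its differential consequence $J(\nabla_Z J)=-(\nabla_Z J)J$ (which comes from differentiating $J^2=-\mathrm{Id}$ and using the derivation property of $\nabla$). All the terms that are pure covariant derivatives of $X$ or $Y$ cancel, and the four remaining terms involving $(\nabla_{\cdot}J)(\cdot)$ regroup cleanly into
\[
\mathcal{N}_J(X,Y) \;=\; H(Y,JX)-H(X,JY).
\]
This identity yields the implication $H\equiv 0 \Rightarrow \mathcal{N}_J\equiv 0$ at once, settling one direction.

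For the converse, assuming $\mathcal{N}_J\equiv 0$, the identity above reads $H(X,JY)=H(Y,JX)$. Substituting $Y\mapsto JY$ and using the purely algebraic identity $H(JU,JV)=-H(U,V)$ (immediate from $J^{2}=-\mathrm{Id}$), I would deduce the symmetry $H(X,Y)=H(Y,X)$. Next I would introduce the $(0,3)$-tensor $\tilde H(X,Y,Z):=g(H(X,Y),Z)$ and show it is skew in the last two slots: this follows by combining the $g$-antisymmetry of the endomorphism $\nabla_X J$ (a consequence of $\nabla g=0$ and $g(J\cdot,\cdot)=-g(\cdot,J\cdot)$) with the anticommutation $J(\nabla_{JX}J)=-(\nabla_{JX}J)J$, which converts the second term of $H$ into the right-hand shape.

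At this point $\tilde H$ is symmetric in its first pair of entries and antisymmetric in its second pair. A two-line symmetric-group argument (iterating the two relations yields $\tilde H(X,Y,Z)=-\tilde H(X,Y,Z)$) then forces $\tilde H\equiv 0$, hence $H\equiv 0$. The only non-routine step is the careful bookkeeping in the expansion of $\mathcal{N}_J$; the rest reduces to parallelism of $g$, the single anticommutation relation $J(\nabla J)=-(\nabla J)J$, and the final symmetric-group cancellation.
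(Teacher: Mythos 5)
Your proposal is correct, and it is worth noting that the paper itself offers no argument for this proposition --- it is quoted from Gray's Corollary $4.2$ --- so what you give is a self-contained proof of the cited fact. I checked the computations against the paper's conventions: expanding each bracket in $\mathcal{N}_J(X,Y)=[X,Y]+J[JX,Y]+J[X,JY]-[JX,JY]$ with the torsion-free Levi-Civita connection and using $J(\nabla_UJ)=-(\nabla_UJ)J$, the pure-derivative terms cancel and one gets exactly
\begin{equation*}
\mathcal{N}_J(X,Y)=(\nabla_YJ)JX+(\nabla_{JY}J)X-(\nabla_XJ)JY-(\nabla_{JX}J)Y=H(Y,JX)-H(X,JY),
\end{equation*}
which gives the direction $H=0\Rightarrow \mathcal{N}_J=0$ at once. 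For the converse, your algebraic identity $H(JU,JV)=-H(U,V)$ is immediate from $J^2=-\mathrm{Id}$, and combined with $H(X,JY)=H(Y,JX)$ it yields the symmetry $H(X,Y)=H(Y,X)$; on the other hand the skew-adjointness of each $\nabla_XJ$ (this is the content of items a) and b) of the paper's list of basic identities, $(\nabla_X\omega)(Y,Z)=g((\nabla_XJ)Y,Z)=-(\nabla_X\omega)(Z,Y)$) together with the same anticommutation relation gives, unconditionally, $g(H(X,Y),Z)=-g(H(X,Z),Y)$. A trilinear form symmetric in its first pair and skew in its last pair vanishes by the standard six-step cycling, so $H=0$. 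All steps are sound; the argument is essentially Gray's original one, recast cleanly in terms of the single identity relating $\mathcal{N}_J$ and $H$, and it fits the conventions used elsewhere in the paper (in particular it is consistent with the proof of Corollary \ref{QK+I=K}, which uses $\nabla J=\tfrac12(K+H)$).
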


\begin{coro}\label{QK+I=K}
	An almost Hermitian manifold that is quasi Kähler and Hermitian is Kähler.
\end{coro}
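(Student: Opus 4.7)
The plan is to exploit the algebraic definitions of the tensors $K$ and $H$ directly. Observe that
\[
K(X,Y) + H(X,Y) = 2(\nabla_X J)Y
\]
for every pair of vector fields $X, Y$, simply because the two terms $(\nabla_{JX}J)JY$ cancel.

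First I would invoke the cited proposition (Corollary $4.2$ of \cite{gray}): the Hermitian hypothesis $\mathcal{N}_J = 0$ is equivalent to $H \equiv 0$. The Quasi-Kähler hypothesis is by definition $K \equiv 0$. Combining these with the identity above yields $(\nabla_X J)Y = 0$ for all $X, Y \in \mathfrak{X}(M)$, i.e.\ $\nabla J = 0$.

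Finally, since $(M, J, g)$ is by hypothesis Hermitian and we have just established $\nabla J = 0$, the manifold satisfies the defining conditions of a Kähler manifold. The main (and only) obstacle is the algebraic identity $K + H = 2\nabla J$, which is immediate from the definitions; no further computation is needed.
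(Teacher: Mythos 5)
Your proposal is correct and coincides with the paper's own argument: the identity $K(X,Y)+H(X,Y)=2(\nabla_X J)Y$, together with $K=0$ (Quasi-Kähler) and $H=0$ (Hermitian, via the cited result of Gray), gives $\nabla J=0$ and hence the Kähler condition. Nothing further is needed.
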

\begin{proof}
	Given $X,Y \in \mathfrak{X}(M)$, we have: $(\nabla_XJ)Y=\frac{1}{2}( \mathcal{Q}(X,Y)+\mathcal{H}(X,Y) ) = 0,$ by the previous result, as claimed. 
\end{proof}

Moving forward, we collect next some useful identities and properties in almost Hermitian geometry.    

\begin{prop}\label{basicid}
	Suppose that $(M^{2n},J,g)$ is an almost Hermitian manifold and $X,Y,Z \in \mathfrak{X}(M)$. Then: 
	\begin{enumerate}[label=\alph*),ref=(\alph*)]
		\item \label{basicid1} $(\nabla_X \omega)(Y,Z)=g((\nabla_XJ)Y,Z)$. 
		\item \label{basicid2} $(\nabla_X \omega)(Y,Z)=-(\nabla_X \omega)(Z,Y)$
		\item \label{basicid3} $(\nabla_X \omega)(JY,Z)=(\nabla_X \omega)(Y,JZ)$.
		\item \label{basicid4} $(\nabla_X \omega)(JY,Y)=0$.
		\item \label{basicid5} $\mathcal{N}_J(JX,Y)=-J \mathcal{N}_J(X,Y)$.
		\item \label{basicid6} Let $p \in M$, $v \in T_pM$ and $\{e_i,Je_i\}_{i=1}^n$ be an orthonormal basis of $T_pM$. Then the codifferential of the associated fundamental form $\omega$ is given by $\delta \omega(v)=\sum_{i=1}^ng(\mathcal{Q}(e_i,e_i),v).$    
	\end{enumerate}
\end{prop}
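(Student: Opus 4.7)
The six identities are related but not all of the same flavor, so my plan is to derive (a) first from metric compatibility, bootstrap (b)--(d) from (a) together with the pointwise algebra of $J$, handle (e) as a direct bracket computation, and then reduce (f) to (a) by picking an adapted orthonormal basis.

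For (a) I would unpack the definition $(\nabla_X\omega)(Y,Z)=X\omega(Y,Z)-\omega(\nabla_XY,Z)-\omega(Y,\nabla_XZ)$, substitute $\omega(\cdot,\cdot)=g(J\cdot,\cdot)$, and use that $\nabla g=0$ to expand $X g(JY,Z)=g(\nabla_X(JY),Z)+g(JY,\nabla_XZ)$; the terms involving $\nabla_X Z$ cancel, leaving $g(\nabla_X(JY)-J\nabla_XY,Z)=g((\nabla_XJ)Y,Z)$. Item (b) is immediate: $\nabla_X\omega$ inherits the antisymmetry of $\omega$ in its last two slots. For (c), differentiating $J^2=-\mathrm{Id}$ along $X$ gives the pointwise identity $(\nabla_XJ)\circ J=-J\circ(\nabla_XJ)$; combining this with (a) and the compatibility $g(Ja,b)=-g(a,Jb)$ yields
\begin{equation*}
(\nabla_X\omega)(JY,Z)=g((\nabla_XJ)(JY),Z)=-g(J(\nabla_XJ)Y,Z)=g((\nabla_XJ)Y,JZ)=(\nabla_X\omega)(Y,JZ).
\end{equation*}
Item (d) then follows by setting $Z=Y$ in (c) and invoking (b): $(\nabla_X\omega)(JY,Y)=(\nabla_X\omega)(Y,JY)=-(\nabla_X\omega)(JY,Y)$, so the quantity vanishes.

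Identity (e) is a routine substitution: plug $JX$ in place of $X$ in the definition of $\mathcal{N}_J$, use $J^2=-\mathrm{Id}$ to simplify the four brackets, and then verify term-by-term that the result matches $-J\mathcal{N}_J(X,Y)$; the key cancellations come from $J[-X,Y]=-J[X,Y]$ and $-[-X,JY]=[X,JY]$.

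The only step that requires more care is (f). I would start from the standard formula for the codifferential of a $2$-form, $\delta\omega(v)=-\sum_{a}(\nabla_{E_a}\omega)(E_a,v)$ for any local orthonormal frame $\{E_a\}$, and then specialize to the $J$-adapted frame $\{e_i,Je_i\}_{i=1}^n$. Applying (a) to each summand turns $(\nabla_{e_i}\omega)(e_i,v)+(\nabla_{Je_i}\omega)(Je_i,v)$ into $g((\nabla_{e_i}J)e_i+(\nabla_{Je_i}J)(Je_i),v)=g(K(e_i,e_i),v)$. The main obstacle here is purely a matter of fixing sign conventions for $\delta$ so that the final formula matches the one stated; once the convention in use (e.g.\ $\delta=-\operatorname{tr}\nabla$ on $2$-forms as in \cite{gray}) is spelled out, the identification with the tensor $K$ is immediate from its definition.
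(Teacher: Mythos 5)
Your six verifications are all correct, and it is worth noting that the paper itself states Proposition \ref{basicid} without proof, deferring to \cite{gray}; so your write-up supplies a standard, self-contained derivation rather than paralleling an argument in the text. Items (a)--(e) are exactly the expected computations: (a) from $\nabla g=0$, (b) from antisymmetry of $\omega$, (c) from $(\nabla_XJ)J+J(\nabla_XJ)=0$ together with skew-adjointness of $J$, (d) from (b) and (c), and (e) by direct substitution of $JX$ into $\mathcal{N}_J$. The only point to pin down is the sign convention in (f): with the convention you cite, $\delta\omega(v)=-\sum_a(\nabla_{E_a}\omega)(E_a,v)$, your computation yields $\delta\omega(v)=-\sum_i g(K(e_i,e_i),v)$, which is the \emph{negative} of the identity as stated; to reproduce the stated sign one needs the opposite convention $\delta\omega(v)=\sum_a(\nabla_{E_a}\omega)(E_a,v)$, or one simply observes that the discrepancy is harmless because every later use of item (f) (Lemma \ref{zolltensor}, the proof of Theorem \ref{classWeaklyZoll2}) only invokes the equivalence $\delta\omega=0\iff\sum_i g(K(e_i,e_i),\cdot)=0$, which is convention-independent. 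Spelling out which convention is in force (and that only the vanishing statement is used) would close this cosmetic point; mathematically your proof is complete.
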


In order to conclude this section, we describe one of the primary tools that we will employ in this Chapter, the characterization of minimal submanifolds that are also an almost complex submanifold. This characterization can be found in the following paper of A. Gray, (\cite{gray}). Before we present this result, lets recall the definition of almost complex submanifold.     

\begin{defi}
	Let $(M^{2n},J)$ be an almost complex manifold and $\Sigma^{2k} \hookrightarrow M^{2n}$ a submanifold. We say that $\Sigma$ is an almost complex submanifold if for every $p \in \Sigma$ we have that $J(T_p \Sigma)=T_p \Sigma$.  
\end{defi}

Then, the aforementioned characterization of almost complex minimal submanifolds reads as follows.   

\begin{prop}{\label{MACchar}}(cf. Theorem $5.6$ in \cite{gray})
	Let $(M^{2n},g,J)$ be an almost Hermitian manifold, and $\Sigma^{2k} \hookrightarrow M^{2n}$ an almost complex submanifold. Then $\Sigma$ is a minimal submanifold of $(M,g)$ if and only if for every $p \in \Sigma$ and  $ v\in T_p^{\perp}\Sigma$,
	$$ \sum_{i=1}^{k}g(\mathcal{Q}(e_i,e_i),v)=0,$$
	where $\{e_i,Je_i\}_{i=1}^{k}$ is an orthonormal basis of $T_p\Sigma$. 
\end{prop}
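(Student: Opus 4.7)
The plan is to deduce the proposition from the single pointwise identity
\begin{equation*}
 H_{\Sigma}(p) \;=\; J\,\pi^{\perp}\!\Big(\sum_{i=1}^{k} K(e_i,e_i)\Big),
\end{equation*}
where $H_{\Sigma}(p)$ is the mean curvature vector of $\Sigma$ at $p$ and $\pi^{\perp}\colon T_pM \to T_p^{\perp}\Sigma$ is the orthogonal projection. Granting this identity, the equivalence is immediate: because $\Sigma$ is almost complex and $g$ is $J$-compatible, $J$ preserves both $T_p\Sigma$ and $T_p^{\perp}\Sigma$ and restricts to a linear isomorphism of the latter, so $H_{\Sigma}(p)=0$ if and only if $\pi^{\perp}\sum_i K(e_i,e_i)=0$, which by nondegeneracy of $g$ on the normal space is in turn equivalent to the stated condition $\sum_{i} g(K(e_i,e_i),v)=0$ for every $v\in T_p^{\perp}\Sigma$.

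To derive the identity, first pick a local orthonormal frame $\{e_1, Je_1, \dots, e_k, Je_k\}$ of $T\Sigma$ near $p$ (possible since $\Sigma$ is $J$-invariant), denote the second fundamental form by $\mathrm{II}(X,Y)=(\nabla_X Y)^{\perp}$, and start from $H_{\Sigma}=\sum_{i=1}^{k}[\mathrm{II}(e_i,e_i)+\mathrm{II}(Je_i,Je_i)]$. The basic move is the formula
\begin{equation*}
\mathrm{II}(X,JY) \;=\; \pi^{\perp}\bigl((\nabla_X J)Y\bigr) + J\,\mathrm{II}(X,Y),
\end{equation*}
obtained by projecting $\nabla_X(JY)=(\nabla_XJ)Y+J\nabla_XY$ onto the normal bundle and using that $J$ commutes with $\pi^{\perp}$. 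Applying this formula with $(X,Y)=(e_i,e_i)$ and then with $(X,Y)=(Je_i,e_i)$, and invoking the symmetry $\mathrm{II}(Je_i,e_i)=\mathrm{II}(e_i,Je_i)$ to connect the two, expresses $\mathrm{II}(Je_i,Je_i)$ in terms of $\mathrm{II}(e_i,e_i)$ together with normal components of $(\nabla_{e_i}J)e_i$ and $(\nabla_{Je_i}J)e_i$.

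To see $K(e_i,e_i)=(\nabla_{e_i}J)e_i+(\nabla_{Je_i}J)(Je_i)$ emerge from the resulting sum, I would then apply the algebraic identity $(\nabla_XJ)J=-J(\nabla_XJ)$, obtained by differentiating $J^2=-\mathrm{Id}$. This converts $J\,\pi^{\perp}((\nabla_{Je_i}J)e_i)$ into $-\pi^{\perp}((\nabla_{Je_i}J)(Je_i))$, the $\mathrm{II}(e_i,e_i)$ terms cancel, and the displayed identity for $H_{\Sigma}$ drops out after a single application of $J$. The argument is not deep conceptually, and the main obstacle is purely combinatorial: the signs coming from $(\nabla_XJ)J=-J(\nabla_XJ)$ and from repeated applications of $J$ on the normal bundle have to be tracked meticulously so that every term outside the combination defining $K(e_i,e_i)$ cancels.
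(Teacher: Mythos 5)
Your argument is correct and is essentially the paper's: the paper's proof consists of quoting (from Gray) exactly the identity you derive, namely $g(JH_p,v)=-\sum_j g(K(e_j,e_j),v)$ for all $v\in T_p^{\perp}\Sigma$, which is equivalent to your $H_{\Sigma}(p)=J\,\pi^{\perp}\bigl(\sum_i K(e_i,e_i)\bigr)$. The only difference is that you supply the short computation with $\mathrm{II}$, $J$-invariance of the tangent and normal bundles, and $(\nabla_XJ)J=-J(\nabla_XJ)$, which the paper leaves to the citation.
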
  
\begin{proof}
	In fact, the mean curvature vector $H$ of $\Sigma$ at the point $p \in \Sigma$ is given by:
	$$g(JH_p,v) = -\sum_{j=1}^{k} g (\mathcal{Q}(e_j,e_j),v),$$
	for every $v \in T^{\perp}_p\Sigma$.
\end{proof}

\subsection{The Classification Theorem} 

Before we proceed with the proof of Theorem $\ref{classWeaklyZoll}$ we recall the definition of the sets $\mathcal{W}_k$, in order to facilitate the read.    
\begin{defi}\label{defiofsetWk}
	Let $(M^{2n}, J, g)$ be a $2n$-dimensional almost Hermitian manifold, with $n\geq2$. For each fixed integer $1\leq k \leq n-1$, we define the set $\mathcal{W}_k$ as the class of almost Hermitian structures $(J, g)$ satisfying the following property: for every $(p, \Pi) \in \mathrm{Gr}_k^{J}(TM)$, there exists a minimal and almost complex submanifold $\Sigma^{2k}_{p,\Pi}$ of $M$ such that $p \in \Sigma_{p,\Pi}$ and $T_p(\Sigma_{p,\Pi}) = \Pi$. 
\end{defi}

The first step in the proof of Theorem $\ref{classWeaklyZoll}$ is to notice that the property defining the class $\mathcal{W}_k$ imposes restrictions, not only on the metric $g$, but also on the almost complex structure $J$. Indeed, we have the following well-know result. 

\begin{prop}\label{JintvsZoll}
	Let $(M^{2n},J,g)$ be an almost Hermitian manifold.
	\begin{enumerate}[label=\alph*),ref=(\alph*)]
		\item\label{JintvsZoll1}(cf. \cite{Nijenhuis_Woolf}) For every $(p,\Pi) \in \mathrm{Gr}_1^{J}(TM)$ there exists an almost complex submanifold $\Sigma^2_{p,\Pi}$ such that $T_p(\Sigma_{p,\Pi})=\Pi$.
		\item\label{JintvsZoll2}(cf. Theorem 15 in \cite{Kruglikov}) Fix $1< k \leq n-1$, if $(J,g) \in \mathcal{W}_k$ then the almost complex structure $J$ is integrable. In particular $(M,J,g)$ is Hermitian. 
	\end{enumerate} 
\end{prop}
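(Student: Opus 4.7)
My plan is to treat the two parts separately. Part (a) I would simply cite: it is the classical Nijenhuis--Woolf local existence theorem for $J$-holomorphic curves with prescribed tangent line, proved via an implicit function argument on H\"older spaces applied to the nonlinear Cauchy--Riemann operator $du\circ j = J\circ du$. Since the result is metric-independent and standard, I would not reproduce it.

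The substance is part (b), which I would establish by showing directly that $\mathcal{N}_J\equiv 0$ on $M$ and then invoking Newlander--Nirenberg. The key structural observation is: if $\Sigma\subset M$ is an almost complex submanifold and $X,Y$ are vector fields tangent to $\Sigma$, then so are $JX,JY$, hence all four Lie brackets in the defining expression of $\mathcal{N}_J(X,Y)$ are tangent to $\Sigma$. Consequently $\mathcal{N}_J(X,Y)_{p}\in T_p\Sigma$ whenever $X_p,Y_p\in T_p\Sigma$.

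Now fix $p\in M$ and $\mathbb{C}$-linearly independent $X,Y\in T_pM$, and let $\Pi_0:=\mathrm{span}_{\mathbb{R}}\{X,JX,Y,JY\}$, a complex $2$-plane. Because $2\leq k\leq n-1$, the collection of complex $k$-planes $\Pi\subset T_pM$ containing $\Pi_0$ is non-trivial, and an elementary linear-algebra check shows that its total intersection is exactly $\Pi_0$. For each such $\Pi$, the hypothesis $(J,g)\in\mathcal{W}_k$ produces an almost complex submanifold $\Sigma^{2k}_{p,\Pi}$ with $T_p\Sigma=\Pi$, so by the previous paragraph $\mathcal{N}_J(X,Y)\in\Pi$; intersecting over $\Pi$ yields $\mathcal{N}_J(X,Y)\in\Pi_0$, i.e.
\[
\mathcal{N}_J(X,Y)=\alpha X+\beta JX+\gamma Y+\delta JY
\]
for real scalars a priori depending on both vectors.

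The remainder is a bookkeeping exercise using the symmetries of $\mathcal{N}_J$. Bilinearity in $Y$ along a generic decomposition $Y=Y_1+Y_2$, combined with the confinement to the respective $4$-planes, forces $\gamma$ and $\delta$ to depend only on $X$; antisymmetry $\mathcal{N}_J(X,Y)=-\mathcal{N}_J(Y,X)$ then identifies $\alpha(X,Y)=-\gamma(Y)$ and $\beta(X,Y)=-\delta(Y)$; and a final application of the identity $\mathcal{N}_J(JX,Y)=-J\mathcal{N}_J(X,Y)$ from Proposition \ref{basicid}, comparing coefficients of $X$ and $JX$, drives $\gamma$ and $\delta$ to zero. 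I expect the main obstacle to be not the algebra itself, which is mechanical once the setup is in place, but rather the linear-algebraic claim that the intersection of complex $k$-planes containing a fixed complex $2$-plane is exactly that $2$-plane whenever $2\leq k\leq n-1$, and the clean handling of the degenerate configuration in which $X,Y$ are $\mathbb{C}$-colinear (so $\Pi_0$ collapses); the latter case, however, is automatic since $\mathcal{N}_J(X,JX)=0$ by Proposition \ref{basicid}.
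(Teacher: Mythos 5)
Your proposal is correct, but it follows a genuinely different route from the paper: the paper does not prove this proposition at all, it simply cites Nijenhuis--Woolf for item (a) and Theorem 15 of Kruglikov for item (b), whereas you supply a self-contained argument for (b). Your argument is sound: tensoriality of $\mathcal{N}_J$ plus the fact that brackets of $\Sigma$-tangent fields (and $J$ of them, $\Sigma$ being almost complex) stay tangent gives $\mathcal{N}_J(X,Y)\in T_p\Sigma$; the linear-algebra claim that the intersection of all complex $k$-planes containing a fixed complex $2$-plane $\Pi_0$ equals $\Pi_0$ does hold precisely because $2\leq k\leq n-1$ (pass to the quotient $T_pM/\Pi_0$ of complex dimension $n-2>k-2$), and it would fail for $k=n$, so the hypothesis is used exactly where it should be; the genericity step needs three $\mathbb{C}$-independent vectors, which is available since $1<k\leq n-1$ forces $n\geq 3$; and writing $\mathcal{N}_J(X,Y)=-\gamma(Y)X-\delta(Y)JX+\gamma(X)Y+\delta(X)JY$ after the additivity and antisymmetry steps, the identity $\mathcal{N}_J(JX,Y)=-J\mathcal{N}_J(X,Y)$ (item (e) of Proposition \ref{basicid}) indeed forces $\gamma=\delta=0$ upon comparing the $X$ and $JX$ coefficients, with the $\mathbb{C}$-colinear case handled by $\mathcal{N}_J(X,JX)=0$. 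What the two routes buy: the citation is shorter and defers to Kruglikov's general obstruction theory for pseudoholomorphic submanifolds, while your proof makes the statement self-contained, invokes only Newlander--Nirenberg as a black box, and makes transparent that neither the metric nor minimality plays any role in the integrability conclusion -- only the existence of almost complex submanifolds tangent to every complex $k$-plane for a single $k$ with $2\leq k\leq n-1$. If you write it up, the only step deserving full detail is the bookkeeping paragraph, which is currently a sketch, though each of its three moves checks out.
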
  

A direct consequence of this result is that the proof of Theorem $\ref{classWeaklyZoll}$ essentially reduces to show that $(J,g)\in \mathcal{W}_k$ implies $\mathcal{Q}=0$ for $k<n-1$ and $\delta \omega=0$ for $k=n-1$. This conclusion agrees with the intuition presented earlier in the introduction. 

The next proposition is the main step in order to translate the condition of being in $\mathcal{W}_k$ into tensorial properties on our manifold.

\begin{lema}\label{zolltensor}
	Let $(M^{2n},J,g)$ be an almost Hermitian manifold.
	\begin{enumerate}[label=\alph*),ref=(\alph*)]
		\item\label{zolltensor1} Fix $1\leq k \leq n-2$. If $(J,g) \in \mathcal{W}_k$, then $\mathcal{Q}$ is an anti-symmetric tensor.
		\item\label{zolltensor2} If $(J,g) \in \mathcal{W}_{n-1}$, then $\delta \omega =0$.  
	\end{enumerate}
\end{lema}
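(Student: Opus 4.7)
The plan is to translate the hypothesis $(J,g) \in \mathcal{W}_k$ into a pointwise condition on the tensor $K$ via Proposition \ref{MACchar}. Fix $p \in M$; for every $(p,\Pi) \in \mathrm{Gr}_k^J(M)$, the minimality of the submanifold $\Sigma_{p,\Pi}$ yields
\begin{equation}\label{eq:tracecond}
    \sum_{i=1}^k g(K(e_i,e_i), v) = 0 \quad \text{for all } v \in \Pi^\perp,
\end{equation}
for any $J$-adapted orthonormal basis $\{e_i, Je_i\}_{i=1}^k$ of $\Pi$. A direct unwinding of the definition of $K$ together with $J^2 = -\mathrm{Id}$ gives $K(JX, JY) = K(X,Y)$, so the vector $T(\Pi) := \sum_{i=1}^k K(e_i,e_i)$ depends only on $\Pi$, and \eqref{eq:tracecond} is equivalent to $T(\Pi) \in \Pi$. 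A key preliminary, obtained by combining parts (b), (c), (d) of Proposition \ref{basicid}, is the orthogonality $K(X,X) \perp \mathrm{span}\{X, JX\}$ for every $X \in T_pM$.

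Part (b), where $k = n-1$, follows almost immediately. Completing a $J$-adapted ONB $\{e_i, Je_i\}_{i=1}^{n-1}$ of $\Pi$ with $\{e_n, Je_n\}$ spanning $\Pi^\perp$, Proposition \ref{basicid}(f) reads $(\delta\omega)^\sharp = T(\Pi) + K(e_n, e_n)$. By hypothesis $T(\Pi) \in \Pi$, so $g(T(\Pi), e_n) = g(T(\Pi), Je_n) = 0$; by the preliminary (with $X = e_n$), $g(K(e_n, e_n), e_n) = g(K(e_n, e_n), Je_n) = 0$. Therefore $\delta\omega(e_n) = \delta\omega(Je_n) = 0$, and since any unit vector may play the role of $e_n$ this yields $\delta\omega = 0$ at $p$.

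For part (a), with $1 \leq k \leq n-2$, fix a unit vector $v \in T_pM$ and consider the quadratic form $\phi_v(X) := g(K(X,X), v)$ on the complex hyperplane $H := \mathrm{span}\{v, Jv\}^\perp$, which has complex dimension $n-1 \geq k+1$. The identity $K(JX,JX) = K(X,X)$ gives $\phi_v(JX) = \phi_v(X)$, so the self-adjoint operator $A_v$ on $H$ associated to the polarization of $\phi_v$ commutes with $J$. By the complex spectral theorem $A_v$ diagonalizes in a $J$-adapted ONB $\{f_i, Jf_i\}_{i=1}^{n-1}$ of $H$ with real eigenvalues $\lambda_1, \ldots, \lambda_{n-1}$. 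Applying \eqref{eq:tracecond} to $\Pi = \mathrm{span}_{\mathbb{C}}\{f_{i_1}, \ldots, f_{i_k}\} \subset H$ (for which $v \in \Pi^\perp$ by construction) yields $\lambda_{i_1} + \cdots + \lambda_{i_k} = 0$ for every $k$-subset of $\{1, \ldots, n-1\}$. Because $n-1 \geq k+1$, comparing two subsets that differ in a single index forces all $\lambda_i$ to coincide, and their common value is then pinned to zero. Hence $\phi_v \equiv 0$ on $H$, i.e., $g(K(X,X), v) = 0$ whenever $X \perp v, Jv$; combined with the preliminary, $K(X,X) = 0$ for every $X \in T_pM$, so $K$ is anti-symmetric.

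The delicate step is the eigenvalue comparison in part (a): one must ensure that $A_v$ admits a genuine $J$-adapted eigenbasis on $H$, and then deploy \eqref{eq:tracecond} precisely along these eigenplanes so that the combinatorial comparison of eigenvalue sums is legitimate. Everything else reduces to bookkeeping with the identities of Proposition \ref{basicid} and the $J$-invariance of $K$ in both slots.
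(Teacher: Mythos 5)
Your argument is correct and follows essentially the same route as the paper: both reduce the problem, via the preliminary orthogonality $K(X,X)\perp\mathrm{span}\{X,JX\}$ and Gray's characterization (Proposition \ref{MACchar}), to the vanishing of all $k$-fold sums $\sum_{\mu} g(K(f_{i_\mu},f_{i_\mu}),v)$ over complex $k$-planes contained in the complex hyperplane orthogonal to $\{v,Jv\}$, and then invoke the same combinatorial step (possible exactly because $k\le n-2$), with part (b) handled identically through item \ref{basicid6} of Proposition \ref{basicid}. The only deviation is your spectral-theorem detour in part (a), which is harmless but unnecessary: the trace condition already applies to the coordinate $k$-planes of an arbitrary $J$-adapted orthonormal basis of $\{v,Jv\}^{\perp}$, which is how the paper argues, so no diagonalization of the quadratic form $\phi_v$ is needed.
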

\begin{proof}
	{\hyperref[zolltensor1]{\textit{a})}} Fix $1\leq k \leq n-2$ and suppose that $(J,g)\in \mathcal{W}_k$.  We want to show that the tensor $\mathcal{Q}$ is anti-symmetric. But this is equivalent to prove that for all $p \in M$ and $u \in T_pM$ with unitary norm, we have that $\mathcal{Q}(u,u)=0$. So we fix $p \in M$ and $u\in T_pM$ with $|u|_g=1$. First, we observe that, in general, $g(\mathcal{Q}(u,u),w)=0$, for every $w \in \mathrm{Span}\{u,Ju\}$. Indeed, suppose that $w=au+bJu$, for $a,b \in \mathbb{R}$. Then using Proposition \ref{basicid} $\ref{basicid1}$, we see that: 
	\begin{align*}
		g(\mathcal{Q}(u,u),w)= &a(\nabla_u\omega)(u,u)+b(\nabla_u \omega)(u,Ju) \\
		+&a(\nabla_{Ju}\omega)(Ju,u)+b(\nabla_{Ju} \omega)(Ju,Ju).
	\end{align*}
	However, by $\ref{basicid2}$ and $\ref{basicid4}$ of Proposition \ref{basicid}, each term of the right hand side vanishes, which proves our claim. 
	
	In light of these observations, it remains to prove that $g(\mathcal{Q}(u,u),v)=0$ for every $v$ orthogonal to $\mathrm{span}\{u,Ju\}$, with $|v|_g=1$. 
	
	Fix such $v \perp \mathrm{span}\{u,Ju\}$. By definition of $v$, it is always possible to find an orthonormal basis $\{u,Ju,e_1,Je_1,...,e_{n-1},Je_{n-1}\}$ of $T_pM$ with $e_1=v$. Writing $\gamma_j=g(\mathcal{Q}(e_j,e_j),v)$, we have to prove that $\gamma_1=0$. In fact, we will prove at once that $\gamma_j=0$ for every $1 \leq j \leq n-1$.    
	
	Take $\mathcal{I}=\{1 \leq i_1 < ... < i_k \leq n-1\} $. By the definition of $\mathcal{W}_k$, there exists a minimal almost complex submanifold $\Sigma^{2k}_{\mathcal{I}}$ of $M$, such that $p \in \Sigma_{\mathcal{I}}$ and $ T_p\Sigma_{\mathcal{I}}= \mathrm{span}\{e_{i_1},Je_{i_1},...,e_{i_k},Je_{i_k}\}$. Then, noticing that $v \perp T_p\Sigma_{\mathcal{I}}$ and applying Proposition \ref{MACchar}, we have that  
	$$ \sum_{\mu=1}^{k}g(\mathcal{Q}(e_{i_{\mu}},e_{i_{\mu}}),v)=0.$$
	
	By the definition of $\gamma_j$, we obtain the following system of equations in terms of $\gamma_j$:   
	$$\gamma_{i_1}+...+\gamma_{i_k}=0, \; \forall \; \mathcal{I}=\{1 \leq i_1 < ... < i_k \leq n-1\}. $$
	Since $k<n-1$, the only solution of this system is the trivial one,  that is $\gamma_j=0$ for every $1 \leq j \leq n-1$, as claimed. 
	
	{$\hyperref[zolltensor2]{\textit{b)}}$} We will prove that for a point $p \in M$ and $u \in T_pM$ with unitary norm, we have that $\delta \omega(u)=0$. Using Proposition \ref{basicid} $\ref{basicid6}$ this is equivalent to $\sum_{i=1}^ng(\mathcal{Q}(e_i,e_i),u) =0$, where $\{e_i,Je_i\}_{i=1}^n$ is an orthonormal basis of $T_pM$. Since $|u|_g=1$, we can suppose that $e_1=u$. Arguing as in the beginning of the proof of the first item, we have that $g(\mathcal{Q}(u,u),u)=0$. Therefore, is enough to show that     
	\begin{equation}\label{eqgernbl}
		\sum_{i=1}^ng(\mathcal{Q}(e_i,e_i),v) =0,
	\end{equation}
	Applying the hypothesis that $(J,g) \in \mathcal{W}_{n-1}$ together with $u \perp \mathrm{span}\{e_j,Je_j\}_{j\geq2}$, we conclude the existence of a minimal almost complex submanifold $\Sigma_u$, such that $p\in \Sigma_u$ and $T_p \Sigma_u=\mathrm{span}\{e_j,Je_j\}_{j\geq2}$. Hence, $\eqref{eqgernbl}$ follows from Proposition $\ref{MACchar}$, completing the proof.
\end{proof}

The previous proposition covers the case were $1<k=n-1$. However, for $1 \leq k<n-1$, it is still necessary to prove that $\mathcal{Q}$ being anti-symmetric implies that $\mathcal{Q}$ is zero. For that, we need to understand how the tensor $\nabla J$ behaves under the commutation of the first and second variables. Recalling that $\mathcal{S}$ is the anti-symmetrization of $\nabla J$, we present the following. 

\begin{lema}
	Let $(M^{2n},J,g)$ be an almost Hermitian manifold. Given $X,Y \in \mathfrak{X}(M)$ we have 
	$$\mathcal{Q}(X,Y)+\mathcal{Q}(Y,X)=-2J\mathcal{S}(JX,Y)-J\mathcal{N}_J(X,Y).$$ 
\end{lema}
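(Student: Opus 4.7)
The plan is to reduce the identity to a bookkeeping exercise in $\nabla J$, exploiting the single algebraic fact
\begin{equation*}
(\nabla_X J)J = -J(\nabla_X J),
\end{equation*}
which follows from differentiating $J^2 = -\mathrm{Id}$.

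First, I would rewrite the Nijenhuis tensor purely in terms of $\nabla J$. Since $\nabla$ is torsion-free, $[U,V] = \nabla_U V - \nabla_V U$; substituting into $\mathcal{N}_J(X,Y) = [X,Y] + J[JX,Y] + J[X,JY] - [JX,JY]$ and using $\nabla_U(JW) = (\nabla_U J)W + J\nabla_U W$, the terms involving only $\nabla$ cancel (the raw $\nabla_X Y - \nabla_Y X$ is canceled by $-J^2(\nabla_X Y - \nabla_Y X)$ coming from the $[JX,JY]$ piece, and the mixed $J\nabla$ contributions pair off), leaving the clean expression
\begin{equation*}
\mathcal{N}_J(X,Y) = J(\nabla_X J)Y - J(\nabla_Y J)X - (\nabla_{JX}J)Y + (\nabla_{JY}J)X.
\end{equation*}

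Next I would expand the left-hand side. Directly from the definition of $K$,
\begin{equation*}
K(X,Y) + K(Y,X) = (\nabla_X J)Y + (\nabla_{JX}J)JY + (\nabla_Y J)X + (\nabla_{JY}J)JX,
\end{equation*}
and applying the key identity $(\nabla_U J)JW = -J(\nabla_U J)W$ to the two mixed terms gives
\begin{equation*}
K(X,Y) + K(Y,X) = (\nabla_X J)Y + (\nabla_Y J)X - J(\nabla_{JX}J)Y - J(\nabla_{JY}J)X.
\end{equation*}

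Finally, I would expand the right-hand side in the same way. Using the key identity once more, $-2JS(JX,Y) = -2J(\nabla_{JX}J)Y + 2J(\nabla_Y J)JX = -2J(\nabla_{JX}J)Y + 2(\nabla_Y J)X$, while applying $-J$ to the formula from step one and using $-J^2 = \mathrm{Id}$ yields $-J\mathcal{N}_J(X,Y) = (\nabla_X J)Y - (\nabla_Y J)X + J(\nabla_{JX}J)Y - J(\nabla_{JY}J)X$. Summing these two contributions gives exactly the right-hand side of the expression obtained for $K(X,Y)+K(Y,X)$, finishing the proof. The only real obstacle is keeping track of signs coming from $J^2=-\mathrm{Id}$ and from the conventions on the order of arguments in $S$, $K$, and $\mathcal{N}_J$; there is no geometric subtlety, just the Leibniz rule applied consistently.
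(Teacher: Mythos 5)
Your proposal is correct and follows essentially the same route as the paper: a direct computation from the torsion-freeness of the Levi-Civita connection and the Leibniz rule, with your version merely reorganizing the bookkeeping by reducing both sides to $\nabla J$-terms (via the standard expression for $\mathcal{N}_J$ and the anticommutation $(\nabla_U J)J=-J(\nabla_U J)$) rather than expanding the left-hand side into brackets and recognizing $S$ and $\mathcal{N}_J$ at the end, as the paper does. One cosmetic slip: the terms $-J^2(\nabla_X Y-\nabla_Y X)$ that cancel the $[X,Y]$ contribution actually come from the $J[JX,Y]$ and $J[X,JY]$ pieces, not from $-[JX,JY]$; your displayed formula for $\mathcal{N}_J$ and all subsequent identities are nonetheless correct.
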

\begin{proof}
	The proof is a direct computation. By definition of $\nabla J$ and the symmetry of the connection, we have 
	\begin{align*}
		\mathcal{Q}(X,Y)+\mathcal{Q}(Y,X) &= (\nabla_XJ)Y + (\nabla_{JX}J)JY + (\nabla_YJ)X + (\nabla_{JY}J)JX \\ 
		&= \nabla_XJY-J\nabla_XY + \nabla_{JX}J^2Y - J \nabla_{JY}{JX} \\
		& \;\;\;\;+ \nabla_YJX-J\nabla_YX + \nabla_{JY}J^2X - J \nabla_{JX}{JY} 	\\
		&= \{\nabla_XJY-\nabla_{JY}X\} -\{\nabla_{JX}Y-\nabla_{Y}JX\}   \\ 
		&\;\;\;\;- J\{\nabla_{JX}JY+ \nabla_YX+ \nabla_{JY}JX + \nabla_XY\} \\ 
		&=[X,JY]-[JX,Y]-2J\{\nabla_{JX}JY+\nabla_YX\} \\
		&\;\;\;\;-J\{[JY,JX]+[X,Y]\}.
	\end{align*}
	On the other hand 
	\begin{align*}
		\nabla_{JX}JY+\nabla_YX &= \nabla_{JX}JY-\nabla_YJ^2X \\
		&=(\nabla_{JX}J)Y+J\nabla_{JX}Y-(\nabla_YJ)JX-J\nabla_YJX \\ 
		&=\mathcal{S}(JX,Y)+J[JX,Y].  
	\end{align*}
	Therefore combining this two equations we have that
	\begin{align*}
		\mathcal{Q}(X,Y)+\mathcal{Q}(Y,X) &=[X,JY]+[JX,Y]-2J\mathcal{S}(JX,Y)-J\{[JY,JX]+[X,Y]\} \\
		&= -2J\mathcal{S}(JX,Y)-J\mathcal{N}_J(X,Y), 
	\end{align*}
	concluding the proof.  
\end{proof}

\begin{coro}\label{PropofTenS}
	Let $(M,J,g)$ be an almost Hermitian manifold and suppose that $\mathcal{Q}$ is an anti-symmetric tensor. Then we have the following identities 
	\begin{enumerate}[label=\alph*),ref=(\alph*)]
		\item $(\nabla_{JX}J)Y=(\nabla_{Y}J)JX- \frac{1}{2}\mathcal{N}_J(X,Y)$
		\item $(\nabla_{X}J)Y=(\nabla_{Y}J)X- \frac{1}{2}J\mathcal{N}_J(X,Y)$.  
	\end{enumerate}  
\end{coro}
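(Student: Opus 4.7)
The plan is to read both identities as direct algebraic consequences of the preceding lemma once the antisymmetry of $K$ is imposed. First I would combine the hypothesis $K(X,Y)+K(Y,X)=0$ with the identity
\[
K(X,Y)+K(Y,X)=-2JS(JX,Y)-J\mathcal{N}_J(X,Y)
\]
from the lemma above, and then solve for $S(JX,Y)$ by applying $J$ to both sides and using $J^2=-\mathrm{Id}$, obtaining $S(JX,Y)=-\tfrac{1}{2}\mathcal{N}_J(X,Y)$.

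To establish item (a), I would simply unwind the definition $S(A,B)=(\nabla_A J)B-(\nabla_B J)A$ at $A=JX$, $B=Y$; this is a purely notational rewriting of the displayed relation. For item (b), I would derive it from (a) by the substitution $X \mapsto -JX$. The left side becomes $(\nabla_X J)Y$ because $J(-JX)=X$, and the first term on the right becomes $(\nabla_Y J)X$. The Nijenhuis term transforms using item \ref{basicid5} of Proposition \ref{basicid}, which yields $\mathcal{N}_J(-JX,Y)=-\mathcal{N}_J(JX,Y)=J\mathcal{N}_J(X,Y)$, producing the factor of $J$ that distinguishes (b) from (a).

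No substantial obstacle is anticipated. The corollary is a purely algebraic consequence of the preceding lemma together with the elementary tensor identities collected in Proposition \ref{basicid}; the antisymmetry of $K$ is the only geometric input and is consumed in the very first step. The mildly tricky point is to keep careful track of the signs and $J$-factors under the substitution $X \mapsto -JX$, but this is entirely formal.
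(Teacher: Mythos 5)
Your derivation is correct and is exactly the argument the paper intends: the corollary is stated as an immediate consequence of the preceding lemma, and your steps (set $K(X,Y)+K(Y,X)=0$, apply $J$ to isolate $S(JX,Y)=-\tfrac{1}{2}\mathcal{N}_J(X,Y)$, unwind the definition of $S$ for (a), then substitute $X\mapsto -JX$ and use item \ref{basicid5} of Proposition \ref{basicid} for (b)) fill in precisely that omitted computation with the signs handled correctly.
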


Now using this corollary and Proposition $\ref{basicid}$ we can prove a refinement of Proposition $\ref{zolltensor}$.

\begin{prop}\label{KASiffKN}
	Let $(M^{2n},J,g)$ be an almost Hermitian manifold satisfying $(J,g)\in \mathcal{W}_k$, for some fixed integer $1\leq k<n-1$. Then $\mathcal{Q}=0$.  
\end{prop}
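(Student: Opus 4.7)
The plan is to observe that the full force of the hypothesis $(J,g)\in\mathcal{W}_k$ has already been exploited in Lemma \ref{zolltensor}\ref{zolltensor1}, which yields that $K$ is anti-symmetric. What remains is a purely tensorial implication: anti-symmetry of $K$, together with the identities of Corollary \ref{PropofTenS} and the standard symmetries of $\mathcal{N}_J$, already forces $K \equiv 0$. No further use of the minimal submanifolds $\Sigma_{p,\Pi}$ or of Proposition \ref{MACchar} is required, and the argument is uniform over $1 \leq k < n-1$, irrespective of whether $J$ happens to be integrable.

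The first step I would take is a direct expansion of $K(X,Y) - K(Y,X)$ from the definition $K(X,Y) = (\nabla_X J)Y + (\nabla_{JX}J)JY$. Grouping terms by the position of $J$ produces the identity
\begin{equation*}
K(X,Y) - K(Y,X) \;=\; S(X,Y) + S(JX, JY).
\end{equation*}
Next, I would apply item (b) of Corollary \ref{PropofTenS} to rewrite $S$ in terms of the Nijenhuis tensor as $S(X,Y) = -\tfrac{1}{2} J\,\mathcal{N}_J(X,Y)$.

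The key auxiliary computation is the identity $\mathcal{N}_J(JX, JY) = -\mathcal{N}_J(X,Y)$, which follows quickly from item \ref{basicid5} of Proposition \ref{basicid} together with the (manifest) anti-symmetry of $\mathcal{N}_J$ in its two entries. Combining these, $S(JX, JY) = -S(X,Y)$, and hence $K(X,Y) - K(Y,X) = 0$. On the other hand, the anti-symmetry of $K$ rewrites the same quantity as $2K(X,Y)$, so $K \equiv 0$, as claimed.

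The only real obstacle is the careful sign bookkeeping for $\mathcal{N}_J$ and $S$ under the action of $J$ and the interchange of variables; once that is carried out, the cancellation producing $K \equiv 0$ is immediate. Conceptually, the heavy lifting, namely the production of enough minimal almost complex submanifolds to pin down the anti-symmetry of $K$, was already accomplished in the proof of Lemma \ref{zolltensor}\ref{zolltensor1}, so the present proposition is essentially a short tensorial postscript to that result.
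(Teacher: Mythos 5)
Your proof is correct, and it rests on the same two prior results as the paper's: the anti-symmetry of $K$ from Lemma \ref{zolltensor} and the identities of Corollary \ref{PropofTenS}. The difference is only in how the final cancellation is organized. The paper contracts with the metric and shows $(\nabla_{JX}\omega)(JY,Z)=-(\nabla_X\omega)(Y,Z)$, hence $g(K(X,Y),Z)=0$ directly, using both items of Corollary \ref{PropofTenS} together with Proposition \ref{basicid} and the compatibility $g(J\cdot,\cdot)=-g(\cdot,J\cdot)$. You instead stay at the level of the tensors themselves: the identity $K(X,Y)-K(Y,X)=S(X,Y)+S(JX,JY)$, the relation $S=-\tfrac{1}{2}J\mathcal{N}_J$ coming from Corollary \ref{PropofTenS}, and $\mathcal{N}_J(JX,JY)=-\mathcal{N}_J(X,Y)$ (item \ref{basicid5} of Proposition \ref{basicid} combined with the anti-symmetry of $\mathcal{N}_J$) show that $K$ is also symmetric, so anti-symmetry forces $K=0$. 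The sign bookkeeping you outline checks out, and the argument is not circular: the displayed identity for $K(X,Y)-K(Y,X)$ holds unconditionally, while the hypothesis $(J,g)\in\mathcal{W}_k$ with $k<n-1$ enters only through Lemma \ref{zolltensor}, exactly as in the paper. Your ``symmetric plus anti-symmetric'' packaging is marginally more conceptual; the paper's computation has the side benefit of recording the explicit identity on $\nabla\omega$.
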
  
\begin{proof}
	Take $X,Y,Z \in \mathfrak{X}(M)$. By Proposition $\ref{basicid}$ and Corollary $\ref{PropofTenS}$ we have the following identities 
	\begin{align*}
		(\nabla_{JX} \omega)(JY,Z) &= (\nabla_{JX} \omega)(Y,JZ) \\
		&=g((\nabla_{JX}J)Y,JZ) \\
		&=g\left((\nabla_{Y}J)JX-\frac{1}{2}\mathcal{N}_{J}(X,Y),JZ \right)\\
		&=-g((\nabla_YJ)X,Z) -\frac{1}{2}g({N}_{J}(X,Y),JZ) \\
		&=-g((\nabla_XJ)Y,Z)-\frac{1}{2}g(J{N}_{J}(X,Y),Z) -\frac{1}{2}g({N}_{J}(X,Y),JZ)\\
		&=-(\nabla_X\omega)(Y,Z).   
	\end{align*}   
	That is, for every $X,Y$ and $Z$ in $\mathfrak{X}(M)$
	$$g(\mathcal{Q}(X,Y),Z)=g((\nabla_{JX}J)JY + (\nabla_{X}J)Y,Z)=0,$$
	implying that $\mathcal{Q}=0$, as claimed.
\end{proof}

Finally, we concatenate all the previous results to give a proof of Theorem $\ref{classWeaklyZoll}$.

\begin{proof}[Proof of Theorem $\ref{classWeaklyZoll}$]
	{$\hyperref[classWeaklyZoll1]{\textit{a)}}$} If  $(J,g)\in \mathcal{W}_1$, Proposition $\ref{KASiffKN}$ implies that $\mathcal{Q}=0$, so by definition $(M,J,g)$ is quasi Kähler. Conversely, suppose that $(M,J,g)$ is quasi Kähler. By Proposition $\ref{JintvsZoll}$ $\ref{JintvsZoll1}$ we have a family $\{\Sigma_{p,\Pi}\; | \; {(p,\Pi) \in \mathrm{Gr}_1^{J}(TM)}\}$ of almost complex submanifolds of $M$. By Proposition $\ref{MACchar}$ every almost complex submanifold of a quasi Kähler manifold is minimal.
	
	{$\hyperref[classWeaklyZoll2]{\textit{b)}}$} Fix $1<k<n-1$. Suppose that $(J,g) \in \mathcal{W}_k$. By Proposition $\ref{JintvsZoll}$ $\ref{JintvsZoll2}$ and Proposition $\ref{KASiffKN}$, we have that $(M,J,g)$ is Hermitian and quasi Kähler. Therefore, Corollary $\ref{QK+I=K}$ implies that $(M,J,g)$ is Kähler. 
	
	Conversely, assume $(M,J,g)$ is Kähler and let us show that $(J,g) \in \mathcal{W}_{k}$. By Proposition~\ref{MACchar}, it suffices to construct a family $\{\Sigma_{p,\Pi} : (p,\Pi) \in \mathrm{Gr}_k^{J}(TM)\}$ of complex submanifolds with $p\in \Sigma_{p,\Pi}$ and $T_p\Sigma_{p,\Pi}=\Pi$. Since $\Sigma_{p,\Pi}$ is not required to be complete, we may work locally. Fix $(p,\Pi) \in \mathrm{Gr}_k^{J}(TM)$. By the Newlander–Nirenberg Theorem (Theorem~\ref{NNteo}), there exists a local biholomorphism $\phi: (U,J) \to (\mathbb{C}^n,i)$ with $\phi(p)=0$. We then define $\Sigma_{p,\Pi}$ as the image under this biholomorphism of the complex plane $d\phi_p(\Pi) \subset \mathbb{C}^n$. 
	
	{$\hyperref[classWeaklyZoll3]{\textit{c)}}$} Fix $n \geq 3$. If $(J,g) \in \mathcal{W}_{n-1}$, then the desired conclusion follows immediately by Propositions $\ref{JintvsZoll}$ and $\ref{zolltensor}$. Now, assume that $(M,J,g)$ is balanced and Hermitian. Since $(M,J)$ is a complex manifold, we can argue as before and produce a family $\{\Sigma_{p,\Pi} \; | \;{(p,\Pi) \in \mathrm{Gr}_{n-1}^{J}(TM)}\}$ of complex submanifolds of $(M,J,g)$ satisfying $p\in \Sigma_{p,\Pi}$ and $T_p\Sigma_{p,\Pi}=\Pi$. Using $\delta\omega=0$ together with Proposition $\ref{basicid}$ $\ref{basicid6}$ and Proposition $\ref{MACchar}$, we easily check that every $(2n-2)$-dimensional complex submanifold of $(M,J,g)$ is minimal.                           
\end{proof}

\section{Systole of Homogeneous Metrics}\label{section:homogeneousmetrics}
This section will be structured as follows. In Section $\ref{descofhommet}$, we will present a description of the homogeneous metrics along with its properties. Section $\ref{2systole}$ contains the proofs of Theorems $\ref{systofhommetrics}$ and $\ref{confclassofhommetr}$ for the dimension two case, while Section $\ref{4nsystole}$ focuses on the proofs for the codimension two case.  

\subsection{Construction of Homogeneous Metrics}\label{descofhommet}

\indent	 As mentioned in the introduction, W. Ziller (\cite{Ziller1982}) classified the homogeneous metrics on complex projective space. Specifically, he proved that the only group acting transitively on $\mathbb{C}P^m$ with non trivial isotropy representation is $\mathrm{Sp}(n+1)$, for $m=2n+1$. The main objective of this preliminary section is to established that each of these metrics is balanced with respect to the canonical complex structure. To accomplish that, we first describe this action along with a detailed construction of the associated homogeneous metrics. 

Recall that the group $\mathrm{Sp}(n+1) \subset \mathrm{U}(2n+2)$, for $n\geq 1$, is given by 

\[\mathrm{Sp}(n+1)=\left\{
U= \begin{pmatrix}
	A & -\bar{B} \\ B & \bar{A} 
\end{pmatrix} \; : \; A,B \in \mathbb{M}_{n+1}(\mathbb{C}),\; U^*U= \mathrm{Id}  \right\}. \]      
This group acts transitively on $\mathbb{S}^{4n+3} \subset \mathbb{C}^{2n+2}$, where the stabilizer subgroup of $\textbf{e}_1=(1,0,...,0)$ is isomorphic to $\mathrm{Sp}(n)$. Since $\mathrm{Sp}(n+1) \subset \mathrm{U}(2n+2)$, this action induces a transitive action on $\mathbb{C}P^{2n+1}$, with base point $o=[\textbf{e}_1]$, and stabilizer group $\mathrm{Sp}(n) \times \mathrm{U}(1)$, where 
\[\mathrm{Sp}(n)\times \mathrm{U}(1) =\left\{
\begin{pmatrix}
	e^{i\theta} & 0 \\ 0 & U_0
\end{pmatrix} \in \mathrm{Sp}(n+1) : 
U_0 \in \mathrm{Sp}(n), \; e^{i\theta} \in \mathrm{U}(1) \right\}.\]             
Hence, $\mathbb{C}P^{2n+1}$ has the structure of the homogeneous space $\mathrm{Sp}(n+1)/\mathrm{Sp}(n) \times \mathrm{U}(1)$. At the level of Lie algebras, we have a decomposition $\mathfrak{sp}(n+1)=\mathfrak{sp}(n) \times \mathfrak{u}(1) \oplus \mathfrak{m}$, where we can identify $\mathfrak{m}$ with $T_o\mathbb{C}P^{2n+1}$. Moreover, $\mathfrak{m}$ can be chosen invariant under the adjoint action of $\mathrm{Sp}(n) \times \mathrm{U}(1)$ on $\mathfrak{sp}(n+1)$, and this action induces an irreducible decomposition $\mathfrak{m}=\mathfrak{m}_0\oplus \mathfrak{m}_1$. Explicitly, these spaces are given by: 

\begin{equation}\label{lie(sp(2))}
	\mathfrak{sp}(n+1)=\left\{ \begin{pmatrix}
		X & -Y^* \\ Y & -X^T 
	\end{pmatrix} \; : \; X,Y \in \mathbb{M}_{n+1}(\mathbb{C}),\; Y=Y^T,\;X^*=-X  \right\};
\end{equation}      

\[\mathfrak{m}_0=\left\{ \begin{pmatrix}
	0 & -Y^* \\ Y & 0 
\end{pmatrix} \; : \; 
Y=\begin{pmatrix}
	y & 0 \\ 0 & 0 
\end{pmatrix} ,\; y \in \mathbb{C}  \right\}; \]  

\[\mathfrak{m}_1=\left\{ \begin{pmatrix}
	X & -Y^* \\ Y & -X^T 
\end{pmatrix} \; : \; 
X=\begin{pmatrix}
	0 & -\bar{z} \\ z^T & 0 
\end{pmatrix},\;
Y=\begin{pmatrix}
	0 & {w} \\ w^T & 0 
\end{pmatrix},\;
z,w \in \mathbb{C}^n
\right\}. \]  

Finally the identification $\mathfrak{m} \cong T_o \mathbb{C}P^{2n+1}$ is given by
\begin{equation}\label{m=ToCP3}
	\begin{aligned}
		\mathfrak{m} &\to T_o \mathbb{C}P^{2n+1}\subset T_{\textbf{e}_1}\mathbb{S}^{4n+3} \\ 
		(y,z,w) & \mapsto (0,z,y,w).  
	\end{aligned}   
\end{equation}

Once we have all the proper identifications, it is trivial to verify the next result. 

\begin{prop}
	With respect to the Fubini-Study metric $g_{FS}$ on $\mathbb{C}P^{2n+1}$, the decomposition $\mathfrak{m}=\mathfrak{m}_0\oplus \mathfrak{m}_1$ is orthogonal. Moreover, the induced metrics on $\mathfrak{m}_0$ and $\mathfrak{m}_1$ are invariant by the adjoint action of $\mathrm{Sp}(n)\times \mathrm{U}(1)$.   
\end{prop}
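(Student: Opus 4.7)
The proof amounts to two essentially bookkeeping verifications using the explicit description of $\mathfrak{m}_0$, $\mathfrak{m}_1$ and the identification \eqref{m=ToCP3}. The plan is to first compute the images of $\mathfrak{m}_0$ and $\mathfrak{m}_1$ inside $T_{\mathbf{e}_1}\mathbb{S}^{4n+3}\subset \mathbb{C}^{2n+2}$, and then transport orthogonality from the Hermitian inner product on $\mathbb{C}^{2n+2}$ down to $\mathbb{C}P^{2n+1}$ via the Hopf projection.

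More concretely, under the identification $(y,z,w)\mapsto (0,z,y,w)$ an element of $\mathfrak{m}_0$ is sent to the vector in $\mathbb{C}^{2n+2}$ whose only nonzero entry is $y$ in the $(n+2)$-th slot, whereas an element of $\mathfrak{m}_1$ is sent to a vector whose $(n+2)$-th entry vanishes. Since the Fubini-Study metric at $o=[\mathbf{e}_1]$ is, by construction of the Riemannian submersion $\mathbb{S}^{4n+3}\to \mathbb{C}P^{2n+1}$, the restriction of the standard real inner product on $\mathbb{C}^{2n+2}\cong \mathbb{R}^{4n+4}$ to the horizontal space $(i\mathbf{e}_1)^{\perp}\cap T_{\mathbf{e}_1}\mathbb{S}^{4n+3}$, the orthogonality of the images of $\mathfrak{m}_0$ and $\mathfrak{m}_1$ is visible directly. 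One also checks that both images already lie in that horizontal space, so no further reduction is needed.

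For the second assertion, the crucial input is that $g_{FS}$ is $\mathrm{U}(2n+2)$-invariant, and therefore a fortiori $\mathrm{Sp}(n+1)$-invariant. In particular, the isotropy subgroup $H:=\mathrm{Sp}(n)\times \mathrm{U}(1)$ at $o$ acts on $T_o\mathbb{C}P^{2n+1}$ by $g_{FS}$-isometries. Under the reductive identification $\mathfrak{m}\cong T_o\mathbb{C}P^{2n+1}$ of \eqref{m=ToCP3}, the standard fact from the theory of homogeneous spaces says that this isotropy representation coincides with the restriction of the adjoint representation $\mathrm{Ad}\colon H\to \mathrm{GL}(\mathfrak{sp}(n+1))$ to the $\mathrm{Ad}(H)$-invariant complement $\mathfrak{m}$. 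Combining these two facts, the adjoint action of $H$ on $\mathfrak{m}$ preserves the inner product induced by $g_{FS}$; since $\mathfrak{m}_0$ and $\mathfrak{m}_1$ are themselves $\mathrm{Ad}(H)$-invariant subspaces (by the way they are defined, as can be confirmed by a direct conjugation computation using the block form \eqref{lie(sp(2))}), the induced inner products on each summand are individually $\mathrm{Ad}(H)$-invariant.

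There is no real obstacle here; the only mildly delicate point is the compatibility between the adjoint action on $\mathfrak{m}$ and the isotropy action on $T_o\mathbb{C}P^{2n+1}$, but this is the standard statement underlying every construction of invariant metrics on reductive homogeneous spaces and can simply be invoked. If desired, one can make the orthogonality completely explicit by writing out the Hermitian pairing between a generic vector $(0,0,y,0)$ of $\mathfrak{m}_0$ and $(0,z,0,w)$ of $\mathfrak{m}_1$, which is manifestly zero.
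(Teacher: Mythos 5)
Your proof is correct, and it fills in exactly the verification the paper leaves implicit (the paper states the result as ``trivial to verify'' once the explicit identification of $\mathfrak{m}$ with $T_o\mathbb{C}P^{2n+1}$ is in place): orthogonality read off from the disjoint supports of the images of $\mathfrak{m}_0$ and $\mathfrak{m}_1$ in the horizontal space of the Hopf fibration, and invariance from the $\mathrm{U}(2n+2)$-invariance of $g_{FS}$ together with the standard equivalence of the isotropy and adjoint representations on the reductive complement.
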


\begin{obs}
	In what follows, $g_{FS}$ will always denote the Fubini-Study metric on the complex projective space and $\Omega$ will denote the associated fundamental form. Moreover, we assume that the Fubini-Study metric is normalized to satisfy $\int_{\mathbb{C}P^1} \Omega=1$.     
\end{obs}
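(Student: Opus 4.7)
The final item is a \textbf{Remark} (\verb|obs| environment) fixing notation and a normalization convention rather than a theorem requiring a proof. Nonetheless, for the remark to make sense one must check two implicit well-posedness claims, and the proposal is to address those. First, the notation $\int_{\mathbb{C}P^{1}}\Omega$ has to be independent of the choice of linearly embedded $\mathbb{C}P^{1}\hookrightarrow \mathbb{C}P^{2n+1}$; second, given any a priori choice of Fubini-Study metric (which is determined only up to an overall positive scale by the compatibility conditions), one must verify that there is a unique rescaling achieving the normalization $\int_{\mathbb{C}P^{1}}\Omega = 1$.

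For the first point, I would argue as follows. The Fubini-Study form $\Omega$ is closed, since $(\mathbb{C}P^{n},J_{\mathrm{can}},g_{FS})$ is Kähler. Any two linear $\mathbb{C}P^{1}$'s in $\mathbb{C}P^{2n+1}$ are homologous in $H_{2}(\mathbb{C}P^{2n+1},\mathbb{Z})\cong\mathbb{Z}$, so by Stokes' theorem the integral of the closed form $\Omega$ over them agrees. Alternatively, $\mathrm{U}(2n+2)$ acts transitively on the space of linear $\mathbb{C}P^{1}$'s and preserves $\Omega$, which gives the same conclusion. Either argument shows that $\int_{\mathbb{C}P^{1}}\Omega$ is a well-defined positive real number depending only on the metric.

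For the second point, I would note that if $g$ is rescaled to $\lambda g$ for $\lambda>0$, then $\omega$ rescales to $\lambda\omega$, hence $\int_{\mathbb{C}P^{1}}\omega$ rescales linearly in $\lambda$. Therefore there is a unique $\lambda>0$ achieving the value $1$, and the convention of the remark simply fixes this scale once and for all. The remark is then internally consistent: throughout the paper, $g_{FS}$ refers to this uniquely normalized representative, and $\Omega$ denotes its fundamental form. The main (indeed only) subtlety is homological, and it is immediate from the Kähler property together with the known generator of $H_{2}(\mathbb{C}P^{2n+1},\mathbb{Z})$; no real obstacle arises.
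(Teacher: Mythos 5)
This item is a notational convention, for which the paper supplies no proof; your well-posedness checks (the integral over a linear $\mathbb{C}P^1$ is independent of the choice because $\Omega$ is closed and all such lines are homologous generators of $H_2(\mathbb{C}P^{2n+1},\mathbb{Z})$, and the normalization is achieved by a unique rescaling since $\int_{\mathbb{C}P^1}\Omega$ scales linearly) are correct and exactly the facts the paper implicitly relies on, e.g.\ in the Crofton lemma of Section 4.2 and in Proposition G. Nothing further is needed.
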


We are now in position to introduce the family of homogeneous metrics in $\mathbb{C}
P^{2n+1}$. The invariant decomposition $\mathfrak{m}=\mathfrak{m}_0\oplus \mathfrak{m}_1$ suggests the following family of metrics on $\mathfrak{m}$:   
$$\restr{g_{t}}{\mathfrak{m}}=t\restr{g_{FS}}{\mathfrak{m}_0} +\restr{g_{FS}}{\mathfrak{m}_1}, $$
for $t \in \mathbb{R}_{>0}$. As a consequence of the previous propositions these metrics extend to a family of Riemannian metrics on $\mathbb{C}P^{2n+1}$, which we will denote by $\{g_t\}_{t \in \mathbb{R}_{>0}}$. Furthermore, this family exhaust the set of homogeneous metric on $\mathbb{C}P^{2n+1}$, up to isometries and homothety, as proved by W. Ziller in \cite{Ziller1982}.      

In what follows, we present an alternative construction for this family. First, we note that the inclusions $\mathrm{Sp}(n) \times \mathrm{U}(1) \subset \mathrm{Sp}(n) \times \mathrm{Sp}(1) \subset \mathrm{Sp}(n+1),$ induces a fibration:   
$$\frac{\mathrm{Sp(1)}}{\mathrm{U}(1)} \to \frac{\mathrm{Sp(n+1)}}{\mathrm{Sp}(n) \times\mathrm{U}(1)} \xrightarrow{\pi} \frac{\mathrm{Sp(n+1)}}{\mathrm{Sp}(n) \times\mathrm{Sp}(1)}.$$
This fibration is know as the \textit{Penrose fibration}. Up to canonical identifications, it is given by:
\begin{equation}\label{pensrosefibration}
	\begin{aligned}
		\mathbb{C}P^1 \to \mathbb{C}P^{2n+1} &\xrightarrow{\pi} \mathbb{H}P^n\\
		[z_0:...:z_n:w_0:...:w_n] &\mapsto [z_0 +w_0j:...:z_n+w_nj]. 
	\end{aligned}
\end{equation}

The relation between the Penrose fibration and the aforementioned invariant decomposition of the tangent space of $\mathbb{C}P^{2n+1}$ can be understood in the subsequent manner. Let $\Lambda^0=\mathrm{ker}d\pi$ be the horizontal distribution defined by the submersion $\pi$, and $\Lambda^1$ its orthogonal complement with respect to the Fubini-Study metric. Given $p\in \mathbb{C}P^{2n+1}$ and $U \in \mathrm{Sp}(n+1)$, with $U\cdot o =p$, we have
$$\Lambda^0_p=\restr{dL_U}{o}(\mathfrak{m}_0), \; \Lambda^1_p=\restr{dL_U}{o}(\mathfrak{m}_1),$$
where $L_{U}:  \mathbb{C}P^{2n+1} \to \mathbb{C}P^{2n+1}$ is the left action induced by the multiplication of $U \in  \mathrm{Sp}(n+1)$.   

In particular, $\Lambda^0_o = \mathfrak{m}_0$ and $\Lambda^1_o = \mathfrak{m}_1$. Consequently, the family of metrics $\{g_t\}_{t\in \mathbb{R}_{>0}}$ can be expressed as:
\begin{equation}\label{formulaforgt}
	g_t= t g_0+g_1,
\end{equation}
where $g_0 \doteq \restr{g_{FS}}{\Lambda^0}$ and $g_1 \doteq \restr{g_{FS}}{\Lambda^1}$. 

An immediate consequence of this approach, is that for the Fubini-Study metric $g_{\mathbb{H}P^n}$ of $\mathbb{H}P^n$ the projection $\pi: (\mathbb{C}P^{2n+1},g_t) \to (\mathbb{H}P^n,g_{\mathbb{H}P^n})$ is a Riemannian submersion for every $t \in \mathbb{R}_{>0}$.    

Notice the similarity in construction between the family of metrics $\{g_t\}_{t\in \mathbb{R}_{>0}}$ and the Berger metrics on $\mathbb{R}P^3$. For instance, the parameter $t>0$ controls the volume of the fiber of the Penrose fibration. This comparison allows us to draw parallels between our results and those presented by L. Ambrozio and R. Montezuma in \cite{lucas_rafael_sistole_projcspace}.

Subsequently we focus on proving that $(\mathbb{C}P^{2n+1},J_{\mathrm{can}},g_t)$ is balanced for every $t>0$. We begin by justifying the compatibility condition of the canonical complex structure $J_{\mathrm{can}}$ with the metrics $\{g_t\}_{t \in \mathbb{R}_{>0}}$, and describing its fundamental forms.

Using the identification $(\ref{m=ToCP3})$, the decomposition $\mathfrak{m}=\mathfrak{m}_0\oplus \mathfrak{m}_1$ is preserved by the canonical complex structure $J_{\mathrm{can}}$ and by the family $\{g_t\}_{t \in \mathbb{R}_{>0}}$. Consequently, $(\mathbb{C}P^{2n+1}, J_{\mathrm{can}}, g_t)$ defines a Hermitian manifold. Furthermore, the decomposition $T\mathbb{C}P^{2n+1}=\Lambda^0\oplus \Lambda^1$ also enjoys this invariance. Therefore, denoting the orthogonal projections onto the spaces $\Lambda^i$, by $\Pi_i: T\mathbb{C}P^{2n+1}\to \Lambda^i$, we can decompose $\Omega$, the fundamental form of the Fubini-Study metric, in the following factors:     
\begin{equation}\label{omega0omega1}
	\Omega_0(\cdot,\cdot)=\Omega(\Pi_0 \cdot,\Pi_0\cdot),\;\Omega_1(\cdot,\cdot)=\Omega(\Pi_1 \cdot,\Pi_1\cdot).
\end{equation}
It follows straightaway from the definition of the family of homogeneous metrics $\{g_t\}_{t \in \mathbb{R}_{>0}}$ on $\mathbb{C}P^{2n+1}$ that the associated fundamental forms are given by: 
$$\omega_t(\cdot,\cdot)\doteq g_t(J \cdot, \cdot)=t\Omega_0(\cdot,\cdot)+\Omega_1(\cdot,\cdot),$$  
for every $t>0$.

The previous decompositions provide the necessary tools to prove the balanced property. 

\begin{lema}\label{propW0andW1}
	If $\pi:\left(\mathbb{C}P^{2n+1},g_{FS}\right) \to \left(\mathbb{H}P^n,g_{\mathbb{H}P^n}\right)$ is the Penrose fibration, then $\Omega_0^2 = 0$ and $\Omega_1^{2n}= (2n)!\pi^*d{V}_{{g}_{\mathbb{H}P^n}}$.
\end{lema}  
\begin{proof}
	Take $X_1,...,X_4 \in \mathfrak{X}(\mathbb{C}P^{2n+1})$. By definition of $\Omega_0$,   
	$$\Omega_0^2(X_1,...,X_4)=\Omega^2(\Pi_0X_1,...,\Pi_0X_4).$$
	However the vector bundle $\Lambda^0$  has rank $2$, so that $\{\Pi_0 X_1,...,\Pi_0X_4\}$ must be a linear dependent set. Therefore $\Omega_0^2(X_1,...,X_4)=0$, as desired.
	
	Now fix $p \in \mathbb{C}P^{2n+1}$. Since $\mathrm{ker}d\pi=\Lambda^0$, the linear forms $\restr{\Omega_1^{2n}}{p}$ and $\restr{\pi^*d{V}_{{g}_{\mathbb{H}P^n}}}{p}$ are of top degree in $\Lambda^1_p$. Hence, there must exist $a \in \mathbb{R}$ such that: 
	$$\restr{\Omega_1^{2n}}{p} = a \restr{\pi^*d{V}_{{g}_{\mathbb{S}^4}}}{p}.$$ 
	Evaluating these forms on a complex orthonormal basis of $\Lambda^1_p$, and and recalling that $\restr{d\pi}{p}: (\Lambda^1_p,g_{FS}) \to (T_{\pi(p)}\mathbb{H}P^n,g_{\mathbb{H}P^n})$ is an isometry that preserves orientation, we conclude that $a=(2n)!$, completing the proof.         
\end{proof}

\begin{prop}\label{gtisbalanced}
	For every $t \in \mathbb{R}_{>0}$, the Hermitian manifold $(\mathbb{C}P^{2n+1},J_{can},g_t)$ is balanced. 
\end{prop}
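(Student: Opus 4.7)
The plan is to compute $\omega_t^{2n}$ explicitly using the decomposition $\omega_t = t\Omega_0 + \Omega_1$ and then show its differential vanishes by a short bookkeeping argument that exploits the fact that the Fubini--Study form $\Omega = \Omega_0 + \Omega_1$ is closed.

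First, since $J_{\mathrm{can}}$ preserves the decomposition $T\mathbb{C}P^{2n+1} = \Lambda^0 \oplus \Lambda^1$ and the two summands are $g_{FS}$-orthogonal, the mixed terms in $\Omega = g_{FS}(J\cdot,\cdot)$ vanish, so $\Omega = \Omega_0 + \Omega_1$. Consequently $\omega_t = t\Omega_0 + \Omega_1$, and by the binomial theorem together with item \ref{propW0andW11} of Lemma \ref{propW0andW1} (which gives $\Omega_0^k = 0$ for $k \geq 2$), every term involving $\Omega_0^k$ with $k\geq 2$ drops out. Therefore
\begin{equation*}
\omega_t^{2n} = \Omega_1^{2n} + 2nt\, \Omega_0 \wedge \Omega_1^{2n-1}.
\end{equation*}

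Since the balanced condition for the Hermitian manifold $(\mathbb{C}P^{2n+1}, J_{\mathrm{can}}, g_t)$, of complex dimension $2n+1$, is $d\omega_t^{2n} = 0$, it suffices to show that each of the two summands above is closed. For the first summand, item \ref{propW0andW12} of Lemma \ref{propW0andW1} identifies $\Omega_1^{2n} = (2n)!\, \pi^* dV_{g_{\mathbb{H}P^n}}$ as the pullback of a top-degree form from $\mathbb{H}P^n$, which is automatically closed. For the second summand, I would apply the same binomial expansion to the Fubini--Study form itself: using $\Omega_0^2 = 0$,
\begin{equation*}
\Omega^{2n} = (\Omega_0 + \Omega_1)^{2n} = \Omega_1^{2n} + 2n\, \Omega_0 \wedge \Omega_1^{2n-1}.
\end{equation*}
Because $\Omega$ is the Kähler form of the Fubini--Study metric, $d\Omega^{2n} = 0$; combining this with $d\Omega_1^{2n} = 0$ from the previous paragraph yields $d(\Omega_0 \wedge \Omega_1^{2n-1}) = 0$.

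Putting the two pieces together gives $d\omega_t^{2n} = 0$ for every $t > 0$, which is the Balanced condition. There is no genuine obstacle here beyond the bookkeeping: the only substantive input is Lemma \ref{propW0andW1}, which has already been proved, and the triviality $\Omega_0^2 = 0$ is what makes the binomial expansion collapse to only two surviving terms and allows one to transfer the closedness of $\Omega^{2n}$ directly to the mixed term.
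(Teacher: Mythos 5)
Your proposal is correct and follows essentially the same route as the paper: expand $\omega_t^{2n}=(t\Omega_0+\Omega_1)^{2n}$ using $\Omega_0^2=0$, note that $\Omega_1^{2n}$ is the pullback of a top form on $\mathbb{H}P^n$, and deduce $d(\Omega_0\wedge\Omega_1^{2n-1})=0$ from the closedness of the Fubini--Study form (the paper phrases this as setting $t=1$, i.e.\ $\Omega=\omega_1$, which is exactly your comparison with $\Omega^{2n}$). No gaps.
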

\begin{proof}
	We will check that $d \omega_t^{2n}=0$ for every $t \in \mathbb{R}_{>0}$. By Proposition $\ref{propW0andW1}$   
	\begin{align*}
		\omega_t^{2n} &= (t \Omega_0 + \Omega_1)^{2n} \\
		&= \sum_{k=0}^{2n} \binom{2n}{k} t^k\Omega_0^k \Omega_1^{2n-k} \\
		&= 2nt\,\Omega_0 \Omega_1^{2n-1} + \Omega_1^{2n} \\ 
		&= 2nt\,\Omega_0 \Omega_1^{2n-1} + (2n)!\,\pi^*d{V}_{{g}_{\mathbb{H}P^n}}. 
	\end{align*} 
	Therefore, $d\omega_t^{2n} = 2nt\,d(\Omega_0 \Omega_1^{2n-1})$ for every $t \in \mathbb{R}_{>0}$. However, for the Fubini-Study metric $\Omega=\omega_1$, we have 
	$$ 0 = \frac{1}{2n}d\omega_1^{2n} = d(\Omega_0 \Omega_1^{2n-1}).$$
	Hence, $d\omega_t^{2n} = 2nt\,d(\Omega_0 \Omega_1^{2n-1}) =0$ for every $t>0$.     
\end{proof}

\begin{obs}
	For the case $n=1$, the Hermitian manifold $(\mathbb{C}P^3,J_{\mathrm{can}},g_t)$ can be viewed as the Twistor space over the anti-self-dual manifold $(\mathbb{S}^4,g_{\mathrm{can}})$. Therefore, (\cite{Jixiang15}, Theorem $3.1$) gives another proof of the fact that this space is balanced.    
\end{obs}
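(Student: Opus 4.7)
The plan is to justify the remark by constructing an explicit isomorphism between the Hermitian manifold $(\mathbb{C}P^3, J_{\mathrm{can}}, g_t)$ and the twistor space of $(\mathbb{S}^4, g_{\mathrm{can}})$ equipped with its canonical one-parameter family of Hermitian metrics, and then invoking \cite[Theorem 3.1]{Jixiang15}.

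First, I would recall the Atiyah--Hitchin--Singer construction: for an oriented Riemannian $4$-manifold $(M,g)$, the twistor space $Z(M)$ is the sphere bundle of orthogonal almost complex structures on $TM$ compatible with the orientation, and it carries a tautological almost complex structure $\mathcal{J}$ built from the horizontal-vertical decomposition induced by the Levi-Civita connection of $g$, together with the vertical tautological complex structure on each fiber. Since $(\mathbb{S}^4, g_{\mathrm{can}})$ is conformally flat, it is anti-self-dual, so $\mathcal{J}$ is integrable. The classical identification $Z(\mathbb{S}^4) \cong \mathbb{C}P^3$ can be obtained from the homogeneous presentations $\mathbb{S}^4 = \mathrm{Sp}(2)/(\mathrm{Sp}(1)\times \mathrm{Sp}(1))$ and $\mathbb{C}P^3 = \mathrm{Sp}(2)/(\mathrm{Sp}(1)\times \mathrm{U}(1))$, under which the twistor projection coincides with the Penrose fibration \eqref{pensrosefibration} for $n=1$.

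Second, I would match this identification with the structures of Section \ref{descofhommet}. Both the Levi-Civita horizontal distribution on the twistor bundle and the distribution $\Lambda^1 = (\ker d\pi)^{\perp_{g_{FS}}}$ are $\mathrm{Sp}(2)$-invariant transverse complements of $\Lambda^0$, so at the basepoint $o$ they both must agree with the unique $\mathrm{Sp}(1)\times \mathrm{U}(1)$-invariant complement of $\mathfrak{m}_0$ in $\mathfrak{m}$, namely $\mathfrak{m}_1$; by equivariance they then coincide globally. The tautological almost complex structure $\mathcal{J}$ acts on vertical vectors by the tautological complex structure of the fiber $\mathbb{C}P^1$, and on horizontal vectors via the identification of $\Lambda^1$ with $T\mathbb{S}^4$; both of these agree with $J_{\mathrm{can}}$ restricted to $\Lambda^0$ and $\Lambda^1$ respectively, again by $\mathrm{Sp}(2)$-equivariance and matching at the basepoint. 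This is the step I expect to be the main obstacle, since verifying the identification $\mathcal{J} = J_{\mathrm{can}}$ requires tracking carefully how the complex coordinates $[z_0:z_1:w_0:w_1]$ used in \eqref{pensrosefibration} encode the fiberwise complex structures of the twistor bundle; the argument is clean at the basepoint by representation theory but must be propagated equivariantly to all of $\mathbb{C}P^3$.

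Third, I would compare the metric families. The canonical family of Hermitian metrics on $Z(\mathbb{S}^4)$ considered in \cite{Jixiang15} is of the form $g_H + s\, g_V$, where $g_H$ is the pullback of $g_{\mathrm{can}}$ along the twistor projection and $g_V$ is the (normalized) round metric on the $\mathbb{C}P^1$ fiber. Under the identifications above, this is exactly $\{g_t\}_{t>0}$ from \eqref{formulaforgt}, up to a global rescaling of the base which does not affect the Balanced condition. With all these identifications in place, \cite[Theorem 3.1]{Jixiang15} (which asserts that the twistor space of an anti-self-dual $4$-manifold with this family of metrics is Balanced) applies directly and yields that $(\mathbb{C}P^3, J_{\mathrm{can}}, g_t)$ is Balanced for every $t>0$, providing the alternative proof asserted in the remark.
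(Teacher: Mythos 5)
Your proposal is correct and is exactly the argument the remark intends: the identification of $(\mathbb{C}P^3, J_{\mathrm{can}})$ with the Atiyah--Hitchin--Singer twistor space of $\mathbb{S}^4$ via the homogeneous presentations $\mathbb{S}^4 = \mathrm{Sp}(2)/(\mathrm{Sp}(1)\times\mathrm{Sp}(1))$ and $\mathbb{C}P^3 = \mathrm{Sp}(2)/(\mathrm{Sp}(1)\times\mathrm{U}(1))$, the equivariant matching of $\Lambda^0 \oplus \Lambda^1$, $J_{\mathrm{can}}$ and $\{g_t\}_{t>0}$ with the twistor-space data $g_H + s\,g_V$, and the appeal to the cited theorem. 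The paper leaves all of these identifications implicit as classical facts, and its self-contained proof of the Balanced property (valid for all $n$, not just $n=1$) is instead the direct computation $d\omega_t^{2n} = 2nt\, d(\Omega_0\Omega_1^{2n-1}) = 0$ of Proposition $\ref{gtisbalanced}$.
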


\subsection{2-Systole}\label{2systole}

Having established the notation and properties of the family $\{g_t\}_{t \in \mathbb{R}_{>0}}$ of homogeneous metrics on the complex projective space $\mathbb{C}P^{2n+1}$, for $n \geq 1$, we now proceed to demonstrate Theorems $\ref{systofhommetrics}$ and $\ref{confclassofhommetr}$ for the dimension two systole case.

We intend to prove a stronger version of Theorem $\ref{systofhommetrics}$ by explicitly exhibiting the submanifold that realizes the systole. Taking inspiration in the well-studied case of the Fubini-Study metric \cite{berger} and \cite{gromov_systole}, together with the fact that the homogeneous family $\{g_t\}_{t \in \mathbb{R}_{>0}}$ is parameterized by the volume of the fiber of the Penrose fibration, it is intuitive to guess that, for $t\leq 1 $, the systole should be achieved at the fiber of this fibration. On the other hand, since there exists a linear projective plane in $\mathbb{C}P^{2n+1}$ with tangent bundle contained in the distribution $\Lambda^1$ (see Proposition $\ref{2sysbound}$), the intuition suggests that this linear projective plane should realize the systole for $t \geq 1$.       

Subsequently, we properly verify that the intuitions above are corrected. This entails analyzing the two distinct cases $t \leq 1$ and $t \geq 1$. As suggested, these cases differ significantly in nature, and their dichotomy will persist throughout the section. We begin by exhibiting the aforementioned linear projective plane.              

\begin{prop}\label{2sysbound}
	There exists a linear projective plane $\mathbb{C}P^1_T\subset \mathbb{C}P^{2n+1}$ such that $T\mathbb{C}P^1_T \subset \Lambda^1$. Moreover, there exists a subgroup $\mathrm{Sp}_{T}(1)$ of $\mathrm{Sp}(n+1)$ isomorphic to $\mathrm{Sp}(1)$, such that $\mathbb{C}P^1_T$ is invariant under its action, and the action is transitive.         
\end{prop}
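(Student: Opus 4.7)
The strategy is to give explicit coordinate descriptions of both the candidate linear projective line $\mathbb{C}P^1_T$ and a subgroup $\mathrm{Sp}_T(1) \subset \mathrm{Sp}(n+1)$ preserving it. Horizontality $T\mathbb{C}P^1_T \subset \Lambda^1$ will follow from a single base-point check combined with the $\mathrm{Sp}(n+1)$-equivariance of the splitting $T\mathbb{C}P^{2n+1} = \Lambda^0 \oplus \Lambda^1$, which in turn comes from the fact that the Penrose fibration $\pi$ and $g_{FS}$ are both $\mathrm{Sp}(n+1)$-invariant.

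Concretely, I would set
\[
\mathbb{C}P^1_T \,\doteq\, \bigl\{[z_0 : z_1 : 0 : \cdots : 0 \,:\, 0 : 0 : \cdots : 0] \in \mathbb{C}P^{2n+1} \,:\, (z_0, z_1) \neq 0 \bigr\},
\]
which is manifestly a linear $\mathbb{C}P^1$. At the basepoint $o=[\mathbf{e}_1]$, the tangent direction to $\mathbb{C}P^1_T$ is spanned by the class of the vector $(0,1,0,\ldots,0\,;\,0,\ldots,0) \in \mathbb{C}^{2n+2}$, which under the identification \eqref{m=ToCP3} corresponds to $(y,z,w) = (0,(1,0,\ldots,0),0) \in \mathfrak{m}_1$. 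Thus $T_o \mathbb{C}P^1_T \subset \mathfrak{m}_1 = \Lambda^1_o$. Next, using the block-matrix description \eqref{lie(sp(2))} of the group $\mathrm{Sp}(n+1)$ (or rather its matrix form in $\mathrm{U}(2n+2)$), I would define
\[
\mathrm{Sp}_T(1) \,\doteq\, \left\{ \begin{pmatrix} A & 0 \\ 0 & \bar A \end{pmatrix} \,:\, A = \begin{pmatrix} a & -\bar b & 0 \\ b & \bar a & 0 \\ 0 & 0 & I_{n-1} \end{pmatrix},\ |a|^2 + |b|^2 = 1 \right\}.
\]
Since the off-diagonal blocks vanish and $A \in \mathrm{U}(n+1)$, the symplectic unitarity relations are trivially satisfied, so $\mathrm{Sp}_T(1) \subset \mathrm{Sp}(n+1)$, and the assignment $(a,b) \mapsto A$ gives an isomorphism $\mathrm{Sp}_T(1) \cong \mathrm{SU}(2) \cong \mathrm{Sp}(1)$.

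It remains to verify that $\mathrm{Sp}_T(1)$ preserves $\mathbb{C}P^1_T$ and acts transitively on it, and then to propagate the base-point inclusion. The first point is immediate: the subspace $W \doteq \{z_2=\cdots=z_n=0,\ w_0=\cdots=w_n=0\} \subset \mathbb{C}^{2n+2}$ is preserved by every element of $\mathrm{Sp}_T(1)$, and $\mathbb{C}P^1_T = \mathbb{P}(W)$; transitivity is inherited from $\mathrm{SU}(2)/\mathrm{U}(1) \curvearrowright \mathbb{C}P^1$. Finally, because the Penrose fibration is $\mathrm{Sp}(n+1)$-equivariant and $g_{FS}$ is $\mathrm{Sp}(n+1)$-invariant, the distributions $\Lambda^0$ and $\Lambda^1$ are $\mathrm{Sp}(n+1)$-invariant; in particular, they are $\mathrm{Sp}_T(1)$-invariant. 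Transitivity of $\mathrm{Sp}_T(1)$ on $\mathbb{C}P^1_T$ then upgrades the base-point inclusion $T_o\mathbb{C}P^1_T \subset \Lambda^1_o$ to $T\mathbb{C}P^1_T \subset \Lambda^1$ everywhere. The main point requiring care is ensuring that $\mathrm{Sp}_T(1)$ acts only on the $z$-block in the way described and does not produce unintended components in the $w$-block; this is the reason for choosing the block-diagonal form with $B=0$, so that no "$j$-twist" is introduced that would push tangent directions out of $\Lambda^1$.
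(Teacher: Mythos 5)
Your proposal is correct and follows essentially the same route as the paper: the same linear line $\mathbb{C}P^1_T=\{[z_0:z_1:0:\cdots:0]\}$, the same block-diagonal subgroup $\mathrm{Sp}_T(1)$ (your explicit $\mathrm{SU}(2)$ block is exactly the paper's $A_0\in\mathrm{Sp}(1)$), and the same mechanism of checking $T_o\mathbb{C}P^1_T\subset\mathfrak{m}_1$ at the base point and propagating it via transitivity of $\mathrm{Sp}_T(1)$ together with the $\mathrm{Sp}(n+1)$-invariance $\Lambda^1_p=\restr{dL_U}{o}(\mathfrak{m}_1)$. No gaps to report.
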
     
\begin{proof}
	Define $\mathbb{C}P^1_T=\{[p_0:p_1:0:...:0]\in \mathbb{C}P^{2n+1}\}$. Therefore, $o=[\textbf{e}_1] \in \mathbb{C}P^1_T$ and 
	$$T_o \mathbb{C}P^1_T=\{(0,\xi,0,0)\in T_{\textbf{e}_1}\mathbb{S}^{4n+3}\; :\; \xi \in \mathbb{C} \} \subset \mathfrak{m}_1.$$
	Consider the subgroup $\mathrm{Sp}_T(1) \subset \mathrm{Sp}(n+1)$, given by: 
	$$\mathrm{Sp}_T(1)=\left\{
	\begin{pmatrix}
		A & 0 \\ 0 & \bar{A} 
	\end{pmatrix} \; : \; A = \begin{pmatrix}
		A_0 & 0 \\ 0 & \mathrm{Id}_{n-1} 
	\end{pmatrix}, \;  A_0 \in \mathrm{Sp}(1) \right\}.$$
	Clearly $\mathbb{C}P^1_T$ is invariant by the action of the subgroup $\mathrm{Sp}_T(1)$ of $\mathrm{Sp}(n+1)$, and moreover the action is transitive. Hence for each $p \in \mathbb{C}P^1_T$ exist $U \in \mathrm{Sp}_T(1)$, such that $U\cdot o = p$, and then $T_p \mathbb{C}P^1_T = \restr{dL_U}{o}(T_o\mathbb{C}P^1_T) \subset \Lambda^1_p$, as desired.   
\end{proof}

Formalizing our intuition above, our candidates to realize the two-dimensional systole of the family $\{ g_{t} \}_{t \in \mathbb{R}_{>0}}$ are $\mathbb{C}P^1_b \doteq \pi^{-1}(b)$ for $t \leq 1$ and $\mathbb{C}P^1_T$ for $t \geq 1$, where $\pi: \mathbb{C}P^{2n+1} \to \mathbb{H}P^n$ is the Penrose fibration and $b \in \mathbb{H}P^n$.  Incidentally, we observe that, by the Koszul Formula, these families of linear projective planes are totally geodesic in $(\mathbb{C}P^{2n+1},g_t)$ for every $t \in \mathbb{R}_{>0}$.

Now that we have well-understood our contestants to realize the systole, and since the volume of $(\mathbb{C}P^{2n+1},g_t)$ can be readily computed to be $\mathrm{vol}_{g_t}(\mathbb{C}P^{2n+1})=t\mathrm{vol}_{g_{FS}}(\mathbb{C}P^{2n+1})$, for every $t\in \mathbb{R}_{>0}$, we can formulate the following refined version of Theorem $\ref{systofhommetrics}$.
\begin{teo}\label{2systofhommetrics}
	Let $\pi: \mathbb{C}P^{2n+1} \to \mathbb{H}P^n$ be the Penrose fibration, and for every $b \in \mathbb{H}P^n$ set $\mathbb{C}P^1_{b} = \pi^{-1}(b)$. Hence
	\begin{enumerate}[label=\alph*),ref=(\alph*)]
		\item If $0< t \leq 1$ then $\mathrm{Sys}_2(\mathbb{C}P^{2n+1},g_t) = |\mathbb{C}P^1_b|_{g_t} = t$;
		\item If $t \geq 1$ then $\mathrm{Sys}_2(\mathbb{C}P^{2n+1},g_t) = |\mathbb{C}P^1_T|_{g_t} = 1$. 
	\end{enumerate}          
\end{teo}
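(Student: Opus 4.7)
The plan is to establish the claimed systole values by combining an explicit upper bound, realized by the candidate submanifolds, with a matching lower bound obtained from a calibration argument using the Fubini-Study Kähler form $\Omega$ as an auxiliary closed 2-form.

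For the upper bound, I would compute $|\mathbb{C}P^1_b|_{g_t}$ and $|\mathbb{C}P^1_T|_{g_t}$ directly. Since $\mathbb{C}P^1_b$ is a Penrose fiber we have $T\mathbb{C}P^1_b \subset \Lambda^0$, while Proposition $\ref{2sysbound}$ gives $T\mathbb{C}P^1_T \subset \Lambda^1$. Formula \eqref{formulaforgt} then implies that the area form on $\mathbb{C}P^1_b$ equals $t$ times the Fubini-Study one, while on $\mathbb{C}P^1_T$ the two area forms agree. Both candidates are linear projective lines, hence complex submanifolds generating $H_2(\mathbb{C}P^{2n+1}, \mathbb{Z}) \cong \mathbb{Z}$, and by the normalization $\int_{\mathbb{C}P^1} \Omega = 1$ each has $g_{FS}$-area equal to $1$; this gives $|\mathbb{C}P^1_b|_{g_t} = t$, $|\mathbb{C}P^1_T|_{g_t} = 1$, and in particular $\mathrm{Sys}_2(\mathbb{C}P^{2n+1}, g_t) \leq \min(t, 1)$.

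For the lower bound I would use $t\Omega$ as a closed calibrating 2-form. The key step, and the main obstacle, is to show that its comass with respect to $g_t$ equals $\max(1, t)$. Let $A$ be the $g_t$-skew-adjoint endomorphism of $T\mathbb{C}P^{2n+1}$ defined by $g_t(AX, Y) = t\Omega(X, Y)$. Using that $J_{\mathrm{can}}$ preserves the $g_{FS}$-orthogonal decomposition $T\mathbb{C}P^{2n+1} = \Lambda^0 \oplus \Lambda^1$ together with the scaling relations $g_t|_{\Lambda^0} = tg_{FS}|_{\Lambda^0}$ and $g_t|_{\Lambda^1} = g_{FS}|_{\Lambda^1}$, a direct calculation yields $A|_{\Lambda^0} = J|_{\Lambda^0}$ and $A|_{\Lambda^1} = tJ|_{\Lambda^1}$ with vanishing off-diagonal blocks. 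Reading off the singular values gives comass exactly $\max(1, t)$. Then for any 2-cycle $C$ generating $H_2$, the comass inequality $\int_C t\Omega \leq \max(1, t) \cdot |C|_{g_t}$ combined with $\int_C t\Omega = t$ (by the normalization) yields $|C|_{g_t} \geq \min(1, t)$, matching the upper bound case by case and establishing the two parts of the theorem.
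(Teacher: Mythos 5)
Your argument is correct, and its overall architecture (explicit complex lines for the upper bound, the closed form $\Omega$ detecting the homology class for the lower bound) is the same as the paper's; the difference lies in how the lower bound is implemented. The paper first proves a Crofton-type lemma identifying $\int_C \Omega$ with the intersection number $[C]\cdot[\mathbb{C}P^{2n}]$, then invokes Wirtinger's inequality \emph{with respect to $g_{FS}$} and the crude pointwise comparison $g_t \geq \min(1,t)\,g_{FS}$ to pass from $|C|_{g_{FS}}$ to $|C|_{g_t}$. You instead compute the comass of $t\Omega$ directly with respect to $g_t$: since $J_{\mathrm{can}}$ preserves the splitting $\Lambda^0\oplus\Lambda^1$ and $g_t = t\,g_{FS}|_{\Lambda^0} + g_{FS}|_{\Lambda^1}$, the $g_t$-skew endomorphism associated to $t\Omega$ is block diagonal, equal to $J$ on $\Lambda^0$ and $tJ$ on $\Lambda^1$, so its operator norm, hence the comass, is exactly $\max(1,t)$. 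This replaces the paper's two steps (Wirtinger plus metric comparison) with one exact computation; it is slightly less elementary (it needs the comass-equals-operator-norm fact for $2$-forms) but yields the sharp constant in a single stroke, and it makes transparent why the bound switches regime at $t=1$. Both routes give the same bound $\min(1,t)$ here.

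One small point to tighten: you bound $\int_C t\Omega$ only for cycles \emph{generating} $H_2$, whereas the systole is an infimum over all homologically non-trivial cycles. If $[C]=k[\mathbb{C}P^1]$ with $k\neq 0$, then $\int_C t\Omega = tk$ and $\bigl|\int_C t\Omega\bigr| = t|k| \geq t$, so the comass inequality still gives $|C|_{g_t} \geq t/\max(1,t) = \min(1,t)$; with that one-line addition the argument covers all non-trivial classes, exactly as the paper's intersection-number formulation does automatically.
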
    

We set forth the proof noticing that $\mathbb{C}P^1_{b}$ and $\mathbb{C}P^1_{T}$ are linear projective planes in $\mathbb{C}P^{2n+1}$, for every $b \in \mathbb{H}P^n$. Therefore, their homology classes are non-trivial, and the following bound follows by the definition of systole: 
\begin{equation}\label{initbounfor2sys}
	\mathrm{Sys}_2(\mathbb{C}P^{2n+1},g_t) \leq \min \{|\mathbb{C}P^1_b|_{g_t},|\mathbb{C}P^1_T|_{g_t} \}.
\end{equation}
This simple observation leads to the following result.

\begin{lema}\label{2systofhommetricslemma}
	For every $b \in \mathbb{H}P^n$ and $t>0$, we have: 
	\begin{enumerate}[label=\alph*),ref=(\alph*)]
		\item $\mathrm{Sys}_2(\mathbb{C}P^{2n+1},g_t) \leq |\mathbb{C}P^1_b|_{g_t} = t$.
		\item $\mathrm{Sys}_2(\mathbb{C}P^{2n+1},g_t) \leq |\mathbb{C}P^1_T|_{g_t} = 1$. 
	\end{enumerate}  
\end{lema}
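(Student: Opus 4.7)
The plan is to observe that both submanifolds are linearly embedded copies of $\mathbb{C}P^1$, so each represents (up to sign) the generator of $H_2(\mathbb{C}P^{2n+1},\mathbb{Z})\cong \mathbb{Z}$. Once non-triviality of the homology class is known, the inequalities $\mathrm{Sys}_2(\mathbb{C}P^{2n+1},g_t) \leq |\mathbb{C}P^1_b|_{g_t}$ and $\mathrm{Sys}_2(\mathbb{C}P^{2n+1},g_t) \leq |\mathbb{C}P^1_T|_{g_t}$ follow directly from the definition of the $2$-systole. Hence the entire content of the lemma is the area computation, which reduces to exploiting the splitting $T\mathbb{C}P^{2n+1}=\Lambda^0 \oplus \Lambda^1$ together with the expression $g_t = t g_0 + g_1$ derived in \eqref{formulaforgt}.

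For item (a), since $\mathbb{C}P^1_b = \pi^{-1}(b)$ is a fiber of the Penrose submersion $\pi:\mathbb{C}P^{2n+1}\to \mathbb{H}P^n$, its tangent bundle is precisely $\ker d\pi = \Lambda^0$ along the fiber. Restricting $g_t$ to $T\mathbb{C}P^1_b \subset \Lambda^0$ kills the $g_1$ term and yields $t\cdot g_{FS}|_{\Lambda^0}$. Therefore $|\mathbb{C}P^1_b|_{g_t} = t\cdot |\mathbb{C}P^1_b|_{g_{FS}}$, and $|\mathbb{C}P^1_b|_{g_{FS}} = 1$ by the normalization convention $\int_{\mathbb{C}P^1}\Omega = 1$ fixed in the remark immediately preceding Proposition~\ref{gtisbalanced}.

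For item (b), I would invoke Proposition~\ref{2sysbound}, which provides a linear projective line $\mathbb{C}P^1_T$ with $T\mathbb{C}P^1_T \subset \Lambda^1$. Restricting $g_t$ to this tangent distribution now kills the $g_0$ contribution, giving $g_t|_{T\mathbb{C}P^1_T} = g_{FS}|_{T\mathbb{C}P^1_T}$, so the area is independent of $t$ and equals $|\mathbb{C}P^1|_{g_{FS}} = 1$, again by the normalization.

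There is no real obstacle in this proof. The only thing worth emphasizing is that the linear nature of both $\mathbb{C}P^1_b$ and $\mathbb{C}P^1_T$ guarantees that they generate $H_2(\mathbb{C}P^{2n+1},\mathbb{Z})$ (rather than being some multiple of a generator), which is what legitimizes the application of the systole definition. This step follows from the fact that both are obtained as projectivizations of complex linear $2$-planes in $\mathbb{C}^{2n+2}$, or equivalently, from the observation that $\int_{\mathbb{C}P^1_b}\Omega = \int_{\mathbb{C}P^1_T}\Omega = 1$ together with $H^2(\mathbb{C}P^{2n+1},\mathbb{Z}) = \mathbb{Z}\langle [\Omega]\rangle$.
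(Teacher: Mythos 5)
Your proposal is correct and follows essentially the same route as the paper: non-triviality of the homology classes gives the systole bound by definition, and the tangency of $\mathbb{C}P^1_b$ to $\Lambda^0$ and of $\mathbb{C}P^1_T$ to $\Lambda^1$ reduces both area computations to the Fubini--Study normalization $\int_{\mathbb{C}P^1}\Omega=1$. The only cosmetic difference is that the paper evaluates the areas by integrating the fundamental form $\omega_t = t\Omega_0+\Omega_1$ over these complex curves (Wirtinger), whereas you rescale the restricted metric $g_t = t g_0 + g_1$ directly; both arguments rest on exactly the same splitting.
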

\begin{proof}
	From $(\ref{initbounfor2sys})$ it is clear that the desired result follows by computing the volume of these submanifolds. Since $\mathbb{C}P^1_b$ is a complex submanifold of $(\mathbb{C}P^{2n+1}, J_{\mathrm{can}})$, with $T\mathbb{C}P^1_b \subset \Lambda^0$ for every $b \in \mathbb{H}P^n$, we have
	\begin{align*}
		|\mathbb{C}P_b^1|_{g_t} =\int_{\mathbb{C}P_b^1}\omega_t =t\int_{\mathbb{C}P_b^1}\Omega_0 =t\int_{\mathbb{C}P_b^1}\Omega =t
	\end{align*}
	for every $b \in \mathbb{H}P^n$ and $t{>0}$.
	Again, $\mathbb{C}P^1_b$ is a complex submanifold of $(\mathbb{C}P^{2n+1}, J_{\mathrm{can}})$. However, since $\mathbb{C}P^1_T$ is transversal to the fiber of the Penrose fibration, the following identity holds
	\begin{align*}
		|\mathbb{C}P_T^1|_{g_t} =\int_{\mathbb{C}P_T^1}\omega_t =\int_{\mathbb{C}P_T^1}\Omega_1 =\int_{\mathbb{C}P_T^1}\Omega =1,
	\end{align*} 
	for every $t>0$.
\end{proof}

It is clear, by Lemma $\ref{2systofhommetricslemma}$, that Theorem $\ref{2systofhommetrics}$ is equivalent to equality in $(\ref{initbounfor2sys})$. In order to prove that the equality must hold, we will follow the approach presented in \cite{gromov_systole} and show that if a closed $2$-cycle $C \subset \mathbb{C}P^{2n+1}$ has less area than the bound given in $(\ref{initbounfor2sys})$, then $C$ has a trivial homology class in $H_2(\mathbb{C}P^{2n+1},\mathbb{Z})$. The foundation of this argument is the following Crofton formula.

\begin{lema}
	Let $C \subset \mathbb{C}P^{2n+1}$ be a closed $2$-cycle. Then 
	$$[C]\cdot [\mathbb{C}P^{2n}]=\int_{C} \Omega,$$
	where $\cdot:H_2(\mathbb{C}P^{2n+1},\mathbb{Z})\times H_{4n}(\mathbb{C}P^{2n+1},\mathbb{Z}) \to \mathbb{Z}$ denotes the intersection pairing.   
\end{lema}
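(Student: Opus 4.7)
The plan is to recognize this identity as a direct consequence of Poincaré duality in $\mathbb{C}P^{2n+1}$, once we identify the de Rham class $[\Omega]$ with the Poincaré dual of the hyperplane class $[\mathbb{C}P^{2n}]$. First I would recall that $H^2(\mathbb{C}P^{2n+1},\mathbb{R})$ is one-dimensional, so $[\Omega]$ and the Poincaré dual $\eta \in H^2_{dR}(\mathbb{C}P^{2n+1})$ of $[\mathbb{C}P^{2n}]\in H_{4n}(\mathbb{C}P^{2n+1},\mathbb{Z})$ must be proportional. By the defining property of the Poincaré dual,
$$[C]\cdot[\mathbb{C}P^{2n}] \;=\; \int_C \eta$$
for every closed $2$-cycle $C$, so the lemma reduces to checking $[\Omega]=\eta$.

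To pin down the constant of proportionality I would evaluate both cohomology classes on a generator of $H_2(\mathbb{C}P^{2n+1},\mathbb{R})$, which I take to be a linear projective line $\mathbb{C}P^1\subset\mathbb{C}P^{2n+1}$. The Fubini-Study normalization fixed in the paper gives $\int_{\mathbb{C}P^1}\Omega=1$, while two generic linear subspaces of complementary dimensions in projective space meet transversally in a single point, so $[\mathbb{C}P^1]\cdot[\mathbb{C}P^{2n}]=1$. This forces $[\Omega]=\eta$ in de Rham cohomology, and plugging this equality into the displayed formula above yields the claim for every closed $2$-cycle $C$.

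The argument is essentially bookkeeping with Poincaré duality and no step presents a genuine obstacle. The only mild technicality is that $C$ need not be a smooth embedded submanifold: for a singular or merely rectifiable cycle, one should interpret $\int_C\Omega$ as the pairing of the integral current representing $[C]$ with the smooth closed form $\Omega$, after which the reduction to cohomology classes goes through unchanged because this pairing depends only on the de Rham class of the form.
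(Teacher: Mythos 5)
Your argument is correct. It differs from the paper's proof only in packaging: the paper argues directly at the level of cycles, writing $C = k\,\mathbb{C}P^1 + \partial R$ (using that $[\mathbb{C}P^1]$ generates $H_2(\mathbb{C}P^{2n+1},\mathbb{Z})$), computing $[C]\cdot[\mathbb{C}P^{2n}]=k$ from $[\mathbb{C}P^1]\cdot[\mathbb{C}P^{2n}]=1$, and getting $\int_C\Omega = k$ by Stokes' Theorem and the normalization $\int_{\mathbb{C}P^1}\Omega=1$; you instead phrase the same invariance cohomologically, identifying $[\Omega]$ with the Poincar\'e dual of $[\mathbb{C}P^{2n}]$ and fixing the proportionality constant by evaluating on $\mathbb{C}P^1$. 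The inputs are identical (the generator of $H_2$, the normalization of $\Omega$, and the intersection number $1$ of complementary linear subspaces), so the two proofs buy the same thing; the paper's version is slightly more elementary in that it never invokes Poincar\'e duality, needing only homology generation and Stokes, while yours makes the cohomological content explicit and, as you note, handles non-smooth cycles by pairing currents with the closed form, which is also implicitly how the paper's chain-level integration should be read.
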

\begin{proof}
	Let $C$ be a closed $2$-cycle. Since $[\mathbb{C}P^1]$ is the generator of $H_2(\mathbb{C}P^{2n+1},\mathbb{Z})$, there exists a $3$-chain $R$ and an integer $k$ such that, in homology, $C=k \mathbb{C}P^1 + \partial R$. Consequently
	$$[C]\cdot [\mathbb{C}P^{2n}]=k[\mathbb{C}P^1]\cdot[\mathbb{C}P^{2n}] + [\partial R]\cdot[\mathbb{C}P^{2n}]=k,$$
	since $[\mathbb{C}P^1]\cdot[\mathbb{C}P^{2n}]=1$ and $[\partial R]=0$. On the other hand, by Stokes' Theorem, 
	$$\int_C \Omega = \int_{k \mathbb{C}P^1 + \partial R}\Omega = k \int_{\mathbb{C}P^1} \Omega = k,$$
	which concludes the proof.     
\end{proof}

Recalling the Wirtinger inequality, we obtain the following. 

\begin{coro}\label{intersecboundbyvol}
	Let $C \subset \mathbb{C}P^{2n+1 }$ be a closed $2$-cycle. Then 
	$$\left| [C]\cdot [\mathbb{C}P^{2n}] \right| \leq |C|_{g_{FS}}.$$
\end{coro}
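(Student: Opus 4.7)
The plan is to chain the previous lemma with the pointwise bound furnished by Wirtinger's inequality, which says that the Fubini--Study fundamental form $\Omega$ is a calibration. Concretely, the approach has just two steps.

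First, I would invoke the preceding lemma to replace the topological intersection number by an integral: for any closed $2$-cycle $C \subset \mathbb{C}P^{2n+1}$,
\begin{equation*}
[C]\cdot[\mathbb{C}P^{2n}] \;=\; \int_{C} \Omega.
\end{equation*}
This reduces the claim to the analytic inequality $\left|\int_C \Omega\right| \leq |C|_{g_{FS}}$.

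Second, I would establish this analytic inequality by integrating the pointwise Wirtinger bound. At each point $p \in C$ with tangent plane spanned by a $g_{FS}$-orthonormal pair $(v,w) \in T_pC$, the compatibility $\Omega(\cdot,\cdot) = g_{FS}(J_{\mathrm{can}}\cdot,\cdot)$ together with $|J_{\mathrm{can}}v|_{g_{FS}} = 1$ and Cauchy--Schwarz gives
\begin{equation*}
|\Omega(v,w)| \;=\; |g_{FS}(J_{\mathrm{can}}v,w)| \;\leq\; 1,
\end{equation*}
with equality precisely when $T_pC$ is $J_{\mathrm{can}}$-invariant. Equivalently, the restriction of $\Omega$ to any $2$-plane is bounded in absolute value by its area element, so $|\Omega_{|T_pC}| \leq d\mathrm{vol}_{C,g_{FS}}$. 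Integrating over $C$ and using the triangle inequality,
\begin{equation*}
\left|\int_C \Omega \right| \;\leq\; \int_C |\Omega_{|TC}| \;\leq\; \int_C d\mathrm{vol}_{C,g_{FS}} \;=\; |C|_{g_{FS}}.
\end{equation*}
Combining the two steps yields the desired inequality. There is no real obstacle here: the only thing to be careful about is the pointwise Wirtinger bound, but this is a classical linear-algebra fact and follows directly from the orthogonality of $J_{\mathrm{can}}$ with respect to $g_{FS}$.
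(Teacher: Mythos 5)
Your proposal is correct and follows exactly the paper's route: the paper deduces this corollary from the preceding lemma together with the Wirtinger inequality, which is precisely the pointwise bound you integrate. The only difference is that you spell out the linear-algebra step that the paper leaves implicit.
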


Finally, we provide the demonstration for Theorem $\ref{2systofhommetrics}$.
\begin{proof}[Proof of Theorem $\ref{2systofhommetrics}$]
	In view of Lemma $\ref{2systofhommetricslemma}$, it is enough to prove that we have an equality in equation $(\ref{initbounfor2sys})$. Or, equivalently to prove that if $C$ is a closed $2$-cycle satisfying $|C|_{g_t}<\min\{ 1,t \}$, then $[C]=0$ in homology. 
	
	Consider initially the case $t\leq 1$, and assume $|C|_{g_t}<t$. Given $X \in \mathfrak{X}(\mathbb{C}P^{2n+1})$, we can compare the metrics $g_t$ and $g_{FS}$ as follows: 
	$$g_t(X,X)=tg^0(X,X) + g^{1}(X,X) \geq tg^0(X,X) + tg^{1}(X,X) \geq tg_{FS}(X,X).$$
	This implies the comparison between volumes $t |C|_{g_{FS}} \leq |C|_{g_t}$. Hence, applying Corollary $\ref{intersecboundbyvol}$ we have
	$$\left|[C]\cdot[\mathbb{C}P^{2n}] \right| \leq |C|_{g_{FS}}  
	\leq \frac{1}{t} |C|_{g_t} 
	< 1.$$
	Now, since $[C] \cdot [\mathbb{C}P^{2n}]$ is an integer, it must be zero. However, we know that the homology $H_{4n}(\mathbb{C}P^{2n+1},\mathbb{Z})$ is generated by $[\mathbb{C}P^{2n}]$, and the intersection paring is non-degenerated, so we must have $[C]=0$, as claimed.
	
	For the case $t \geq 1$, a similar argument as before shows that 
	$$\left|[C]\cdot[\mathbb{C}P^2] \right| \leq |C|_{g_{FS}}  
	\leq |C|_{g_t}. $$     
	Then again, we conclude that $[C]=0$ if $|C|_{g_t}<1$.    
\end{proof}  

We now prove Theorem \ref{confclassofhommetr}, which states that every homogeneous metric maximizes the normalized systole in its conformal class. Our strategy, inspired by \cite{berger} and \cite{lucas_rafael_sistole_projcspace}, is to parametrize the linear projective planes which realize the systole in such a way that we can apply the coarea formula in order to prove the existence of integral geometric formula (Appendix \ref{Appendix:IntegralGeometricFormulasandSystolic Inequalities}), making Theorem \ref{confclassofhommetr} a direct corollary of Theorem \ref{FIGimpSCC}. More precisely, we summarize our objective in the following result. 

\begin{prop}\label{IGF2sys}
	For a fixed $t\in \mathbb{R}_{>0}$, there exists a family $\{\Sigma_{\sigma}\}_{\sigma \in B}$ of linear complex projective spaces with complex dimension $1$ in $(\mathbb{C}P^{2n+1},g_t)$, parameterized by a closed Riemannian manifold $(B,g_B)$, such that, for every function $\varphi \in C^{\infty}(\mathbb{C}P^{2n+1})$, the following formula holds: 
	$$\int_{B} \left( \int_{\Sigma_{\sigma}} \varphi\, dA_{g_t} \right) dV_{g_B}= \int_{\mathbb{C}P^{2n+1}} \varphi \,dV_{g_t}.$$
	Moreover, for each $\sigma \in B$, we have that $\mathrm{Sys}_2(\mathbb{C}P^{2n+1},g_t)=|\Sigma_{\sigma}|_{g_t}.$    
\end{prop}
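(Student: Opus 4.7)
The plan is to split into the two regimes $0<t\leq 1$ and $t\geq 1$ identified in Theorem \ref{2systofhommetrics}, and in each case to exhibit a homogeneous family of linear $\mathbb{C}P^1$'s realising the systole together with an integral geometric formula obtained from an invariance-plus-Fubini argument. At the boundary value $t=1$ both constructions are valid (they correspond to the two extremes of the family), so no compatibility between the two cases needs to be checked.

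For $0<t\leq 1$, I would take $B=\mathbb{H}P^n$ with $g_B$ equal to the Fubini-Study metric $g_{\mathbb{H}P^n}$, and the family $\Sigma_b=\pi^{-1}(b)$, where $\pi:\mathbb{C}P^{2n+1}\to\mathbb{H}P^n$ is the Penrose fibration. Each $\Sigma_b$ is a linear $\mathbb{C}P^1$ and realises the systole by Theorem \ref{2systofhommetrics}(a). Since $\pi:(\mathbb{C}P^{2n+1},g_t)\to(\mathbb{H}P^n,g_{\mathbb{H}P^n})$ is a Riemannian submersion for every $t>0$, as observed immediately after equation $(\ref{formulaforgt})$, the coarea formula for Riemannian submersions gives directly
$$\int_{\mathbb{C}P^{2n+1}}\varphi\,dV_{g_t} = \int_{\mathbb{H}P^n}\left(\int_{\pi^{-1}(b)}\varphi\,dA_{g_t}\right)dV_{g_{\mathbb{H}P^n}}(b),$$
with no rescaling of $g_B$ required.

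For $t\geq 1$, the natural candidate is the $\mathrm{Sp}(n+1)$-orbit of the plane $\mathbb{C}P^1_T$ from Proposition \ref{2sysbound}. Every translate $U\cdot\mathbb{C}P^1_T$ is a linear $\mathbb{C}P^1$ realising the systole, by $g_t$-isometry invariance combined with Theorem \ref{2systofhommetrics}(b). Letting $H\subset \mathrm{Sp}(n+1)$ be the closed subgroup stabilising $\mathbb{C}P^1_T$ setwise, the family is parameterised by the compact homogeneous space $B=\mathrm{Sp}(n+1)/H$, which carries an essentially unique $\mathrm{Sp}(n+1)$-invariant Riemannian metric $g_B$ up to an overall positive scale. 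Denoting by $d\mu_G$ the bi-invariant Haar measure on $\mathrm{Sp}(n+1)$, Fubini gives
$$\int_{\mathrm{Sp}(n+1)}\left(\int_{U\cdot\mathbb{C}P^1_T}\varphi\,dA_{g_t}\right)d\mu_G(U) = \int_{\mathbb{C}P^1_T}\left(\int_{\mathrm{Sp}(n+1)}\varphi(U\cdot x)\,d\mu_G(U)\right)dA_{g_t}(x).$$
Since $\mathrm{Sp}(n+1)$ acts transitively by $g_t$-isometries on $\mathbb{C}P^{2n+1}$, the inner integral on the right is independent of $x$ and equal to a fixed multiple of $\int_{\mathbb{C}P^{2n+1}}\varphi\,dV_{g_t}$. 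Pushing $\mu_G$ forward to $B=\mathrm{Sp}(n+1)/H$ (dividing by the $H$-Haar mass) and then rescaling $g_B$ by the appropriate positive constant absorbs all remaining factors into $dV_{g_B}$.

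The genuine work of the proof lies in this last step: one must verify that the rescaled invariant volume $dV_{g_B}$ on $B$ matches, factor for factor, the pushforward of $\mu_G/\mu_G(H)$, and that this rescaling is consistent with $g_B$ being a true Riemannian metric rather than just a volume form. This is a classical bookkeeping exercise in homogeneous-space integration once $H$ is identified explicitly from the action of $\mathrm{Sp}(n+1)$ on $\mathbb{C}P^1_T$ described in Section \ref{descofhommet}; everything else reduces to $\mathrm{Sp}(n+1)$-isometry invariance of $g_t$ and the explicit computation of $|\mathbb{C}P^1_T|_{g_t}$ already carried out in Theorem \ref{2systofhommetrics}.
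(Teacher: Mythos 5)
Your proposal is correct, and for $0<t\le 1$ it coincides with the paper's proof: both take $B=\mathbb{H}P^n$ with its Fubini--Study metric, use that the Penrose fibration is a Riemannian submersion for every $g_t$, and apply the coarea formula, with the systole property supplied by Theorem \ref{2systofhommetrics}. For $t\ge 1$ your route is genuinely different from the paper's. The paper works with the double fibration $(\ref{doublefibration})$ built from the inclusions $\mathrm{Sp}(n)\times\mathrm{U}(1),\,\mathrm{Sp}_T(1)\subset\mathrm{Sp}(n+1)$: it equips the incidence space $E$ with the metric induced from $g_t\times g_N$ (Proposition \ref{metgegb}), checks that the Jacobians of the two projections are constant and that $\nu$ is an isometry on the $\rho$-fibres, and applies the coarea formula twice, absorbing all constants into a homothety of $g_N$. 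You instead average over the group: Fubini on $\mathrm{Sp}(n+1)\times\mathbb{C}P^1_T$ with Haar measure, $g_t$-isometry invariance of the action, and uniqueness up to scale of the invariant measure on the transitive space $\mathbb{C}P^{2n+1}$ yield the formula with the pushforward measure on $B=\mathrm{Sp}(n+1)/H$; since that pushforward is $\mathrm{Sp}(n+1)$-invariant, it is a constant multiple of the Riemannian volume of any invariant metric on $B$, and a constant rescaling of that metric (which scales the volume by the appropriate power) finishes the argument. The ``bookkeeping'' step you defer is routine for exactly this reason, so it is not a gap; what your approach buys is that it avoids constructing $g_E$ and verifying constancy of Jacobians, at the cost of invoking uniqueness of invariant measures on compact homogeneous spaces. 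One small inaccuracy: the invariant metric on $B$ need not be unique up to scale (the isotropy representation of $H$ can be reducible), but you never actually use this --- only uniqueness of the invariant volume measure matters, and indeed the paper likewise starts from an arbitrary invariant metric $g_N$.
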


As seen previously, we can explicitly find the linear projective planes that realize the $2$-systole for each $t \in \mathbb{R}_{>0}$. Therefore, we have natural candidates to comprise those families (see Theorem $\ref{2systofhommetrics}$). Inherently, we will have to analyze two cases: $0 < t \leq 1$ and $t \geq 1$. In the first case, the Penrose fibration provides a simple way to perform this construction:

\begin{proof}[Proof of Proposition $\ref{IGF2sys}$ (case $0<t\leq 1$).] Fix $0<t\leq 1$. Recall that, the Penrose fibration $\pi:(\mathbb{C}P^{2n+1},g_t) \to (\mathbb{H}P^n,g_{\mathbb{H}P^n})$ is a Riemannian submersion. Therefore, by the coarea formula, for each function $\varphi \in C^{\infty}(\mathbb{C}P^{2n+1})$, the following identity holds: 
	$$\int_{\mathbb{H}P^n} \left( \int_{\mathbb{C}P^1_{b}} \varphi\, dA_{g_t} \right) dV_{g_{\mathbb{H}P^n}}= \int_{\mathbb{C}P^{2n+1}} \varphi\, dV_{g_t}.$$ 
	Here $\mathbb{C}P^1_{b} = \pi^{-1}(b)$ for each $b \in \mathbb{H}P^n$. Finally, Theorem $\ref{2systofhommetrics}$ ensures that the fibers of the Penrose fibration realize the $2$-systole. That is, $\mathrm{Sys}_2(\mathbb{C}P^{2n+1},g_t)=|\mathbb{C}P^1_b|_{g_t}$ for each $b \in \mathbb{H}P^n$.    
\end{proof}

Let us proceed to the case $t \geq 1$. In this situation, the $2$-systole is realized by the linear projective space $\mathbb{C}P^1_T$ (see Proposition $\ref{2sysbound}$ and Theorem $\ref{2systofhommetrics}$). As before, our objective is to find a parameterized family of linear projective spaces that are isometric to $\mathbb{C}P^1_T$ and admit an integral geometric formula. In order to do so, we will apply the double fibration argument, which was already known and well-understood by M. Pu and M. Berger (see, for instance, \cite{Berger1992}).   

Following \cite{paiva_fernandes}, Definition $2.6$ and subsequently Example $3$, we have that the inclusions  $\mathrm{Sp}(n) \times \mathrm{U}(1),\mathrm{Sp}_T(1) \subset \mathrm{Sp(n+1)}$ induces the \textit{double fibration}: 
\begin{equation}\label{doublefibration}
	\begin{split}	
		\begin{xy}\xymatrix{
				& E \doteq \frac{\mathrm{Sp}(n+1)}{L} \ar[dl]_{\nu}  \ar[dr]^{\rho} & \\
				\mathbb{C}P^{2n+1}&   & N \doteq \frac{\mathrm{Sp}(n+1)}{\mathrm{Sp}_T(1)}
			}
		\end{xy} 
	\end{split}
\end{equation}  
where $L\doteq (\mathrm{Sp}(n) \times \mathrm{U}(1)) \cap \mathrm{Sp}_T(1) \cong \mathrm{U}(1)$. 

Note that the fibers of $\rho:E \to N $ are modeled by $\mathbb{C}P^1_T = \mathrm{Sp}_T(1)/\mathrm{U}(1)$. Therefore, the parameterized family $\{\nu(\rho^{-1}(\sigma))\}_{\sigma \in N}$ consists of linear complex projective planes, each one diffeomorphic to $\mathbb{C}P^1_T$ by a left translation of $\mathrm{Sp}(n+1)$. Now, the existence of an integral geometric formula will follow from this parameterized family, by applying the coarea formula twice in the double fibration $(\ref{doublefibration})$. However, first we need to introduce appropriate Riemannian metrics on the manifolds $E$ and $N$.      

\begin{prop}
	For each $t\geq 1$ there are Riemannian metrics $g_E$ and $g_N$ on the manifolds $E$ and $N$ such that: 
	\begin{enumerate}[label=\alph*),ref=(\alph*)]\label{metgegb}
		\item \label{metgegb1} $g_E$ and $g_N$ are $\mathrm{Sp}(n+1)$-invariants;
		\item \label{metgegb3}The Jacobian of the maps $\rho: (E,g_E) \to (N,g_N)$ and $\nu:(E,g_E) \to (\mathbb{C}P^{2n+1},g_t)$ are constant;
		\item \label{metgegb4}For each $\sigma \in N$, $\restr{\nu}{\rho^{-1}(\sigma)}: (\rho^{-1}(\sigma),g_E) \to (\mathbb{C}P^{2n+1},g_t)$ is an isometric embedding. Moreover, $\Sigma_{\sigma} \doteq \nu(\rho^{-1}(\sigma))$ is isometric to $\mathbb{C}P^1_T$, for all $\sigma \in N$.     
	\end{enumerate}  
\end{prop}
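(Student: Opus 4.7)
The plan is to reduce the construction of $g_E$ and $g_N$ to a careful choice of $\mathrm{Ad}$-invariant inner products on $\mathfrak{g} := \mathfrak{sp}(n+1)$. By the standard theory of reductive homogeneous spaces, an $\mathrm{Sp}(n+1)$-invariant Riemannian metric on $E = \mathrm{Sp}(n+1)/L$ (respectively on $N = \mathrm{Sp}(n+1)/\mathrm{Sp}_T(1)$) amounts to an $\mathrm{Ad}(L)$-invariant (respectively $\mathrm{Ad}(\mathrm{Sp}_T(1))$-invariant) inner product on any complement of the corresponding isotropy subalgebra in $\mathfrak{g}$. Since $\nu$ and $\rho$ are $\mathrm{Sp}(n+1)$-equivariant by construction, once such metrics are chosen, item (a) is immediate, and item (b) follows for free: the Jacobians become $\mathrm{Sp}(n+1)$-invariant, hence constant, functions on the connected homogeneous manifold $E$.

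To set up the decomposition, fix an $\mathrm{Ad}(\mathrm{Sp}(n+1))$-invariant inner product $\kappa$ on $\mathfrak{g}$ (for instance coming from $\kappa(X,Y) = \mathrm{tr}(XY^{*})$ via the embedding $\mathfrak{sp}(n+1) \subset \mathfrak{u}(2n+2)$), normalized so that the induced metric on $\mathrm{Sp}(n+1)/(\mathrm{Sp}(n)\times\mathrm{U}(1))$ equals the Fubini-Study metric $g_{FS}$; thus $g_{FS}|_{\mathfrak{m}} = c\,\kappa|_{\mathfrak{m}}$ for some $c > 0$. Consider the $\kappa$-orthogonal decomposition
$$\mathfrak{g} = \mathfrak{l} \oplus \mathfrak{a} \oplus \mathfrak{b} \oplus \mathfrak{c},$$
where $\mathfrak{a} = \mathfrak{l}^{\perp} \cap (\mathfrak{sp}(n) \oplus \mathfrak{u}(1))$ is the vertical subspace for $\nu$ at $[e]$, $\mathfrak{b} = \mathfrak{l}^{\perp} \cap \mathfrak{sp}_T(1)$ is the vertical subspace for $\rho$ at $[e]$, and $\mathfrak{c}$ is the $\kappa$-orthogonal complement of the three preceding summands. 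Using the block-matrix description in $(\ref{lie(sp(2))})$, $\mathfrak{a}$ is supported in the lower-right $n \times n$ block while $\mathfrak{b}$ lies in the upper-left $2 \times 2$ block, so the decomposition is genuinely $\kappa$-orthogonal, each summand is $\mathrm{Ad}(L)$-invariant, and $\mathfrak{b} \subset \mathfrak{m}_1$; moreover, a direct computation using $(\ref{m=ToCP3})$ shows that $\nu_{*}$ at $[e]$ maps $\mathfrak{b}$ isomorphically onto $T_o \mathbb{C}P^1_T$.

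Now define $g_E$ to be the $\mathrm{Sp}(n+1)$-invariant metric on $E$ whose value at $[e]$ is $\kappa|_{\mathfrak{a}} \oplus c\,\kappa|_{\mathfrak{b}} \oplus \kappa|_{\mathfrak{c}}$ (with the summands kept $\kappa$-orthogonal), and $g_N$ to be the $\mathrm{Sp}(n+1)$-invariant metric on $N$ whose value at $[e]$ is $\kappa|_{\mathfrak{a}\oplus\mathfrak{c}}$, after identifying $\mathfrak{sp}_T(1)^{\perp} \cong \mathfrak{a}\oplus\mathfrak{c}$. All $\mathrm{Ad}$-invariance requirements follow from $\kappa$ being $\mathrm{Ad}(\mathrm{Sp}(n+1))$-invariant. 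By construction $\rho$ is a Riemannian submersion. For item (c), $\mathrm{Sp}(n+1)$-equivariance reduces the isometric embedding check to the base point $[e]$, where $\nu_{*}$ carries $\mathfrak{b}$ isomorphically onto $T_o \mathbb{C}P^1_T \subset \mathfrak{m}_1$; the decisive fact, structural for the whole range $t \geq 1$, is that $g_t|_{\mathfrak{m}_1} = g_{FS}|_{\mathfrak{m}_1}$ is independent of $t$, so $c\,\kappa|_{\mathfrak{b}} = g_{FS}|_{\mathfrak{b}} = g_t|_{\mathfrak{b}}$ and the map is an isometry. The principal technical obstacle is verifying $\mathfrak{a}\perp_{\kappa}\mathfrak{b}$ together with the identification $\nu_{*}(\mathfrak{b}) = T_o\mathbb{C}P^1_T$, both of which reduce to careful bookkeeping of how $\mathrm{Sp}_T(1)$ and $\mathrm{Sp}(n) \times \mathrm{U}(1)$ embed inside $\mathrm{Sp}(n+1)$.
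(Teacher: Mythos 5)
Your route is genuinely different from the paper's, which is much shorter: there one takes $g_N$ to be an arbitrary $\mathrm{Sp}(n+1)$-invariant metric, regards $E$ as the incidence submanifold of $\mathbb{C}P^{2n+1}\times N$, and lets $g_E$ be the restriction of the product metric $g_t\times g_N$; then (a) is immediate, (b) follows from invariance plus transitivity exactly as you argue, and (c) is automatic because under this identification $\nu$ and $\rho$ are the two coordinate projections, so $\rho^{-1}(\sigma)$ carries precisely the metric induced by $g_t$ on $\Sigma_\sigma$, which is a left $\mathrm{Sp}(n+1)$-translate (hence a $g_t$-isometric copy) of $\mathbb{C}P^1_T$. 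Your Lie-theoretic construction via the splitting $\mathfrak{g}=\mathfrak{l}\oplus\mathfrak{a}\oplus\mathfrak{b}\oplus\mathfrak{c}$ reaches the same goal and gives a more explicit $g_E$ (for instance $\rho$ becomes a Riemannian submersion), at the cost of bookkeeping the paper's trick avoids; the reduction of (c) to the single linear statement that $\nu_*$ carries $(\mathfrak{b},c\,\kappa)$ isometrically onto $(T_o\mathbb{C}P^1_T,g_t)$ is sound, and your identification $\nu_*(\mathfrak{b})=T_o\mathbb{C}P^1_T$ checks out against $(\ref{m=ToCP3})$.

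There is, however, one incorrect step: the normalization ``$g_{FS}|_{\mathfrak{m}}=c\,\kappa|_{\mathfrak{m}}$'' is false. Since $\mathfrak{sp}(n+1)$ is simple, $\kappa$ is unique up to scale, and the two inequivalent irreducible summands $\mathfrak{m}_0$ and $\mathfrak{m}_1$ scale differently: with the identification $(\ref{m=ToCP3})$ and $\kappa(X,Y)=\mathrm{Re}\,\mathrm{tr}(XY^*)$, the ratio of $g_{FS}$ to $\kappa$ on $\mathfrak{m}_0$ is twice the ratio on $\mathfrak{m}_1$ (the matrix realizing $(y,z,w)$ has trace-norm $2|y|^2+4(|z|^2+|w|^2)$ while the Fubini--Study norm is $|y|^2+|z|^2+|w|^2$), so the normal homogeneous metric determined by $\kappa$ is homothetic to $g_{1/2}$, not to $g_{FS}=g_1$, and no rescaling of $\kappa$ fixes this. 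The error is repairable because you only ever use the restriction to $\mathfrak{b}\subset\mathfrak{m}_1$: define $c$ instead by $c\,\kappa|_{\mathfrak{m}_1}=g_{FS}|_{\mathfrak{m}_1}$ (which does hold for a single constant, by irreducibility of $\mathfrak{m}_1$ or the computation above), and the rest of your argument goes through. Two smaller points you should tighten: $\mathfrak{a}$ is not supported purely in the lower-right block (it contains a $\mathfrak{u}(1)$-direction with a nonzero $(1,1)$-entry), although $\mathfrak{a}\perp_\kappa\mathfrak{b}$ still holds because every element of $\mathfrak{sp}(n)\oplus\mathfrak{u}(1)$ vanishes on the off-diagonal entries supporting $\mathfrak{b}$; and for (c) you should record injectivity of $\nu$ on the fiber (if $u\cdot o=u'\cdot o$ with $u,u'\in\mathrm{Sp}_T(1)$ then $u^{-1}u'\in L$) together with the observation that $\Sigma_\sigma=u\cdot\mathbb{C}P^1_T$ is isometric to $\mathbb{C}P^1_T$ because $u$ acts by $g_t$-isometries.
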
    

\begin{proof}
	We begin by constructing the metrics $g_N$ and $g_E$. First, we define $g_N$ as any $\mathrm{Sp}(n+1)$-invariant metric. Since the Lie group $\mathrm{Sp}(n+1)$ is compact such metric exists. 
	
	In order to define the metric $g_E$ we recall that, by Example $3$ in \cite{paiva_fernandes}, we can regard $E$ as a submanifold of $\mathbb{C}P^{2n+1} \times N$. Therefore we define $g_E$ as the induced metric from the product metric $g_t \times g_N$.   
	
	Property $\ref{metgegb1}$ follows directly by construction. Property $\ref{metgegb3}$ is a simple consequence of property $\ref{metgegb1}$ together with the fact that $\mathrm{Sp}(n+1)$ acts transitively in $\mathbb{C}P^{2n+1}$, $N$ and $E$. Therefore, it remains only to prove $\ref{metgegb4}$. However, under the identification $E \subset \mathbb{C}P^{2n+1} \times N$, the projections $\nu$ and $\rho$ are given by the projections on the first and second variables. Therefore $\ref{metgegb4}$ is a simple consequence of the construction of the metric $g_E$.   
\end{proof}

Now we are in conditions to prove Proposition $\ref{IGF2sys}$, for the case $t \geq 1$.

\begin{proof}[Proposition $\ref{IGF2sys}$ (case $t\geq 1$).]
	Applying the coarea formula for $\rho$ and $\nu$ in the double fibration $\ref{doublefibration}$, and using that the Jacobians of  the map $\rho: (E,g_E) \to (N,g_N)$ and the map $\nu:(E,g_E) \to (\mathbb{C}P^{2n+1},g_t)$ are constant, we obtain
	\begin{align*}
		\frac{1}{|\mathrm{Jac}\rho|} \int_{N}\left(\int_{\rho^{-1}(\sigma)} \tilde{\varphi} dA_{g_E} \right) dV_{g_N} &= \int_{E} \tilde{\varphi} dV_{g_{E}}\\ &= \frac{1}{\left| \mathrm{Jac}\nu \right|}\int_{\mathbb{C}P^{2n+1}} \left( \int_{\nu^{-1}(p)} \tilde{\varphi} dA_{g_E} \right) dV_{g_t} ,
	\end{align*}
	for every $\tilde{\varphi} \in C^{\infty}(E)$. Since the metric $g_E$ is $\mathrm{Sp}(n+1)$-invariant, the fibers of $\nu:E \to \mathbb{C}P^{2n+1}$ have the same area. Therefore, for a given $\varphi \in C^{\infty}(\mathbb{C}P^{2n+1})$, defining $\tilde{\varphi}= \nu^{*}\varphi$, we obtain
	$$\frac{\left| \mathrm{Jac}\nu \right|}{|\mathrm{Jac}\rho|}\int_{N}\left(\int_{\rho^{-1}(\sigma)} \nu^{*} \varphi dA_{g_E} \right)dV_{g_N} = |\nu^{-1}(o)|_{g_E} \int_{\mathbb{C}P^{2n+1}} \varphi dV_{g_t}.$$ 
	On the other hand, since $\nu: (\rho^{-1}(\sigma),g_E) \to (\Sigma_{\sigma},g_t)$ is an isometry, for each $\sigma \in N$, we can rewrite the above formula as 
	\begin{align*}
		\int_{\mathbb{C}P^{2n+1}} \varphi dV_{g_t} &= \frac{\left| \mathrm{Jac}\nu \right|}{|\mathrm{Jac}\rho| |\nu^{-1}(o)|_{g_E}}\int_{N}\left(\int_{\rho^{-1}(\sigma)} \nu^{*} \varphi dA_{g_E} \right)dV_{g_N} \\
		&= \frac{\left| \mathrm{Jac}\nu \right|}{|\mathrm{Jac}\rho| |\nu^{-1}(o)|_{g_E}}\int_{N}\left(\int_{\Sigma_{\sigma}} \varphi dA_{{g_t}} \right)dV_{g_N}.
	\end{align*}      
	To conclude the proof, we define $(B,g_B)$ as $\left(N,\lambda g_N\right)$, where $\lambda$ is given by $\left({\left| \mathrm{Jac}\nu \right|}/{|\mathrm{Jac}\rho| |\nu^{-1}(o)|_{g_E}} \right)^{-\frac{2}{\mathrm{dim}(N)}}$ is constant. The fact that every element of $\{\Sigma_{\sigma}\}_{\sigma \in N}$, realizes the systole follows from Proposition $\ref{metgegb}$ $\ref{metgegb4}$ and Theorem $\ref{2systofhommetrics}$.  
\end{proof}  

\subsection{4n-Systole}\label{4nsystole}

Following what was done in the previous section, we will complete the proof of Theorems $\ref{systofhommetrics}$ and $\ref{confclassofhommetr}$, by studying the $4n$-systole case. 

As we will see, Theorem $\ref{systofhommetrics}$ is a simple consequence of a classical calibration argument based on the fact that each homogeneous metric is balanced.   

\begin{prop}\label{4sysofgt}
	Given $t \in \mathbb{R}_{>0}$, 
	$$\mathrm{Sys}_{4n}(\mathbb{C}P^{2n+1},g_t)=|\mathbb{C}P^{2n}_{\sigma}|_{g_t}=\frac{2nt+1}{(2n+1)!},$$
	where $\mathbb{C}P^{2n}_{\sigma} \doteq \{[p]: p \in \mathbb{S}^{4n+3} \text{ and } p \perp \sigma\},$ for each $\sigma \in \mathbb{C}P^{2n+1}$.  
\end{prop}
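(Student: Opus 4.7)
The plan is to combine Proposition \ref{sysofhomometrics} with a short cohomological computation. Since Proposition \ref{gtisbalanced} established that $(\mathbb{C}P^{2n+1}, J_{\mathrm{can}}, g_t)$ is Balanced for every $t>0$, Proposition \ref{sysofhomometrics} applies with $m=2n+1$ and gives
\[
\mathrm{Sys}_{4n}(\mathbb{C}P^{2n+1}, g_t) = \mathrm{area}_{g_t}(\mathbb{C}P^{2n}_\sigma)
\]
for every complex line $\sigma$. The task reduces to evaluating this common area and checking that it equals $(2nt+1)/(2n+1)!$.

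Because $\mathbb{C}P^{2n}_\sigma$ is a $J_{\mathrm{can}}$-complex submanifold, Wirtinger yields $\mathrm{area}_{g_t}(\mathbb{C}P^{2n}_\sigma) = \frac{1}{(2n)!}\int_{\mathbb{C}P^{2n}_\sigma}\omega_t^{2n}$. Expanding $\omega_t = t\Omega_0 + \Omega_1$ and using $\Omega_0^2 = 0$ (Lemma \ref{propW0andW1}), and then comparing with the case $t=1$, one obtains the identity $\omega_t^{2n} = \Omega^{2n} + 2n(t-1)\,\Omega_0\Omega_1^{2n-1}$. Thus the integral splits as $A + 2n(t-1)B$, where $A := \int_{\mathbb{C}P^{2n}_\sigma}\Omega^{2n} = 1$ (by the normalization $\int_{\mathbb{C}P^1}\Omega = 1$) and $B := \int_{\mathbb{C}P^{2n}_\sigma}\Omega_0\Omega_1^{2n-1}$.

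To evaluate $B$, first note that $\Omega_0\Omega_1^{2n-1}$ is closed: differentiating the identity above in $t$ and using $d\omega_t^{2n} = 0$ (the Balanced condition for $m=2n+1$) yields $d(\Omega_0\Omega_1^{2n-1}) = 0$. Since $[\mathbb{C}P^{2n}_\sigma]$ is Poincar\'e dual to $[\Omega] \in H^2(\mathbb{C}P^{2n+1}, \mathbb{R})$, we get $B = \int_{\mathbb{C}P^{2n+1}} \Omega \wedge \Omega_0 \Omega_1^{2n-1} = \int_{\mathbb{C}P^{2n+1}}\Omega_0 \Omega_1^{2n}$, again thanks to $\Omega_0^2 = 0$. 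This last integral is pinned down by computing $\int_{\mathbb{C}P^{2n+1}}\omega_t^{2n+1}$ in two ways: on one hand, it equals $(2n+1)!\,\mathrm{vol}_{g_t}(\mathbb{C}P^{2n+1}) = t$; on the other, since $\Omega_1^{2n+1} = 0$ (as $\Lambda^1$ has real rank $4n$), only a single binomial term survives the expansion, giving $(2n+1)\,t\int_{\mathbb{C}P^{2n+1}}\Omega_0\Omega_1^{2n}$. Hence $B = 1/(2n+1)$, and combining yields $\int_{\mathbb{C}P^{2n}_\sigma}\omega_t^{2n} = (2nt+1)/(2n+1)$; division by $(2n)!$ produces the claimed formula.

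The main substantive input is Proposition \ref{sysofhomometrics}, whose calibration-based proof does the geometric work. The only step in the remaining computation that requires care is the reduction of $B$ to an integral over the ambient $\mathbb{C}P^{2n+1}$ via Poincar\'e duality, which depends precisely on the closedness of $\Omega_0\Omega_1^{2n-1}$ granted by the Balanced condition.
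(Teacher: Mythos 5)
Your proposal is correct, and the first half (reduction of the systole to the area of a single linear hyperplane via the Balanced condition) coincides with the paper's argument: the paper proves this in place by the calibration/Wirtinger--Stokes argument with the closed form $\omega_t^{2n}$, which is exactly the content of Proposition~\ref{sysofhomometrics} that you invoke, so there is no circularity in citing it. Where you genuinely diverge is the evaluation of $\mathrm{area}_{g_t}(\mathbb{C}P^{2n}_\sigma)$. The paper writes $\omega_t^{2n}=2nt\,\Omega_0\Omega_1^{2n-1}+\Omega_1^{2n}$ and reduces everything to $\int_{\mathbb{C}P^{2n}_\sigma}\Omega_1^{2n}$, which it computes geometrically: $\Omega_1^{2n}=(2n)!\,\pi^*dV_{g_{\mathbb{H}P^n}}$, explicit affine charts fitting into a commutative diagram with the Penrose fibration, and the coarea formula for the Riemannian submersion to get $|\mathbb{H}P^n|_{g_{\mathbb{H}P^n}}=1/(2n+1)!$. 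You instead argue cohomologically: closedness of $\Omega_0\Omega_1^{2n-1}$ (which indeed follows from $d\omega_t^{2n}=0$, or directly from $d\Omega^{2n}=0$) lets you use Poincar\'e duality of $[\mathbb{C}P^{2n}_\sigma]$ with $[\Omega]$ to push the integral to the ambient space, and the constant $B=1/(2n+1)$ is then pinned down by a top-degree identity. Your route is shorter and avoids both the chart computation and the volume of $\mathbb{H}P^n$; the paper's route keeps the Penrose-fibration geometry explicit, which it reuses elsewhere. One small stylistic remark: your "two ways" evaluation of $\int_{\mathbb{C}P^{2n+1}}\omega_t^{2n+1}$ leans on the asserted fact $\mathrm{vol}_{g_t}=t\,\mathrm{vol}_{g_{FS}}$; you could avoid this entirely by noting $\Omega^{2n+1}=(2n+1)\,\Omega_0\Omega_1^{2n}$ and $\int_{\mathbb{C}P^{2n+1}}\Omega^{2n+1}=1$, which gives $B=1/(2n+1)$ directly.
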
 
\begin{proof}
	Fix $t \in \mathbb{R}_{>0}$ and $\sigma \in \mathbb{C}P^{2n+1}$. Since $(\mathbb{C}P^{2n+1}, J_{can}, g_t)$ is a balanced manifold, the $2$-form $\omega_t(\cdot, \cdot) = g_t(J_{can}\cdot, \cdot)$ satisfies that $\omega_t^{2n}$ is closed.
	
	Now, every homologically non-trivial, closed $4n$-cycle $C$ in $\mathbb{C}P^{2n+1}$ can be decomposed as $C = k \mathbb{C}P^{2n}_{\sigma} + \partial R$, where $R$ is a $(4n+1)$-cycle and $k$ is a non-zero integer. Therefore, by the Wirtinger inequality and the Stokes' Theorem, we have
	
	$$|C|_{g_t} \geq \frac{|k|}{(2n)!} \int_{\mathbb{C}P^{2n}_{\sigma}} {\omega_t^{2n}}+ \frac{1}{(2n)!}\int_{\partial R} {\omega_t^{2n}} = |k| \,|\mathbb{C}P^{2n}_{\sigma}|_{g_t}.$$
	Since $k$ is non-zero we obtain that $\mathrm{Sys}_{4n}(\mathbb{C}P^{2n+1},g_t)=|\mathbb{C}P^{2n}_{\sigma}|_{g_t}$. It remains to compute the volume of $\mathbb{C}P^{2n}_{\sigma}$. For that, we recall that $\Omega$, the Kähler form associated to the Fubini-Study metric, was normalized so that $\int_{\mathbb{C}P^{2n}_{\sigma}} \Omega^{2n} = 1$, and also that $\Omega^{2n} = 2n\Omega_0 \Omega_1^{2n-1} + \Omega_1^{2n}$, where $\Omega_0$ and $\Omega_1$ are defined in $(\ref{omega0omega1})$. Therefore, we have the following identities: 
	\begin{align*}
		|\mathbb{C}P^{2n}_{\sigma}|_{g_t}&=\frac{1}{(2n)!} \int_{\mathbb{C}P^{2n}_{\sigma}}\omega_t^{2n}
		\\ &= \frac{1}{(2n)!}\left( \int_{\mathbb{C}P^{2n}_{\sigma}} 2nt\Omega_0 \Omega_1^{2n-1} + \Omega_1^{2n} \right) \\
		&= \frac{1}{(2n)!}\left( \int_{\mathbb{C}P^{2n}_{\sigma}} t\Omega^{2n} + (1-t)\Omega_1^{2n} \right) \\
		&=\frac{1}{(2n)!}\left(t+(1-t)\int_{\mathbb{C}P^{2n}_{\sigma}} \Omega_1^{2n} \right).
	\end{align*}
	Therefore it is enough to compute $\int_{\mathbb{C}P^{2n}_{\sigma}} \Omega^{2n}_1$. But since $\mathrm{Sp}(n+1)$ acts transitively in $\mathbb{C}P^{2n+1}$ by $g_t$-isometries, it suffices to compute the integral for a fixed element $\sigma_0 \in \mathbb{C}P^{2n+1}$. For convenience we take $\sigma_0$ generated by $\textbf{e}_{n+2}\in \mathbb{C}^{2n+2}$.
	
	By Proposition $\ref{propW0andW1}$  we have $\Omega_1^{2n} = (2n)!\, \pi^{*}dV_{g_{\mathbb{H}P^n}}$, where $\pi:\mathbb{C}P^{2n+1}\to \mathbb{H}P^n$ is the Penrose fibration. So bearing in mind the following commutative diagram  
	\[
	\begin{xy}\xymatrix{
			& \mathbb{C}^{2n} \ar[dl]_{\Phi}  \ar[dr]^{\Psi} & \\
			\mathbb{C}P^{2n}_{\sigma_0} \subset \mathbb{C}P^{2n+1} \ar[rr]_{\pi}&   & \mathbb{H}P^n
		}
	\end{xy} 
	\] 
	where $\Phi: \mathbb{C}^{2n} \to \mathbb{C}P^{2n}_{\sigma_0}$, $(x_1,..,x_n,y_1,...,y_n) \mapsto [1:x_1:...:x_n:0:y_1:...:y_n]$, and $\Psi: \mathbb{C}^{2n} \to \mathbb{H}P^n$, $(x_1,..,x_n,y_1,...,y_n) \mapsto [1:x_1+jy_1:...:x_n+jy_n]$, are coordinates charts with dense image, we have that
	
	\begin{align*}
		\int_{\mathbb{C}P^{2n}_{\sigma_0}}\frac{\Omega_1^{2n}}{(2n)!}= \int_{\mathbb{C}P^{2n}_{\sigma_0}} \hspace{-3pt}\pi^{*}dV_{g_{\mathbb{H}P^n}}= \int_{\Phi(\mathbb{C}^{2n})} \hspace{-3pt}\pi^{*}dV_{g_{\mathbb{H}P^n}}
		= \int_{\Psi(\mathbb{C}^{2n})} \hspace{-3pt}dV_{g_{\mathbb{H}P^n}} =|\mathbb{H}P^n|_{g_{\mathbb{H}P^n}}.  
	\end{align*} 
	Finally, the area of $\mathbb{H}P^n$ can be computed by applying the coarea formula to the Riemannian submersion $\pi:(\mathbb{C}P^{2n+1},g_{FS}) \to (\mathbb{H}P^{n},g_{\mathbb{H}P^n})$. Explicitly we have that $|\mathbb{H}P^n|_{g_{\mathbb{H}P^n}}=\frac{1}{(2n+1)!}$, concluding the proof.               
\end{proof}

We conclude this section proving Theorem $\ref{confclassofhommetr}$ in the context of the $4n$-systole. Similarly to Section $\ref{2systole}$, we present a family of linear projective spaces admitting an integral geometric formula. Therefore, once more, the desired result will follow from Theorem $\ref{FIGimpSCC}$. As before, the integral geometric formula is derived through an argument using a double fibration and the coarea formula.

In light of Proposition $\ref{4sysofgt}$ the family $\{\mathbb{C}P^{2n}_{\sigma}\}_{\sigma \in \mathbb{C}P^{2n+1}}$ is a natural choice for the family of linear projective spaces, since every element of the family realizes the $4n$-systole. Moreover, in order to assist the construction of the integral geometric formula, we define the \textit{incidence set}  $\mathcal{I}=\{(p,\sigma) \in \mathbb{C}P^{2n+1} \times \mathbb{C}P^{2n+1} : p \in \mathbb{C}P^{2n}_{\sigma}\}$. It is a well-known fact that the incidence set $\mathcal{I}$ induces the double fibration 

\begin{equation}\label{doublefibrationI}
	\begin{split}
		\begin{xy}\xymatrix{
				& \mathcal{I} \ar[dl]_{\nu}  \ar[dr]^{\rho} & \\
				\mathbb{C}P^{2n+1}&   & \mathbb{C}P^{2n+1}
			}
		\end{xy}
	\end{split}
\end{equation}
where $\nu$ and $\rho$ are, respectively, the projections onto the first and second coordinates (\cite{paiva_fernandes}). For every $t \in \mathbb{R}_{>0}$, the inclusion $\mathcal{I} \subset (\mathbb{C}P^{2n+1} \times \mathbb{C}P^{2n+1}, g_t \times g_t)$, induces a Riemannian metric $\tilde{g}_t$ in the incidence set. In what follows, we underline some properties of these double fibration and its Riemannian metrics.            

\begin{prop}\label{propDFforIS}
	Let $t \in \mathbb{R}_{>0}$. The following assertions hold: 
	\begin{enumerate}[label=\alph*),ref=(\alph*)]
		\item \label{propDFforIS1} The action of $\mathrm{Sp}(n+1)$ on $\mathbb{C}P^{2n+1}\times \mathbb{C}P^{2n+1}$ induces an action by isometries on $(\mathcal{I},\tilde{g}_t)$;
		\item \label{propDFforIS3}For each $(p,\sigma) \in \mathbb{C}P^{2n+1} \times \mathbb{C}P^{2n+1}$, the maps $\restr{\nu}{\rho^{-1}(\sigma)}: (\rho^{-1}(\sigma),\tilde{g}_t) \to (\mathbb{C}P_{\sigma}^{2n},g_t)$ and $\restr{\rho}{\nu^{-1}(p)}: (\nu^{-1}(p), \tilde{g}_t) \to (\mathbb{C}P^{2n}_{p}, g_t)$ are isometries;
		\item \label{propDFforIS4}The map $\mathbb{C}P^{2n+1} \ni p \mapsto \int_{\nu^{-1}(p)} \frac{|\mathrm{Jac} \rho|}{|\mathrm{Jac}\nu |} dA_{\tilde{g}_t} \in \mathbb{R}$ is constant.     
	\end{enumerate}  
\end{prop}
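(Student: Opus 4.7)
The plan is to exploit that $\mathcal{I}$ inherits its metric from the product $(\mathbb{C}P^{2n+1}\times \mathbb{C}P^{2n+1},g_t\times g_t)$ and that the entire double fibration construction is $\mathrm{Sp}(n+1)$-equivariant. Once this is in place, the three items reduce essentially to bookkeeping.

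For item \ref{propDFforIS1}, the incidence relation $p\perp \sigma$ is defined via the Hermitian inner product of $\mathbb{C}^{2n+2}$, which is preserved by $\mathrm{Sp}(n+1)\subset \mathrm{U}(2n+2)$; hence the diagonal action on $\mathbb{C}P^{2n+1}\times \mathbb{C}P^{2n+1}$ leaves $\mathcal{I}$ invariant. Since $g_t$ is $\mathrm{Sp}(n+1)$-invariant by construction (Section \ref{descofhommet}), so is $g_t\times g_t$, and therefore so is its restriction $\tilde{g}_t$. For item \ref{propDFforIS3}, I would argue directly from the definition of the induced metric: the fiber $\rho^{-1}(\sigma)$ is $\mathbb{C}P^{2n}_\sigma\times \{\sigma\}$, so a tangent vector at $(p,\sigma)$ has the form $(v,0)$ with $v\in T_p\mathbb{C}P^{2n}_\sigma$, and its $\tilde{g}_t$-length equals $g_t(v,v)^{1/2}$. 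Thus $\restr{\nu}{\rho^{-1}(\sigma)}$ is a length-preserving bijection onto $\mathbb{C}P^{2n}_\sigma$, and the analogous argument with the factors swapped handles $\restr{\rho}{\nu^{-1}(p)}$.

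For item \ref{propDFforIS4}, the key point is that the projections $\nu,\rho$ are $\mathrm{Sp}(n+1)$-equivariant between the action on $\mathcal{I}$ from \ref{propDFforIS1} and the homogeneous action on $\mathbb{C}P^{2n+1}$. Since all three actions are by isometries, the Jacobians $|\mathrm{Jac}\nu|$ and $|\mathrm{Jac}\rho|$ are $\mathrm{Sp}(n+1)$-invariant functions on $\mathcal{I}$, and hence so is their ratio. Given $p,p'\in \mathbb{C}P^{2n+1}$, transitivity produces $U\in \mathrm{Sp}(n+1)$ with $U\cdot p=p'$, and the induced isometry of $\mathcal{I}$ carries $\nu^{-1}(p)$ isometrically onto $\nu^{-1}(p')$ while preserving the integrand; a change of variables identifies the two fiberwise integrals. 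The only delicate aspect of the proof is precisely this equivariance argument: the integrand is not expected to be constant along a single fiber, but this is irrelevant because the integral needs only to be compared across different fibers via a global isometry of $(\mathcal{I},\tilde{g}_t)$.
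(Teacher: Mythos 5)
Your proposal is correct and follows essentially the same route as the paper: items a) and b) by direct verification from the definition of the induced metric and the $\mathrm{Sp}(n+1)$-invariance of $g_t$, and item c) via $\mathrm{Sp}(n+1)$-invariance of the Jacobians together with transitivity and a change of variables identifying the fiber integrals (the paper merely routes this change of variables through $\mathbb{C}P^{2n}_p$ using item b), which is a cosmetic difference).
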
 
\begin{proof}
	We provide a proof for $\ref{propDFforIS4}$, which is the only part not straightforward to check. Firstly, due to the $\mathrm{Sp}(n+1)$-invariance of the metrics $g_t$ and $\tilde{g}_t$, the Jacobians $|\mathrm{Jac}\nu|, |\mathrm{Jac}\rho|: \mathcal{I} \to \mathbb{R}$ are also $\mathrm{Sp}(n+1)$-invariant. Now, fix $p = U \cdot o \in \mathbb{C}P^{2n+1}$, for $U \in \mathrm{Sp}(n+1)$. Using that $\restr{\rho}{\nu^{-1}(p)}: (\nu^{-1}(p), \tilde{g}_t) \to (\mathbb{C}P^{2n}_{p}, g_t)$ is an isometry, we obtain 
	\begin{align*}
		\int_{\nu^{-1}(p)}{\frac{|\mathrm{Jac} \rho|}{|\mathrm{Jac}\nu |} dA_{\tilde{g}_t}} 
		&= \int_{\mathbb{C}P^{2n}_{p}}{\frac{|\mathrm{Jac} \rho|}{|\mathrm{Jac}\nu |}(p,\sigma) dA_{g_t}(\sigma)} \\
		&= \int_{\mathbb{C}P^{2n}_o}{\frac{|\mathrm{Jac} \rho|}{|\mathrm{Jac}\nu |}(p, U \cdot \eta ) dA_{g_t}(\eta)} \\
		&= \int_{\mathbb{C}P^{2n}_o}{\frac{|\mathrm{Jac} \rho|}{|\mathrm{Jac}\nu |}(o, \eta ) dA_{g_t}(\eta)} = \int_{\nu^{-1}(o)}{\frac{|\mathrm{Jac} \rho|}{|\mathrm{Jac}\nu |} dA_{\tilde{g}_t}},
	\end{align*}  
	as desired.  
	
\end{proof}

A straightforward application of the coarea in the double fibration $\eqref{doublefibrationI}$ allows us to prove the existence of an integral geometric formula for the family $\{\mathbb{C}P^{2n}_{\sigma}\}_{\sigma \in \mathbb{C}P^{2n+1}}$. 

\begin{prop}
	For each $t\in \mathbb{R}_{>0}$, there exists a Riemannian metric $\hat{g}_t$ homothetic to $g_t$ such that, for each $\varphi \in C^{\infty}(\mathbb{C}P^{2n+1})$, the following formula holds:
	\begin{equation}\label{IGFfocd1}
		\int_{\mathbb{C}P^{2n+1}}{\left( \int_{\mathbb{C}P^{2n}_{\sigma}}{ \varphi\, dA_{g_t}}\right) dV_{\hat{g}_t}(\sigma)}= \int_{\mathbb{C}P^{2n+1}} \varphi\, dV_{g_t}.
	\end{equation}  
\end{prop}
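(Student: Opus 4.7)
The plan is to prove the identity $(\ref{IGFfocd1})$ by evaluating one and the same integral on $(\mathcal{I}, \tilde{g}_t)$ in two different ways, using the coarea formula applied to each of the two projections in the double fibration $(\ref{doublefibrationI})$, and then absorbing the resulting multiplicative constant by a homothetic rescaling of $g_t$. The key inputs already at our disposal from Proposition $\ref{propDFforIS}$ are: (i) that $\restr{\nu}{\rho^{-1}(\sigma)}\colon (\rho^{-1}(\sigma), \tilde{g}_t) \to (\mathbb{C}P^{2n}_\sigma, g_t)$ is an isometry for every $\sigma$, and (ii) that the integral $\int_{\nu^{-1}(p)} \frac{|\mathrm{Jac}\,\rho|}{|\mathrm{Jac}\,\nu|}\, dA_{\tilde{g}_t}$ is a constant $C$ independent of $p$.

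Given $\varphi \in C^{\infty}(\mathbb{C}P^{2n+1})$, I would introduce the auxiliary function $h = |\mathrm{Jac}\,\rho| \cdot (\varphi \circ \nu)$ on $\mathcal{I}$ and compute $\int_{\mathcal{I}} h\, dV_{\tilde{g}_t}$ in two ways. Applying the coarea formula to the submersion $\nu$, the factor $\varphi(p) = (\varphi\circ\nu)(p, \sigma)$ pulls out of the fiber integral and (ii) yields
$$\int_{\mathcal{I}} h\, dV_{\tilde{g}_t} = \int_{\mathbb{C}P^{2n+1}} \varphi(p) \left( \int_{\nu^{-1}(p)} \frac{|\mathrm{Jac}\,\rho|}{|\mathrm{Jac}\,\nu|}\, dA_{\tilde{g}_t} \right) dV_{g_t}(p) = C \int_{\mathbb{C}P^{2n+1}} \varphi\, dV_{g_t}.$$
Applying the coarea formula to the submersion $\rho$ instead, the weight $|\mathrm{Jac}\,\rho|$ in $h$ cancels with the coarea Jacobian and (i) identifies the remaining fiber integral with an integral over $\mathbb{C}P^{2n}_\sigma$, giving
$$\int_{\mathcal{I}} h\, dV_{\tilde{g}_t} = \int_{\mathbb{C}P^{2n+1}} \left( \int_{\rho^{-1}(\sigma)} \varphi \circ \nu\, dA_{\tilde{g}_t} \right) dV_{g_t}(\sigma) = \int_{\mathbb{C}P^{2n+1}} \left( \int_{\mathbb{C}P^{2n}_\sigma} \varphi\, dA_{g_t} \right) dV_{g_t}(\sigma).$$

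Equating the two expressions yields the desired identity up to the overall constant $C$. To finish, I would set $\hat{g}_t = \lambda g_t$ with $\lambda > 0$ chosen so that $dV_{\hat{g}_t} = \lambda^{2n+1}\, dV_{g_t} = C^{-1}\, dV_{g_t}$, namely $\lambda = C^{-1/(2n+1)}$. By construction $\hat{g}_t$ is homothetic to $g_t$, and this rescaling converts the above identity into $(\ref{IGFfocd1})$. No real obstacle arises: the only small point to check is that $\nu$ and $\rho$ are smooth submersions with smooth positive Jacobians so that the coarea formula applies, but this is automatic from the homogeneity of $\mathcal{I}$ under the $\mathrm{Sp}(n+1)$-action and the equivariance of both projections, already recorded in item $\ref{propDFforIS1}$ of Proposition $\ref{propDFforIS}$.
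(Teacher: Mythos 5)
Your argument is essentially the paper's own proof: the paper likewise applies the coarea formula twice in the double fibration $(\ref{doublefibrationI})$, uses items $\ref{propDFforIS3}$ and $\ref{propDFforIS4}$ of Proposition $\ref{propDFforIS}$ to identify the fiber integrals and pull out the constant, and then absorbs that constant by a homothety of $g_t$ (your exponent $\lambda = C^{-1/(2n+1)}$, matching $\dim_{\mathbb{R}} \mathbb{C}P^{2n+1} = 4n+2$, is the correct normalization). Your explicit auxiliary function $h = |\mathrm{Jac}\,\rho|\,(\varphi\circ\nu)$ just makes the double application of the coarea formula more transparent; there is no substantive difference.
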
 
\begin{proof}
	Applying the coarea formula twice in the double fibration $\eqref{doublefibrationI}$ and Proposition $\ref{propDFforIS}$ $\ref{propDFforIS3}$, it holds that:
	$$\int_{\mathbb{C}P^{2n+1}}{\left( \int_{\mathbb{C}P^{2n}_{\sigma}}{ \varphi\, dA_{g_t}}\right) dV_{{g}_t}(\sigma)} = \int_{\mathbb{C}P^{2n+1}} \left( \int_{\mathbb{C}P^{2n}_p}{\frac{|\mathrm{Jac} \rho|}{|\mathrm{Jac}\nu |}}\, dA_{\tilde{g}_t} \right)\varphi \, dV_{g_t}(p),$$
	for every $\varphi \in C^{\infty}(\mathbb{C}P^{2n+1})$. Moreover, Proposition $\ref{propDFforIS}$ $\ref{propDFforIS4}$ establishes that the function $\mathbb{C}P^{2n+1} \ni p \mapsto \int_{\nu^{-1}(p)} \frac{|\mathrm{Jac} \rho|}{|\mathrm{Jac}\nu |} dA_{\tilde{g}_t} \in \mathbb{R}$ is constant. As a result, calling this constant $\theta = \theta(t)$ and defining $\hat{g}_t \doteq (\theta)^{\frac{1}{2n+1}}g_t$ we obtain the desired result.      
\end{proof} 

\section{Systole of Balanced Metrics}\label{section:BalancedMetrics}

\indent In Chapter \ref{section:homogeneousmetrics} we proved that the Fubini-Study metric is the global minimum, among homogeneous metrics, of the normalized $(2n-2)$-systole functional on $\mathbb{C}P^n$, $n \geq 3$. A crucial step of the proof was to determine the submanifold that realizes the systole for each homogeneous metric, which was possible due to the fact that each of these metrics is balanced. Therefore, a natural question is if the Fubini-Study metric remains a point of minimum for the normalized $(2n-2)$-systole functional among all the balanced metrics, that are balanced with respect to the canonical complex structure of complex projective space. This section is devoted to study this problem. More precisely, we will prove Theorem $\ref{sysofbal}$, which is stated below after introducing notation. 

Let $\mathscr{B}$ denote the space of smooth balanced metrics with respect to the canonical complex structure of $\mathbb{C}P^n$. We endow this space with the ${C}^{2}$-topology. We denote by $\mathscr{K} \subset \mathscr{Bal}$ the subspace of smooth Kähler metrics.   

\begin{teo}{\label{sysofbal}}
	Let $n \geq 3$. There exists an open set $ \mathscr{K} \subset \mathcal{U} \subset \mathscr{B}$, in the ${C}^{2}$-topology, such that for every metric $g\in \mathcal{U}$,
	$$\mathrm{Sys}_{2n-2}^{\mathrm{nor}}(\mathbb{C}P^n,g) \geq \mathrm{Sys}_{2n-2}^{\mathrm{nor}}(\mathbb{C}P^n,g_{FS}).$$
	Moreover, $g \in \mathcal{U}$ satisfies the equality if and only if $g \in \mathscr{K}$.       
\end{teo}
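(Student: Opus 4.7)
The plan is to reduce the restricted systole on $\mathscr{B}$ to a cohomological expression, compute its first and second variations at $g_{FS}$, and close via Taylor expansion. By Proposition~\ref{sysofhomometrics}, for every $g \in \mathscr{B}$ and every $\sigma \in \mathbb{C}P^n$ the systole equals $\mathrm{area}_g(\mathbb{C}P^{n-1}_\sigma)$. Since $\mathbb{C}P^{n-1}_\sigma$ is $J_{\mathrm{can}}$-complex, Wirtinger's identity combined with closedness of $\omega_g^{n-1}$ gives
\[
\mathrm{Sys}_{2n-2}(\mathbb{C}P^n,g) \;=\; \frac{1}{(n-1)!}\,\bigl\langle[\omega_g^{n-1}],[\mathbb{C}P^{n-1}_\sigma]\bigr\rangle,
\]
and since $H^{2n-2}(\mathbb{C}P^n,\mathbb{R})$ is one-dimensional I write $[\omega_g^{n-1}] = a(g)\,[\Omega_{FS}^{n-1}]$, reducing the restricted normalized systole to $F(g) = c_n\,a(g)/\mathrm{vol}(g)^{(n-1)/n}$ for a universal constant $c_n$. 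This expression is manifestly smooth in the $C^2$-topology, and a scaling/$\partial\bar\partial$-lemma argument shows $F$ is constant on $\mathscr{K}$.

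Next I linearize along $g_s = g_{FS}+sh+O(s^2)$ with $h$ a $J_{\mathrm{can}}$-Hermitian symmetric $2$-tensor; the associated variation $\dot\omega := h(J_{\mathrm{can}}\cdot,\cdot)$ is a real $(1,1)$-form, and the linearized Balanced equation at the Kähler base point reduces to $\Omega_{FS}^{n-2}\wedge d\dot\omega = 0$. A cohomological computation of $\dot a$ combined with the standard formula $\dot{\mathrm{vol}} = \tfrac{1}{2}\int_{\mathbb{C}P^n}\mathrm{tr}_{g_{FS}} h\,dV_{FS}$ and the Crofton identity for $g_{FS}$ developed in Section~\ref{4nsystole} shows that the linear terms cancel and $\dot F|_{g_{FS}}[h] = 0$ for every tangent vector to $\mathscr{B}$; by $U(n+1)$-equivariance and the scaling invariance of $F$, every Kähler metric is a critical point of $F$ on $\mathscr{B}$.

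The crux of the proof is the Hessian. Expanding $a$ to second order yields integrals of $\Omega_{FS}^{n-3}\wedge\dot\omega\wedge\dot\omega$ and $\Omega_{FS}^{n-2}\wedge\ddot\omega$, which after integration by parts using the linearized Balanced identity reduce, together with the second variation of $\mathrm{vol}^{(n-1)/n}$, to an expression that I expect to be a positive multiple of $\int_{\mathbb{C}P^n}|d\dot\omega|_{FS}^2\,dV_{FS}$, with the $\ddot\omega$-contributions canceling against quadratic volume terms thanks to the linearized constraint. This is manifestly non-negative and vanishes precisely when $d\dot\omega = 0$; by the $\partial\bar\partial$-lemma on $\mathbb{C}P^n$ the closed real $(1,1)$-forms are exactly $\mathbb{R}\cdot\Omega_{FS}\oplus\{i\partial\bar\partial\phi : \phi \in C^\infty(\mathbb{C}P^n,\mathbb{R})\}$, which coincides with $T_{g_{FS}}\mathscr{K}$. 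The main obstacle is executing this integration by parts rigorously and verifying the signs; here the specific Lefschetz structure of $(\mathbb{C}P^n,\Omega_{FS})$ and the rigidity of its complex structure play a decisive role.

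Finally, I promote infinitesimal positivity to the local statement via the integral Taylor formula
\[
F(g)-F(g_{FS}) \;=\; \int_0^1(1-s)\,\mathrm{Hess}\,F|_{g_s}(h,h)\,ds, \qquad h := g-g_{FS},
\]
along the segment $g_s := g_{FS}+sh \in \mathscr{B}$, together with a slice construction through $\mathscr{K}$ that uses the $\partial\bar\partial$-lemma to split $h$ into a Kähler piece $h_0 \in T\mathscr{K}$ and a transverse remainder. Continuity of $\mathrm{Hess}\,F$ in the $C^2$-topology and strict coercivity on the transverse component then yield an open $C^2$-neighborhood $\mathscr{K}\subset \mathcal{U}\subset\mathscr{B}$ on which $F(g)\geq F(g_{FS})$, with equality forcing the transverse remainder to vanish and hence $g \in \mathscr{K}$.
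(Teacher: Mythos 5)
Your overall strategy coincides with the paper's (express the systole of a Balanced metric cohomologically via Proposition \ref{sysofhomometrics}, show Kähler metrics are critical, show the Hessian is nonnegative with kernel the Kähler directions, and conclude by Taylor expansion), but the two steps you leave open are precisely where the real work lies, and your guesses there are not correct as stated. First, the second variation: at a normalized Kähler form $\omega$ the paper computes (Lemma \ref{svfforkfrst}, Corollary \ref{sffforfoverkal}), via the chart $\omega\mapsto\omega^{n-1}$ and the Riemann--Hodge bilinear relations, that the Hessian on a tangent direction $\eta\in T_\omega\mathcal{B}$ with Hodge decomposition $\eta=a\omega+d\alpha+\delta_\omega\theta$ equals $(n-1)\int_{\mathbb{C}P^n}\|\delta_\omega\theta\|^2_{g_\omega}\,dV_{g_\omega}$, i.e.\ the $L^2$-norm of the coexact part of the variation of $\omega$ --- a zeroth-order quantity, not a positive multiple of $\int|d\dot\omega|^2$. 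The two quadratic forms do share the kernel (closed $(1,1)$-forms), but they are not proportional: on a coexact primitive $(1,1)$ eigenform of the Hodge Laplacian with eigenvalue $\lambda$ one has $\int|d\dot\omega|^2=\lambda\,\|\dot\omega\|_{L^2}^2$. Since the Hessian is well defined on the constrained tangent space at a critical point independently of the parametrization, your expected formula cannot come out of the integration by parts you postpone; moreover, running the splitting at all requires knowing that the coexact part $\delta_\omega\theta$ is again of type $(1,1)$, primitive, and tangent to $\mathscr{B}$, which is the nontrivial content of Lemma \ref{coexactpartoftoB} and is absent from your sketch.

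Second, the passage from infinitesimal to local positivity has two gaps. The segment $g_s=g_{FS}+sh$ does not stay in $\mathscr{B}$: the Balanced condition $d\omega_g^{n-1}=0$ is not linear in $g$, so your integral Taylor formula is taken along a path that generically leaves the constraint set; the paper avoids this by working in the global Michelsohn chart $\Phi(\omega)=\omega^{n-1}$, in which $\mathcal{B}$ becomes an open convex subset of the closed $(n-1,n-1)$-forms, together with the slice diffeomorphism of Proposition \ref{KissplitinB}. More seriously, there is a two-norm discrepancy you cannot wave away: the Hessian is coercive only in $L^2$ on the transverse slice (it \emph{is} the $L^2$-norm there), while your neighborhood and your continuity statement are in the $C^2$- (respectively $C^{1,\nu}$-) topology. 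Continuity of the Hessian in the strong topology only bounds the Taylor remainder by $\|h\|_{C^2}^2$, which cannot be absorbed by an $L^2$-coercive leading term; asserting ``strict coercivity on the transverse component'' in the strong norm is false. The missing ingredient is exactly the paper's Lemma \ref{L2boundofHess}: on a fixed $C^{1,\nu}$-neighborhood of the Kähler form, the difference of Hessians evaluated on a transverse direction $\eta$ is bounded by $\tfrac{n-1}{2}\|\eta\|^2_{L^2_{\omega_0}}$, i.e.\ the modulus of continuity of the Hessian is itself controlled in the weak norm. Without an estimate of this type your final step does not close.
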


The proof of this theorem relies on an analysis of the Taylor expansion of the functional $\mathrm{Sys}^{\mathrm{nor}}_{2n-2}: \mathscr{B} \to \mathbb{R}$ over the set of Kähler metrics. In order to formalize this argument, we must first endow the spaces of Kähler and balanced metrics with structures of smooth Banach manifolds, in such a way that the inclusion is an embedding in a neighborhood of each smooth metric. The next section is devoted to define these structures.

\subsection{Manifold Structure of the space of Balanced Metrics}\label{manfstrctoBalMet}
In this section, we fix $n \geq 3$ and set $J$ to be the canonical complex structure of $\mathbb{C}P^n$. Accordingly, the Hermitian condition will always be defined with respect to the canonical complex structure.

In order to endow the space of balanced metrics with a structure of Banach manifold, rather than a structure of Fréchet manifold, we will have to be less restrictive and work in the space of ${C}^{1,\nu}$ Riemannian metrics, for some $0<\nu<1$ fixed. We choose to work in the Hölder topology instead of directly work in the $C^2$-topology to facilitate the use of regularity theorems.   

Let $(\mathrm{Riem}^{1,\nu}(\mathbb{C}P^n),{C}^{1,\nu})$ denote the space of ${C}^{1,\nu}$ Riemannian metrics endowed with the ${C}^{1,\nu}$-topology. For clarity, we will denote by $\mathscr{K}^{1,\nu}$, $\mathscr{B}^{1,\nu}$ and $\mathscr{H}^{1,\nu}$ the spaces of Kähler, balanced and Hermitian metrics with regularity $C^{1,\nu}$, respectively, equipped with the subset topology induced by inclusion in $(\mathrm{Riem}^{1,\nu}(\mathbb{C}P^n),{C}^{1,\nu})$.       

Recall that we have a duality between the space of Hermitian metrics $\mathscr{H}^{1,\nu}$ and the space of differential forms. Indeed, endowing the space of ${C}^{1,\nu}$ complex valued differential forms $(C^{1,\nu}(\Lambda_{\mathbb{C}}^{\bullet}),{C}^{1,\nu})$ with the ${C}^{1,\nu}$-topology, we have the following homeomorphism: 
\begin{align}\label{formxmetrics}
	\begin{split}
		\mathcal{J} : C^{1,\nu}(\Lambda_+^{1,1}) &\to \mathscr{H}^{1,\nu}  \\
		\omega &\mapsto g_{\omega}(\cdot,\cdot)\doteq \omega(\cdot,J\cdot),
	\end{split}     
\end{align}
where 
$$\Lambda_+^{p,p} = \{\alpha \in \Lambda_{\mathbb{R}}^{p,p} : \alpha(v_1,...,v_p,Jv_1,...,Jv_p)>0 \mbox{, for every $\{v_j,Jv_j\}_{j=1}^{p}$ l.i. set} \},$$ 
denotes the open cone of positive $(p,p)$-forms inside $\Lambda_{\mathbb{R}}^{p,p}$, the bundle of real $(p,p)$-forms.  

Thus, in order to define the manifold structure for the set of balanced metrics it is enough to define a Banach manifold structure in the space $\mathcal{B}$ of  \textit{balanced forms} (of class $C^{1,\nu}$): 
$$\mathcal{B}\doteq \mathcal{J}^{-1}\left( \mathscr{B}^{1,\nu} \right) = \{\omega \in C^{1,\nu}(\Lambda_+^{1,1}) : d\omega^{n-1}=0\}.$$

\begin{prop}\label{balismanif}
	The space of balanced forms $\mathcal{B}$ has a structure of smooth Banach manifold modelled over $C^{1, \nu}_{{cl}}\big(\Lambda^{n-1,n-1}_{\mathbb{R}}\big)$, the Banach space of real closed $(n-1,n-1)$-forms.
\end{prop}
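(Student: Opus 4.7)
The plan is to use the classical Michelsohn correspondence to identify $\mathcal{B}$ with an open subset of the Banach space $C^{1,\nu}_{cl}(\Lambda^{n-1,n-1}_{\mathbb{R}})$, and then transport this linear structure back to $\mathcal{B}$ through a global diffeomorphism. The key object is the map
$$\Phi \colon C^{1,\nu}(\Lambda_+^{1,1}) \longrightarrow C^{1,\nu}(\Lambda_+^{n-1,n-1}), \qquad \Phi(\omega) = \tfrac{1}{(n-1)!}\,\omega^{n-1},$$
whose source and target are open subsets of the Banach spaces $C^{1,\nu}(\Lambda_{\mathbb{R}}^{1,1})$ and $C^{1,\nu}(\Lambda_{\mathbb{R}}^{n-1,n-1})$, respectively. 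Since $d\omega^{n-1}=0$ if and only if $d\Phi(\omega)=0$, the map $\Phi$ restricts to a bijection between $\mathcal{B}$ and the closed forms inside $C^{1,\nu}(\Lambda_+^{n-1,n-1})$.

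I would then verify that $\Phi$ is a smooth diffeomorphism onto its image. The algebraic input is Michelsohn's classical result from \cite{michelsohn82}: on each complex $n$-dimensional vector space, the $(n-1)$-th wedge power is a real-analytic bijection between the open cone of positive real $(1,1)$-forms and the open cone of positive real $(n-1,n-1)$-forms. Passing to $C^{1,\nu}$ sections, $\Phi$ is smooth because it is polynomial in the coefficients of $\omega$. For the smoothness of $\Phi^{-1}$, I would invoke the inverse function theorem in Banach spaces after checking that the linearization
$$d\Phi_{\omega}(\alpha) = \tfrac{1}{(n-2)!}\,\alpha \wedge \omega^{n-2}$$
is an isomorphism at every positive $\omega$. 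This is a pointwise Hard Lefschetz assertion: $\Lambda_{\mathbb{R}}^{1,1}$ and $\Lambda_{\mathbb{R}}^{n-1,n-1}$ have the same real fiber rank $n^2$, and the wedge operator $\alpha \mapsto \alpha \wedge \omega^{n-2}$ is a fiberwise linear isomorphism whenever $\omega$ is positive; it therefore lifts to a topological isomorphism of the corresponding spaces of Hölder sections.

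Finally, $\Phi(\mathcal{B})$ is the intersection of the open cone $C^{1,\nu}(\Lambda_+^{n-1,n-1})$ with the closed Banach subspace $C^{1,\nu}_{cl}(\Lambda_{\mathbb{R}}^{n-1,n-1})$, hence an open subset of $C^{1,\nu}_{cl}(\Lambda_{\mathbb{R}}^{n-1,n-1})$. Using $\Phi$ as a global chart then equips $\mathcal{B}$ with a smooth Banach manifold structure modelled on $C^{1,\nu}_{cl}(\Lambda_{\mathbb{R}}^{n-1,n-1})$, as required. The main obstacle is the smoothness of $\Phi^{-1}$ in the Hölder topology: while Michelsohn's algebraic inverse is available pointwise, one must ensure that it genuinely preserves $C^{1,\nu}$-regularity and depends smoothly on the form, and the Lefschetz isomorphism of $d\Phi_{\omega}$ is exactly what reduces this regularity question to a routine application of the Banach-space inverse function theorem.
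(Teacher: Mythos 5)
Your proposal is correct and follows essentially the same route as the paper: the wedge-power map $\Phi(\omega)=\omega^{n-1}$ (up to normalization) from positive $(1,1)$-forms to positive $(n-1,n-1)$-forms, Michelsohn's bijectivity, the pointwise Lefschetz isomorphism $\alpha\mapsto\alpha\wedge\omega^{n-2}$ to see that $d\Phi_\omega$ is a Banach isomorphism, the Banach-space inverse function theorem to make $\Phi$ a smooth diffeomorphism, and finally the observation that $\Phi(\mathcal{B})=C^{1,\nu}_{cl}\big(\Lambda_+^{n-1,n-1}\big)$ is open in the closed subspace $C^{1,\nu}_{cl}\big(\Lambda^{n-1,n-1}_{\mathbb{R}}\big)$, yielding a global chart. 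No gaps.
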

\begin{obs}
	Note that the closed property of differential forms is a closed condition in the ${C}^{1,\nu}$-topology. Therefore, the space $C^{1,\nu}_{{cl}}(\Lambda^{p,p}_{\mathbb{R}}) $ of real and closed $(p,p)$-forms is a closed subspace of $C^{1,\nu}(\Lambda^{p,p}_{\mathbb{R}})$, consequently, a Banach vector space.
\end{obs}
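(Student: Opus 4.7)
The plan is to reduce the statement to an application of Michelsohn's theorem on the pointwise $(n-1)$-th power map. Define
\[ \Phi: C^{1,\nu}(\Lambda_+^{1,1}) \to C^{1,\nu}(\Lambda_+^{n-1,n-1}), \qquad \omega \mapsto \omega^{n-1}. \]
I would first argue that $\Phi$ is a smooth diffeomorphism between open subsets of the Banach spaces $C^{1,\nu}(\Lambda^{1,1}_{\mathbb{R}})$ and $C^{1,\nu}(\Lambda^{n-1,n-1}_{\mathbb{R}})$. Smoothness of $\Phi$ is immediate, since iterated wedge product is a continuous multilinear (hence polynomial) operation that preserves Hölder regularity on a compact manifold. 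The essential content is that $\Phi$ admits a smooth inverse, which is precisely Michelsohn's observation: at each point $p$ the algebraic map $A\mapsto A^{n-1}$ from the open cone of positive $(1,1)$-forms to the open cone of positive $(n-1,n-1)$-forms on $T_p^*\mathbb{C}P^n$ is a real-analytic diffeomorphism, whose inverse is given by an explicit algebraic formula in the coefficients. Smoothness at the level of sections then follows from the inverse function theorem in Banach spaces, once one checks that the differential $d\Phi_\omega(\alpha) = (n-1)\,\omega^{n-2}\wedge \alpha$ is a bounded linear isomorphism $C^{1,\nu}(\Lambda^{1,1}_{\mathbb{R}}) \to C^{1,\nu}(\Lambda^{n-1,n-1}_{\mathbb{R}})$ whenever $\omega$ is positive; this is a pointwise linear-algebraic statement on the fibers of $\Lambda^{\bullet,\bullet}$ and the boundedness of its inverse is uniform on the compact $\mathbb{C}P^n$.

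Granting the diffeomorphism $\Phi$, the description of $\mathcal{B}$ translates directly: an $\omega\in C^{1,\nu}(\Lambda_+^{1,1})$ is Balanced if and only if $d\Phi(\omega)=0$, so
\[ \Phi(\mathcal{B}) \;=\; C^{1,\nu}(\Lambda_+^{n-1,n-1}) \,\cap\, C^{1,\nu}_{cl}(\Lambda^{n-1,n-1}_{\mathbb{R}}). \]
Positivity is a $C^0$-open (hence $C^{1,\nu}$-open) condition, and by the remark preceding the proposition $C^{1,\nu}_{cl}(\Lambda^{n-1,n-1}_{\mathbb{R}})$ is a closed subspace, hence a Banach space in its own right. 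Therefore $\Phi(\mathcal{B})$ is an open subset of this Banach space, and thus carries the trivial structure of a smooth Banach manifold modelled on $C^{1,\nu}_{cl}(\Lambda^{n-1,n-1}_{\mathbb{R}})$. Pulling this structure back through $\Phi^{-1}$ endows $\mathcal{B}$ with the required smooth Banach manifold structure and identifies its tangent space at $\omega$ with the preimage of $C^{1,\nu}_{cl}(\Lambda^{n-1,n-1}_{\mathbb{R}})$ under the linear isomorphism $(n-1)\omega^{n-2}\wedge(\cdot)$.

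The main obstacle is the pointwise inversion step: verifying that Michelsohn's algebraic $(n-1)$-th root extends to a genuinely smooth map between Hölder sections. The linear-algebraic inversion at a single point is classical, but to run the Banach inverse function theorem one must keep track of the positivity condition and show that the inverse of the multiplication operator $\alpha\mapsto (n-1)\,\omega^{n-2}\wedge\alpha$ is bounded in $C^{1,\nu}$-norm uniformly in $\omega$ on compacta. I would handle this by exhibiting the pointwise inverse as a rational function in the coefficients of $\omega$ (valid on the positive cone), noting that its composition with a $C^{1,\nu}$-section is again $C^{1,\nu}$, and then invoking the open mapping theorem fiberwise combined with the compactness of $\mathbb{C}P^n$ to conclude uniform bounds.
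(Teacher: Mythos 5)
Your proposal does not prove the statement at hand. The statement is the Remark asserting that $C^{1,\nu}_{cl}\big(\Lambda^{p,p}_{\mathbb{R}}\big)$, the space of real closed $(p,p)$-forms of class $C^{1,\nu}$, is a closed linear subspace of $C^{1,\nu}\big(\Lambda^{p,p}_{\mathbb{R}}\big)$ and hence a Banach space. What you have written is instead a (reasonable, and essentially paper-faithful) sketch of the proof of Proposition~\ref{balismanif}, i.e.\ that the space of Balanced forms $\mathcal{B}$ carries a smooth Banach manifold structure via Michelsohn's bijection $\omega \mapsto \omega^{n-1}$ and the inverse function theorem. Indeed, at the point where you write ``by the remark preceding the proposition $C^{1,\nu}_{cl}(\Lambda^{n-1,n-1}_{\mathbb{R}})$ is a closed subspace, hence a Banach space in its own right,'' you are explicitly \emph{assuming} the statement you were asked to prove, rather than proving it. None of the machinery you invoke (positivity cones, the pointwise $(n-1)$-th root, uniform bounds on the inverse of $\alpha \mapsto (n-1)\,\omega^{n-2}\wedge\alpha$) is needed for, or relevant to, the Remark.

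The Remark itself has a one-line argument, which is what the paper's phrasing gestures at: the exterior derivative $d$ is a bounded linear operator from $C^{1,\nu}$-forms to $C^{0,\nu}$-forms (the $C^{1,\nu}$-norm controls the $C^{0,\nu}$-norm of first derivatives of the coefficients in any fixed finite atlas), so if $\alpha_j \to \alpha$ in $C^{1,\nu}$ with $d\alpha_j = 0$, then $d\alpha_j \to d\alpha$ in $C^{0,\nu}$ and hence $d\alpha = 0$; thus $\ker d$ is closed. The conditions of being real and of type $(p,p)$ are pointwise linear conditions preserved under uniform convergence, so they too cut out a closed subspace. A closed linear subspace of a Banach space is complete in the induced norm, hence a Banach space. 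If you want to salvage your write-up, this short argument should replace the appeal to ``the remark preceding the proposition,'' and the rest of your text should be relabelled as a proof of Proposition~\ref{balismanif}.
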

\begin{proof}
	Regarding $C^{1,\nu}(\Lambda_+^{1,1})$ and $C^{1,\nu}(\Lambda_+^{n-1,n-1})$ as open sets of Banach vector spaces, it is easily seen that the following map is smooth
	\begin{align*}
		\Phi: C^{1,\nu}\big(\Lambda_+^{1,1}\big) &\to C^{1,\nu}\big(\Lambda_+^{n-1,n-1}\big) \\
		\omega &\mapsto \omega^{n-1}.
	\end{align*}
	This map is also known to be bijective, see \cite{Michelsohn_1982}. Even more, for each $\omega \in C^{1,\nu}\big(\Lambda_+^{1,1}\big)$ the map $\restr{d\Phi}{\omega}: C^{1,\nu}(\Lambda_{\mathbb{R}}^{1,1}) \to C^{1,\nu}(\Lambda_{\mathbb{R}}^{n-1,n-1})$, $ \alpha \mapsto (n-1)\alpha \wedge \omega^{n-2}$, is continuous. On the other hand, Theorem $\ref{lefdecompthrm}$ $\ref{lefdecompthrm1}$ implies that this map is also bijective. Hence, it is a Banach space isomorphism. Therefore, by the inverse function theorem for Banach spaces, the map $\Phi$ is a smooth diffeomorphism. In particular, denoting by $C_{cl}^{1,\nu}(\Lambda_{+}^{n-1,n-1})$ the space of positive, closed $(n-1,n-1)$-forms, we have that $\Phi: \mathcal{B} \to C_{cl}^{1,\nu}(\Lambda_{+}^{n-1,n-1})$ is a homeomorphism. Since $C_{cl}^{1,\nu}\big(\Lambda_{+}^{n-1,n-1}\big) \subset C_{cl}^{1,\nu}\big(\Lambda_{\mathbb{R}}^{n-1,n-1}\big)$ is an open set of a Banach vector space, the map $\restr{\Phi}{\mathcal{B}}: \mathcal{B} \to C_{cl}^{1,\nu}\big(\Lambda_{+}^{n-1,n-1}\big)$ defines a global chart. Then, the space of balanced forms has a structure of smooth Banach manifold modelled over $C_{cl}^{1,\nu}\big(\Lambda_{\mathbb{R}}^{n-1,n-1}\big)$.            
\end{proof}

\begin{coro}\label{globalchrtbalmetr}
	The space of balanced metrics $\mathscr{B}^{1,\nu}$ has a structure of smooth Banach manifold such that the map 
	\begin{align*}
		\hat{\Phi}: \mathscr{B}^{1,\nu} &\to C_{cl}^{1,\nu}\big(\Lambda_{+}^{n-1,n-1}\big) \\
		g&\mapsto \Phi\left(g(J\cdot,\cdot)\right) 
	\end{align*}
	defines a smooth diffeomorphism onto the set $C_{cl}^{1,\nu}\big(\Lambda_{+}^{n-1,n-1}\big)$. 
\end{coro}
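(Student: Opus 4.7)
The plan is to transport the Banach manifold structure produced in Proposition \ref{balismanif} from the space of Balanced forms $\mathcal{B}$ to the space of Balanced metrics $\mathscr{B}^{1,\nu}$, using the Hermitian metric$\leftrightarrow$positive $(1,1)$-form duality $\mathcal{J}$ defined in \eqref{formxmetrics}.

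First I would observe that by definition $\mathscr{B}^{1,\nu} = \mathcal{J}(\mathcal{B})$, since a Hermitian metric is Balanced precisely when its fundamental form $\omega = \mathcal{J}^{-1}(g)$ satisfies $d\omega^{n-1}=0$. Since $\mathcal{J}$ is a homeomorphism onto $\mathscr{H}^{1,\nu}$, its restriction $\mathcal{J}\vert_{\mathcal{B}} : \mathcal{B} \to \mathscr{B}^{1,\nu}$ is a homeomorphism. We declare this map to be a smooth diffeomorphism; this transfers the Banach manifold structure from $\mathcal{B}$ to $\mathscr{B}^{1,\nu}$ in a way that is canonically determined by the canonical complex structure $J$.

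Next I would note that, by construction, $\hat{\Phi} = \Phi \circ \mathcal{J}^{-1}$ on $\mathscr{B}^{1,\nu}$. Indeed, for $g\in \mathscr{B}^{1,\nu}$, the form $g(J\cdot,\cdot) = \mathcal{J}^{-1}(g)$ lies in $\mathcal{B}$, and $\hat{\Phi}(g) = \Phi(\mathcal{J}^{-1}(g))$. Since $\mathcal{J}\vert_{\mathcal{B}}$ is a smooth diffeomorphism by the definition we just imposed, and Proposition \ref{balismanif} shows that $\Phi\vert_{\mathcal{B}} : \mathcal{B} \to C_{cl}^{1,\nu}(\Lambda_{+}^{n-1,n-1})$ is a smooth diffeomorphism, the composition $\hat{\Phi}$ is a smooth diffeomorphism onto $C_{cl}^{1,\nu}(\Lambda_{+}^{n-1,n-1})$.

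Finally I would verify that $C_{cl}^{1,\nu}(\Lambda_{+}^{n-1,n-1})$ is an open subset of the Banach space $C_{cl}^{1,\nu}(\Lambda_{\mathbb{R}}^{n-1,n-1})$. Openness in the ambient space $C^{1,\nu}(\Lambda_{\mathbb{R}}^{n-1,n-1})$ follows from the fact that positivity of an $(n-1,n-1)$-form on a compact manifold is preserved under sufficiently small $C^0$-perturbations (the set of positive forms is an open cone in each fiber, and a $C^{1,\nu}$-neighborhood is in particular a $C^0$-neighborhood), and the subspace topology from the closed subspace $C_{cl}^{1,\nu}(\Lambda_{\mathbb{R}}^{n-1,n-1})$ inherits this openness. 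No step is really an obstacle here: the corollary is essentially formal once Proposition \ref{balismanif} is in place, and the only point requiring any care is ensuring that the positivity condition is genuinely open in the $C^{1,\nu}$-topology, which it is by compactness of $\mathbb{C}P^n$.
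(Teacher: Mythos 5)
Your proposal is correct and follows essentially the paper's intended argument: the paper also treats the corollary as a formal transport of the structure from Proposition \ref{balismanif} through the duality $\mathcal{J}$ of \eqref{formxmetrics}, with $\hat{\Phi}=\Phi\circ\mathcal{J}^{-1}$ serving as the global chart. Your added remarks (that $\mathcal{J}\vert_{\mathcal{B}}$ is a homeomorphism onto $\mathscr{B}^{1,\nu}$, so the transported structure is compatible with the subspace topology, and that positivity is $C^0$-open by compactness) are exactly the points the paper leaves implicit.
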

Corollary $\ref{globalchrtbalmetr}$ establishes the manifold structure of the space of balanced metrics. Therefore, it remains to prove that the space of Kähler metrics has a structure of Banach manifold with the property that the inclusion $\iota: \mathscr{K}^{1,\nu} \hookrightarrow \mathscr{B}^{1,\nu}$ is a smooth embedding around every smooth metric.

Since the \textit{space of Kähler forms (of class $C^{1,\nu}$)} $\mathcal{K} \doteq \mathcal{J}^{-1}\left(\mathscr{K}^{1,\nu} \right) = C_{cl}^{1,\nu}\big(\Lambda^{1,1}_+\big)$ is an open set of the Banach space $C_{cl}^{1,\nu}\big(\Lambda^{1,1}_{\mathbb{R}}\big)$, it has a natural smooth Banach manifold structure, in such a way that the inclusion $\iota: \mathcal{K} \hookrightarrow \mathcal{B}$ is a topological embedding. The aforementioned smooth embedding property can be stated as the following proposition. The remaining portion of this section will be dedicated to proving it.

\begin{prop}\label{KissplitinB}
	Let $j\doteq {\Phi}\circ{\iota}: \mathcal{K} \to C_{cl}^{1,\nu}\big(\Lambda^{n-1,n-1}_{+}\big)$. For each smooth Kähler form $\omega_0 \in \mathcal{K}$, there exists a closed subspace $A_{\omega_0} \subset T_{\omega_0} \mathcal{B}$, open neighborhoods $U \subset \mathcal{K}$ of $\omega_0$ and $V\subset A_{\omega_0}$ of $0$, and an open set $W$ containing $j(\omega_0)$ in $C_{cl}^{1,\nu}\big(\Lambda_+^{n-1,n-1}\big)$, along with a smooth diffeomorphism $\rho:U\times V \to W$, satisfying the following properties:
	\begin{enumerate}[label=\alph*),ref=(\alph*)]
		\item $T_{\omega_0}\mathcal{B}=T_{\omega_{0}}\mathcal{K} \oplus A_{\omega_0}$;
		\item $\rho(\omega_0, 0) = j(\omega_0)$; 
		\item $\rho\left(U \times \{0\} \right) = W \cap j\left(U \right)$;
		\item For every $(\omega, \xi) \in U \times V$ and $\eta \in A_{\omega_0}$, we have that $\restr {d\rho }{(\omega,\xi)} \, \eta = \restr{d \Phi}{\omega_0} \, \eta$.    
	\end{enumerate}   
\end{prop}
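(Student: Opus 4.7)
The plan is to construct $\rho$ as the affine approximation to $\Phi$ around $\omega_0$ and invoke the Banach inverse function theorem. First, differentiating the defining conditions of $\mathcal{K}$ and $\mathcal{B}$ at a Kähler form $\omega_0$, one gets
\[
T_{\omega_0}\mathcal{K} = C^{1,\nu}_{cl}(\Lambda^{1,1}_{\mathbb{R}}), \qquad T_{\omega_0}\mathcal{B} = \{\alpha \in C^{1,\nu}(\Lambda^{1,1}_{\mathbb{R}}) : d\alpha \wedge \omega_0^{n-2} = 0\},
\]
where the second identity uses $d\omega_0 = 0$. My candidate for $\rho$ is
\[
\rho(\omega, \xi) \doteq \Phi(\omega) + d\Phi|_{\omega_0}(\xi) = \omega^{n-1} + (n-1)\,\xi \wedge \omega_0^{n-2},
\]
which satisfies (b) and (d) by inspection. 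Its values lie in $C^{1,\nu}_{cl}(\Lambda_+^{n-1,n-1})$ near $(\omega_0, 0)$: closedness follows from $d\omega^{n-1} = 0$ (because $\omega \in \mathcal{K}$) together with $d(\xi \wedge \omega_0^{n-2}) = d\xi \wedge \omega_0^{n-2} = 0$ (precisely the defining condition of $T_{\omega_0}\mathcal{B}$), and positivity holds by continuity.

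The crux is to construct a closed complement $A_{\omega_0}$ of $T_{\omega_0}\mathcal{K}$ inside $T_{\omega_0}\mathcal{B}$. I would do this via Hodge theory of the smooth Kähler metric $g_{\omega_0}$. Its Hodge Laplacian $\Delta = dd^* + d^*d$ on $\Lambda^{1,1}_{\mathbb{R}}$ is elliptic with smooth coefficients; Schauder estimates then ensure that its Green's operator $G$ gains two Hölder derivatives, so $P\alpha \doteq d^*dG\alpha$ defines a bounded linear operator on $C^{1,\nu}(\Lambda^{1,1}_{\mathbb{R}})$. The standard identities $[\Delta, d^*] = 0$ (hence $[G, d^*] = 0$) and $(d^*)^2 = 0$ yield $P^2 = P$. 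Writing the Hodge decomposition $\alpha = \alpha_H + dd^*G\alpha + P\alpha$, one sees that $\alpha - P\alpha$ is closed, so $d(P\alpha) = d\alpha$ and hence $P$ preserves $T_{\omega_0}\mathcal{B}$; the same decomposition, combined with the $L^2$-orthogonality of the three summands, gives $\ker(P|_{T_{\omega_0}\mathcal{B}}) = T_{\omega_0}\mathcal{K}$. Setting $A_{\omega_0} \doteq \mathrm{Im}(P|_{T_{\omega_0}\mathcal{B}})$ yields the required closed topological complement, establishing (a).

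With (a) in hand,
\[
d\rho|_{(\omega_0, 0)}(\alpha, \xi) = d\Phi|_{\omega_0}(\alpha + \xi)
\]
is a Banach isomorphism: by the proof of Proposition \ref{balismanif}, $d\Phi|_{\omega_0}$ restricts to an isomorphism $T_{\omega_0}\mathcal{B} \to C^{1,\nu}_{cl}(\Lambda^{n-1,n-1}_{\mathbb{R}})$, and the decomposition $T_{\omega_0}\mathcal{B} = T_{\omega_0}\mathcal{K} \oplus A_{\omega_0}$ gives bijectivity. The Banach inverse function theorem supplies open neighborhoods $U \subset \mathcal{K}$, $V \subset A_{\omega_0}$ and $W \subset C^{1,\nu}_{cl}(\Lambda_+^{n-1,n-1})$ such that $\rho : U \times V \to W$ is a smooth diffeomorphism. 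Property (c) is then automatic: $j(U) = \rho(U \times \{0\}) \subset W$, and injectivity of $\rho$ forces any $j(\omega') \in W \cap j(U)$ to be the image of the unique point $(\omega', 0) \in U \times V$.

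The main obstacle is the Hodge-theoretic step producing $A_{\omega_0}$ as a closed subspace in the $C^{1,\nu}$-topology. This is where the smoothness of $\omega_0$ (a hypothesis of the statement) is used essentially: it is what makes $\Delta$ have smooth coefficients and allows the Schauder regularity of $G$ and the commutation with $d^*$ to be invoked cleanly. The remaining steps — checking that $\rho$ is well defined in a neighborhood and applying the inverse function theorem — are routine.
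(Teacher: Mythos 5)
Your overall strategy coincides with the paper's: take $A_{\omega_0}$ to be the co-exact part of $T_{\omega_0}\mathcal{B}$ coming from the Hodge decomposition of the smooth metric $g_{\omega_0}$, define $\rho(\omega,\xi)=\Phi(\omega)+\restr{d\Phi}{\omega_0}\cdot\xi$, and conclude with the Banach inverse function theorem; properties (b)--(d) and the isomorphism of $\restr{d\rho}{(\omega_0,0)}$ are handled exactly as in the paper. However, there is a genuine gap at the key step, namely the claim that the projection $P=d^*dG$ ``preserves $T_{\omega_0}\mathcal{B}$.'' Membership in $T_{\omega_0}\mathcal{B}$ has two parts (see Lemma \ref{TgspaceofKB}): the form must be a \emph{real $(1,1)$-form}, and it must satisfy $d(\cdot\wedge\omega_0^{n-2})=0$. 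Your argument ($\alpha-P\alpha$ is closed, hence $d(P\alpha)=d\alpha$, hence $d(P\alpha)\wedge\omega_0^{n-2}=0$) only verifies the second condition. The first is not automatic: $d^*dG$ does not commute with the $(p,q)$-decomposition, and for $\alpha$ of type $(1,1)$ the $(2,0)$-component of $P\alpha$ is $\bar{\partial}^*\partial G\alpha$, which has no a priori reason to vanish. If $P\alpha$ fails to be of type $(1,1)$, then $A_{\omega_0}=\mathrm{Im}(P|_{T_{\omega_0}\mathcal{B}})$ need not lie in $T_{\omega_0}\mathcal{B}$ at all, and the splitting in item (a) collapses.

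This is exactly the content of Lemma \ref{coexactpartoftoB}(b) in the paper, whose proof is not formal: it uses the hypothesis $\alpha\in T_{\omega_0}\mathcal{B}$ together with the Kähler identity $[\Lambda_{\omega},d]=-\delta^c_{\omega}$ to derive $\delta_{\omega}\delta^c_{\omega}\theta=0$, hence $\partial^*[\delta_{\omega}\theta]_{2,0}=0$; it extracts from the $(2,0)$-part of the Hodge decomposition of $\alpha$ the identity $[\delta_{\omega}\theta]_{2,0}=-\partial[\alpha]_{1,0}$; combining these with $\tfrac12\Delta=\partial\partial^*+\partial^*\partial$ shows $[\delta_{\omega}\theta]_{2,0}$ is harmonic; and finally it invokes the fact that every harmonic $2$-form on $\mathbb{C}P^n$ is of type $(1,1)$ (i.e. $h^{2,0}=0$) to conclude $[\delta_{\omega}\theta]_{2,0}=0$. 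So both the restriction to $T_{\omega_0}\mathcal{B}$ (not all of $C^{1,\nu}(\Lambda^{1,1}_{\mathbb{R}})$) and the specific cohomology of $\mathbb{C}P^n$ enter essentially. Your proposal needs this argument (or an equivalent one) inserted where you assert that $P$ preserves $T_{\omega_0}\mathcal{B}$; with it, the rest of your proof goes through as written.
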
 

The non-trivial aspect of Proposition \ref{KissplitinB} lies in finding the appropriate complement of the tangent space of $\mathcal{K}$. To accomplish this, we begin by presenting a characterization of these tangent spaces.

\begin{lema}\label{TgspaceofKB}
	Let $\mathcal{K}$ and $\mathcal{B}$ denote, respectively, the Banach manifolds of Kähler forms and balanced forms, endowed with the ${C}^{1,\nu}$-topology. Then:
	\begin{enumerate}[label=\alph*),ref=(\alph*)]
		\item \label{TgspaceofKB1}For each $\omega \in \mathcal{K}$, we have $T_{\omega}\mathcal{K}=C_{cl}^{1,\nu}\big(\Lambda^{1,1}_{\mathbb{R}}\big)$;
		\item \label{TgspaceofKB2}For each $\omega \in \mathcal{B}$, we have $T_{\omega}\mathcal{B}=\left\{\eta \in C^{1,\nu}\big(\Lambda^{1,1}_{\mathbb{R}}\big) : d(\eta \wedge \omega^{n-2})=0 \right\}$;
		\item \label{TgspaceofKB3}For each $\omega \in \mathcal{K}$, the map $d\iota_{\omega} : T_{\omega}\mathcal{K} \to T_{\omega}\mathcal{B}$ is given by the canonical inclusion.  
	\end{enumerate}
\end{lema}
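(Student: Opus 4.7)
The strategy is to address the three items in order, reducing everything to the diffeomorphism $\Phi$ constructed in the proof of Proposition \ref{balismanif}. Item \ref{TgspaceofKB1} is essentially immediate: by definition $\mathcal{K}=C^{1,\nu}_{cl}\big(\Lambda^{1,1}_+\big)$ is a nonempty open subset of the Banach space $C^{1,\nu}_{cl}\big(\Lambda^{1,1}_{\mathbb{R}}\big)$, so from the general theory of smooth Banach manifolds its tangent space at any point is the ambient Banach space itself.

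For item \ref{TgspaceofKB2} the plan is to exploit the global chart $\restr{\Phi}{\mathcal{B}}:\mathcal{B}\to C^{1,\nu}_{cl}\big(\Lambda^{n-1,n-1}_+\big)$ from Proposition \ref{balismanif}. Since this is a smooth diffeomorphism onto an open subset of the Banach space $C^{1,\nu}_{cl}\big(\Lambda^{n-1,n-1}_{\mathbb{R}}\big)$, the tangent space $T_\omega\mathcal{B}$ is identified via $d\Phi_\omega$ with this whole model space. The differential of $\omega\mapsto\omega^{n-1}$ at $\omega$ is the linear map $\eta\mapsto(n-1)\eta\wedge\omega^{n-2}$, which by item \ref{lefdecompthrm1} of Theorem \ref{lefdecompthrm} (the Lefschetz-type isomorphism already used in the proof of Proposition \ref{balismanif}) is a Banach space isomorphism $C^{1,\nu}\big(\Lambda^{1,1}_{\mathbb{R}}\big)\to C^{1,\nu}\big(\Lambda^{n-1,n-1}_{\mathbb{R}}\big)$. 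Viewing tangent vectors as velocities of curves in the ambient space $C^{1,\nu}\big(\Lambda^{1,1}_+\big)$, one therefore gets
\[
T_\omega\mathcal{B}=\big\{\eta\in C^{1,\nu}\big(\Lambda^{1,1}_{\mathbb{R}}\big)\;:\;(n-1)\eta\wedge\omega^{n-2}\in C^{1,\nu}_{cl}\big(\Lambda^{n-1,n-1}_{\mathbb{R}}\big)\big\},
\]
which is precisely the claimed description. The forward direction is confirmed by differentiating $t\mapsto\omega_t^{n-1}$ at $t=0$ along any curve $\omega_t\in\mathcal{B}$ with $\dot\omega_0=\eta$; the converse requires producing a curve realizing a given $\eta$, which one obtains by setting $\omega_t\doteq\Phi^{-1}\big(\omega^{n-1}+(n-1)t\,\eta\wedge\omega^{n-2}\big)$ for $|t|$ small.

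Item \ref{TgspaceofKB3} is then a direct consequence: the inclusion $\iota$ is nothing but the set-theoretic inclusion once both $\mathcal{K}$ and $\mathcal{B}$ are regarded as subsets of $C^{1,\nu}\big(\Lambda^{1,1}_+\big)$. Given $\eta\in T_\omega\mathcal{K}$, the affine curve $\omega+t\eta$ lies in $\mathcal{K}$ for small $t$ and, through $\iota$, in $\mathcal{B}$; its velocity at $t=0$ is $\eta$ in both descriptions, so $d\iota_\omega(\eta)=\eta$. Consistency with \ref{TgspaceofKB2} is automatic, since the Kähler conditions $d\eta=0$ and $d\omega=0$ immediately yield $d(\eta\wedge\omega^{n-2})=0$.

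The only point requiring care is the compatibility between the two descriptions of $T_\omega\mathcal{B}$ — the ambient one via $(1,1)$-forms and the intrinsic one via the chart $\Phi$. This is not really an obstacle but the content to verify: it amounts to the chain rule applied to $\Phi$, and follows from the smoothness of $\omega\mapsto\omega^{n-1}$ on the relevant open set of a Banach space, which is already established in Proposition \ref{balismanif}.
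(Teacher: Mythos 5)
Your proposal is correct and follows essentially the same route as the paper: item a) via openness of $\mathcal{K}$ in $C^{1,\nu}_{cl}\big(\Lambda^{1,1}_{\mathbb{R}}\big)$, item b) via the global chart $\Phi$ together with the explicit curve $t\mapsto\Phi^{-1}\big(\omega^{n-1}+t(n-1)\eta\wedge\omega^{n-2}\big)$, which is exactly the curve $\hat\eta(t)$ the paper uses, and item c) as an immediate consequence. The extra verifications you include (the Lefschetz isomorphism for the differential of $\Phi$ and the closedness check for Kähler directions) are consistent with, and slightly more detailed than, the paper's argument.
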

\begin{proof}
	The prove of $\ref{TgspaceofKB1}$ follows immediately from the fact that $\mathcal{K}$ is an open set of $C_{cl}^{1,\nu}\big(\Lambda^{1,1}_{\mathbb{R}}\big)$. 
	
	To prove $\ref{TgspaceofKB2}$, fix $\omega \in \mathcal{B}$ and let $\mathcal{V}_{\omega}=\left\{\eta \in C^{1,\nu}\big(\Lambda^{1,1}_{\mathbb{R}}\big) : d(\eta \wedge \omega^{n-2})=0 \right\}$. The desired isomorphism is explicit given by 
	\begin{align*}
		T_{\omega} : \mathcal{V}_{\omega} &\to T_{\omega} \mathcal{B} \\ 
		\eta &\mapsto [\hat{\eta}],	
	\end{align*}    
	where $\hat{\eta}$ is the only curve in $\mathcal{B}$ defined by $\hat{\eta}(t)^{n-1}=\omega^{n-1}+ t (n-1)\eta \wedge \omega^{n-2}$, for $|t|$ sufficiently small. Finally, $\ref{TgspaceofKB3}$ follows by  $\ref{TgspaceofKB1}$ and $\ref{TgspaceofKB2}$.  
\end{proof}

In \cite{Morrey1956}, B. Morrey and J. Eells generalized the Hodge decomposition theorem for forms with distinct types of regularity. In particular, since the space of harmonic two-forms in $\mathbb{C}P^n$ is one dimensional they proved that for any smooth Kähler metric $g_\omega \in \mathscr{K}$, the space $C^{1,\nu}\left(\Lambda^2_{\mathbb{R}}\right)$ can be decomposed as follows: 
$$C^{1,\nu}\left(\Lambda^2_{\mathbb{R}}\right)= \mathbb{R}\omega \oplus \mathrm{Im}d \oplus \mathrm{Im} \delta_{\omega},$$
where the exterior derivative has domain $C^{2,\nu}\left(\Lambda^1_{\mathbb{R}}\right)$, and $\delta_{\omega}$ is the codifferential induced by $g_\omega$, with domain $C^{2,\nu}\left(\Lambda^3_{\mathbb{R}}\right)$.        

On the other hand, by Lemma $\ref{TgspaceofKB}$ $\ref{TgspaceofKB1}$ we have that $T_{\omega} \mathcal{K}=\left( \mathbb{R} \omega \oplus \mathrm{Im}d\right)\cap C^{1,\nu}\big(\Lambda^{1,1}_{\mathbb{R}}\big)$. Therefore, the aforementioned Hodge decomposition Theorem implies the splitting $T_{\omega}\mathcal{B}=T_{\omega}\mathcal{K} \oplus \left( \mathrm{Im} \delta_{\omega} \cap T_{\omega}\mathcal{B}\right)$, under the assumption that the projection $\pi_{\delta_{\omega}}: C^{1,\nu}\left(\Lambda^2_{\mathbb{R}}\right) \to \mathrm{Im}\delta_{\omega} $ preserves the subspace $T_{\omega}\mathcal{B}$. In the next result, we prove that this assumption is satisfied, thus proving the first part of Proposition \ref{KissplitinB}.

\begin{lema}\label{coexactpartoftoB}
	Let $\omega \in \mathcal{K}$ be a smooth Kähler form and $\eta \in T_{\omega}\mathcal{B}$. Then, if $\pi_{\delta_{\omega}}: C^{1,\nu}\left(\Lambda^2_{\mathbb{R}}\right) \to \mathrm{Im}\delta_{\omega}$ denotes the projection into the space of coexact forms, induced by the Hodge decomposition, we have that
	\begin{enumerate}[label=\alph*),ref=(\alph*)]
		\item\label{coexactpartoftoB1} $ \pi_{\delta_{\omega}}(\eta) \wedge \omega^{n-1} =0$;
		\item\label{coexactpartoftoB2} $\pi_{\delta_{\omega}}(\eta) \in T_{\omega} \mathcal{B}$.
	\end{enumerate}  
	In particular, $T_{\omega}\mathcal{B}=T_{\omega}\mathcal{K} \oplus A_{\omega}$ for $A_{\omega}\doteq\mathrm{Im}(\delta_{\omega})\cap T_{\omega}\mathcal{B}$.      
\end{lema}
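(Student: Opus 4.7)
The plan is to combine the Hodge decomposition on the smooth Kähler background $(\mathbb{C}P^n, g_\omega)$ with the Kähler identities. Since the Laplacian commutes with the $(p,q)$-projections on a Kähler manifold, the Hodge decomposition quoted above restricts to $C^{1,\nu}(\Lambda^{1,1}_{\mathbb{R}})$: any $\eta \in T_\omega \mathcal{B}$ can be written as $\eta = c\omega + d\alpha + \delta_\omega \beta$ with each summand a real $(1,1)$-form, and by construction $\pi_{\delta_\omega}(\eta) = \delta_\omega\beta$, which in particular satisfies $\delta_\omega(\delta_\omega\beta) = 0$.

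For item $\ref{coexactpartoftoB2}$, I would expand
\begin{equation*}
d(\eta\wedge\omega^{n-2}) = d\eta\wedge \omega^{n-2} = d(c\omega + d\alpha + \delta_\omega\beta)\wedge \omega^{n-2} = d\delta_\omega\beta\wedge\omega^{n-2},
\end{equation*}
using $d\omega = 0$, $d^2 = 0$ and $d(c\omega)=0$. The hypothesis $\eta\in T_\omega\mathcal{B}$ makes the left-hand side vanish, so $d(\delta_\omega\beta\wedge\omega^{n-2}) = 0$, which is the defining condition of $T_\omega\mathcal{B}$ in Lemma $\ref{TgspaceofKB}$, item $\ref{TgspaceofKB2}$.

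For the pointwise statement in item $\ref{coexactpartoftoB1}$, I would use the Lefschetz decomposition to write $\delta_\omega\beta = f\omega + \psi_0$ with $\psi_0$ primitive, so that $\delta_\omega\beta\wedge\omega^{n-1} = f\,\omega^n$ and the goal reduces to showing $f\equiv 0$. The strategy is to obtain two distinct relations between $\delta_\omega \psi_0$ and $d^c f$ and to compare them. On the one hand, $\delta_\omega(\delta_\omega\beta) = 0$ together with the Kähler identity $[L,\delta_\omega] = d^c$ applied to the function $f$ yields $\delta_\omega(f\omega) = -d^c f$, whence $\delta_\omega\psi_0 = d^c f$. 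On the other hand, the identity from item $\ref{coexactpartoftoB2}$ expands as $df\wedge\omega^{n-1} + d\psi_0\wedge\omega^{n-2} = 0$; applying the Weil formulas for the Hodge star of primitive $1$-forms and primitive $(1,1)$-forms on a Kähler $2n$-manifold converts this, after one more application of the identity $\delta_\omega = \pm\star d\star$, into $\delta_\omega\psi_0 = \pm(n-1)\,d^c f$ with a definite sign fixed by conventions. Comparing the two gives $d^c f = \pm(n-1)\,d^c f$, which for $n\geq 3$ forces $d^c f = 0$, and hence $df = 0$, so that $f$ is constant on the connected manifold $\mathbb{C}P^n$. Combined with $\int_{\mathbb{C}P^n} f\,dV_{g_\omega} = 0$ (which follows from the Kähler identity $[\Lambda, \delta_\omega] = 0$, giving $nf = \Lambda(\delta_\omega\beta) = \delta_\omega(\Lambda\beta)$, a total divergence), this forces $f \equiv 0$.

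Finally, the direct sum $T_\omega\mathcal{B} = T_\omega\mathcal{K}\oplus A_\omega$ follows at once: the decomposition $\eta = (c\omega + d\alpha) + \delta_\omega\beta$ exhibits the sum thanks to item $\ref{coexactpartoftoB2}$ and Lemma $\ref{TgspaceofKB}$, item $\ref{TgspaceofKB1}$, while the directness $T_\omega\mathcal{K}\cap A_\omega = \{0\}$ is immediate from the $L^2$-orthogonality between $d$-closed forms and coexact forms in the Hodge decomposition. The main obstacle is item $\ref{coexactpartoftoB1}$ in its pointwise form: the integral vanishing is immediate from Stokes' theorem together with $d\omega^{n-1}=0$, but upgrading it to a pointwise identity genuinely requires coupling the tangent-space condition $d(\eta\wedge\omega^{n-2})=0$ with the two distinct Kähler identities $[L,\delta_\omega]=d^c$ and $[\Lambda,\delta_\omega]=0$ in order to produce an algebraic inconsistency in the Lefschetz decomposition unless $f\equiv 0$.
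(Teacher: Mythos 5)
Your argument has a genuine gap at its very first step: the claim that the Hodge decomposition of $\eta$ restricts to $(1,1)$-forms, i.e.\ that $\eta = c\omega + d\alpha + \delta_\omega\beta$ with \emph{each summand} real of type $(1,1)$, justified by ``the Laplacian commutes with the $(p,q)$-projections''. That commutation only gives that the harmonic projector and the Green operator $G$ preserve bidegree; the exact and coexact pieces are $d\delta_\omega G\eta$ and $\delta_\omega d G\eta$, and neither $d$ nor $\delta_\omega$ preserves bidegree. Indeed the $(2,0)$-component of the exact part is $\partial\bar{\partial}^* G\eta$, which is nonzero for a generic real $(1,1)$-form (already on a flat complex torus, $\eta = \mathrm{Re}(f\,dz_1\wedge d\bar z_2)$ with $f$ a non-constant character has $\partial\bar{\partial}^* G\eta$ proportional to $\partial^2 f/\partial z_2^2\, dz_1\wedge dz_2 \neq 0$). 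Proving that $\pi_{\delta_\omega}(\eta)$ is of type $(1,1)$ is precisely the non-trivial half of item \ref{coexactpartoftoB2}, since membership in $T_\omega\mathcal{B}$ requires lying in $C^{1,\nu}\big(\Lambda^{1,1}_{\mathbb{R}}\big)$ by Lemma \ref{TgspaceofKB}, and it genuinely uses the hypothesis $\eta \in T_\omega\mathcal{B}$: in the paper one first derives $\delta_\omega\delta^c_\omega\theta = 0$ from $\Lambda_\omega(d\delta_\omega\theta)=0$ via the Kähler identity $[\Lambda_\omega,d]=-\delta^c_\omega$ and the $L^2$-orthogonality of $\mathrm{Im}\,d$ and $\mathrm{Im}\,\delta_\omega$, hence $\partial^*[\delta_\omega\theta]_{2,0}=0$; combined with $[\delta_\omega\theta]_{2,0} = -\partial[\alpha]_{1,0}$ this makes $[\delta_\omega\theta]_{2,0}$ harmonic, and since harmonic $2$-forms on $\mathbb{C}P^n$ are of type $(1,1)$ it vanishes. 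Your proposal assumes this conclusion at the outset, and it propagates: your proof of item \ref{coexactpartoftoB1} invokes the Weil formula $\star\psi_0 = -\tfrac{1}{(n-2)!}\,\psi_0\wedge\omega^{n-2}$, which is valid for \emph{primitive $(1,1)$-forms} but changes sign on the $(2,0)+(0,2)$ part, so it too rests on the unproved type statement.

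Apart from this, the rest of your plan is sound and is in fact an attractive alternative to the paper's treatment of item \ref{coexactpartoftoB1} (the paper instead shows that $\Lambda_\omega(\delta_\omega\theta)$ is a constant of zero mean): once one knows that $\delta_\omega\beta$ is a $(1,1)$-form with $\delta_\omega(\delta_\omega\beta)=0$ and $d(\delta_\omega\beta\wedge\omega^{n-2})=0$ (your derivation of the latter from $d\omega=0$ is correct and coincides with the paper's), the Lefschetz splitting $\delta_\omega\beta = f\omega + \psi_0$, the comparison of $\delta_\omega\psi_0 = d^c f$ (from $[L,\delta_\omega]=d^c$) with $\delta_\omega\psi_0 = -(n-1)\,d^c f$ (from applying $\star$ and the Weil identities to $df\wedge\omega^{n-1} + d\psi_0\wedge\omega^{n-2}=0$), and your zero-mean argument via $[\Lambda_\omega,\delta_\omega]=0$ do force $f\equiv 0$; the final splitting $T_\omega\mathcal{B} = T_\omega\mathcal{K}\oplus A_\omega$ then follows as you say. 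To make the proposal complete you must first prove $[\pi_{\delta_\omega}(\eta)]_{2,0}=0$ — for instance by the paper's harmonicity argument sketched above — rather than deduce it from commutation of the Laplacian with the type projections.
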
 
\begin{proof}
	Let $\omega$ and $\eta$ be as in the statement. Consider also $\eta = a\omega + d\alpha + \delta_{\omega} \theta$ the Hodge decomposition of $\eta$, where $a\in \mathbb{R}$, $\alpha \in C^{2,\nu}(\Lambda^{1}_{\mathbb{R}})$, and $ \theta \in C^{2,\nu}(\Lambda^{3}_{\mathbb{R}})$.
	
	First, we prove $\ref{coexactpartoftoB1}$. According to the Lefschetz decomposition Theorem (see Theorem $\ref{lefdecompthrm}$), it is enough to establish that $\Lambda_{\omega}(\delta_{\omega} \theta) = 0$, where $\Lambda_{\omega}$ denotes the dual of the Lefschetz operator associated with the Kähler structure $\omega$ (see Definition \ref{duallefop}). Nevertheless, since $\eta \in T_{\omega}\mathcal{B}$, we observe that $d\delta_{\omega} \theta \wedge \omega^{n-2} = 0$. Consequently, invoking again the Lefschetz decomposition Theorem, we see that this condition is equivalent to $\Lambda_{\omega}(d\delta_{\omega} \theta) = 0$. Moreover, we can commute the operators $d$ and $\Lambda_{\omega}$ by means of Proposition \ref{propofdeltac}, leading to
	$$0=(\Lambda_{\omega}d)(\delta_{\omega} \theta)=(d\Lambda_{\omega}-\delta_{\omega}^{c})(\delta_{\omega}\theta)
	= d\Lambda_{\omega} \delta_{\omega}\theta + \delta_{\omega} \delta_{\omega}^c\theta,$$
	where the operator $\delta_{\omega}^c$ is given by Definition \ref{deltacdef}, and we have applied the identity $\delta_{\omega} \delta_{\omega}^c=-\delta^c_{\omega} \delta_{\omega}$. Since $\mathrm{Im}(d) \perp_{L^2} \mathrm{Im}(\delta_{\omega})$, we further obtain
	\begin{equation}\label{deltadeltac}
		d\Lambda_{\omega} \delta_{\omega}\theta =0= \delta_{\omega} \delta_{\omega}^c\theta. 
	\end{equation}  
	To conclude that the constant function $\Lambda_{\omega}(\delta_{\omega}\theta)$ is zero, it suffices to show that it has zero mean. But, indeed
	$$\int_{\mathbb{C}P^{n}}\Lambda_{\omega}{\delta_{\omega}\theta} \,dV_{g_{\omega}}= \int_{\mathbb{C}P^{n}} \Lambda_{\omega}{\delta_{\omega}\theta} \wedge \star_{g_{\omega}} 1
	= \int_{\mathbb{C}P^{n}} \theta \wedge\left(  \star_{g_{\omega}} d \omega\right) = 0, $$
	where $\star_{g_{\omega}}$ denote the Hodge star associated with the metric $g_{\omega}$.
	
	Now, let us proceed to the proof of $\ref{coexactpartoftoB2}$. To demonstrate that $\delta_{\omega}\theta \in T_{\omega}\mathcal{B}$, we need to prove that $d(\delta_{\omega} \theta \wedge \omega^{n-2})=0$ and $\delta_{\omega}\theta \in C^{1,\nu}\big( \Lambda^{1,1}_{\mathbb{R}}\big)$. However, recalling the Hodge decomposition of $\eta$ and using the fact that $\omega$ is a closed form, we obtain  
	$$ 0 = d(\eta \wedge \omega^{n-2}) = d\left((a\omega+d\alpha)\wedge\omega^{n-2}\right) + d\left(\delta_{\omega}\theta \wedge \omega^{n-2} \right) = d\left(\delta_{\omega}\theta \wedge \omega^{n-2} \right).$$ 
	Therefore, it only remains to show that $\delta_{\omega}\theta$ is of type $(1,1)$. Denoting the projection into the space of $(p,q)$-forms by $[\cdot]_{p,q}: \Lambda^{\bullet}_{\mathbb{C}} \to \Lambda^{p,q}$, we observe that $[d\alpha + \delta_{\omega}\theta]_{2,0} = [\eta-a\omega]_{2,0} = 0$. Since, $d = \partial + \bar{\partial}$ and $\alpha= [\alpha]_{1,0}+[\alpha]_{0,1}$ we reach the following equality
	\begin{equation}\label{deldeltatheta}
		\partial [\alpha]_{1,0}= - [\delta_{\omega} \theta]_{2,0}.
	\end{equation}
	On the other hand, $\partial^* = \frac{1}{2} (\delta_\omega - i \delta^c_{\omega})$, once that $\delta_{\omega} = \partial^* + \bar{\partial}^*$ and $\delta^c_{\omega} = i(\partial^* - \bar{\partial}^*)$, where $\partial^*$ and $\bar{\partial}^*$ denote the $L^2$-dual operators of $\partial$ and $\bar{\partial}$, respectively. Hence, by $(\ref{deltadeltac})$ we see that $\partial^* (\delta_{\omega}\theta) = 0$. Decomposing the form $\delta_{\omega}\theta$, we further obtain  
	\begin{equation*}
		0 = \partial^* (\delta_{\omega}\theta) = \partial^*([\delta_{\omega}\theta]_{2,0}) + \partial^*([\delta_{\omega}\theta]_{1,1}) + \partial^*([\delta_{\omega}\theta]_{0,2}).
	\end{equation*}
	Keeping in mind that $\partial^*\left( C^{1,\nu}(\Lambda^{p,q})\right) \subset C^{0,\nu}(\Lambda^{p-1,q})$, the above equality translates to
	\begin{equation}\label{deldualdeltatheta}
		\partial^*[\delta_{\omega}\theta]_{2,0} = 0.
	\end{equation}
	
	Since the Hodge Laplacian in a Kähler manifold can be written as $\frac{1}{2}\Delta = \partial \partial^* + \partial^*\partial$ (Proposition $3.1.12$, \cite{huybrechts2005complex}), by $(\ref{deldeltatheta})$ and $(\ref{deldualdeltatheta})$ the form $[\delta_{\omega} \theta]_{2,0}$ is harmonic in $\mathbb{C}P^n$. However, since every harmonic form in $\mathbb{C}P^n$ is of type $(1,1)$, the form $[\delta_{\omega} \theta]_{2,0}$ must be zero. Additionally, $[\delta_{\omega} \theta]_{0,2} = \overline{[\delta_{\omega} \theta]}_{2,0} = 0$, completing the argument.       
\end{proof}

The previous Lemma establishes the property that, over smooth forms, the tangent space of the Kähler forms is complemented in the tangent space of balanced forms. As a consequence, the proof of Proposition \ref{KissplitinB}, that we provide bellow, reduces to a simple application of the inverse function theorem for Banach spaces.

\begin{proof}[Proof of Proposition $\ref{KissplitinB}$]
	Fix $\omega_0 \in \mathcal{K}$ a smooth Kähler form. And consider the global chart of the space of balanced metrics $\Phi: \mathcal{B} \to C_{cl}^{1,\nu}(\Lambda_{+}^{n-1,n-1})$, defined in Proposition \ref{balismanif}, also let $j = \Phi \circ \iota: \mathcal{K} \to C_{cl}^{1,\nu}(\Lambda_{+}^{n-1,n-1})$ be its restriction to the space of Kähler forms, and finally let $A_{\omega_0}$ be the complement of $T_{\omega_0}\mathcal{K}$ as defined in Lemma \ref{coexactpartoftoB}. 
	
	Since $C_{cl}^{1,\nu}(\Lambda_{+}^{n-1,n-1})$ is an open set of $C_{cl}^{1,\nu}(\Lambda_{\mathbb{R}}^{n-1,n-1})$ we can define the following smooth map
	\begin{align*}
		\rho: \mathcal{K} \times A_{\omega_0} &\to C_{cl}^{1,\nu}\big(\Lambda_{\mathbb{R}}^{n-1,n-1}\big) \\
		(\omega, \eta)  &\mapsto j(\omega) + d\Phi_{\omega_0}(\eta),
	\end{align*}  
	whose derivative at the point $(\omega_0,0) \in \mathcal{K} \times A_{\omega_0}$ is given by 
	\begin{align}\label{deriofrho}
		\begin{split}
			\restr{d\rho}{(\omega_0,0)}: T_{\omega_0}\mathcal{K} \oplus A_{\omega_0} &\to C_{cl}^{1,\nu}\big(\Lambda_{\mathbb{R}}^{n-1,n-1}\big) \\
			(\alpha, \eta)  &\mapsto \restr{d\Phi}{\omega_0}\left(\restr{d\iota}{\omega_0} \alpha + \eta\right).
		\end{split}
	\end{align}
	Therefore, combining the decomposition $T_{\omega} \mathcal{B} = T_{\omega}\mathcal{K} \oplus A_{\omega}$ with Proposition $\ref{balismanif}$ we conclude that $\restr{d\rho}{(\omega_0,0)}$ is a Banach space isomorphism. By the inverse function theorem for Banach spaces, there exist open neighborhoods $U \subset \mathcal{K}$ of $\omega_0$ and $V \subset A_{\omega_0}$ of $0$, such that $W \doteq \rho(U \times V)$ is an open set and the map $\rho: U \times V \to W$ is a smooth diffeomorphism. The listed properties of this diffeomorphism follows directly from its explicit definition.    
\end{proof}  

\subsection{First and Second variation of the normalized Systole}\label{firstandsecvariation}

As mentioned earlier in this section, in order to establish Theorem \ref{sysofbal}, we must study the Taylor expansion of the normalized systole function. To proceed with this analysis, we require the formulas for the first and second derivatives of this map. 

Before we carry on with these computations, it is necessary to establish and fix some notations. We begin by noticing that our definition of systole naturally extends to metrics of lower regularity. More specifically, if $g$ is a metric in $\mathrm{Riem}(\mathbb{C}P^n)^{1,\nu}$, we set
$$\mathrm{Sys}_k(M,g)=\inf\{\mathrm{vol}_g(C)  :  \mbox{where $[C] \neq 0$ in ${H}_k(M,\mathbb{Z})$}\},$$
where the volume of a cycle is computed with respect to the Hausdorff measure induced by the distance function of the $C^{1,\nu}$ Riemannian manifold $(\mathbb{C}P^n,g)$.

With a consistent definition of the normalized systole $\mathrm{Sys}^{\mathrm{nor}}_{2n-2}: \mathscr{B}^{1,\nu} \to \mathbb{R}$ in the space of $C^{1,\nu}$ balanced metrics, we can employ the balanced condition to establish its smoothness in the Fréchet sense.

\begin{lema}
	Let $g_{\omega} \in \mathscr{B}^{1,\nu}$ be a balanced metric, then
	\begin{equation}\label{normsystexpl}
		\mathrm{Sys}^{\mathrm{nor}}_{2n-2}(\mathbb{C}P^n,g_{\omega})=\frac{(n!)^{\frac{n-1}{n}}}{(n-1)!} \frac{\int_{\mathbb{C}P^{n-1}} \omega^{n-1}}{\left( \int_{\mathbb{C}P^{n}} \omega^{n}\right)^{\frac{n-1}{n}}}.
	\end{equation}
	In particular, $\mathrm{Sys}^{\mathrm{nor}}_{2n-2}: \mathscr{B}^{1,\nu} \to \mathbb{R}$ is a smooth map in the Fréchet sense.
\end{lema}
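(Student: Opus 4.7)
The plan is to reduce the identity to a direct computation using two ingredients already established in the paper: Proposition $\ref{sysofhomometrics}$, which computes the co-dimension two systole of any Balanced Hermitian metric on $\mathbb{C}P^{n}$ (with respect to $J_{\mathrm{can}}$), and the Wirtinger equality for complex submanifolds of a Hermitian manifold. Note that Proposition $\ref{sysofhomometrics}$ is a calibration statement whose proof rests on the pointwise Wirtinger inequality together with $d\omega^{n-1}=0$, so it is valid in the $C^{1,\nu}$ regularity of $\mathscr{B}^{1,\nu}$ (the calibration $\omega^{n-1}/(n-1)!$ and the integrands are continuous, which is what is needed for Stokes on Lipschitz cycles).

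The first step is to invoke Proposition $\ref{sysofhomometrics}$ to write
\[
\mathrm{Sys}_{2n-2}(\mathbb{C}P^{n},g_{\omega})=\mathrm{area}_{g_{\omega}}(\mathbb{C}P^{n-1}_{\sigma}).
\]
Since $\mathbb{C}P^{n-1}_{\sigma}$ is a $J_{\mathrm{can}}$-complex submanifold and $g_{\omega}$ is $J_{\mathrm{can}}$-compatible, Wirtinger gives the pointwise equality $(n-1)!\,dA_{g_{\omega}}=\omega^{n-1}|_{\mathbb{C}P^{n-1}_{\sigma}}$, hence
\[
\mathrm{area}_{g_{\omega}}(\mathbb{C}P^{n-1}_{\sigma})=\frac{1}{(n-1)!}\int_{\mathbb{C}P^{n-1}_{\sigma}}\omega^{n-1}.
\]
By the same Wirtinger identity applied to the ambient manifold, $\mathrm{vol}_{g_{\omega}}(\mathbb{C}P^{n})=\frac{1}{n!}\int_{\mathbb{C}P^{n}}\omega^{n}$. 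Inserting these two expressions into the definition
\[
\mathrm{Sys}^{\mathrm{nor}}_{2n-2}(\mathbb{C}P^{n},g_{\omega})=\frac{\mathrm{Sys}_{2n-2}(\mathbb{C}P^{n},g_{\omega})}{\mathrm{vol}_{g_{\omega}}(\mathbb{C}P^{n})^{(n-1)/n}}
\]
and simplifying the resulting powers of $n!$ yields exactly the claimed formula \eqref{normsystexpl}.

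For the smoothness statement, transport the functional via the homeomorphism $\mathcal{J}^{-1}:\mathscr{B}^{1,\nu}\to\mathcal{B}$ of \eqref{formxmetrics}, which is a smooth diffeomorphism onto an open subset of a Banach manifold (cf.\ Proposition $\ref{balismanif}$ and its corollary). In the variable $\omega$, the numerator $\omega\mapsto\int_{\mathbb{C}P^{n-1}}\omega^{n-1}$ and the inner integral of the denominator $\omega\mapsto\int_{\mathbb{C}P^{n}}\omega^{n}$ are continuous polynomial functionals on $C^{1,\nu}(\Lambda^{1,1}_{\mathbb{R}})$, and on the open cone $C^{1,\nu}(\Lambda^{1,1}_{+})$ the latter is bounded below by a positive constant; since the power $x\mapsto x^{(n-1)/n}$ and reciprocation are smooth on $(0,\infty)$, the composition is smooth. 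Thus $\mathrm{Sys}^{\mathrm{nor}}_{2n-2}$ is smooth in the Fréchet sense.

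The only delicate point is the applicability of Proposition $\ref{sysofhomometrics}$ at $C^{1,\nu}$ regularity; this is not an obstacle because its proof only uses that $\omega$ is continuous, $\omega^{n-1}$ is closed in the distributional sense, and Stokes' theorem holds for integer rectifiable $(2n-2)$-cycles, all of which survive the relaxation of regularity.
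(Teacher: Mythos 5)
Your argument is essentially the paper's own: the paper proves the formula by the same calibration/Wirtinger computation (it refers back to the argument of Proposition~$\ref{4sysofgt}$, i.e.\ decomposing a non-trivial cycle as $k\,\mathbb{C}P^{n-1}_{\sigma}+\partial R$ and using $d\omega^{n-1}=0$ with Stokes and Wirtinger), and then notes smoothness as a direct consequence of the explicit expression. Your additional care about the $C^{1,\nu}$ regularity and the chart $\mathcal{J}$ is consistent with the paper's setup and does not change the route.
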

\begin{proof}
	The formula $(\ref{normsystexpl})$ follow from a similar argument as the done in Proposition \ref{4sysofgt}. The smoothness is direct consequence of the given formula.   
\end{proof}
For our purposes, the most suitable way to approach the calculations of the first and second derivatives, and further on, the Taylor expansion of the normalized systole, is by doing it in charts. To achieve this, we rewrite the map $\mathrm{Sys}^{\mathrm{nor}}_{2n-2}: \mathscr{B}^{1,\nu} \to \mathbb{R}$, modulo constants, in terms of the global chart $\hat{\Phi}: \mathscr{B}^{1,\nu} \to C_{cl}^{1,\nu}(\Lambda_+^{n-1,n-1})$ (see Corollary $\ref{globalchrtbalmetr}$), leading to the following definition:
\begin{align}\label{fmap}
	\begin{split}
		\mathcal{F}: C_{cl}^{1,\nu}&(\Lambda_+^{n-1,n-1}) \to \mathbb{R} \\
		\sigma &\mapsto \frac{\int_{\mathbb{C}P^{n-1}} \sigma}{ \left( \int_{\mathbb{C}P^{n}} \sigma \wedge \Psi(\sigma)\right)^{\frac{n-1}{n}}}, 
	\end{split}
\end{align}     
where $\Psi \doteq \Phi^{-1}: C_{cl}^{1,\nu}(\Lambda_+^{n-1,n-1}) \to \mathcal{B}$. Below, we will elucidate basic properties of the functional $\mathcal{F}$.
\begin{prop}\label{basicproproff}
	The functional $\mathcal{F}: C_{cl}^{1,\nu}(\Lambda_+^{n-1,n-1}) \to \mathbb{R}$ satisfies the following properties:
	\begin{enumerate}[label=\alph*),ref=(\alph*)]
		\item\label{basicproproff1} $\mathcal{F}$ is invariant under homothety;
		\item\label{basicproproff2} $\mathcal{F}$ is constant over the Kähler forms, i.e., within the set $\Phi(\mathcal{K})$.
	\end{enumerate}
\end{prop}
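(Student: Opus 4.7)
The plan is to verify both items by direct computation, leveraging the defining identity $\Psi(\sigma)^{n-1} = \sigma$ characterizing the inverse chart $\Psi \doteq \Phi^{-1}$.

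For item \ref{basicproproff1}, the equation $\Psi(\lambda \sigma)^{n-1} = \lambda \sigma = (\lambda^{1/(n-1)} \Psi(\sigma))^{n-1}$ combined with the bijectivity of $\Phi$ established in the proof of Proposition $\ref{balismanif}$ (via item $\ref{lefdecompthrm1}$ of Theorem $\ref{lefdecompthrm}$) forces $\Psi(\lambda \sigma) = \lambda^{1/(n-1)} \Psi(\sigma)$ for every $\lambda > 0$. Substituting $\lambda \sigma$ into the functional, the numerator acquires a factor $\lambda$, while
\begin{equation*}
\int_{\mathbb{C}P^n} \lambda\sigma \wedge \Psi(\lambda \sigma) = \lambda^{\frac{n}{n-1}} \int_{\mathbb{C}P^n} \sigma \wedge \Psi(\sigma),
\end{equation*}
so raising to the power $(n-1)/n$ produces an overall factor $\lambda$ in the denominator, matching the numerator and giving $\mathcal{F}(\lambda \sigma) = \mathcal{F}(\sigma)$.

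For item \ref{basicproproff2}, I would first use the identity $\sigma \wedge \Psi(\sigma) = \Psi(\sigma)^n$ to rewrite, for every $\omega \in \mathcal{K}$,
\begin{equation*}
\mathcal{F}(\omega^{n-1}) = \frac{\int_{\mathbb{C}P^{n-1}} \omega^{n-1}}{\left(\int_{\mathbb{C}P^n} \omega^n\right)^{\frac{n-1}{n}}}.
\end{equation*}
Since $\omega$ is now assumed closed, both integrals are cohomological, depending only on the class $[\omega] \in H^2(\mathbb{C}P^n, \mathbb{R})$. This cohomology group is one-dimensional, generated by the Fubini-Study class $[\Omega]$, so $[\omega] = c\,[\Omega]$ for some $c = c(\omega) > 0$, and hence $\int_{\mathbb{C}P^{n-1}} \omega^{n-1} = c^{n-1} \int_{\mathbb{C}P^{n-1}} \Omega^{n-1}$ and $\int_{\mathbb{C}P^n} \omega^n = c^n \int_{\mathbb{C}P^n} \Omega^n$. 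The factor $c^{n-1}$ cancels between numerator and denominator, yielding $\mathcal{F}(\omega^{n-1}) = \mathcal{F}(\Omega^{n-1})$ independently of $\omega$.

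There is no substantial obstacle in either part; the proof is essentially bookkeeping around the defining equation for $\Psi$ and the topological interpretation of integrals of closed forms on $\mathbb{C}P^n$. The only conceptual point worth stressing is that in \ref{basicproproff2} one crucially uses that $\omega$ itself is closed, so that $\int \omega^n$ becomes cohomological, whereas the Balanced hypothesis $d\omega^{n-1} = 0$ alone only gives closedness of the numerator integrand. This discrepancy between the Kähler and general Balanced regimes is precisely what makes Theorem $\ref{sysresttobalmet}$ non-trivial, since it is the variations transverse to $\mathscr{K}$ inside $\mathscr{B}$ that must carry strictly positive second-order contribution to $\mathcal{F}$.
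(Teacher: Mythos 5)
Your proof is correct and follows essentially the same route as the paper: item a) rests on the scaling relation $\Phi(\lambda\omega)=\lambda^{n-1}\Phi(\omega)$ (you verify the resulting cancellation in $\mathcal{F}$ directly from \eqref{fmap} rather than quoting the homothety invariance of the normalized systole), and item b) is exactly the paper's observation that closedness of $\omega$ makes both integrals cohomological, which the paper phrases via the Hodge decomposition $\omega=a\Omega+d\beta$ together with Stokes' theorem instead of $[\omega]=c[\Omega]$ in $H^2(\mathbb{C}P^n,\mathbb{R})$. No gaps.
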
 
\begin{proof}
	The prove of $\ref{basicproproff1}$ follow from the homothety invariance of the normalized systole together with the fact that $\Phi(\lambda \omega)= \lambda^{n-1}\Phi(\omega)$, for every $\lambda>0$ and $\omega \in \mathcal{B}$.
	
	In order to prove $\ref{basicproproff2}$, fix $\omega \in \mathcal{K}$. The Hodge decomposition Theorem implies that $\omega = a \Omega + d\beta$. Here $\Omega$ denotes the fundamental form of the Fubini-Study metric, as always. Therefore by Stoke's Theorem
	$$\mathcal{F}\left(\Phi(\omega)\right)= \frac{\int_{\mathbb{C}P^{n-1}} \omega^{n-1}}{\left( \int_{\mathbb{C}P^{n}} \omega^{n}\right)^{\frac{n-1}{n}}} =   \frac{a^{n-1}\int_{\mathbb{C}P^{n-1}} \Omega^{n-1}}{\left( a^n\int_{\mathbb{C}P^{n}} \Omega^{n}\right)^{\frac{n-1}{n}}}=\mathcal{F}\left(\Phi(\Omega)\right).$$
\end{proof}

The last piece of notation that we will introduce is the space of \textit{normalized balanced forms} 
\begin{equation*}
	\mathcal{B}_1 \doteq \left\{\omega \in \mathcal{B} \; : \; \int_{\mathbb{C}P^n} \omega^n = 1 \right\}.
\end{equation*}
Given the invariance of $\mathcal{F}$ under homothety, considering normalized balanced forms imposes no restriction and greatly simplifies the computations. Moreover, recall that we have normalized the Fubini-Study form $\Omega$ to ensure its inclusion within this space.

Once we settle the notation, we follow through with the computations of the first and second derivatives of the functional $\mathcal{F}: C_{cl}^{1,\nu}(\Lambda_+^{n-1,n-1}) \to \mathbb{R}$.

\begin{teo}[First Variational Formula of $\mathcal{F}$]\label{fvfforF}
	If $\omega$ is a normalized balanced form and $\mu \in C_{cl}^{1,\nu}(\Lambda_{\mathbb{R}}^{n-1,n-1})$, then 
	\begin{equation*}
		\restr{d \mathcal{F}}{\Phi(\omega)} \, \mu = \left( \int_{\mathbb{C}P^{n-1}}{\mu} \right) - \left(\int_{\mathbb{C}P^{n-1}} \omega^{n-1} \right)\left( \int_{\mathbb{C}P^n} \mu \wedge \omega \right), 
	\end{equation*}
	where $\Phi: \mathcal{B} \to C_{cl}^{1,\nu}(\Lambda_+^{n-1,n-1})$, is given by $\Phi(\omega)=\omega^{n-1}$. .  
\end{teo}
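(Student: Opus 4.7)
My plan is to unwind the definition of $\mathcal{F}$ via the quotient rule and reduce everything to computable derivatives in the variable $\sigma$. Write $\mathcal{F}(\sigma) = N(\sigma)/D(\sigma)^{(n-1)/n}$ where $N(\sigma) = \int_{\mathbb{C}P^{n-1}} \sigma$ and $D(\sigma) = \int_{\mathbb{C}P^n} \sigma \wedge \Psi(\sigma)$. Since $N$ is linear, $dN_\sigma \cdot \mu = \int_{\mathbb{C}P^{n-1}} \mu$ is immediate. The task reduces to computing $dD_\sigma \cdot \mu$, and then combining with the normalization $\int_{\mathbb{C}P^n}\omega^n = 1$ coming from $\omega \in \mathcal{B}_1$.

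For the differential of $D$, set $\sigma_t = \sigma + t\mu$, $\omega_t = \Psi(\sigma_t)$, and $\dot{\omega} = \frac{d}{dt}\big|_{t=0}\omega_t$; note that $\Psi$ is smooth by Proposition \ref{balismanif}. Since $\omega_t^{n-1} = \sigma_t = \sigma + t\mu$, differentiating at $t=0$ gives $(n-1)\omega^{n-2}\wedge\dot{\omega} = \mu$, hence after wedging with $\omega$
\begin{equation*}
(n-1)\,\omega^{n-1}\wedge \dot\omega \;=\; \omega\wedge\mu.
\end{equation*}
On the other hand $D(\sigma_t) = \int_{\mathbb{C}P^n} \omega_t^n$, so differentiating under the integral sign yields $dD_\sigma\cdot\mu = n\int_{\mathbb{C}P^n}\omega^{n-1}\wedge\dot\omega$. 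Combining the two identities eliminates the auxiliary $\dot\omega$ and gives the clean expression
\begin{equation*}
dD_\sigma \cdot \mu \;=\; \frac{n}{n-1}\int_{\mathbb{C}P^n} \mu\wedge\omega.
\end{equation*}

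Finally, plug these into the quotient rule. Using $D(\sigma) = \int_{\mathbb{C}P^n}\omega^n = 1$ for $\omega \in \mathcal{B}_1$, the factor $D(\sigma)^{(n-1)/n}$ and its auxiliary power become $1$, and the factor $(n-1)/n$ in the quotient rule cancels precisely with the $n/(n-1)$ produced above. The result is
\begin{equation*}
d\mathcal{F}_{\Phi(\omega)}\cdot\mu \;=\; \int_{\mathbb{C}P^{n-1}} \mu \;-\; \left(\int_{\mathbb{C}P^{n-1}}\omega^{n-1}\right)\left(\int_{\mathbb{C}P^n} \mu\wedge \omega\right),
\end{equation*}
as claimed. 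There is no real obstacle here: the only mildly delicate point is the bookkeeping between the variations of $\sigma$ and $\omega$, which is handled by the algebraic identity $(n-1)\omega^{n-1}\wedge\dot\omega = \omega\wedge\mu$ extracted from $\omega_t^{n-1}=\sigma_t$. Smoothness of $\Psi$, required to differentiate freely, is already guaranteed by Proposition \ref{balismanif}.
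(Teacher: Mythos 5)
Your proposal is correct and follows essentially the same route as the paper: a linear variation $\sigma_t=\Phi(\omega)+t\mu$, the smooth inverse $\Psi$ from Proposition \ref{balismanif}, the identity $(n-1)\,\omega^{n-2}\wedge\dot\omega=\mu$ obtained by differentiating $\omega_t^{n-1}=\sigma_t$ and then wedging with $\omega$, and evaluation at $t=0$ using the normalization $\int_{\mathbb{C}P^n}\omega^n=1$. The only cosmetic difference is that you apply the quotient rule to $N/D^{(n-1)/n}$ directly (noting $\sigma_t\wedge\omega_t=\omega_t^{n}$), whereas the paper tracks the auxiliary functions $\phi(t)$ and $\psi(t)$ along the curve; the computations are identical in substance.
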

\begin{proof}
	We start by defining the smooth curve $t \mapsto \mu_t = \omega^{n-1} + t \mu$ in $C^{1,\nu}(\Lambda_+^{n-1,n-1})$, for a short time interval. Making use that $\Phi$ is a diffeomorphism we also can define the smooth curve $t \mapsto \omega_t \in \mathcal{B}$ satisfying $\omega_t^{n-1}=\mu_t$. Now, introducing the auxiliary functions
	\begin{equation*}
		\phi(t) = \int_{\mathbb{C}P^{n-1}} \mu_t\;\; \mbox{and} \;\; \psi(t) =\left( \int_{\mathbb{C}P^{n}} \mu_t \wedge \omega_t \right)^{\frac{n-1}{n}},
	\end{equation*}
	we can express the functional $\mathcal{F}$ along the curve $\mu_t$ as $\mathcal{F}(\mu_t) = \phi(t)/\psi(t)$. Since, the functional $\mathcal{F}$ is Fréchet differentiable and the curve $t \mapsto \mu_t$ has initial conditions $\mu_0 = \Phi(\omega)$ and $\dot{\mu}_0 = \mu$, the first derivative of $\mathcal{F}$ is expressed as 
	\begin{equation*}
		\restr{d \mathcal{F}}{\Phi(\omega)} \, \mu = \restr{\frac{d}{dt}\mathcal{F}(\mu_t)}{t=0} = \phi'(0)-\phi(0)\psi'(0),
	\end{equation*} 
	where we used that $\psi(0)=1$.
	
	A straightforward computation shows that $\phi'(t)$ and $\psi'(t)$ can be expressed as
	\begin{align}\label{psiphider1}
		\begin{split}
			\phi'(t)&=\int_{\mathbb{C}P^{n-1}}\mu,  \\ \psi'(t)&=\frac{n-1}{n}\left(\int_{\mathbb{C}P^n} \mu_t \wedge \omega_t\right)^{-\frac{1}{n}}\left( \int_{\mathbb{C}P^n} \mu \wedge \omega_t + \mu_t \wedge \frac{\partial}{\partial t} \omega_t\right).
		\end{split}
	\end{align} 
	Taking a derivative of $\omega^{n-1}_t$ we obtain $ (n-1)\omega^{n-2}_t \wedge \frac{\partial}{\partial t} \omega_t$. Wedging this equality with $\omega_t$, we further obtain
	$$\frac{1}{n-1} \mu \wedge \omega_t = \omega^{n-1}_t \wedge \frac{\partial}{\partial t} \omega_t = \mu_t \wedge \frac{\partial}{\partial t} \omega_t,$$
	allowing us to reach the following simplification of $\psi'(t)$: 
	\begin{equation}\label{psiphider2}
		\psi'(t)=\left(\int_{\mathbb{C}P^n} \mu_t \wedge \omega_t\right)^{-\frac{1}{n}}\left( \int_{\mathbb{C}P^n} \mu \wedge \omega_t\right).
	\end{equation}   
	
	Finally, evaluating the equations $(\ref{psiphider1})$ and $(\ref{psiphider2})$ at $t=0$ together with the fact that $\restr{(\mu_t\wedge \omega_t)}{t=0}=\omega^{n}$, we obtain
	\begin{equation*}
		\restr{d \mathcal{F}}{\Phi(\omega)}\, \mu= \phi'(0)-\phi(0)\psi'(0)=\left( \int_{\mathbb{C}P^{n-1}}{\mu} \right) - \left(\int_{\mathbb{C}P^{n-1}} \omega^{n-1} \right)\left( \int_{\mathbb{C}P^n} \mu \wedge \omega \right), 
	\end{equation*}
	as desired. 
\end{proof}

An immediate consequence of the first variational formula is that the Kähler metrics are critical points for the normalized systole functional. 

\begin{coro}\label{fvfforkal}
	Every Kähler metric is a critical point for the normalized systole functional $\mathrm{Sys}^{\mathrm{nor}}_{2n-2}: \mathscr{B}^{1,\nu} \to \mathbb{R}$. 
\end{coro}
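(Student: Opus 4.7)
The plan is to pull the normalized systole back through the global chart $\hat{\Phi}$ of Corollary \ref{globalchrtbalmetr} and work with the scalar functional $\mathcal{F}$ defined in \eqref{fmap}. Combining \eqref{normsystexpl} with \eqref{fmap} one has $\mathrm{Sys}^{\mathrm{nor}}_{2n-2}(\mathbb{C}P^n, g_\omega) = \tfrac{(n!)^{(n-1)/n}}{(n-1)!}\,\mathcal{F}(\omega^{n-1})$, so the question reduces to showing $\restr{d\mathcal{F}}{\omega^{n-1}} = 0$ for every smooth Kähler form $\omega$. By the homothety invariance of $\mathcal{F}$ (Proposition \ref{basicproproff}, item a), I may rescale and assume $\omega \in \mathcal{B}_1$ from the outset.

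Now I apply the first variational formula of Theorem \ref{fvfforF}: for any closed real $(n-1,n-1)$-form $\mu$,
$$\restr{d\mathcal{F}}{\omega^{n-1}} \cdot \mu = \int_{\mathbb{C}P^{n-1}} \mu - \left(\int_{\mathbb{C}P^{n-1}} \omega^{n-1}\right)\!\left(\int_{\mathbb{C}P^n} \mu \wedge \omega\right).$$
The key observation is cohomological: $H^{2n-2}(\mathbb{C}P^n;\mathbb{R})$ is one-dimensional, and since $\omega$ is Kähler the class $[\omega^{n-1}]$ is well defined and generates it. Hence $[\mu] = c\,[\omega^{n-1}]$ for a unique $c \in \mathbb{R}$, or equivalently $\mu = c\,\omega^{n-1} + d\eta$ for some suitable primitive $\eta$.

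Using Stokes' theorem on the closed manifolds $\mathbb{C}P^{n-1}$ and $\mathbb{C}P^n$, together with $d\omega = 0$ (which turns $d\eta \wedge \omega$ into the exact form $d(\eta \wedge \omega)$), the exact pieces integrate to zero, yielding
$$\int_{\mathbb{C}P^{n-1}} \mu = c \int_{\mathbb{C}P^{n-1}} \omega^{n-1}, \qquad \int_{\mathbb{C}P^n} \mu \wedge \omega = c \int_{\mathbb{C}P^n} \omega^n = c,$$
where the last equality uses $\omega \in \mathcal{B}_1$. Substituting these two identities into the first variational formula, the constant $c\,\int_{\mathbb{C}P^{n-1}}\omega^{n-1}$ appears with opposite signs in the two terms and cancels, so $\restr{d\mathcal{F}}{\omega^{n-1}} \cdot \mu = 0$ for every admissible $\mu$.

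The only mild subtlety is the regularity of the primitive $\eta$ when $\mu$ is merely of class $C^{1,\nu}$; this is taken care of by the Morrey--Eells version of Hodge theory already invoked in Section \ref{manfstrctoBalMet}. Alternatively, since the two integrands on the right-hand side of the first variational formula depend only on the de Rham class $[\mu]$ (both $\omega$ and $\omega^{n-1}$ are closed thanks to the Kähler hypothesis), one can argue purely at the level of cohomology and bypass the regularity issue entirely, so in the end there is no genuine analytic obstacle in this corollary.
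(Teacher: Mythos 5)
Your proof is correct and follows essentially the same route as the paper: reduce by homothety invariance to a normalized Kähler form, apply the first variational formula of Theorem \ref{fvfforF}, and kill both terms by a cohomological (Hodge/de Rham) decomposition plus Stokes' theorem. The only cosmetic difference is that you decompose $\mu$ against the class $[\omega^{n-1}]$ directly, whereas the paper decomposes both $\omega$ and $\mu$ against the Fubini--Study form $\Omega$; your closing remark that the integrals depend only on de Rham classes also disposes of the $C^{1,\nu}$ regularity point in the same spirit as the paper's use of the Morrey--Eells decomposition.
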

\begin{proof}
	In view of the previous identifications, is enough to show that $\restr{d \mathcal{F}}{\Phi(\omega)}\equiv 0$ for every $\omega \in \mathcal{K}$. Even more, since $\mathcal{F}$ is invariant under homothety, there is no lost of generality in restring ourselves to the space of normalized Kähler forms.  
	
	Therefore, fix $\omega \in \mathcal{K} \cap \mathcal{B}_1$ and $\mu \in C_{cl}^{1,\nu}(\Lambda_{\mathbb{R}}^{n-1,n-1})$. Since both forms are closed and $\omega$ is normalized, the Hodge decomposition Theorem implies the existence of $a \in \mathbb{R}$, $\alpha \in C^{2,\nu}(\Lambda^1_{\mathbb{R}})$ and $\beta \in C^{2,\nu}(\Lambda^{2n-3}_{\mathbb{R}})$, such that $\omega= \Omega + d \alpha$ and $\mu = a \Omega^{n-1}+d\beta$. Now recalling that $\int_{\mathbb{C}P^k} \Omega^k=1$, for every $k \geq 1$, and applying the first variational formula for $\mathcal{F}$ together with Stokes' Theorem, we obtain 
	\begin{align*}
		\restr{d \mathcal{F}}{\Phi(\omega)}\, \mu &= \left( \int_{\mathbb{C}P^{n-1}}{\mu} \right) - \left(\int_{\mathbb{C}P^{n-1}} \omega^{n-1} \right)\left( \int_{\mathbb{C}P^n} \mu \wedge \omega \right) \\ 
		&=a \left(\int_{\mathbb{C}P^{n-1}} \Omega^{n-1}\right) - a \left(\int_{\mathbb{C}P^{n-1}} \Omega^{n-1}\right)\left(\int_{\mathbb{C}P^{n}} \Omega^{n}\right)=0.
	\end{align*}
	Since $\mu \in C_{cl}^{1,\nu}(\Lambda_{\mathbb{R}}^{n-1,n-1})$ is  arbitrary we conclude the proof.         
\end{proof}

We proceed with the computation of the second derivative of the functional $\mathcal{F}$. 

\begin{teo}[Second variational formula of $\mathcal{F}$]\label{svfforf}
	If $\omega \in \mathcal{B}_1$, $\eta \in T_{\omega} \mathcal{B}$ and $\mu= \restr{d\Phi}{\omega} \, \eta \in C_{cl}^{1,\nu}(\Lambda_{\mathbb{R}}^{n-1,n-1})$, then
	\begin{align*}
		\frac{1}{2}\restr{d^2 \mathcal{F}}{\Phi(\omega)}(\mu,\mu) = &\left(\int_{\mathbb{C}P^{n-1}}\omega^{n-1}\right) \left( \int_{\mathbb{C}P^{n}}\mu \wedge \omega \right)^2 
		- \left(\int_{\mathbb{C}P^{n-1}}\mu\right) \left( \int_{\mathbb{C}P^{n}}\mu \wedge \omega \right)\\
		&+\left(\int_{\mathbb{C}P^{n-1}}\omega^{n-1}\right) \left(\frac{1}{(n-1)} \left(\int_{\mathbb{C}P^{n}}\mu \wedge \omega \right)^2 - \int_{\mathbb{C}P^{n}}\mu \wedge \eta \right),
	\end{align*}  
	where $\Phi: \mathcal{B} \to C_{cl}^{1,\nu}(\Lambda_+^{n-1,n-1})$, is given by $\Phi(\omega)=\omega^{n-1}$.  
\end{teo}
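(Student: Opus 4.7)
The strategy is to mimic the proof of the first variational formula (Theorem~\ref{fvfforF}), working with the one-parameter curve of $(n-1,n-1)$-forms $\mu_t = \omega^{n-1} + t\mu$ in $C^{1,\nu}_{cl}(\Lambda^{n-1,n-1}_+)$ (defined for small $|t|$) and its preimage $\omega_t \doteq \Psi(\mu_t)$ under the global diffeomorphism $\Phi$. The crucial observation is that, because $\mu = \restr{d\Phi}{\omega} \cdot \eta = (n-1)\eta \wedge \omega^{n-2}$, differentiation of the identity $\omega_t^{n-1} = \mu_t$ at $t=0$ yields $\restr{\dot\omega_t}{t=0} = \eta$; this is what allows the tangent vector $\eta \in T_\omega\mathcal{B}$ to re-enter the formula at second order.

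Writing $\mathcal{F}(\mu_t) = \phi(t)/\psi(t)$ with $\phi(t) = \int_{\mathbb{C}P^{n-1}}\mu_t$ and $\psi(t) = \Theta(t)^{(n-1)/n}$, where $\Theta(t) = \int_{\mathbb{C}P^n}\mu_t\wedge\omega_t$, I would apply the quotient rule twice to obtain
\begin{equation*}
\restr{\tfrac{d^2}{dt^2}\mathcal{F}(\mu_t)}{t=0} = \phi''(0) - 2\phi'(0)\psi'(0) - \phi(0)\psi''(0) + 2\phi(0)\psi'(0)^2,
\end{equation*}
using $\psi(0) = 1$ (normalization $\omega \in \mathcal{B}_1$). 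The curve $\mu_t$ is affine, so $\phi''(0) = 0$ and $\phi'(0) = \int_{\mathbb{C}P^{n-1}}\mu$, while $\phi(0) = \int_{\mathbb{C}P^{n-1}}\omega^{n-1}$. For $\psi'(0)$ I would reuse the computation from Theorem~\ref{fvfforF}, which already produced the key identity $\mu_t \wedge \dot\omega_t = \tfrac{1}{n-1}\mu\wedge\omega_t$ coming from differentiating $\omega_t^{n-1} = \mu_t$, yielding $\Theta'(t) = \tfrac{n}{n-1}\int_{\mathbb{C}P^n}\mu\wedge\omega_t$ and hence $\psi'(0) = \int_{\mathbb{C}P^n}\mu\wedge\omega$.

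The only genuinely new computation is $\psi''(0)$. Differentiating $\Theta'(t) = \tfrac{n}{n-1}\int\mu\wedge\omega_t$ once more gives $\Theta''(t) = \tfrac{n}{n-1}\int\mu\wedge\dot\omega_t$, and evaluating at $t=0$ with $\restr{\dot\omega_t}{t=0}=\eta$ produces $\Theta''(0) = \tfrac{n}{n-1}\int_{\mathbb{C}P^n}\mu\wedge\eta$. Chaining this through the formula $\psi'' = \tfrac{n-1}{n}\bigl[\Theta^{-1/n}\Theta'' - \tfrac{1}{n}\Theta^{-1/n-1}(\Theta')^2\bigr]$ and substituting $\Theta(0)=1$ gives
\begin{equation*}
\psi''(0) = \int_{\mathbb{C}P^n}\mu\wedge\eta \;-\; \tfrac{1}{n-1}\Bigl(\int_{\mathbb{C}P^n}\mu\wedge\omega\Bigr)^{\!2}.
\end{equation*}

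Assembling the four terms into the quotient-rule expression and collecting, the $\psi'(0)^2$ contributions combine to produce the displayed coefficient $2\phi(0)\psi'(0)^2 - \phi(0)\psi''(0)$, which splits as stated into $2\bigl(\int\omega^{n-1}\bigr)\bigl(\int\mu\wedge\omega\bigr)^2$ and the bracketed correction $\bigl(\int\omega^{n-1}\bigr)\bigl[\tfrac{1}{n-1}(\int\mu\wedge\omega)^2 - \int\mu\wedge\eta\bigr]$, while the cross term $-2\phi'(0)\psi'(0)$ gives $-2\bigl(\int_{\mathbb{C}P^{n-1}}\mu\bigr)\bigl(\int\mu\wedge\omega\bigr)$. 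The only real obstacle is keeping the bookkeeping straight across the quotient rule and the chain rule for $\Theta^{(n-1)/n}$; everything else is forced by the two identities $\omega_t^{n-1}=\mu_t$ and $\restr{\dot\omega_t}{t=0}=\eta$.
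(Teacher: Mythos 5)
Your proposal is correct and follows essentially the same route as the paper: the affine curve $\mu_t=\omega^{n-1}+t\mu$, the identity $\mu_t\wedge\dot\omega_t=\tfrac{1}{n-1}\mu\wedge\omega_t$ from differentiating $\omega_t^{n-1}=\mu_t$, the identification $\restr{\dot\omega_t}{t=0}=\eta$ via injectivity of $\restr{d\Phi}{\omega}$, and the computation of $\psi''(0)=\int\mu\wedge\eta-\tfrac{1}{n-1}\bigl(\int\mu\wedge\omega\bigr)^2$ all match the paper's argument, differing only in that you expand the quotient rule fully through $\Theta(t)$ while the paper packages part of it using the first variation $\restr{d\mathcal{F}}{\Phi(\omega)}\cdot\mu$.
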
   
\begin{proof}
	Keeping in mind the notation of Theorem \ref{fvfforF}, and making use that $\mathcal{F}$ is smooth in the Fréchet sense together with the fact that $t \mapsto \mu_t$ is a linear variation, we have that 
	$$\restr{d^2 \mathcal{F}}{\Phi(\omega)}(\mu,\mu) = \restr{\frac{d^2}{dt^2}\mathcal{F}(\mu_t)}{t=0}.$$   
	Since $\psi(0)=1$, $\phi''(0)=0$, $\omega$ is normalized and $(\ref{psiphider1})$ holds, we can take the derivative of $\frac{d \mathcal{F}(\mu_t)}{dt}=(\phi'(t)\psi(t)-\phi(t)\psi'(t))/\psi^2(t)$ at $t=0$ to obtain 
	\begin{equation}\label{derivsegdeF}
		\restr{\frac{d^2}{dt^2}\mathcal{F}(\mu_t)}{t=0} = -2 \psi'(0) \left(\restr{d \mathcal{F} }{\Phi(\omega)} \mu\right) - \phi(0) \psi''(0).
	\end{equation}
	All the terms on the right-hand side of this equation have already been computed, with the exception of $\psi''(0)$. To calculate this term, we refer back to formula $(\ref{psiphider2})$ and differentiate it:
	\begin{align*}
		\psi''(t)=-\frac{1}{n}&\left(\int_{\mathbb{C}P^{n}}\mu_t \wedge \omega_t \right)^{\frac{-1-n}{n}} \left(\int_{\mathbb{C}P^{n}}\mu \wedge \omega_t + \mu_t \wedge \frac{\partial}{\partial t} \omega_t \right)\left( \int_{\mathbb{C}P^{n}}\mu \wedge \omega_t \right) \\ 
		&+\left(\int_{\mathbb{C}P^{n}} \sigma_t \wedge \omega_t\right)^{-\frac{1}{n}} \left( \int_{\mathbb{C}P^{n}}\mu \wedge  \frac{\partial}{\partial t } \omega_t \right).
	\end{align*}
	By retrieving the identities $\restr{(\mu_t\wedge \omega_t)}{t=0}=\omega^{n}$ and $\restr{\left(\mu_t \wedge \frac{\partial}{\partial t} \omega_t\right)}{t=0} = \frac{1}{(n-1)} \mu \wedge \omega$, we further obtain 
	$$\psi''(0)=-\frac{1}{(n-1)}\left( \int_{\mathbb{C}P^{n}}\mu \wedge \omega \right)^2 + \left( \int_{\mathbb{C}P^{n}}\mu \wedge \left( \restr{\frac{\partial}{\partial t } \omega_t}{t=0} \right) \right).$$
	
	Additionally, $\restr{d\Phi}{\omega}  \eta = \mu = \restr{d\Phi}{\omega} \left(\restr{\frac{\partial}{\partial t } \omega_t}{t=0}\right)$, which implies that $\eta = \restr{\frac{\partial}{\partial t } \omega_t}{t=0}$. Hence,
	\begin{equation}\label{secderofpsi}
		\psi''(0)=-\frac{1}{(n-1)}\left( \int_{\mathbb{C}P^{n}}\mu \wedge \omega \right)^2 + \left( \int_{\mathbb{C}P^{n}}\mu \wedge \eta \right).
	\end{equation} 
	Therefore, the desired result follows by combining $\eqref{psiphider2}$, $\eqref{derivsegdeF}$, and $\eqref{secderofpsi}$, as well as the first variation formula in Theorem \ref{fvfforF}.
\end{proof}                  

A non-trivial consequence of the second variational formula is that the Hessian of $\mathcal{F}$, over a Kähler form, is coercive in the $L^2$-norm when restricted to the transversal direction of the Kähler forms. To show this, we apply Theorem $\ref{svfforf}$ to the case of Kähler metrics.

\begin{lema}\label{svfforkfrst}
	Let $\omega \in \mathcal{K}$ be a normalized Kähler form, $\eta \in T_{\omega} \mathcal{B}$ and $\mu= \restr{d\Phi}{\omega} \eta \in C_{cl}^{1,\nu}(\Lambda_{\mathbb{R}}^{n-1,n-1})$. Then:
	\begin{enumerate}[label=\alph*),ref=(\alph*)]
		\item\label{svfforkfrst1} $\frac{1}{(n-1)} \restr{d^2 \mathcal{F}}{\Phi(\omega)}(\mu,\mu) = \left( \int_{\mathbb{C}P^{n}} \eta \wedge \omega ^{n-1}\right)^2 - \int_{\mathbb{C}P^{n}} \eta\wedge \eta \wedge \omega^{n-2}$;
		\item\label{svfforkfrst2} If $\alpha \in T_{\omega}\mathcal{K}$, then $\restr{d^2 \mathcal{F}}{\Phi(\omega)}\left(\restr{d\Phi}{\omega}\, \alpha , \mu\right)=0$,   
	\end{enumerate}
	where $\Phi: \mathcal{B} \to C_{cl}^{1,\nu}(\Lambda_+^{n-1,n-1})$, is given by $\Phi(\omega)=\omega^{n-1}$. 
\end{lema}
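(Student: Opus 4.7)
The plan is to substitute $\omega \in \mathcal{K}\cap\mathcal{B}_1$ directly into Theorem \ref{svfforf} and exploit the Kähler condition to collapse the formula. The key observation is that every normalized Kähler form is cohomologous to $\Omega$ with coefficient exactly one: writing the Hodge decomposition of the closed real $(1,1)$-form $\omega$ as $\omega = a\Omega + d\gamma$ and using $\int_{\mathbb{C}P^n}\omega^n = 1 = \int_{\mathbb{C}P^n}\Omega^n$ together with positivity of the Kähler class forces $a=1$. Consequently $\omega^k - \Omega^k$ is exact for every $k$, and this single identity will do all the work.

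From this I extract two consequences valid for any closed $\mu \in C^{1,\nu}_{cl}(\Lambda^{n-1,n-1}_{\mathbb{R}})$: first, $\int_{\mathbb{C}P^{n-1}}\omega^{n-1} = 1$; second, by Stokes and the fact that $[\Omega]$ is Poincaré dual to $[\mathbb{C}P^{n-1}]$,
$$\int_{\mathbb{C}P^{n-1}}\mu \;=\; \int_{\mathbb{C}P^n}\mu\wedge\Omega \;=\; \int_{\mathbb{C}P^n}\mu\wedge\omega.$$
Plugging these into the three summands of Theorem \ref{svfforf} makes the first two terms cancel identically, leaving
$$\restr{d^2\mathcal{F}}{\Phi(\omega)}(\mu,\mu) \;=\; \tfrac{1}{n-1}\bigl(\textstyle\int_{\mathbb{C}P^n}\mu\wedge\omega\bigr)^{2} - \int_{\mathbb{C}P^n}\mu\wedge\eta.$$
Using $\mu = (n-1)\,\eta\wedge\omega^{n-2}$ to rewrite $\int\mu\wedge\omega = (n-1)\int\eta\wedge\omega^{n-1}$ and $\int\mu\wedge\eta = (n-1)\int\eta\wedge\eta\wedge\omega^{n-2}$, then dividing by $n-1$, yields part (a).

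For part (b), I polarize the formula in (a): for any $\eta_1,\eta_2\in T_\omega\mathcal{B}$ with $\mu_i = \restr{d\Phi}{\omega}\cdot\eta_i$,
$$\tfrac{1}{n-1}\restr{d^2\mathcal{F}}{\Phi(\omega)}(\mu_1,\mu_2) = \Bigl(\textstyle\int\eta_1\wedge\omega^{n-1}\Bigr)\!\Bigl(\int\eta_2\wedge\omega^{n-1}\Bigr) - \int\eta_1\wedge\eta_2\wedge\omega^{n-2}.$$
Specialize $\eta_1 = \alpha \in T_\omega\mathcal{K}$, a closed real $(1,1)$-form; since $H^{1,1}(\mathbb{C}P^n)=\mathbb{R}[\Omega]$, Hodge theory gives $\alpha = b\Omega + d\beta$. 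Closedness of $\omega^{n-1}$ and Stokes yield $\int\alpha\wedge\omega^{n-1}=b$. Closedness of $\eta_2\wedge\omega^{n-2}$, which is exactly the description of $T_\omega\mathcal{B}$ in Lemma \ref{TgspaceofKB}\ref{TgspaceofKB2}, together with Stokes, gives $\int\alpha\wedge\eta_2\wedge\omega^{n-2} = b\int\Omega\wedge\eta_2\wedge\omega^{n-2}$; and replacing $\Omega$ by $\omega$ once more via the exactness of $\omega-\Omega$ produces $b\int\eta_2\wedge\omega^{n-1}$. The two terms cancel, proving (b).

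The whole argument is essentially algebraic once $\omega^k - \Omega^k \in \mathrm{Im}\,d$ is in hand. The only subtle points are recognizing that $a=1$ in the Hodge decomposition of a normalized Kähler form, and checking that each integration by parts is applied to a form that is genuinely closed — which relies on the Kähler condition for the powers of $\omega$ and on the tangent-space characterization of $T_\omega\mathcal{B}$ to ensure $\eta\wedge\omega^{n-2}$ is closed.
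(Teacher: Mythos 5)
Your proof is correct and follows essentially the same route as the paper: both substitute the normalized Kähler form into the second variation formula of Theorem \ref{svfforf}, use the Hodge-theoretic fact that $\omega-\Omega$ is exact (so $\int_{\mathbb{C}P^{n-1}}\omega^{n-1}=1$ and $\int_{\mathbb{C}P^{n-1}}\mu=\int_{\mathbb{C}P^n}\mu\wedge\omega$), and then rewrite everything in terms of $\eta$ via $\mu=(n-1)\eta\wedge\omega^{n-2}$, with part (b) obtained by polarizing (a) and a Stokes argument. The only cosmetic difference is that your cancellation of the first two summands re-derives what the paper simply cites as the criticality of Kähler metrics (Corollary \ref{fvfforkal}).
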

\begin{proof}
	First we prove $\ref{svfforkfrst1}$. Let $\omega \in \mathcal{K} \cap \mathcal{B}_1$. Then, by Corollary \ref{fvfforkal}, we see that $\restr{d \mathcal{F}}{\Phi(\omega)}\equiv 0$. Therefore, by recollecting equations $(\ref{derivsegdeF})$ and $(\ref{secderofpsi})$, we have
	\begin{equation*}
		\restr{d^2 \mathcal{F}}{\Phi(\omega)}(\mu,\mu) = \left(\int_{\mathbb{C}P^{n-1}}\omega^{n-1}\right) \left(\frac{1}{(n-1)} \left(\int_{\mathbb{C}P^{n}}\mu \wedge \omega \right)^2 - \int_{\mathbb{C}P^{n}}\mu \wedge \eta \right).
	\end{equation*}
	
	On the other hand, since $\omega$ is Kähler and normalized, we can apply the Hodge Decomposition Theorem to write it as $\omega=\Omega + d\beta$, implying that $\int_{\mathbb{C}P^{n-1}}\omega^{n-1}=1$. Furthermore, by the definition of $\mu$, we have $\mu= \restr{d \Phi}{\omega} \eta = (n-1) \eta \wedge \omega^{n-2}$, leading to the desired equality.
	
	Now we prove $\ref{svfforkfrst2}$. Since $\alpha \in T_{\omega} \mathcal{K}$ and $\eta \in T_{\omega} \mathcal{B}$, the forms $\alpha$ and $\eta \wedge \omega^{n-2}$ are closed. Then, again by the Hodge Decomposition Theorem, we can express them as $\alpha= a \Omega + d \beta$ and $\eta \wedge \omega^{n-2} = b \Omega^{n-1} + d \tilde{\beta}$. Hence, by $\ref{svfforkfrst1}$ and Stokes' Theorem, we see that
	\begin{align*}
		\frac{\restr{d^2 \mathcal{F}}{\Phi(\omega)}\left(\restr{d \Phi}{\omega}\, \alpha,\mu\right)}{(n-1)}  &= \left( \int_{\mathbb{C}P^{n}} \alpha \wedge \omega ^{n-1}\right)\left( \int_{\mathbb{C}P^{n}} \eta \wedge \omega ^{n-1}\right) - \int_{\mathbb{C}P^{n}} \alpha \wedge \eta \wedge  \omega^{n-2} \\
		&= \left( a\int_{\mathbb{C}P^{n}} \Omega^{n}\right)\left(b \int_{\mathbb{C}P^{n}} \Omega^n\right) - ab\int_{\mathbb{C}P^{n}} \Omega^n = 0,
	\end{align*}     
	as intended.
\end{proof}

\begin{coro}\label{sffforfoverkal}
	Let $\omega \in \mathcal{K}$ be a normalized smooth Kähler form, and let $\eta \in T_{\omega} \mathcal{B}$. Suppose that $\eta$ has the Hodge decomposition with respect to the metric $g_\omega$ given by $\eta = a\omega + d\alpha + \delta_{\omega}\theta$. Then, if $\mu = \restr{d \Phi}{\omega} \, \eta$, we have
	\begin{equation}\label{hessofnormsys}
		\frac{1}{(n-1)} \restr{d^2 \mathcal{F}}{\Phi(\omega)}(\mu,\mu) = \int_{\mathbb{C}P^{n}} || \delta_{\omega} \theta ||_{g_{\omega}}^2 dV_{g_{\omega}}.
	\end{equation}
	Here, $\Phi: \mathcal{B} \to C_{cl}^{1,\nu}(\Lambda_+^{n-1,n-1})$, is given by $\Phi(\omega)=\omega^{n-1}$, and the Riemannian metric $g_\omega$ has been extended to the space of differential forms.
\end{coro}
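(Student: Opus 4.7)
The plan is to reduce $\restr{d^2\mathcal{F}}{\Phi(\omega)}(\mu,\mu)$ to the contribution of only the co-exact piece of $\eta$ by exploiting the two items of Lemma \ref{svfforkfrst}, and then identify the resulting integral via the Hodge--Riemann bilinear relation on the Kähler manifold $(\mathbb{C}P^n, g_\omega)$. Everything structural is already in place; the computation is essentially a bookkeeping exercise followed by one classical pointwise identity.

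First I would split the Hodge decomposition into $\eta = \eta_K + \eta_A$ with $\eta_K \doteq a\omega + d\alpha$ and $\eta_A \doteq \delta_\omega\theta$. Lemma \ref{coexactpartoftoB} guarantees that $\eta_A$ is of type $(1,1)$ and lies in $T_\omega\mathcal{B}$, so the closed real $(1,1)$-form $\eta_K = \eta - \eta_A$ lies in $T_\omega\mathcal{K}$. Setting $\mu_K = \restr{d\Phi}{\omega}\eta_K$ and $\mu_A = \restr{d\Phi}{\omega}\eta_A$, linearity gives $\mu = \mu_K + \mu_A$. Expanding the Hessian bilinearly and invoking item (b) of Lemma \ref{svfforkfrst} twice, once with $\eta$ there set to $\eta_K$ (which kills $\restr{d^2\mathcal{F}}{\Phi(\omega)}(\mu_K,\mu_K)$) and once with $\eta$ there set to $\eta_A$ (which kills the cross term $\restr{d^2\mathcal{F}}{\Phi(\omega)}(\mu_K,\mu_A)$), collapses the expression to
\[
\restr{d^2\mathcal{F}}{\Phi(\omega)}(\mu,\mu) \;=\; \restr{d^2\mathcal{F}}{\Phi(\omega)}(\mu_A,\mu_A).
\]

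Next I would apply item (a) of Lemma \ref{svfforkfrst} with $\eta$ replaced by $\eta_A = \delta_\omega\theta$, giving
\[
\tfrac{1}{n-1}\restr{d^2\mathcal{F}}{\Phi(\omega)}(\mu_A,\mu_A) \;=\; \left(\int_{\mathbb{C}P^n} \delta_\omega\theta \wedge \omega^{n-1}\right)^{\!2} \;-\; \int_{\mathbb{C}P^n} \delta_\omega\theta \wedge \delta_\omega\theta \wedge \omega^{n-2}.
\]
By Lemma \ref{coexactpartoftoB}(a), the form $\delta_\omega\theta$ is primitive in the Lefschetz sense, that is $\delta_\omega\theta \wedge \omega^{n-1} = 0$ pointwise, so the first term on the right drops out.

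It thus remains to verify the identity
\[
-\int_{\mathbb{C}P^n} \delta_\omega\theta \wedge \delta_\omega\theta \wedge \omega^{n-2} \;=\; \int_{\mathbb{C}P^n} \|\delta_\omega\theta\|^2_{g_\omega}\, dV_{g_\omega},
\]
which is the classical Hodge--Riemann bilinear relation for primitive real $(1,1)$-forms on a Kähler $n$-manifold: the Hodge star formula $\star\beta = -\,\omega^{n-2}\wedge\beta/(n-2)!$ combined with the pointwise equality $\beta\wedge\star\beta = \|\beta\|^2_{g_\omega}\,dV_{g_\omega}$ yields exactly the stated equation, under the convention used in the paper for extending $g_\omega$ to two-forms. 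The only step with actual content is this Hodge--Riemann application, and its positivity is precisely what will allow Theorem \ref{sysofbal} to follow: the Hessian is semi-positive and its kernel is exactly the Kähler directions $T_\omega\mathcal{K}$. The main (modest) obstacle is simply tracking the normalization constants carefully so that the final coefficient matches the form stated.
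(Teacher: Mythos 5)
Your proposal is correct and follows essentially the same route as the paper: decompose $\eta$ via Lemma \ref{coexactpartoftoB} so that $a\omega+d\alpha\in T_{\omega}\mathcal{K}$ and $\delta_{\omega}\theta\in T_{\omega}\mathcal{B}$, use item \ref{svfforkfrst2} of Lemma \ref{svfforkfrst} to discard all contributions involving the Kähler part, apply item \ref{svfforkfrst1} to $\delta_{\omega}\theta$, and evaluate the surviving term by the Hodge--Riemann bilinear relation for primitive $(1,1)$-forms (your Weil/Hodge-star identity is the same fact the paper invokes as Theorem \ref{RHtheorem}). The only cosmetic differences are that you kill $\int\delta_{\omega}\theta\wedge\omega^{n-1}$ by the pointwise primitivity of Lemma \ref{coexactpartoftoB}, item \ref{coexactpartoftoB1}, whereas the paper uses $L^2$-orthogonality of co-exact forms to the closed form $\omega$, and your (rightly flagged) bookkeeping of the normalization constant in the final step, neither of which affects the argument.
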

\begin{proof}
	Let $\omega \in \mathcal{K}$ be a normalized smooth Kähler form, and $\eta = a\omega + d \alpha + \delta_{\omega} \theta$. By Lemma $\ref{coexactpartoftoB}$, we have that $\delta_{\omega} \theta \in T_{\omega} \mathcal{B}$. Moreover, applying Lemma $\ref{svfforkfrst}$ $\ref{svfforkfrst2}$ and observing that $a\omega + d\alpha \in T_{\omega}\mathcal{K}$, we obtain the following simplification for the Hessian of $\mathcal{F}$:
	\begin{align*}
		\frac{1}{(n-1)} \restr{d^2 \mathcal{F}}{\Phi(\omega)}(\mu,\mu) &= \frac{1}{(n-1)} \restr{d^2 \mathcal{F}}{\Phi(\omega)}\left(\restr{d \Phi}{\omega} \, \delta_{\omega} \theta,\restr{d \Phi}{\omega} \, \delta_{\omega} \theta\right) 
		\\&= \left( \int_{\mathbb{C}P^{n}} \delta_{\omega}\theta \wedge \omega ^{n-1}\right)^2 - \int_{\mathbb{C}P^{n}} \delta_{\omega}\theta\wedge \delta_{\omega}\theta \wedge \omega^{n-2} 
		\\&= - \int_{\mathbb{C}P^{n}} \delta_{\omega}\theta\wedge \delta_{\omega}\theta \wedge \omega^{n-2}.
	\end{align*}
	In the last equality, we used the fact that $\delta_{\omega}$ is the $L^2$-dual of $d$, and $\omega$ is closed. Moreover, taking the Riemann-Hodge pairing (see definition \ref{RiemmanHodgepair}) of $\delta_{\omega}\theta$ with it self we obtain the term $-\delta_{\omega}\theta \wedge \delta_{\omega}\theta \wedge \omega^{n-2}$. Since $\delta_{\omega} \theta$ is a primitive form of type $(1,1)$, as stated in Lemma \ref{coexactpartoftoB}, the desired result follows from Theorem $\ref{RHtheorem}$.
\end{proof}

\subsection{Main Theorem}
Gathering the results of Sections $\ref{manfstrctoBalMet}$ and $\ref{firstandsecvariation}$ we present a proof of Theorem \ref{sysofbal}. However, before providing a rigorous demonstration, we will discuss a useful intuition. For convenience, we start by summarizing the previous results in the following Lemma.
\begin{lema}\label{summarizinglemmablc} 
	Let $\mathcal{F}: C_{cl}^{1,\nu}\big(\Lambda_+^{n-1,n-1}\big) \to \mathbb{R}$ denote the normalized systole functional under the global chart $\Phi: \mathcal{B} \to C_{cl}^{1,\nu}\big(\Lambda_+^{n-1,n-1}\big)$. Then, for a fixed smooth normalized Kähler form $\omega_0 \in \mathcal{K}$, there exist open neighborhoods $U \subset \mathcal{K}$ of $\omega_0$ and $V\subset A_{\omega_0}$ of $0$, along with a smooth diffeomorphism $\rho:U\times V \to \rho(V\times U)\subset  C_{cl}^{1,\nu}(\Lambda_+^{n-1,n-1})$, such that the map $F \doteq \mathcal{F}\circ \rho:U \times V  \to \mathbb{R}$ satisfies the following properties:  
	\begin{enumerate}[label=\alph*),ref=(\alph*)]
		\item\label{summarizinglemmablc1}  ${F}$ is constant over the set $U\times \{0\}$.
		\item\label{summarizinglemmablc2}  $\restr{d {F}}{\omega}\equiv 0$, for every $\omega \in U$.
		\item\label{summarizinglemmablc3}  The Hessian map $\restr{d^2 {F}}{\omega_0}: T_{\omega_0}\mathcal{K} \oplus A_{\omega_0} \times T_{\omega_0} \mathcal{K} \oplus A_{\omega_0} \to \mathbb{R}$ is a symmetric, semi-positive definite bilinear form. Moreover, its kernel is given by $T_{\omega_0}\mathcal{K}$.
		\item\label{summarizinglemmablc4}  Given $(\omega,\xi) \in U \times V$, the restriction $\restr{d^2 {F}}{(\omega,\xi)}: A_{\omega_0} \times A_{\omega_0} \to \mathbb{R}$ is given by 
		\begin{equation}\label{transfooftheHess}
			\restr{d^2 {F}}{(\omega, \xi)}(\eta,\eta) =\restr{d^2 \mathcal{F}}{\rho(\omega,\xi)}\left(\restr{d\Phi}{\omega_0}\,\eta,\restr{d\Phi}{\omega_0}\,\eta\right),
		\end{equation}
		for every $\eta \in A_{\omega_0}$. Hence, $\restr{d^2 {F}}{\omega_0}(\eta,\eta) =(n-1) ||\eta ||^2_{L^2_{g_{\omega_0}}}$, where $  ||\eta ||^2_{L^2_{g_{\omega_0}}}$ is given by $L^2$-norm induced by the Kähler metric $g_{\omega_0}$ associated to the Kähler form $\omega_0$.    
	\end{enumerate} 
\end{lema}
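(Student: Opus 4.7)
The plan is to verify items \ref{summarizinglemmablc1}, \ref{summarizinglemmablc2}, and \ref{summarizinglemmablc4} as direct translations of the results of Sections \ref{manfstrctoBalMet} and \ref{firstandsecvariation} via the chain rule applied to $F = \mathcal{F} \circ \rho$, and then to derive item \ref{summarizinglemmablc3} as a formal consequence of the first three.

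First I would prove item \ref{summarizinglemmablc1}: by property (c) of Proposition \ref{KissplitinB}, $\rho(U \times \{0\}) = W \cap j(U) \subset \Phi(\mathcal{K})$, so item \ref{basicproproff2} of Proposition \ref{basicproproff} implies that $F$ is constant on the slice $U \times \{0\}$. Next, for item \ref{summarizinglemmablc2}, I would invoke Corollary \ref{fvfforkal}: since $\rho(\omega, 0) \in \Phi(\mathcal{K})$ for every $\omega \in U$, each such point is critical for $\mathcal{F}$, and the chain rule yields $\restr{dF}{(\omega,0)} = \restr{d\mathcal{F}}{\rho(\omega,0)} \circ \restr{d\rho}{(\omega,0)} = 0$.

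For item \ref{summarizinglemmablc4}, the cornerstone is property (d) of Proposition \ref{KissplitinB}: the differential $\restr{d\rho}{(\omega,\xi)}(0,\eta) = \restr{d\Phi}{\omega_0}\cdot \eta$ is independent of $(\omega,\xi)$. Computing $\restr{\tfrac{d^2}{dt^2}}{t=0} F(\omega, \xi+t\eta)$ with two applications of the chain rule then produces formula \eqref{transfooftheHess} directly. To evaluate at $(\omega_0, 0)$, I would use that $\eta \in A_{\omega_0} = \mathrm{Im}(\delta_{\omega_0}) \cap T_{\omega_0}\mathcal{B}$ forces its Hodge decomposition with respect to $g_{\omega_0}$ to reduce to a purely co-exact piece $\eta = \delta_{\omega_0}\theta$; Corollary \ref{sffforfoverkal} then yields $\restr{d^2F}{(\omega_0,0)}(\eta,\eta) = (n-1) ||\eta||^2_{L^2_{\omega_0}}$, as claimed.

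Item \ref{summarizinglemmablc3} would be obtained by assembling the previous three into a block decomposition of the Hessian on $T_{\omega_0}\mathcal{K} \oplus A_{\omega_0}$: symmetry is automatic as $F$ is of class $C^2$; differentiating item \ref{summarizinglemmablc1} twice in $T_{\omega_0}\mathcal{K}$-directions kills the upper-left block; differentiating item \ref{summarizinglemmablc2} once in an $A_{\omega_0}$-direction kills the off-diagonal blocks; and item \ref{summarizinglemmablc4} identifies the lower-right block with a positive multiple of the $L^2$-inner product on $A_{\omega_0}$. Semi-positivity and kernel equal to $T_{\omega_0}\mathcal{K}$ follow at once. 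The main obstacle is conceptual rather than technical: one must track carefully how vectors in $A_{\omega_0}$ are transported by $d\rho$ in order to identify the restricted Hessian with the second variation formula provided by Corollary \ref{sffforfoverkal}; once this bookkeeping is settled, no analytic difficulty remains.
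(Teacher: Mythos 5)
Your proposal is correct and follows essentially the same route as the paper: items a) and b) come from Proposition \ref{basicproproff} and Corollary \ref{fvfforkal}, and item d) from property d) of Proposition \ref{KissplitinB} (the independence of $\restr{d\rho}{(\omega,\xi)}$ in the $A_{\omega_0}$-directions) combined with Corollary \ref{sffforfoverkal}. The only cosmetic difference is item c): the paper gets it in one step from the critical-point transformation law of the Hessian, the explicit formula \eqref{deriofrho} for $\restr{d\rho}{(\omega_0,0)}$, and Corollary \ref{sffforfoverkal}, whereas you assemble the block decomposition by differentiating a) and b) along the Kähler slice and quoting d) for the $A_{\omega_0}$-block — both arguments are valid and essentially equivalent.
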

\begin{proof}
	Fix a smooth normalized Kähler form $\omega_0 \in \mathcal{K}$. Then, the desired conditions are satisfied by the open neighborhoods $U \subset \mathcal{K}$ of $\omega_0$ and $V\subset A_{\omega_0}$ of $0$, along with the smooth diffeomorphism $\rho:U\times V \to \rho(V\times U)$ provided in Proposition $\ref{KissplitinB}$.
	
	Indeed, $\ref{summarizinglemmablc1}$ and $\ref{summarizinglemmablc2}$ are a direct consequence of Proposition $\ref{basicproproff}$ and Corollary $\ref{fvfforkal}$. Furthermore, to prove $\ref{summarizinglemmablc3}$, we notice that  $\Phi(\omega_0)$ is a critical point of $\mathcal{F}$, and the Hessian of $F$ is given by
	$$\restr{d^2 F}{\omega_0}((\alpha,\eta),(\alpha,\eta))= \restr{d^2 \mathcal{F}}{\Phi(\omega_0)}{\left(\restr{d\rho}{\omega_0}\, (\alpha,\eta),\restr{d\rho}{\omega_0} \, (\alpha,\eta) \right)}, $$
	for every $\alpha \in T_{\omega_0}\mathcal{K}$ and $\eta \in A_{\omega_0}$. Therefore, $\ref{summarizinglemmablc3}$ result from equation $(\ref{deriofrho})$  and Corollary $\ref{sffforfoverkal}$.
	
	In order to prove $\ref{summarizinglemmablc4}$, note that for $\eta \in A_{\omega_0}$, the definition of $\rho$ in $(\ref{deriofrho})$ implies that
	$$\frac{d}{dt}F(\omega,\xi+t \eta)= \restr{d \mathcal{F}}{\rho\left(\omega,\xi+t \eta\right)} \left(\restr{d\Phi}{\omega_0}\, \eta \right).$$
	Then, the transformation law given in $(\ref{transfooftheHess})$ follows by taking a derivative of the above equation. The second part of $\ref{summarizinglemmablc4}$ follow from Corollary~\ref{sffforfoverkal} along with the definition of $A_{\omega_0}$.        
\end{proof}  

It is interesting to observe, as an intuition, that if ${G}: U \times V \subset \mathcal{K} \times {A_{\omega_0}} \to \mathbb{R}$ is a smooth map that satisfies properties $\ref{summarizinglemmablc1}$ through $\ref{summarizinglemmablc3}$ of Lemma $\ref{summarizinglemmablc}$, together with the fact that the restriction $\restr{d^2 {G}}{(\omega,\xi)}: A_{\omega_0} \times A_{\omega_0} \to \mathbb{R}$ is coercive in the $C^{1,\nu}$-topology, then ${G}(\omega,\eta) \geq {G}(\omega_0)$ in a neighborhood of $\omega_0$.

In fact, since $G: U \times V \to \mathbb{R}$ is smooth in the Fréchet sense with respect to the $C^{1,\nu}$-norm, the second-order Taylor expansion with Lagrange remainder around $\omega \in U$ implies the existence of a constant $\lambda = \lambda(\eta) \in (0,1)$, resulting in the following bound:
\begin{align*}
	G(\omega,\eta) &=G(\omega) + \restr{dG}{\omega}\, \eta + \frac{1}{2}\restr{d^2 G}{(\omega,\lambda \eta)}(\eta,\eta)  \\
	& = G(\omega) + \restr{dG}{\omega}\, \eta + \frac{1}{2}\restr{d^2 G}{\omega_0}(\eta,\eta) + \frac{1}{2}\left( \restr{d^2 G}{(\omega,\lambda \eta)}(\eta,\eta) - \restr{d^2 G}{\omega_0}(\eta,\eta) \right)  \\ 
	& \geq G(\omega) + \restr{dG}{\omega}\, \eta + C||\eta||^2_{C^{1,\nu}} + \frac{1}{2}\left( \restr{d^2 G}{(\omega,\lambda \eta)}(\eta,\eta) - \restr{d^2 G}{\omega_0}(\eta,\eta) \right),   
\end{align*} 
where the constant $C=C(\omega_0)>0$ arises form the coercivity condition. Now, by the assumed properties of the map $G$ together with the continuity of $d^2G$ in the $C^{1,\nu}$-topology, we further obtain   
\begin{align*}
	G(\omega,\eta) & \geq G(\omega) + \restr{dG}{\omega}\cdot \eta + C||\eta||^2_{C^{1,\nu}} + \frac{1}{2}\left( \restr{d^2 G}{(\omega,\lambda \eta)}(\eta,\eta) - \restr{d^2 G}{\omega_0}(\eta,\eta) \right)   \\
	& \geq G(\omega_0) + C||\eta||^2_{C^{1,\nu}} -\frac{C}{2} ||\eta||^2_{C^{1,\nu}} \\ 
	&= G(\omega_0) + \frac{C}{2} ||\eta||^2_{C^{1,\nu}},
\end{align*} 
after shrinking $U$ and $V$, if necessary. Therefore, the desired result follows from classical arguments in view of the last inequality.

In the situation we want to analyze, however, the function $F: U \times V \subset \mathcal{K} \times A_{\omega_0} \to \mathbb{R}$ has a Hessian that is not coercive in the $C^{1,\nu}$-topology, but satisfies the weaker property stated in Lemma $\ref{summarizinglemmablc}$ $\ref{summarizinglemmablc4}$ instead. In order to bypass this problem, our strategy is to estimate the $L^2$-norm of the Hessian, and then mimic the previous argument. Specifically, we present the following.
\begin{lema}\label{L2boundofHess}
	Let $\omega_0 \in \mathcal{K}$ be a smooth and normalized Kähler form. Then, there exist a neighborhood $\mathcal{N} \subset \mathcal{B}$ of $\omega_0$, in the $C^{1,\nu}$-topology, such that for each $\omega \in \mathcal{N}$ and $\eta \in T_{\omega_0}\mathcal{B}$ the equality
	\begin{equation*}
		\left| \restr{d^2 \mathcal{F} }{\Phi(\omega_0)}\left(\mu,\mu\right) - \restr{d^2 \mathcal{F} }{\Phi(\omega)}\left(\mu,\mu\right) \right| \leq \frac{n-1}{2} \,||\eta||^2_{L^2_{g_{\omega_0}}},
	\end{equation*}
	holds, where $\mu= \restr{d \Phi}{\omega_0} \, \eta \in C^{1,\nu}_{cl}\left(\Lambda^{n-1,n-1}_{\mathbb{R}} \right)$. 
\end{lema}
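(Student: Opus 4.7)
\medskip

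The plan is to expand both Hessians via Theorem \ref{svfforf} and then estimate the resulting difference term by term, taking advantage of the fact that the formula there only involves scalar integrals. After shrinking $\mathcal{N}$ so that for every $\omega\in\mathcal{N}$ the differential $\restr{d\Phi}{\omega}:T_\omega\mathcal{B}\to C^{1,\nu}_{cl}(\Lambda^{n-1,n-1}_{\mathbb{R}})$ remains a Banach space isomorphism (Proposition \ref{balismanif}), the auxiliary form $\tilde\eta_\omega:=(\restr{d\Phi}{\omega})^{-1}\mu$ is well defined, equals $\eta$ at $\omega_0$, and depends continuously on $\omega$. Using the homothety invariance of $\mathcal{F}$ (Proposition \ref{basicproproff}) to reduce to normalized forms, Theorem \ref{svfforf} expresses $\restr{d^2\mathcal{F}}{\Phi(\omega)}(\mu,\mu)$ as a polynomial in the four scalars
$$A(\omega):=\int_{\mathbb{C}P^{n-1}}\omega^{n-1},\quad B(\omega):=\int_{\mathbb{C}P^n}\mu\wedge\omega,\quad M:=\int_{\mathbb{C}P^{n-1}}\mu,\quad I(\omega):=\int_{\mathbb{C}P^n}\mu\wedge\tilde\eta_\omega,$$
and the analogous expression at $\omega_0$ involves $I(\omega_0)=\int\mu\wedge\eta$.

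Subtracting, the difference decomposes into a finite sum of products in which at least one factor is a ``continuity difference'' of the form $A(\omega_0)-A(\omega)$, $B(\omega_0)-B(\omega)$, or $I(\omega_0)-I(\omega)$, while the remaining factors belong to $\{A,B,M,I\}$. The $A$- and $B$-differences tend to zero as $\omega\to\omega_0$ in $C^{1,\nu}$ by direct continuity of the integrands, and the $I$-difference by the identity $(\tilde\eta_\omega-\eta)\wedge\omega^{n-2}=\eta\wedge(\omega_0^{n-2}-\omega^{n-2})$, which upon inverting the Lefschetz operator $L_\omega$ (an isomorphism on $(1,1)$-forms by item \ref{lefdecompthrm1} of Theorem \ref{lefdecompthrm}, smoothly depending on $\omega$) yields
$$\|\tilde\eta_\omega-\eta\|_{L^2_{\omega_0}}\,\leq\, C\,\|\omega-\omega_0\|_{C^0}\,\|\eta\|_{L^2_{\omega_0}}.$$
The remaining factors must be estimated in terms of $\|\eta\|_{L^2_{\omega_0}}$ rather than a stronger norm, since the Hessian at $\omega_0$ is only coercive in $L^2$ (Corollary \ref{sffforfoverkal}). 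For $B(\omega)$ and $I(\omega)$ this follows from Cauchy--Schwarz combined with the pointwise bound $|\mu|_{g_{\omega_0}}=(n-1)|\eta\wedge\omega_0^{n-2}|_{g_{\omega_0}}\leq C|\eta|_{g_{\omega_0}}$.

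The subtle estimate is on the topological integral $M=\int_{\mathbb{C}P^{n-1}}\mu$, which a priori is not controlled by any $L^p$-norm of $\mu$ on the ambient manifold. Here one invokes Hodge theory: since $\mu$ is a closed $(n-1,n-1)$-form and $H^{n-1,n-1}(\mathbb{C}P^n,\mathbb{R})$ is one-dimensional, spanned by $[\Omega^{n-1}]$, the integral $M$ equals the $L^2$-projection coefficient of $\mu$ onto the harmonic form $\Omega^{n-1}$, giving $|M|\leq C\|\mu\|_{L^2_{\omega_0}}\leq C'\|\eta\|_{L^2_{\omega_0}}$. Combining all estimates, every summand in the difference is dominated by $\varepsilon(\mathcal{N})\cdot\|\eta\|^2_{L^2_{\omega_0}}$ with $\varepsilon(\mathcal{N})\to 0$ as $\mathcal{N}$ shrinks; choosing $\mathcal{N}$ small enough that the total is at most $\tfrac{n-1}{2}\|\eta\|^2_{L^2_{\omega_0}}$ concludes the argument. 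The main obstacle is that a naive operator-norm continuity of $d^2\mathcal{F}$ only yields a bound in the $C^{1,\nu}$-norm of $\mu$, which is useless for the subsequent coercivity argument; the saving grace is the explicit structure of Theorem \ref{svfforf}, which reduces everything to scalar integrals amenable to Cauchy--Schwarz, and the Hodge-theoretic identification of the topological integral $M$ as an $L^2$-pairing.
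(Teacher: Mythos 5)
Your proposal is correct and follows essentially the same route as the paper: reduce to normalized forms by homothety invariance, expand both Hessians via the second variation formula into scalar integrals, control the $d\Psi$-term through the pointwise inverse of wedging with $\omega^{n-2}$, and bound every factor by $\|\eta\|_{L^2_{\omega_0}}$ so that the coefficients vanish as $\omega\to\omega_0$ in $C^{1,\nu}$. Your Hodge-theoretic identification of $\int_{\mathbb{C}P^{n-1}}\mu$ with the harmonic projection coefficient is just a rephrasing of the paper's identity $\int_{\mathbb{C}P^{n-1}}\alpha=\int_{\mathbb{C}P^{n}}\alpha\wedge\omega_0$ for closed $(2n-2)$-forms, so the two arguments coincide in substance.
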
           
\begin{proof}
	We begin by observing that it is enough to prove the existence of a neighborhood $\mathcal{N}_1 \subset \mathcal{B}_1$ of $\omega_0$, such that
	\begin{equation}\label{hessinforB1}
		\left| \restr{d^2 \mathcal{F} }{\Phi(\omega_0)}\left(\mu,\mu\right) - \restr{d^2 \mathcal{F} }{\Phi(\omega)}\left(\mu,\mu\right) \right| \leq \frac{n-1}{4} \,||\eta||^2_{L^2_{g_{\omega_0}}},
	\end{equation}
	for every $\omega \in \mathcal{N}_1$ and $\eta \in T_{\omega_0}\mathcal{B}$. 
	
	Indeed, consider the continuous map $v: \mathcal{B} \to \mathbb{R}_{>0}$ defined by $v(\omega) = \int_{\mathbb{C}P^n} \omega^n$, which allows us to normalize any balanced form ${\omega} \in \mathcal{B}$ as $\tilde{\omega} \doteq v(\omega)^{-\frac{1}{n}}\omega \in \mathcal{B}_1$. Furthermore, the homothety invariance property of $\mathcal{F}$ implies the following relation between the Hessians over $\omega$ and $\tilde{\omega}$
	$$ \restr{d^2 \mathcal{F}}{\Phi(\omega)} = v(\omega)^{\frac{2n}{n-1}} \restr{d^2 \mathcal{F}}{\Phi(\tilde{\omega})} .$$
	
	\noindent Consequently, for every $\omega$ in the open set $\mathcal{N}' \doteq \left\{ \omega \in \mathcal{B} \, : \, v(\omega)^{-\frac{1}{n}}\omega \in \mathcal{N}_1 \right\}$ and $\mu= \restr{d \Phi}{\omega_0} \, \eta \in C^{1,\nu}_{cl}\big(\Lambda^{n-1,n-1}_{\mathbb{R}} \big)$, the following inequality holds
	\begin{align*}
		&\left| \restr{d^2 \mathcal{F} }{\Phi(\omega_0)}\left(\mu,\mu\right) - \restr{d^2 \mathcal{F} }{\Phi(\omega)}\left(\mu,\mu\right) \right| \leq\\ 
		&\leq \left|1- v(\omega)^\frac{2n}{n-1} \right| \restr{d^2 \mathcal{F} }{\Phi(\omega_0)}\left(\mu,\mu\right) + v(\omega)^\frac{2n}{n-1}\left| \restr{d^2 \mathcal{F} }{\Phi(\omega_0)}\left(\mu,\mu\right) - \restr{d^2 \mathcal{F} }{\Phi(\tilde{\omega})}\left(\mu,\mu\right) \right| \\
		&\leq \left|1- v(\omega)^\frac{2n}{n-1} \right| \restr{d^2 \mathcal{F} }{\Phi(\omega_0)}\left(\mu,\mu\right)  +\frac{(n-1)}{4}\,v(\omega)^\frac{2n}{n-1}\,||\eta||^2_{L^2_{g_{\omega_0}}} \\
		&\leq (n-1) \left(\left|1- v(\omega)^\frac{2n}{n-1} \right|+ \frac{v(\omega)^\frac{2n}{n-1}}{4} \right)||\eta||^2_{L^2_{g_{\omega_0}}},
	\end{align*} 
	where we have applied $\eqref{hessinforB1}$ along with Lemma $\ref{summarizinglemmablc}$ $\ref{summarizinglemmablc3}$ and $\ref{summarizinglemmablc4}$. Moreover, since $v$ is continuous and $v(\omega_0) = 1$, the desired neighborhood can be define as $\mathcal{N} \doteq \left\{ \omega \in \mathcal{N}'\; : \; \left|1- v(\omega)^\frac{2n}{n-1} \right|+ \frac{1}{4}{v(\omega)^\frac{2n}{n-1}} < 1/2  \right\}$.
	
	It remains to prove the existence of the neighborhood $\mathcal{N}_1 \subset \mathcal{B}_1$. Fix $\omega \in \mathcal{B}_1$ and $\mu= \restr{d \Phi}{\omega_0} \, \eta \in C^{1,\nu}_{cl}\left(\Lambda^{n-1,n-1}_{\mathbb{R}} \right)$. By Theorem $\ref{svfforf}$, we can write the Hessian of $\mathcal{F}$ over $\omega$ as
	\begin{equation*}
		\restr{d^2 \mathcal{F}}{\Phi(\omega)}\left(\mu,\mu\right) = 2P_{\omega}(\mu,\mu) + \frac{1}{(n-1)}\, R^1_{\omega}(\mu,\mu) + R^{2}_{\omega}(\mu,\mu),  
	\end{equation*}
	where the operators $P_\omega$, $R^{1}_\omega$ and $R^{2}_{\omega}$ are given by
	\begin{align*}
		&P_\omega(\mu,\mu) =\left( \int_{\mathbb{C}P^{n}}\mu \wedge \omega \right) \left(\left(\int_{\mathbb{C}P^{n-1}}\omega^{n-1}\right) \left( \int_{\mathbb{C}P^{n}}\mu \wedge \omega \right) 
		- \left(\int_{\mathbb{C}P^{n-1}}\mu\right) \right),\\
		&R^{1}_{\omega}(\mu,\mu)=\left(\int_{\mathbb{C}P^{n-1}}\omega^{n-1}\right) \left(\int_{\mathbb{C}P^{n}}\mu \wedge \omega \right)^2,\\
		&R^{2}_{\omega}(\mu,\mu)= \left(\int_{\mathbb{C}P^{n-1}}\omega^{n-1}\right) \left(\int_{\mathbb{C}P^{n}}\mu \wedge \restr{d\Psi}{\Phi(\omega)}\,\mu \right),
	\end{align*}     
	where, $\Psi = \Phi^{-1}:C_{cl}^{1,\nu}(\Lambda_+^{n-1,n-1}) \to  \mathcal{B}$.
	
	As showed in Corollary $\ref{fvfforkal}$ we have that $P_{\omega_0}=0$, since $\omega_0$  is Kähler. This leads to the estimate 
	\begin{align*}
		\left| \restr{d^2 \mathcal{F} }{\Phi(\omega_0)}\left(\mu,\mu\right) - \restr{d^2 \mathcal{F} }{\Phi(\omega)}\left(\mu,\mu\right) \right| \leq 2\left|P_\omega(\mu,\mu) \right| &+ \frac{\left|R^1_{\omega_0}(\mu,\mu)- R^1_{\omega}(\mu,\mu)\right|}{(n-1)}\\
		&+ \left|R^2_{\omega_0}(\mu,\mu) - R^2_{\omega}(\mu,\mu) \right|.
	\end{align*}
	Therefore, it suffices to study each of the terms on the right-hand side independently.
	
	We start with the operator $P_\omega$. First, note that every given closed form $\alpha \in C^{1,\nu}(\Lambda^{2n-2})$ can be written as $\alpha = a \omega_0^{n-1} + d\xi$. Therefore, by applying Stokes's Theorem and recalling that $\omega_0$ is normalized, we have that 
	$$\int_{\mathbb{C}P^{n-1}} \alpha = a\int_{\mathbb{C}P^{n-1}} \omega_0^{n-1} = \int_{\mathbb{C}P^{n}} \alpha \wedge \omega_0.$$ 
	Consequently, we can rewrite $P_{\omega}$ as 
	$$P_\omega(\mu,\mu) =\left( \int_{\mathbb{C}P^{n}}\mu \wedge \omega \right) \left(\left(\int_{\mathbb{C}P^{n-1}}\omega^{n-1}\right) \left( \int_{\mathbb{C}P^{n}}\mu \wedge \omega \right) 
	- \left(\int_{\mathbb{C}P^{n}}\mu \wedge \omega_0\right) \right) $$
	
	To further simplify notation, we introduce the continuous map: $w : \mathcal{B} \to \mathbb{R}$, given by $w(\omega) = \int_{\mathbb{C}P^{n-1}} \omega^{n-1}$. This implies that 
	\begin{align*}
		\left| P_{\omega}(\mu,\mu)\right| &= \left| \left( \int_{\mathbb{C}P^{n}}\mu \wedge \omega \right)\right|\, \left| w(\omega) \left( \int_{\mathbb{C}P^{n}}\mu \wedge \omega \right) 
		- \left(\int_{\mathbb{C}P^{n}}\mu \wedge \omega_0\right) \right| \\
		&= \left| \left( \int_{\mathbb{C}P^{n}}\mu \wedge \omega \right)\right|\, \left|\int_{\mathbb{C}P^{n}}\mu \wedge \left( w(\omega)\omega- \omega_0\right) \right|. 
	\end{align*}

	Recall that for any top form $\xi \in C^{1,\nu}(\Lambda^{2n}_{\mathbb{R}})$, it holds that $\left|\int_{\mathbb{C}P^n} \xi \right| \leq \int_{\mathbb{C}P^{n}} ||\xi||_{g_{\omega_0}} dV_{g_{\omega_0}}$. Furthermore, since the complex projective space is compact, there exists a universal constant $C > 0$ such that $||\alpha \wedge \beta ||_{g_{\omega_0}} \leq C ||\alpha||_{g_{\omega_0}} ||\beta||_{g_{\omega_0}}$. Moreover, $C>0$ will also denote a constant that may possibly change throughout the calculations but depends only on $\omega_0$ and $n$. Considering the previous observations, we have 
	\begin{align*}
		\left| P_{\omega}(\mu,\mu)\right| &= \left| \left( \int_{\mathbb{C}P^{n}}\mu \wedge \omega \right)\right|\, \left|\int_{\mathbb{C}P^{n}}\mu \wedge \left( w(\omega)\omega- \omega_0\right) \right| \\
		&\leq C  \left( \int_{\mathbb{C}P^{n}}||\mu||_{g_{\omega_0}} ||\omega||_{g_{\omega_0}} \right) \left(\int_{\mathbb{C}P^{n}}||\mu||_{g_{\omega_0}} ||w(\omega)\omega- \omega_0||_{g_{\omega_0}} \right)\\
		&\leq C ||\omega||_{C^{1,\nu}}||w(\omega)\omega- \omega_0||_{C^{1,\nu}} \left(\int_{\mathbb{C}P^{n}}||\mu||_{g_{\omega_0}}  \right)^2.
	\end{align*}    
	However, $\mu= \restr{d\Phi}{\omega_0} \,\eta = (n-1) \omega^{n-2}_0 \wedge \eta $. Hence, applying Hölder inequality we can find a new constant $C>0$, such that
	\begin{align*}
		\left| P_{\omega}(\mu,\mu)\right| &\leq C ||\omega||_{C^{1,\nu}}||w(\omega)\omega- \omega_0||_{C^{1,\nu}} \left((n-1)\int_{\mathbb{C}P^{n}} || \omega_0||^{n-2}_{g_{\omega_0}} ||\eta||_{g_{\omega_0}}  \right)^2\\
		&\leq C ||\omega||_{C^{1,\nu}}||w(\omega)\omega- \omega_0||_{C^{1,\nu}} ||\eta||^2_{L^2_{g_{\omega_0}}}.    	
	\end{align*}      
	
	Noticing that the function $\mathcal{B}_1 \ni \omega \mapsto C ||\omega||_{C^{1,\nu}}||w(\omega)\omega - \omega_0||_{C^{1,\nu}} \in \mathbb{R}$ is continuous and vanishes at $\omega_0$, there exists a neighborhood $\mathcal{W}_1 \subset \mathcal{B}_1$ of $\omega_0$, where the following inequality holds
	$$ \left| P_{\omega}(\mu,\mu)\right| \leq \frac{n-1}{12} \, ||\eta||^2_{L^2_{\omega_0}}, $$
	for any $\omega \in \mathcal{W}_{1}$ and  $\mu= \restr{d \Phi}{\omega_0} \, \eta \in C^{1,\nu}_{cl}\big(\Lambda^{n-1,n-1}_{\mathbb{R}} \big)$.    
	
	Now, we estimate the operator $R^1$. Upon noticing that $w(\omega_0) = 1$ and employing the same reasoning as before, we obtain for each $\omega$ in the non-empty open set $\{\omega \in \mathcal{B}_1: w(\omega) > 0\}$, and for $\mu = \restr{d\Phi}{\omega_0} \, \eta = (n-1) \eta \wedge \omega_0^{n-2}$, where $\eta \in T_{\omega_0} \mathcal{B}$ that
	\begin{align*}
		\left| R^{1}_{\omega}(\mu,\mu)  - R^{1}_{\omega_0}(\mu,\mu) \right| &= \left| w(\omega)\left( \int_{\mathbb{C}P^{n}} \mu \wedge \omega \right)^2 - \left( \int_{\mathbb{C}P^{n}} \mu \wedge \omega_0 \right)^2 \right| \\
		&= \left|  \int_{\mathbb{C}P^{n}}\mu\wedge (w(\omega)^{\frac{1}{2}}\omega + \omega_0 ) \right|\, \left| \int_{\mathbb{C}P^{n}}\mu\wedge (w(\omega)^{\frac{1}{2}}\omega - \omega_0 ) \right| \\ 
		& \leq C || w(\omega)^{\frac{1}{2}}\omega - \omega_0 ||_{C^{1,\nu}} ||w(\omega)^{\frac{1}{2}}\omega + \omega_0||_{C^{1,\nu}} \left(\int_{\mathbb{C}P^{n}}\mu \right)^2\\
		&\leq C || w(\omega)^{\frac{1}{2}}\omega - \omega_0 ||_{C^{1,\nu}} ||w(\omega)^{\frac{1}{2}}\omega + \omega_0||_{C^{1,\nu}} || \eta ||^2_{L^2_{\omega_0}}.
	\end{align*}    
	As before, notice that the map $\omega \mapsto C || w(\omega)^{\frac{1}{2}}\omega - \omega_0 ||_{C^{1,\nu}} ||w(\omega)^{\frac{1}{2}}\omega + \omega_0||_{C^{1,\nu}}$ is a continuous function that vanishes at $\omega_0$. We can define a neighborhood $\mathcal{W}_2 \subset \mathcal{B}_1$ of $\omega_0$, in such way that
	\begin{equation*}
		\left| R^{1}_{\omega}(\mu,\mu)  - R^{1}_{\omega_0}(\mu,\mu) \right| \leq \frac{n-1}{12} \, ||\eta||^2_{L^2_{g_{\omega_0}}},
	\end{equation*}  
	for every $\omega \in \mathcal{W}_{2}$ and  $\mu= \restr{d \Phi}{\omega_0} \, \eta \in C^{1,\nu}_{cl}\big(\Lambda^{n-1,n-1}_{\mathbb{R}} \big)$.
	
	Finally we estimate the operator $R^2$. Once more, taking $\omega \in \mathcal{B}_1$ such that $w(\omega) > 0$, and $\mu = \restr{d\Phi}{\omega_0} \, \eta = (n-1) \eta \wedge \omega_0^{n-2}$, where $\eta \in T_{\omega_0} \mathcal{B} \subset C^{1,\nu}\big(\Lambda_{\mathbb{R}}^{1,1}\big)$, we have
	\begin{align}\label{R2equat}
		\begin{split}
			\left| R^{2}_{\omega}(\mu,\mu)  - R^{2}_{\omega_0}(\mu,\mu) \right| &= \left| w(\omega)\left( \int_{\mathbb{C}P^{n}} \mu \wedge \restr{d \Psi}{\Phi(\omega)}\, \mu \right) - \left( \int_{\mathbb{C}P^{n}} \mu \wedge \eta \right) \right|\\
			& =\left|  \int_{\mathbb{C}P^{n}} \mu \wedge \left( w(\omega)\restr{d \Psi}{\Phi(\omega)}\, \mu - \eta \right)  \right|.
		\end{split}
	\end{align}
	Turning our attention to the map $\restr{d \Phi}{\omega}$, we recall that it is induced by the bundle isomorphism $\Lambda_{\mathbb{R}}^{1,1} \ni \alpha \mapsto (n-1)\alpha \wedge \omega^{n-2} \in \Lambda^{n-1,n-1}_{\mathbb{R}}$. Therefore,  its inverse is induced by the inverse of this bundle isomorphism. Consequently, if we denote such bundle map by $S_{\omega}: \Lambda^{n-1,n-1}_{\mathbb{R}} \to \Lambda^{1,1}_{\mathbb{R}}$, we obtain the following pointwise bound
	\begin{align*}
		||w(\omega) \restr{d \Psi}{\Phi(\omega)}\, \mu - \eta ||_{g_{\omega_0}} 
		& =||w(\omega)S_{\omega}\mu - \eta ||_{g_{\omega_0}}\\ &\leq ||S_{\omega} ||_{g_{\omega_0}} ||w(\omega)\mu - S_{\omega}^{-1} \eta||_{g_{\omega_0}} \\
		&=  (n-1)||S_{\omega} ||_{g_{\omega_0}} ||w(\omega)\eta \wedge {}\omega_0^{n-2} - \eta\wedge {\omega}^{n-2} ||_{g_{\omega_0}}  \\
		&\leq C||S_{\omega} ||_{g_{\omega_0}} ||w(\omega) \omega_0^{n-2} - {\omega}^{n-2} ||_{g_{\omega_0}} || \eta ||_{g_{\omega_0}}.  
	\end{align*}
	
	Using the compactness of the complex projective space and the equivalence of Euclidean products, we can obtain a neighborhood $\mathcal{W}_3 \subset \mathcal{B}_1$ of $\omega_0$ such that $||S_\omega||_{g_{\omega_0}} \leq 2 ||S_{\omega} ||_{g_\omega}$ at every point. Moreover, since we can put any linear Kähler form in canonical form, and $S^{-1}_{\omega}$ is wedging with the fundamental form, we conclude that $||S_{\omega} ||_{g_\omega} = ||S_{\omega_0} ||_{g_{\omega_0}}$ for every $\omega \in \mathcal{B}$.
	
	Combining the aforementioned pointwise information with $(\ref{R2equat})$, we obtain the following inequality for every $\omega \in \mathcal{W}_3$
	
	\begin{align*}
		\left| R^{2}_{\omega}(\mu,\mu)  - R^{2}_{\omega_0}(\mu,\mu) \right| &= \left| w(\omega)\left( \int_{\mathbb{C}P^{n}} \mu \wedge \restr{d \Psi}{\Phi(\omega)}\, \mu \right) - \left( \int_{\mathbb{C}P^{n}} \mu \wedge \eta \right) \right|\\
		& \leq C \int_{\mathbb{C}P^{n}} ||\eta||_{g_{\omega_0}} \, ||w(\omega)\restr{d \Psi}{\Phi(\omega)}\, \mu - \eta ||_{g_{\omega_0}} \\ 
		&\leq C \int_{\mathbb{C}P^{n}} ||\eta||^2_{g_{\omega_0}} \, \left(||S_{\omega_0} ||_{g_{\omega_0}} ||w(\omega) \omega_0^{n-2} - {\omega}^{n-2} ||_{g_{\omega_0}} \right) \\
		&\leq C ||w(\omega) \omega_0^{n-2} - {\omega}^{n-2} ||_{C^{1,\nu}} \left(\int_{\mathbb{C}P^n} ||\eta||^2_{g_{\omega_0}}\right) \\
		& \leq C ||w(\omega) \omega_0^{n-2} - {\omega}^{n-2} ||_{C^{1,\nu}} \,||\eta||^2_{L^2_{g_{\omega_0}}},
	\end{align*}  
	where, in the last line, we applied Hölder inequality, and $C>0$ is a constant that depends of $n$, $\omega_0$ and $\mathcal{W}_3$. As the map $\mathcal{W}_3 \ni \omega \to C' ||w(\omega) \omega_0^{n-2} - {\omega}^{n-2} ||_{C^{1,\nu}}\in \mathbb{R}$ is continuous and vanishes at $\omega_0$, we can shrink $\mathcal{W}_3$ to ensure that
	$$\left| R^{2}_{\omega}(\mu,\mu)  - R^{2}_{\omega_0}(\mu,\mu) \right|  \leq \frac{n-1}{12} \, ||\eta||^2_{L^2_{g_{\omega_0}}}, $$
	for every $\omega \in \mathcal{W}_{3}$ and  $\mu= \restr{d \Phi}{\omega_0} \, \eta \in C^{1,\nu}_{cl}\big(\Lambda^{n-1,n-1}_{\mathbb{R}} \big)$. We can conclude the proof defining $\mathcal{N}_1=\mathcal{W}_1 \cap \mathcal{W}_2 \cap \mathcal{W}_3$.         
	
\end{proof}

Now that Lemma $\ref{L2boundofHess}$ is established, we can apply the computations provided in the beginning of the section to give a proof of Theorem $\ref{sysofbal}$. Which we restate below in terms of the finer topology $C^{1,\nu}$ and of the functional $\mathcal{F}: C^{1,\nu}_{cl}\big(\Lambda_+^{n-1,n-1}\big) \to \mathbb{R}$.
\begin{teo}
	There is an open set $ {\Phi}\left(\mathcal{K} \cap \Omega^{1,1}(\mathbb{C}P^n)\right) \subset \mathcal{U} \subset C_{cl}^{1,\nu}\big(\Lambda_{+}^{n-1,n-1}\big)$, in the ${C}^{1,\nu}$-topology, such that for every form $\sigma \in \mathcal{U}$ 
	$$\mathcal{F}(\sigma) \geq \mathcal{F}(\Omega^{n-1}).$$
	Moreover, $\sigma \in \mathcal{U}$ satisfies the equality if and only if $\sigma \in {\Phi}\left(\mathcal{K}\right)$.
\end{teo}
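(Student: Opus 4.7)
The plan is to carry out the heuristic Taylor-expansion argument sketched just before the statement, upgrading it so that the second-order correction actually controls $\mathcal{F}(\sigma) - \mathcal{F}(\Omega^{n-1})$ despite the mismatch between the Hessian's $L^2$-coercivity and the $C^{1,\nu}$-topology on $\mathcal{B}$. Fix a smooth Kähler form $\omega_0 \in \mathcal{K} \cap \Omega^{1,1}(\mathbb{C}P^n)$ and pull $\mathcal{F}$ back through the local chart $\rho : U \times V \to \rho(U \times V)$ provided by Proposition \ref{KissplitinB}, obtaining the smooth functional $F \doteq \mathcal{F} \circ \rho : U \times V \to \mathbb{R}$. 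Lemma \ref{summarizinglemmablc} records the three features that drive the argument: $F$ is constant and equal to $\mathcal{F}(\Omega^{n-1})$ on $U \times \{0\}$, the differential $\restr{dF}{\omega}$ vanishes at every $\omega \in U$, and the Hessian at $\omega_0$ restricted to $A_{\omega_0}$ computes the $L^2_{\omega_0}$-norm, namely $\restr{d^2 F}{\omega_0}(\eta,\eta)=(n-1)\|\eta\|^2_{L^2_{\omega_0}}$.

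Now I would apply Taylor's theorem with Lagrange remainder at an arbitrary Kähler base point $\omega \in U$: there exists $\lambda = \lambda(\omega,\eta) \in (0,1)$ with
\begin{equation*}
F(\omega,\eta) = F(\omega) + \restr{dF}{\omega}\cdot \eta + \tfrac{1}{2}\restr{d^2 F}{(\omega,\lambda\eta)}(\eta,\eta).
\end{equation*}
The first term is $\mathcal{F}(\Omega^{n-1})$ by item \ref{summarizinglemmablc1}, and the second vanishes by item \ref{summarizinglemmablc2}. Splitting the remainder as
\begin{equation*}
\restr{d^2 F}{(\omega,\lambda\eta)}(\eta,\eta) = \restr{d^2 F}{\omega_0}(\eta,\eta) + \bigl(\restr{d^2 F}{(\omega,\lambda\eta)}(\eta,\eta) - \restr{d^2 F}{\omega_0}(\eta,\eta)\bigr),
\end{equation*}
the leading term equals $(n-1)\|\eta\|^2_{L^2_{\omega_0}}$, while Lemma \ref{L2boundofHess} together with the transformation rule $\eqref{transfooftheHess}$ shows that, after possibly shrinking $U$ and $V$ in the $C^{1,\nu}$-topology, the bracketed error is bounded in absolute value by $\tfrac{n-1}{2}\|\eta\|^2_{L^2_{\omega_0}}$. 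Consequently
\begin{equation*}
F(\omega,\eta) \geq \mathcal{F}(\Omega^{n-1}) + \tfrac{n-1}{4}\|\eta\|^2_{L^2_{\omega_0}},
\end{equation*}
with strict inequality whenever $\eta \neq 0$. Rewriting in terms of $\sigma = \rho(\omega,\eta)$ yields the desired lower bound on the open neighborhood $\rho(U \times V)$ of $\Phi(\omega_0)$, and equality forces $\eta = 0$, so $\sigma = \Phi(\omega) \in \Phi(\mathcal{K})$. To produce the single open set $\mathcal{U}$ of the statement, I take the union of these local neighborhoods as $\omega_0$ ranges over $\mathcal{K}\cap \Omega^{1,1}(\mathbb{C}P^n)$; the inequalities patch together since the right-hand side is the constant $\mathcal{F}(\Omega^{n-1})$, which equals $\mathcal{F}(\sigma)$ for every $\sigma \in \Phi(\mathcal{K})$ by Proposition \ref{basicproproff}.

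The main obstacle is precisely the topology mismatch just circumvented. The Hessian of $F$ at $\omega_0$ is merely semi-positive with kernel $T_{\omega_0}\mathcal{K}$, and its coercivity on the complement $A_{\omega_0}$ is only in $L^2_{\omega_0}$, a norm strictly weaker than the $C^{1,\nu}$-norm in which $\mathcal{F}$ is Fréchet smooth. A naive application of the Taylor remainder would only control $\restr{d^2 F}{(\omega,\lambda\eta)} - \restr{d^2 F}{\omega_0}$ in the $C^{1,\nu}$ operator norm, which cannot be absorbed by $\|\eta\|^2_{L^2_{\omega_0}}$. Lemma \ref{L2boundofHess} is the crucial technical input that resolves this by producing, via a term-by-term analysis of the second variational formula of Theorem \ref{svfforf}, an honest $L^2_{\omega_0}$-bound for the error whose constant can be made arbitrarily small by shrinking the $C^{1,\nu}$-neighborhood of $\omega_0$. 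Combined with the Kähler identity of Corollary \ref{sffforfoverkal}, this is what allows the expansion to close and the strict inequality to hold off the Kähler locus.
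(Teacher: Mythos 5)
Your proposal is correct and follows essentially the same route as the paper: a second-order Taylor expansion with Lagrange remainder of $F=\mathcal{F}\circ\rho$ around a Kähler base point in the chart of Proposition~\ref{KissplitinB}, with the $L^2_{\omega_0}$-coercivity of the Hessian from Lemma~\ref{summarizinglemmablc} absorbing the remainder error controlled by Lemma~\ref{L2boundofHess}, yielding $\mathcal{F}(\sigma)\geq \mathcal{F}(\Omega^{n-1})+\tfrac{n-1}{4}\|\eta\|^2_{L^2_{\omega_0}}$ and the equality characterization, followed by taking the union of the local neighborhoods. The only point the paper makes explicit that you pass over is that Lemmas~\ref{summarizinglemmablc} and~\ref{L2boundofHess} are stated for \emph{normalized} Kähler forms, so the neighborhoods around non-normalized forms are obtained by dilation using the homothety invariance of $\mathcal{F}$ (Proposition~\ref{basicproproff}) before forming the union.
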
    
\begin{proof}
	Let $\omega_0$ be a smooth Kähler form, and let $\rho: U \times V \to \rho(U \times V) \subset C_{cl}^{1,\nu}\big(\Lambda_+^{n-1,n-1}\big)$ be the smooth diffeomorphism given in Lemma $\ref{summarizinglemmablc}$. Furthermore, we denote the the functional $\mathcal{F}$ under this identification, by $F \doteq \mathcal{F}\circ \rho: U \times V \to \mathbb{R}.$ 
	
	If $\mathcal{N} \subset \mathcal{B}$ denotes the neighborhood provided in Lemma $\ref{L2boundofHess}$, we can assume that $U \subset \mathcal{K}$ and $V \subset A_{\omega_0}$ are open convex sets which satisfying $\mathcal{W}_{\omega_0} \doteq \rho(U \times V) \subset \Phi\left(\mathcal{N}\right)$.
	
	The second-order Taylor expansion with the Lagrange remainder for ${F}:U \times V \to \mathbb{R}$  around $\omega \in U$ implies that for each $\eta \in V$, there exists $\lambda=\lambda(\eta) \in (0,1)$ such that the following equality holds
	\begin{align*}
		{F}(\omega,\eta)  &={F}(\omega) + \restr{d{F}}{\omega} \eta + \frac{1}{2}\restr{d^2 {F}}{(\omega,\lambda \eta)}(\eta,\eta)  \\
		& = {F}(\omega) + \restr{d{F}}{\omega} \eta + \frac{1}{2}\restr{d^2{F}}{\omega_0}(\eta,\eta) + \frac{1}{2}\left( \restr{d^2 {F}}{(\omega,\lambda \eta)}(\eta,\eta) - \restr{d^2 {F}}{\omega_0}(\eta,\eta) \right). 
	\end{align*} 
	Since we are under the hypothesis of Lemmas $\ref{summarizinglemmablc}$ and $\ref{L2boundofHess}$ we further obtain
	\begin{align*}
		{F}(\omega,\eta) & = {F}(\omega_0) + \frac{n-1}{2}||\eta||^2_{L^2_{g_{\omega_0}}}+\\ &+ \frac{1}{2}\left( \restr{d^2 \mathcal{F}}{\rho(\omega,\lambda \eta)}\left(\restr{d\Phi}{\omega_0}\,\eta,\restr{d\Phi}{\omega_0}\,\eta\right) - \restr{d^2 \mathcal{F}}{\Phi(\omega_0)}\left(\restr{d\Phi}{\omega_0}\,\eta,\restr{d\Phi}{\omega_0}\,\eta \right) \right)   \\
		& \geq {F}(\omega_0) + \frac{n-1}{2}||\eta||^2_{L^2_{g_{\omega_0}}}-\frac{n-1}{4} ||\eta||^2_{L^2_{g_{\omega_0}}} \\ 
		&= {F}(\omega_0) + \frac{n-1}{4} ||\eta||^2_{L^2_{g_{\omega_0}}}.
	\end{align*}   
	Applying Proposition $\ref{basicproproff}$ to ensure that $\mathcal{F}$ is constant along the Kähler forms, we conclude that for every form $\sigma = \rho(\omega,\eta) \in \mathcal{W}_{\omega_0}$, the following inequality holds
	$$\mathcal{F}(\sigma) \geq \mathcal{F}(\Omega^{n-1}) + \frac{n-1}{4}||\eta||^2_{L^2_{g_{\omega_0}}}.$$
	Even more, if equality holds $\eta=0$, that is, $\sigma=\rho(\omega,0)=\Phi(\omega) \in \Phi(\mathcal{K})$. Conversely, applying Proposition $\ref{basicproproff}$, if $\sigma \in \Phi(\mathcal{K})$ then equality holds.   
	
	In conclusion, we constructed the desired neighborhood around each smooth and normalized Kähler form. To complete the proof, we need to extend this construction to non-normalized forms. For that, we recall that by Proposition $\ref{basicproproff}$, the functional $\mathcal{F}$ is invariant under homothety, allowing us to construct the aforementioned neighborhood using dilatation. Finally, we can take $\mathcal{U}$ as the union of $\mathcal{W}_{\omega_0}$, for each $\omega_0 \in \mathcal{K} \cap \Omega^{1,1}(\mathbb{C}P^n)$.
\end{proof}
\section{Deformations in \texorpdfstring{$\mathcal{Z}$}{Z}}\label{section:zolldeformation}

As discussed in the introduction,  this chapter adapts the generalized Zoll condition proposed by Ambrozio-Marques-Neves to the setting of almost complex structures on complex projective space. The results from Chapters $\ref{section:WeaklyZollmetrics}$ and $\ref{section:BalancedMetrics}$ are then applied to provide a classification of their one-parameter deformations. As a consequence of this classification, we are able to investigate the behavior of the codimension-two normalized systole along these deformations. We conclude the chapter by comparing our results with the spherical case, building upon the work of L. Ambrozio and R. Montezuma (\cite{lucas_rafael_sistole_projcspace}, \cite{lucas_rafael_minmax}).

\begin{defi}\label{zollmetrics}
	The set $\mathcal{Z}$ is defined as the class of almost Hermitian structures $(J, g)$ in $\mathbb{C}P^n$ that admit a family $\{\Sigma^{2n-2}_\sigma\}_{\sigma \in \mathbb{C}P^n}$ of $(2n-2)$-dimensional submanifolds satisfying the following properties:
	\begin{enumerate}[label=\alph*),ref=(\alph*)]
		\item\label{zollmetrics1} For every $\sigma \in \mathbb{C}P^n$ the submanifold $\Sigma_\sigma$ is diffeomorphic to $\mathbb{C}P^{n-1}$, minimal and $J$-almost complex;
		\item\label{zollmetrics2} For every $(p ,\Pi) \in \mathrm{Gr}_{n-1}^{J}(\mathbb{C}P^n)$, in the Grassmannian of $J$-almost complex hyperplanes, there exists a unique $\sigma \in \mathbb{C}P^n$ for which $p \in \Sigma_\sigma$ and $T_p \Sigma_\sigma = \Pi$. Moreover, the map $\mathrm{Gr}_{n-1}^{J}(\mathbb{C}P^n) \ni (p,\Pi) \mapsto \sigma \in \mathbb{C}P^n$ is a submersion;
		\item\label{zollmetrics3} The map $\mathbb{C}P^n \ni \sigma \mapsto \Sigma_\sigma$ is smooth in the sense of the graphical convergence.   
	\end{enumerate}
	The family $\{\Sigma^{2n-2}_\sigma\}_{\sigma \in \mathbb{C}P^n}$ is called the associated Zoll family.  
\end{defi}

Following the ideas in \cite{lucas_coda_andre}, our interest is to classify $1$-parameter deformations of the Fubini-Study metric that lie in the set $\mathcal{Z}$. More concretely, a smooth family $t \mapsto (J_t,g_t)$ of almost Hermitian structures is said to be a \textit{$1$-parameter deformation of the Fubini-Study metric in $\mathcal{Z}$} if $(J_t,g_t) \in \mathcal{Z}$ for every $t$, and there exists a family of Zoll families $\{\Sigma_{\sigma,t}\}_{\sigma \in \mathbb{C}P^n}$ such that the map $(\sigma,t) \mapsto \Sigma_{\sigma,t}$ is continuous in the sense of graphical convergence, and moreover $(J_0,g_0)$ and $\{\Sigma_{\sigma,0}\}_{\sigma \in \mathbb{C}P^n}$ are given by $(J_{\mathrm{can}},g_{FS})$ and $\{\mathbb{C}P^{n-1}_{\sigma}\}_{\sigma \in \mathbb{C}P^n}$.

The first step to classify these deformations is to notice that the notion of $(J,g) \in \mathcal{Z}$ presented in the previous definition is a stronger version of the concept of belonging in $\mathcal{W}_{n-1}$, as defined earlier in Chapter $\ref{section:WeaklyZollmetrics}$ (see Definition $\ref{defiofsetWk}$). In other words, we always have that $\mathcal{Z} \subset \mathcal{W}_{n-1}$. Therefore, we can apply Theorem $\ref{classWeaklyZoll}$ to derive basic properties of almost Hermitian structures that are in $\mathcal{Z}$.

\begin{prop}\label{classofzollweakver}
	Let $(J,g)$ be an almost Hermitian structure in $\mathbb{C}P^n$, for $n\geq2$, that belongs to $\mathcal{Z}$. Then:
	\begin{enumerate}[label=\alph*),ref=(\alph*)]
		\item If $n=2$, the almost Hermitian structure $(J,g)$ is Almost-Kähler.
		\item If $n \geq 3$, the almost complex structure $J$ is integrable and the Riemannian metric $g$ is balanced with respect to $J$.  
	\end{enumerate}   
\end{prop}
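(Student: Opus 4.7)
The plan is to derive both statements directly from the classification Theorem \ref{classWeaklyZoll}, using the elementary observation that $\mathcal{Z} \subset \mathcal{W}_{n-1}$. The entire proof is essentially a bookkeeping exercise once this inclusion is in hand.

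First I would verify $\mathcal{Z} \subset \mathcal{W}_{n-1}$ explicitly: given $(J,g) \in \mathcal{Z}$ with associated Zoll family $\{\Sigma_\sigma\}_{\sigma \in \mathbb{C}P^n}$, property $\ref{zollmetrics2}$ of Definition \ref{zollmetrics} yields, for every $(p,\Pi) \in \mathrm{Gr}_{n-1}^{J}(\mathbb{C}P^n)$, a choice of $\sigma$ with $p \in \Sigma_\sigma$ and $T_p\Sigma_\sigma = \Pi$. Property $\ref{zollmetrics1}$ ensures that $\Sigma_\sigma$ is minimal and $J$-almost complex. Setting $\Sigma_{p,\Pi} \doteq \Sigma_\sigma$ produces precisely the family of submanifolds required by Definition \ref{defiofsetWk} of $\mathcal{W}_{n-1}$.

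For the case $n \geq 3$, item $\ref{classWeaklyZoll23}$ of Theorem \ref{classWeaklyZoll} applies directly: $(J,g) \in \mathcal{W}_{n-1}$ forces $J$ to be integrable and $g$ to be Balanced with respect to $J$, which is part (b) of the proposition.

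For $n = 2$ we have $n-1 = 1$, so $\mathcal{Z} \subset \mathcal{W}_1$, and item $\ref{classWeaklyZoll21}$ of Theorem \ref{classWeaklyZoll} gives that $(J,g)$ is Quasi-K\"ahler. To upgrade this to Almost-K\"ahler, I would invoke the two implications recorded in Section \ref{section:WeaklyZollmetricsSubsection:Prelinaries}: Quasi-K\"ahler always implies Balanced, and in complex dimension $n = 2$ the Balanced condition reads $d\omega^{n-1} = d\omega = 0$, which is exactly the Almost-K\"ahler condition. This yields part (a). No step of this argument presents a genuine obstacle; the substantive analytical content sits entirely inside Theorem \ref{classWeaklyZoll}, which has already been established.
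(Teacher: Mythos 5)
Your proposal is correct and follows exactly the route the paper takes: observe $\mathcal{Z}\subset\mathcal{W}_{n-1}$ and apply Theorem \ref{classWeaklyZoll}, with the $n=2$ case upgraded from Quasi-K\"ahler to Almost-K\"ahler via the inclusions (Quasi-K\"ahler $\Rightarrow$ Balanced, and Balanced $=$ Almost-K\"ahler when $n=2$) recorded in the preliminaries.
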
    

A consequence of the previous proposition is that each element $\Sigma_{\sigma}$ in the Zoll family of $(J,g) \in \mathcal{Z}$ is non-trivial in $H_{2n-2}(\mathbb{C}P^n,\mathbb{Z})$. In fact, if $\Sigma_\sigma$ were trivial Stokes' Theorem would imply that $\mathrm{vol}_{g}(\Sigma_{\sigma}) =\frac{1}{(2n-2)!}\int_{\Sigma_{\sigma}} \omega^{n-1} = 0$, since $\omega^{n-1}$ is closed.    

From these preliminary properties we can use the classical theory of deformations of complex manifolds develop by K. Kodaira (\cite{Kodaira05}) and A. Frölicher, A. Nijenhuis (\cite{Frolicher_Nijenhuis}) to prove the following classification theorem.        

\begin{teo}\label{classifofzolldef}
	Fix $n \geq 3$. Let $\mathbb{R} \ni t \mapsto (J_t, g_t) \in \mathcal{Z}$ be a smooth $1$-parameter deformation of the Fubini-Study metric in $\mathcal{Z}$. Then, there exists $\varepsilon >0$ and a continuous map $(-\varepsilon, \varepsilon) \ni t \mapsto \theta(t) \in \mathrm{Diff}(\mathbb{C}P^n)$, such that, modulo isotopy, for every $|t| < \varepsilon$ the following properties are satisfied.
	\begin{enumerate}[label=\alph*),ref=(\alph*)]
		\item The almost complex structure $J_t$ is constant and equal to $J_{\mathrm{can}}$;
		\item The metric $g_t$ is balanced with respect to $J_{\mathrm{can}}$;
		\item The family $\{\Sigma_{\sigma,t}\}_{\sigma \in \mathbb{C}P^{n}}$ is given by $\left\{\mathbb{C}P^{n-1}_{\theta(t,\sigma)}\right\}_{\sigma\in \mathbb{C}P^{n}}.$ 
	\end{enumerate}
\end{teo}
\begin{proof}
	Applying Proposition $\ref{classofzollweakver}$, we conclude that $J_t$ is integrable, and $g_t$ is balanced with respect to $J_t$ for every $t \in \mathbb{R}$. Since $t \mapsto J_t$ is a smooth family of complex structures in $\mathbb{C}P^n$, the deformation Theorem of Kodaira (see Theorem $4.12$, \textsection 4.2 in \cite{Kodaira05}) implies that there exists an $\varepsilon > 0$ and a smooth isotopy $\phi: \mathbb{C}P^n \times (-\varepsilon, \varepsilon) \to \mathbb{C}P^n$ such that $\phi_t^*(J_t) = J_{\mathrm{can}}$. Therefore, up to the action of this isotopy, there is no loss of generality in assuming that $J_t$ is constant, given by the canonical complex structure, and that $g_t$ is balanced with respect to $J_{\mathrm{can}}$ for every $|t| < \varepsilon$.
	
	It remains to show that the family $\{\Sigma_{\sigma,t}\}_{\sigma \in \mathbb{C}P^n}$ is a reparametrization of the equatorial family $\{\mathbb{C}P^{n-1}_\sigma \}_{\sigma \in \mathbb{C}P^n}$. Since the map $(\sigma, t) \mapsto \Sigma_{\sigma,t}$ is continuous with respect to graphical convergence, the function $(\sigma,t) \mapsto \inf_{\Sigma_{\sigma,t}} \mathrm{Scal}_{(\Sigma_{\sigma,t},g_{FS})}$ is also continuous. Hence, possibly after reducing $\varepsilon>0$, we can apply the rigidity theorem for compact complex submanifolds of $(\mathbb{C}P^n,g_{FS})$ with bounded scalar curvature, proven by K. Ogiue (\cite{K_Ogiue}), to conclude that each $\Sigma_{\sigma,t}$ must be a totally geodesic $\mathbb{C}P^{n-1}$. However, the set of connected, compact, complex, totally geodesic codimension two submanifolds of $(\mathbb{C}P^n,g_{FS})$ is precisely given by $\{\mathbb{C}P^{n-1}_{\sigma}\}_{\sigma \in \mathbb{C}P^n}$ (see \cite[Lemma 2, Chapter XI, Section 4]{Kobayashi_Nomizi_vol2}). Thus, for each $(\sigma,t)$ there exists a unique $\theta(\sigma,t)\in \mathbb{C}P^n$ such that $\Sigma_{\sigma,t} = \mathbb{C}P^{n-1}_{\theta(\sigma,t)}$.    
	
	 In other words, there is a map $\theta: \mathbb{C}P^n \times (-\varepsilon, \varepsilon) \to \mathbb{C}P^n$ satisfying $\Sigma_{\sigma,t} = \mathbb{C}P^{n-1}_{\theta(\sigma,t)}$, for every $\sigma \in \mathbb{C}P^n$ and $|t|< \varepsilon$. Now, by part $\ref{zollmetrics2}$ of Definition $\ref{zollmetrics}$, the map $\theta(t,\cdot)$ is bijective and by part $\ref{zollmetrics3}$, is also smooth. Therefore, reducing $\varepsilon>0$ once more, we can assume that each one of these maps is a diffeomorphism. Finally, the continuity of $t \mapsto \theta(t) \in \mathrm{Diff}(\mathbb{C}P^n)$ follows once more from the continuity of $(\sigma, t) \mapsto \Sigma_{\sigma,t}$.              
\end{proof}

The combination of the previous classification Theorem with our analysis of the normalized systole over balanced metrics (Theorem $\ref{sysofbal}$) allow us to understand the normalized systole along a $1$-parameter deformation of the Fubini-Study metric in $\mathcal{Z}$. 

\begin{coro}\label{1pardefsyst}
	Fix $n \geq 3$. Let $\mathbb{R} \ni t \mapsto (J_t, g_t) \in \mathcal{Z}$ be a smooth $1$-parameter deformation of the Fubini-Study metric in $\mathcal{Z}$. Then there exists an $\varepsilon>0$ such that, for every $t \in (-\varepsilon, \varepsilon)$, $$\mathrm{Sys}_{2n-2}^{\mathrm{nor}}(\mathbb{C}P^n,g_t) \geq \mathrm{Sys}_{2n-2}^{\mathrm{nor}}(\mathbb{C}P^n,g_{FS}).$$  
\end{coro}  

We conclude this chapter by comparing our results on deformations of generalized Zoll structures with those discussed earlier for the sphere. Corollary~\ref{1pardefsyst} establishes that the codimension two normalized systole is locally non-decreasing for any 1-parameter deformation of the Fubini-Study metric within the class $\mathcal{Z}$. This behavior contrasts with that of the volume normalized width for $1$-parameter deformations of Zoll surfaces in $\mathbb{S}^3$, where the round metric does not exhibit a similar local minimum type structure.

Indeed, L. Ambrozio and R. Montezuma proved that the round metric is a strict local minimum for the normalized width within the $1$-parameter family of Berger metrics on $\mathbb{S}^3$~\cite[Section 5]{lucas_rafael_sistole_projcspace}. Furthermore, Ambrozio-Marques-Neves constructed $1$-parameter deformations of Zoll surfaces (with non constant sectional curvature) within the conformal class of the round metric~\cite[Theorem A]{lucas_coda_andre}, and the results of Ambrozio-Montezuma show that the normalized width strictly decreases near the round metric for such deformations~\cite[Theorem 1.2.1]{lucas_rafael_minmax}. In summary, we see that the round metric exhibits a saddle-point behavior for the normalized width functional.

\begin{appendices}

\section{Integral Geometric Formulas and Systolic Inequalities}\label{Appendix:IntegralGeometricFormulasandSystolic Inequalities}

\begin{defi}\label{IGFdef}
	Let $(M^n,g)$ be a closed Riemannian manifold, and $\{\Sigma^k_{\sigma}\}_{\sigma \in \mathcal{G}}$  a family of closed smooth $k$-submanifolds continuously parameterized by a closed manifold $\mathcal{G}$. We say that the family $\{\Sigma^k_{\sigma}\}_{\sigma \in \mathcal{G}}$ admits an integral geometric formula, if $\mathcal{G}$ admits a positive Radon measure $d\mu$ that satisfies the following two properties: 
	\begin{enumerate}[label=\alph*),ref=(\alph*)]
		\item\label{IGF1} For every $\phi \in C^{\infty}(M)$ the map $\mathcal{G} \ni \sigma \mapsto \int_{\Sigma_{\sigma}}\phi d{A}_g \in \mathbb{R}$ is continuous;   
		\item \label{IGF2} The following integral equation holds for each smooth function $ \phi \in C^{\infty}(M)$,  
		\begin{equation}\label{integralGeomForm}
			\int_{\mathcal{G}} \left(\int_{\Sigma_{\sigma}} \phi\, d {A}_g \right) d\mu(\sigma) = \int_{M} \phi \,d{V}_g.
		\end{equation}
	\end{enumerate}     
\end{defi}

Let the parameterized family $\{\Sigma^k_\sigma\}_{\sigma \in \mathcal{G}}$ be as defined above. An interesting consequence of the existence of an integral geometric formula is the denseness of the family $\{\Sigma^k_\sigma\}_{\sigma \in \mathcal{G}}$, meaning that the closed set ${\cup_{\sigma \in \mathcal{G}} \Sigma_{\sigma}}$ covers $M$. In fact, otherwise, we can choose a positive function $\phi \in C^{\infty}(M)$ with support in the non-empty open set $M \setminus {\cup_{\sigma \in \mathcal{G}} \Sigma_{\sigma}}$, leading to a contradiction with the formula $(\ref{integralGeomForm})$. 

Despite the aforementioned property of integral geometric formulas our main interest lies in its connection with systolic inequalities. This relation was conceived by M. Pu in one of the earliest papers in systolic geometry (\cite{Pu52}) and since then was largely replicated (\cite{berger},\cite{gromov_systole}, \cite{paiva_fernandes}). Next we adapt his argument to our context.    

\begin{teo}\label{FIGimpSCC}
	Let $(M^n,g)$ be a closed Riemannian manifold, and suppose that there exists a family $\{\Sigma^k_\sigma\}_{\sigma \in \mathcal{G}}$ of closed smooth $k$-submanifolds parameterized by a closed manifold $\mathcal{G}$ admitting an integral geometric formula. Moreover, suppose that $\Sigma_{\sigma}$ is homological non-trivial and $\mathrm{Sys}_k(M,g)=\mathrm{vol}_g(\Sigma_{\sigma})$ for every $\sigma \in \mathcal{G}$. Then, for every Riemannian metric $\bar{g}$ in the conformal class of $g$, we have
	$$ \mathrm{Sys}_k^{\mathrm{nor}}(M,\bar{g}) \leq \mathrm{Sys}_k^{\mathrm{nor}}(M,g).$$
	Moreover, equality holds if and only if $\bar{g}$ is homothetic to the metric $g$.    
\end{teo}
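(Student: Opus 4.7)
The plan is to adapt the classical Pu--Berger conformal argument to our parametrized setting. First, I would write the conformal representative as $\bar g = f^2 g$ for a smooth positive function $f \colon M \to \mathbb{R}_{>0}$, which is the standard description of a conformal class on a closed manifold. Under this conformal change the induced measures scale as $dA_{\bar g} = f^k\, dA_g$ on each submanifold $\Sigma_\sigma^k$ and $dV_{\bar g} = f^n\, dV_g$ on $M$. The strategy is then to apply the integral geometric formula \eqref{integralGeomForm} twice: once with $\phi \equiv 1$ to pin down the total mass $\mu(\mathcal{G})$, and once with $\phi = f^k$ to bound the $\bar g$-areas of the submanifolds $\Sigma_\sigma$, and finally to couple the two estimates by Hölder's inequality.

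Concretely, inserting $\phi = f^k$ into \eqref{integralGeomForm} yields
$$\int_{\mathcal G} \mathrm{vol}_{\bar g}(\Sigma_\sigma)\, d\mu(\sigma) \;=\; \int_M f^k\, dV_g.$$
Since every $\Sigma_\sigma$ is homologically non-trivial, the left-hand side is at least $\mathrm{Sys}_k(M,\bar g)\, \mu(\mathcal{G})$, and by Hölder's inequality with exponents $n/k$ and $n/(n-k)$ the right-hand side is bounded above by
$$\Big(\int_M f^n\, dV_g\Big)^{k/n} \mathrm{vol}(M,g)^{(n-k)/n} \;=\; \mathrm{vol}(M,\bar g)^{k/n}\,\mathrm{vol}(M,g)^{(n-k)/n}.$$
Now the choice $\phi \equiv 1$ in \eqref{integralGeomForm}, together with the hypothesis $\mathrm{vol}_g(\Sigma_\sigma) = \mathrm{Sys}_k(M,g)$ for every $\sigma$, gives the normalization $\mathrm{Sys}_k(M,g)\, \mu(\mathcal G) = \mathrm{vol}(M,g)$. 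Substituting this identity into the previous chain of inequalities and rearranging produces the target inequality $\mathrm{Sys}_k^{\mathrm{nor}}(M,\bar g) \leq \mathrm{Sys}_k^{\mathrm{nor}}(M,g)$.

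For the rigidity statement, I would retrace the chain. Equality in Hölder forces $f^k$ to be proportional to the constant function $1$ on $M$, hence $f$ is constant and $\bar g$ is homothetic to $g$; the converse is trivial since the normalized systole is scale-invariant. The argument has no substantial technical obstacle: the only points requiring attention are the legitimacy of feeding $\phi = f^k$ into \eqref{integralGeomForm} (immediate from smoothness of $f$) and the observation that the hypothesis ``\emph{every} $\Sigma_\sigma$ realizes the $g$-systole'' is precisely what converts the integral geometric formula with $\phi \equiv 1$ into an effective formula for $\mu(\mathcal G)$ in terms of geometric quantities of $g$. Once these are in place, the conclusion follows by the two-line computation sketched above.
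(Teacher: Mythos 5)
Your proposal is correct and follows essentially the same route as the paper: apply the integral geometric formula to the conformal factor raised to the power $k$ (the paper writes $\bar g=\phi g$ and uses $\phi^{k/2}$, you write $\bar g=f^2g$ and use $f^k$), bound the systolic term below via homological non-triviality, use H\"older with exponents $n/k$ and $n/(n-k)$, and normalize $\mu(\mathcal G)$ by plugging in $\phi\equiv 1$ together with the hypothesis that every $\Sigma_\sigma$ realizes the $g$-systole; the equality analysis via the H\"older equality case is likewise identical.
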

\begin{proof}
	Let $\phi \in {C}^{\infty}_{>0}(M)$ be the conformal factor of $\bar{g}$, that is $\bar{g}=\phi g$. Then for each $\sigma \in \mathcal{G}$, 
	$$\mathrm{vol}_{\bar{g}}(\Sigma_{\sigma})=\int_{\Sigma_{\sigma}} \phi^{k/2} d{A}_g.$$
	Therefore, the integral geometric formula and the fact that $\mathrm{Sys}_k(M,g)=\mathrm{vol}_g(\Sigma_{\sigma})$ gives 
	\begin{align*}
		\int_{\mathcal{G}} \mathrm{vol}_{\bar{g}}(\Sigma_{\sigma})d \mu &=  \int_M \phi^{k/2} d{V}_g \\
		&\leq \mathrm{vol}_g(M)^{\frac{n-k}{n}}\left( \int_M \phi^{n/2} d \mathrm{V}_g \right)^{\frac{k}{n}}\\
		&= \mathrm{vol}_g(M)^{\frac{n-k}{n}} \mathrm{vol}_{\bar{g}}(M)^{\frac{k}{n}}.
	\end{align*}      
	Where we used Hölder's inequality. On the other hand, by part $\ref{IGF1}$ of definition $\ref{IGFdef}$, the map $\mathcal{G} \ni \sigma \mapsto \mathrm{vol}_{\bar{g}}(M) \in \mathbb{R}$ is continuous, hence $\mathrm{Sys}_k(M,\bar{g}) \leq \dashint_{\mathcal{G}} \mathrm{vol}_{\bar{g}}(\Sigma_{\sigma}) d \mu$. However, inserting the constant function $\psi \equiv 1$ in equation $(\ref{integralGeomForm})$ we see that $\mu(\mathcal{G})\,\mathrm{Sys}_k(\Sigma_{\sigma},g)=\mathrm{vol}_g(M)$. Consequently, the following inequality holds:     
	\begin{align*}
		\mathrm{Sys}_k(M,\bar{g}) &\leq \frac{\mathrm{Sys}_k(M,g)}{\mathrm{vol}_g(M)} \left( \int_{\mathcal{G}} \mathrm{vol}_{\bar{g}}(\Sigma_{\sigma}) d \mu\right) \\
		&\leq  \mathrm{Sys}_k^{\mathrm{nor}}(M,g) \mathrm{vol}_{\bar{g}}(M)^{\frac{k}{n}}, 
	\end{align*} 
	thus proving the desired result. The equality case happens if and only if equality holds in the Hölder inequality, therefore we must have $\phi$ constant in this case. 
\end{proof}

\section{Miscellanea of Hermitian Geometry} 

Here we compile classical theorems of Hermitian geometry. We begin by considering the linear case. Let $(V^{2n},\langle \cdot, \cdot \rangle)$ be a real Euclidean vector space of dimension $2n$, endowed with a compatible linear (almost) complex structure $I \in \mathrm{End}(V)$. The \textit{fundamental $2$-form} associated to $(V^{2n},\langle \cdot, \cdot \rangle,I)$ is given by: 
$$\omega(\cdot, \cdot) \doteq\langle I\cdot, \cdot \rangle.$$
In order to fix notation, we recall that the linear complex structure $I$ induces a decomposition on $\Lambda V^{*}_{\mathbb{C}} \doteq \Lambda V^* \otimes \mathbb{C}$, the space of complex-valued forms, given by:
$$\Lambda V^{*}_{\mathbb{C}}= \oplus_{k=0}^{2n} \oplus_{p+q=k}^{} \Lambda^{p,q}V^*,$$ 
where $\Lambda^{p,q}V^*$ denote the space of forms of type $(p,q)$. On the other hand, the fundamental form $\omega$ defines the \textit{Lefschetz operator}, which has a central role in this theory.
\begin{defi}
	The Lefschetz operator $L: \Lambda V^{*}_{\mathbb{C}} \to \Lambda V^{*}_{\mathbb{C}}$ is defined by $u \mapsto u \wedge \omega$.   
\end{defi}    

As usual, we can extend the Euclidean product of $V$ to $\Lambda V^*$. This allows the definition of the \textit{dual Lefschetz operator}, as follows. 
\begin{defi}\label{duallefop}
	The dual Lefschetz operator is the unique map $\Lambda: \Lambda V^* \to \Lambda V^*$ that satisfies: 
	$$\langle \Lambda u , v \rangle = \langle  u , Lv \rangle, $$ 
	for every $u,v \in \Lambda V^*$. We also denote by $\Lambda: \Lambda V^{*}_{\mathbb{C}} \to \Lambda V^{*}_{\mathbb{C}}$, the $\mathbb{C}$-linear extension of the dual Lefschetz operator. 
\end{defi}

Associated to the dual Lefschetz operator is the concept of \textit{primitive forms}.
\begin{defi}
	A $k$-form $u \in \Lambda^k V^{*}_{\mathbb{C}}$ is called primitive if $\Lambda u = 0$, and we denote the subspace of these forms by $P^k_{\mathbb{C}}$, and the space of real primitive $k$-forms will be denoted by $P^k$. Moreover, we also define the space of primitive forms of type $(p,q)$ as $P^{p,q} = P^{p+q}_{\mathbb{C}} \cap \Lambda^{p,q}V^*$.     
\end{defi} 

In the subsequent proposition we present some properties of the set of primitive forms. These properties usually are embedded in a deeper theorem called the \textit{Lefschetz Decomposition Theorem} (Proposition $1.2.30$, \cite{huybrechts2005complex}).
\begin{teo}\label{lefdecompthrm}
	Let $(V^{2n},\langle \cdot, \cdot \rangle,I)$ be a  real euclidean vector space endowed with a compatible linear complex structure. Then: 
	\begin{enumerate}[label=\alph*),ref=(\alph*)]
		\item\label{lefdecompthrm1} The map $L^{n-k}: \Lambda^k V^{*} \to \Lambda^{2n-k} V^{*}$ is bijective, for every $k \leq n$.
		\item\label{lefdecompthrm2}If $k \leq n$, then $P^k = \{u \in \Lambda^k V^* \; : \; L^{n-k+1}u=0\}$.  
	\end{enumerate} 
\end{teo}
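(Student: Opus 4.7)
The plan is to realize $\Lambda V^{*}$ as a finite-dimensional representation of $\mathfrak{sl}(2,\mathbb{R})$ and deduce both assertions from the classification of such representations. To this end I introduce the grading operator $H:\Lambda V^{*}\to\Lambda V^{*}$ acting on $\Lambda^{k}V^{*}$ as multiplication by $k-n$, and seek the commutation relations
\begin{equation*}
[L,\Lambda]=H, \qquad [H,L]=2L, \qquad [H,\Lambda]=-2\Lambda,
\end{equation*}
so that $e\mapsto L$, $f\mapsto \Lambda$, $h\mapsto H$ defines an $\mathfrak{sl}(2,\mathbb{R})$-module structure on $\Lambda V^{*}$. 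The two bracket relations involving $H$ are immediate from the fact that $L$ raises form degree by $2$ while $\Lambda$ lowers it by $2$.

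The main technical obstacle is the identity $[L,\Lambda]=H$. I would verify it by a pointwise computation in an adapted basis. Fix an orthonormal basis $\{e_{1},Ie_{1},\dots,e_{n},Ie_{n}\}$ of $V$ with dual basis $\{e^{i},f^{i}\}$, so that $\omega=\sum_{i=1}^{n}e^{i}\wedge f^{i}$ and $\Lambda$ is the formal adjoint of $L$ for the inner product on $\Lambda V^{*}$. Writing $L=\sum_{i}L_{i}$ and $\Lambda=\sum_{j}\Lambda_{j}$ with $L_{i}=e^{i}\wedge f^{i}\wedge(\cdot)$ and $\Lambda_{j}$ the corresponding double contraction, one checks $[L_{i},\Lambda_{j}]$ on monomials $e^{I}\wedge f^{J}$: the off-diagonal terms ($i\neq j$) cancel after summation, while the diagonal term $[L_{i},\Lambda_{i}]$ evaluates to $+1$ when neither $e^{i}$ nor $f^{i}$ appears in the monomial and to $-1$ when both appear. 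Summing over $i$ recovers precisely the scalar $k-n$ acting on a $k$-form, establishing the identity.

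With the $\mathfrak{sl}(2)$-module structure in place, $\Lambda V^{*}$ splits as an orthogonal direct sum of irreducibles $V_{w}$ of highest weights $w\geq 0$, on which $L$ and $\Lambda$ act as the standard raising and lowering operators, and $\Lambda^{k}V^{*}$ is the $(k-n)$-weight eigenspace of $H$. Part (a) follows from the classical fact that within any $V_{w}$ containing the weight $k-n$ (i.e.\ with $w\geq n-k$), the operator $L^{n-k}$ bijects the weight-$(k-n)$ line onto the weight-$(n-k)$ line; summing over summands gives the global bijectivity of $L^{n-k}:\Lambda^{k}V^{*}\to\Lambda^{2n-k}V^{*}$. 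For part (b), $\Lambda u=0$ with $u\in\Lambda^{k}V^{*}$ precisely says that $u$ lies in the lowest weight space of a sum of copies of $V_{n-k}$, from which $L^{n-k+1}u=0$ since the resulting weight $n-k+2$ exceeds $w=n-k$. Conversely, decomposing an arbitrary $u\in\Lambda^{k}V^{*}$ as $u=\sum_{j\geq 0}L^{j}u_{j}$ with $u_{j}$ primitive of degree $k-2j$ (the iterated Lefschetz decomposition obtained from the irreducible summands), the hypothesis $L^{n-k+1}u=0$ together with the orthogonality of the summands and the non-vanishing of $L^{n-k+1+j}$ on primitives of degree $k-2j$ for $j\geq 1$ force $u_{j}=0$ for every $j\geq 1$; hence $u=u_{0}$ is primitive.
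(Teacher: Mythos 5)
Your argument is correct and is precisely the standard $\mathfrak{sl}(2,\mathbb{R})$-representation proof of the Lefschetz decomposition; the paper does not prove this statement itself but cites exactly this argument (Proposition $1.2.30$ in \cite{huybrechts2005complex}), so your route coincides with the one the paper invokes. One small sign slip to fix in the verification of $[L,\Lambda]=H$: the diagonal commutator $[L_i,\Lambda_i]$ acts by $-1$ on monomials containing neither $e^i$ nor $f^i$ and by $+1$ on those containing both (the reverse of what you wrote), which is what actually sums to the scalar $k-n$ on $\Lambda^k V^*$; the rest of the argument is unaffected.
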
     

To conclude the review of the linear part we introduce another important operator, called the \textit{Riemann-Hodge pairing}.
\begin{defi}\label{RiemmanHodgepair}
	For each $k\leq n$, we define the Riemann-Hodge pairing as the bilinear form $\mathcal{RH}: \Lambda^k V^* \times \Lambda^k V^* \to \mathbb{R}$ given by: 
	$$\mathcal{RH}(u,v) = (-1)^{\frac{k(k-1)}{2}} u \wedge v \wedge \omega^{n-k},$$
	where we identify $\Lambda^{2n}V^*$ with $\mathbb{R}$ using the euclidean product. We also denote by $\mathcal{RH}$ the $\mathbb{C}$-linear extension of the Riemann-Hodge paring.   
\end{defi}  

The next theorem, know as the Riemann-Hodge bilinear relations, tell us how the Riemann-Hodge pairing acts over primitive forms (Corollary $1.2.36$, \cite{huybrechts2005complex}). 

\begin{teo}\label{RHtheorem}
	Let $\mathcal{RH}: \Lambda V^{*}_{\mathbb{C}} \times \Lambda V^{*}_{\mathbb{C}} \to \mathbb{C}$ denote the Riemann-Hodge paring. Then 
	$$\mathcal{RH}\left(\Lambda^{p,q}V^*,\Lambda^{p',q'}V^* \right)=0,$$
	whenever $(p,q) \neq (q',p')$. Moreover, if $p+q \leq n$, then 
	$$(\sqrt{-1})^{p-q}\mathcal{RH}(u,\bar{u})=(n-(p+q))! \cdot ||u||^2,$$
	for every $u \in P^{p,q}$.     
\end{teo}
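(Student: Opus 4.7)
The plan is to treat the two assertions independently: the bidegree vanishing is pure book-keeping, while the non-vanishing on primitive forms is extracted from the Hodge star via Weil's formula.

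For the vanishing statement, I would argue by bidegree. Fix $u \in \Lambda^{p,q}V^*$ and $v \in \Lambda^{p',q'}V^*$. In order for $u \wedge v \wedge \omega^{n-k}$ to be non-zero, it must be a non-zero multiple of the volume form, which lives in $\Lambda^{n,n}V^*$. Since $\omega \in \Lambda^{1,1}V^*$, the wedge product lies in $\Lambda^{p+p'+n-k,\,q+q'+n-k}V^*$, where $k = p+q = p'+q'$ (otherwise the total degree is wrong). Imposing $p+p'+n-k = n$ and $q+q'+n-k = n$ forces $p' = q$ and $q' = p$. Hence $\mathcal{RH}(u,v) = 0$ whenever $(p',q') \neq (q,p)$, as required.

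For the second statement, the key ingredient I would invoke is the Weil identity, which asserts that for every primitive form $u \in P^{p,q}$ with $k = p+q \le n$,
\[
{\star}\, u \;=\; \frac{(-1)^{k(k+1)/2}\, i^{p-q}}{(n-k)!}\, L^{n-k} u,
\]
where $\star$ is the $\mathbb{C}$-linear extension of the Hodge star with respect to $\langle \cdot, \cdot \rangle$. Applying this to $\bar u \in P^{q,p}$ gives
\[
{\star}\, \bar u \;=\; \frac{(-1)^{k(k+1)/2}\, i^{q-p}}{(n-k)!}\, \omega^{n-k} \wedge \bar u.
\]
Combining the identity $\|u\|^2 \mathrm{vol} = u \wedge {\star}\,\bar u$ with the definition $\mathcal{RH}(u,\bar u) = (-1)^{k(k-1)/2}\, u \wedge \bar u \wedge \omega^{n-k}$, one obtains
\[
\|u\|^{2}\, \mathrm{vol} \;=\; \frac{(-1)^{k^2}\, i^{q-p}}{(n-k)!}\, \mathcal{RH}(u,\bar u) \;=\; \frac{(-1)^{k}\, i^{q-p}}{(n-k)!}\, \mathcal{RH}(u,\bar u).
\]
Multiplying through by $(n-k)!\, i^{p-q}$ and using $(-1)^{k} i^{2(p-q)} = (-1)^{k}(-1)^{p-q} = (-1)^{2p} = 1$ yields the claimed formula $i^{p-q}\, \mathcal{RH}(u,\bar u) = (n-k)!\, \|u\|^2$.

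The one non-trivial input is the Weil formula. To prove it, I would use the $\mathfrak{sl}_2$-triple $(L,\Lambda,H)$ acting on $\Lambda V^*_{\mathbb{C}}$, where $H$ is the grading operator with eigenvalue $k-n$ on $\Lambda^k V^*_{\mathbb{C}}$; the classical representation theory of $\mathfrak{sl}_2$ splits each irreducible submodule of $\Lambda V^*_{\mathbb{C}}$ as $\bigoplus_{j \geq 0} \mathbb{C}\cdot L^j u$ for $u$ a primitive highest-weight vector, and both $\star$ and $L^{n-k}(\cdot)/(n-k)!$ are intertwiners between the primitive component $P^{p,q}$ and $L^{n-k} P^{p,q} \subset \Lambda^{n-q,n-p}V^*$. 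Hence they differ by a scalar depending only on $(p,q)$; the scalar is pinned down by testing on an explicit primitive form in a unitary basis of $V$ (for instance, a decomposable monomial of the form $e^{i_1}\wedge \cdots \wedge e^{i_p} \wedge \bar{e}^{j_1} \wedge \cdots \wedge \bar{e}^{j_q}$ with disjoint indices). This last scalar computation is the only genuinely delicate step and, being purely linear-algebraic, is standard and is the place where I expect to spend the most care in accounting for signs and powers of $i$.
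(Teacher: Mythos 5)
The paper itself offers no proof of this statement: Appendix B records it as a classical fact and points to Corollary 1.2.36 in \cite{huybrechts2005complex}, so any complete argument you give is necessarily "a different route". Your deduction of the theorem from the Weil identity is correct and is in fact the standard one. The bidegree count for the vanishing statement is exactly right, and your sign and power-of-$i$ bookkeeping (using $(-1)^{k(k+1)/2}(-1)^{k(k-1)/2}=(-1)^{k}$ and $(-1)^{k}i^{2(p-q)}=(-1)^{k}(-1)^{p-q}=1$) does yield $i^{p-q}\,\mathcal{RH}(u,\bar u)=(n-k)!\,\|u\|^{2}$, provided $\|u\|^{2}$ denotes the sesquilinear (Hermitian) extension of the inner product and $\star$ its $\mathbb{C}$-linear extension; these are the conventions of the cited source and the ones under which $u\wedge\star\bar u=\|u\|^{2}\,\mathrm{vol}$ holds, so it is worth stating them explicitly (with the bilinear extension the constant would come out differently). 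The phrase "multiplying through by $(n-k)!\,i^{p-q}$" is slightly loose as a description of the algebra, but the identities invoked are the right ones and the final formula checks out, e.g.\ against $u=dz$ in $n=1$.

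The one step I would not let stand as written is the justification of the Weil formula itself. The Hodge star is not an intertwiner for the $\mathfrak{sl}_2$-triple $(L,\Lambda,H)$, and $\mathfrak{sl}_2$-representation theory alone cannot show that $\star$ carries $P^{p,q}$ into $L^{n-k}P^{p,q}$ rather than mixing the various pieces of the Lefschetz decomposition of $\Lambda^{n-q,n-p}V^{*}$: the $\mathfrak{sl}_2$-action only sees the total degree, not the bidegree or the individual primitive vectors. What makes the "equal up to a scalar" argument work is $\mathrm{U}(n)$-equivariance: both $\star$ and $L$ commute with the unitary group, $P^{p,q}$ is an irreducible $\mathrm{U}(n)$-module occurring with multiplicity one in the target, and Schur's lemma then reduces everything to the scalar you propose to pin down on a decomposable monomial; alternatively one proves the Weil identity by the direct computation (or induction on $n$) as in Proposition 1.2.31 of \cite{huybrechts2005complex}. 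Since you explicitly isolate the Weil identity as the external ingredient, quoting it as a standard fact is perfectly acceptable here (the paper quotes the entire theorem); only the $\mathfrak{sl}_2$ sketch should be repaired or replaced by that citation.
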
  

In what follows $(M^{2n}, g, J, \omega)$ denotes a closed and connected Kähler manifold. Clearly, the pointwise theory developed earlier generalizes to forms on the manifold $M$. Therefore, we have well-defined the Lefschetz operator and its dual. 

An important question is how these operators commute with the differential and codifferential on $M$. The Kähler condition imposes important relations between these operators, which are called \textit{Kähler identities}. In what follows, we present some of these relations. However, before that, we need to introduce the $\delta^c$ operator.

\begin{defi}\label{deltacdef}
	For each $1\leq k \leq 2n$, we define $\delta^c: \Omega^k_{\mathbb{C}}(M) \to \Omega^{k-1}_{\mathbb{C}}(M)$ as $\delta^{c}=i (\partial^* - \bar{\partial}^*)$. 
\end{defi} 

The next proposition shows how $\Lambda$ commutes with the exterior differential and $\delta$ with $\delta^c$.

\begin{prop}\label{propofdeltac}(cf. Proposition $3.1.12$ in \cite{huybrechts2005complex})
	Let $(M^{2n},g,J,\omega)$ be a closed and connected Kähler manifold, and $\Lambda$ the dual Lefschetz operator. Then:
	\begin{enumerate}[label=\alph*),ref=(\alph*)]
		\item $[\Lambda,d]=-\delta^c$;
		\item $\delta \circ \delta^c + \delta^c \circ \delta = 0$.  
	\end{enumerate}
\end{prop}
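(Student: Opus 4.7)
\medskip

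\noindent\textbf{Proof proposal.} For part (a), the plan is to decompose $d = \partial + \bar\partial$ and prove the two "super Kähler identities" separately, namely
\[
[\Lambda,\partial] \;=\; i\,\bar\partial^{*}, \qquad [\Lambda,\bar\partial] \;=\; -\,i\,\partial^{*}.
\]
Summing these and recalling that $\delta^{c} = i(\partial^{*}-\bar\partial^{*})$ immediately yields $[\Lambda,d] = i\bar\partial^{*} - i\partial^{*} = -\delta^{c}$. To prove the super identities themselves, I would first verify them in the flat model $(\mathbb{C}^{n}, g_{\mathrm{eucl}}, J_{\mathrm{std}})$ by a direct computation on a monomial basis $dz^{I}\wedge d\bar z^{J}$: both $\Lambda$, $\partial$, $\bar\partial$, $\partial^{*}$, $\bar\partial^{*}$ have explicit formulas in terms of contraction and wedging with $dz^{i}$, $d\bar z^{i}$, and the identities fall out from the anticommutation relations $\{\iota_{\partial_{z^{i}}}, dz^{j}\}=\delta_{ij}$. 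Then I would globalize using Kähler normal coordinates: around any point $p\in M$ one can choose holomorphic coordinates in which $g_{i\bar\jmath}(z) = \delta_{ij} + O(|z|^{2})$, so the Christoffel symbols and the first derivatives of $\omega$ vanish at $p$.

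The key observation to exploit this is that each of the operators $\Lambda$, $\partial$, $\bar\partial$, $\partial^{*}$, $\bar\partial^{*}$ is built from the metric tensor and (in the case of $\partial^{*},\bar\partial^{*}$) first derivatives of the metric; their effect on a form at the point $p$ depends only on the $1$-jet of $g$ at $p$ together with the $1$-jet of the form. Consequently the identity $[\Lambda,\bar\partial]+i\partial^{*}=0$, once reduced to an equality of first-order differential operators, becomes a pointwise identity on $2$-jets that depends only on the $1$-jet of the metric. In Kähler normal coordinates this $1$-jet agrees with the flat one, so the identity at $p$ reduces to its flat analogue on $\mathbb{C}^{n}$, which has already been checked. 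Since $p$ is arbitrary, the identity holds globally.

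For part (b), the plan is to deduce it directly from the integrability identities $\partial^{2}=0$ and $\bar\partial^{2}=0$, which by duality give $(\partial^{*})^{2} = 0 = (\bar\partial^{*})^{2}$. Writing $\delta = \partial^{*}+\bar\partial^{*}$ and $\delta^{c}=i(\partial^{*}-\bar\partial^{*})$, one computes
\begin{align*}
\delta\,\delta^{c} + \delta^{c}\,\delta
&= i\bigl[(\partial^{*}+\bar\partial^{*})(\partial^{*}-\bar\partial^{*}) + (\partial^{*}-\bar\partial^{*})(\partial^{*}+\bar\partial^{*})\bigr] \\
&= 2i\bigl[(\partial^{*})^{2} - (\bar\partial^{*})^{2}\bigr] \;=\; 0.
\end{align*}
Equivalently one can observe that $d\circ d^{c} + d^{c}\circ d = 0$ (with $d^{c} = i(\bar\partial - \partial)$), which follows from $\partial\bar\partial = -\bar\partial\partial$, and then take formal $L^{2}$-adjoints.

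The main obstacle in the argument is the transfer from the flat model to the curved Kähler manifold in part (a): one must verify that the operators $[\Lambda,\partial]$ and $\bar\partial^{*}$ differ, in any local frame, by an expression involving only the metric and its first derivatives, so that their equality at a point can be checked in Kähler normal coordinates. This is essentially the content of the statement that the symbol computation for $\partial^{*}$ involves only $g^{i\bar\jmath}$ and $\partial g_{i\bar\jmath}$, and once this is in hand the curved case follows from the flat case with no further work. Part (b) is then purely algebraic given part (a) is not even needed.
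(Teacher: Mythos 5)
Your proposal is correct; note that the paper does not prove this proposition at all but simply cites Huybrechts (Proposition 3.1.12), and your argument --- reduction of the first-order identity $[\Lambda,\bar\partial]=-i\partial^{*}$ to the flat model via Kähler normal coordinates, followed by the purely algebraic cancellation $\delta\delta^{c}+\delta^{c}\delta=2i\bigl[(\partial^{*})^{2}-(\bar\partial^{*})^{2}\bigr]=0$ --- is precisely the standard proof given in that reference, with signs consistent with the paper's convention $\delta^{c}=i(\partial^{*}-\bar\partial^{*})$.
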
  

\end{appendices}

\hfill
\begin{flushleft}
{\small \textbf{Acknowledgments} I would like to thank my PhD advisor, Lucas Ambrozio, for all the insightful conversations and enriching suggestions on earlier drafts of these paper. Additionally, I extend my thanks to CNPq - Conselho Nacional de Desenvolvimento Científico e Tecnológico and FAPERJ - Fundação Carlos Chagas Filho de
	Amparo à Pesquisa do Estado do Rio de Janeiro for their support of this work.}
\end{flushleft}
\hfill

\begin{flushleft}
{\small \textbf{Funding} The author Luciano Luzzi Junior was supported by CNPq - Conselho Nacional de Desenvolvimento Científico e Tecnológico (Process: 141589/2020-5) and by FAPERJ - Fundação Carlos Chagas Filho de
	Amparo à Pesquisa do Estado do Rio de Janeiro (Doutorado Nota 10, Process: 202.373/2022).}
\end{flushleft}
\hfill

\begin{flushleft}
{ \small \textbf{Conflict of interest/Competing interests} The author have no conflicts of interest to declare that are relevant to the content of this article.}
\end{flushleft}

\bibliography{sn-bibliography}

\end{document}